\documentclass[pdflatex,sn-mathphys-num]{sn-jnl}

\usepackage{graphicx}%
\usepackage{multirow}%
\usepackage{amsmath,amssymb,amsfonts}%
\usepackage{amsthm}%
\usepackage{mathrsfs}%
\usepackage[title]{appendix}%
\usepackage{xcolor}%
\usepackage{textcomp}%
\usepackage{manyfoot}%
\usepackage{booktabs}%
\usepackage{algorithm}%
\usepackage{algorithmicx}%
\usepackage{algpseudocode}%
\usepackage{listings}%
\usepackage{placeins}%
\usepackage{tikz-cd}%
\usetikzlibrary{patterns}%

\usepackage{pifont}
\newcommand{\cmark}{\ding{51}}
\newcommand{\xmark}{\ding{55}}

\theoremstyle{plain}
\newtheorem{theorem}{Theorem}[section]
\newtheorem*{theorem*}{Theorem}
\newtheorem{proposition}[theorem]{Proposition}
\newtheorem{lemma}[theorem]{Lemma}
\newtheorem{corollary}[theorem]{Corollary}

\theoremstyle{remark}

\newtheorem{conjecture}[theorem]{Conjecture}
\newtheorem{remark}[theorem]{Remark}

\theoremstyle{definition}
\newtheorem{definition}[theorem]{Definition}

\numberwithin{equation}{section}

\renewcommand{\orcidlogo}{%
  \includegraphics[width=10pt]{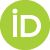}%
}

\begin{document}

\title[On the Spectral Synthesis of Lipschitz Persistence Diagram Vectorizations]{On the Spectral Synthesis of Lipschitz Persistence Diagram Vectorizations}

\author*{\fnm{Charles} \sur{Fanning} \orcid{https://orcid.org/0009-0002-8251-2802}}\email{cfannin8@students.kennesaw.edu}

\author{\fnm{Mehmet} \sur{Aktas} \orcid{https://orcid.org/0000-0002-9527-9600}}\email{maktas1@kennesaw.edu}

\affil{\orgdiv{School of Data Science and Analytics}, 
\orgname{Kennesaw State University}, 
\orgaddress{\street{1000 Chastain Rd NW}, 
\city{Kennesaw}, 
\postcode{30144}, 
\state{Georgia}, 
\country{United States}}}







\abstract{
Persistence diagrams are fundamental descriptors in topological data analysis. Many statistical and machine learning methods require persistence diagrams to be mapped into vector spaces or compared through kernels. Although many persistence diagram vectorizations and persistence kernels have been proposed, existing methods are typically developed as individual constructions, and existing work does not provide a canonical persistence diagram vectorization from which many existing vectorizations and kernels can be derived. In this paper, we develop such a canonical persistence diagram vectorization for certain classes of persistence diagrams using Fourier analysis on groups of virtual persistence diagrams. Spectral synthesis asks whether other persistence diagram vectorizations can be represented through this canonical vectorization. We prove this result for a class of Lipschitz persistence diagram vectorizations. We extend this result from uniformly discrete metric pairs to separable metric pairs.
}

\keywords{persistence diagrams, persistence diagram vectorizations, persistent homology, spectral synthesis, Fourier analysis, harmonic analysis, abstract harmonic analysis }

\pacs[MSC Classification]{43A45, 46E40, 55N31}

\maketitle


\section{Introduction}\label{sec1}

Persistent homology summarizes the multiscale topology of data by assigning persistence diagrams to filtered spaces \cite{892133,Zomorodian2005ComputingPH,CohenSteiner2007StabilityPD,Oudot2015PersistenceT}. Persistence diagrams have become standard descriptors in topological data analysis and have found applications in statistics and machine learning. Many statistical and machine learning methods cannot use persistence diagrams directly, so persistence diagrams must first be mapped into vector spaces or compared through kernels. This need has led to many persistence diagram vectorizations and persistence kernels \cite{ali2023survey,hensel2021survey}, including persistence landscapes \cite{bubenik2015statistical}, persistence images \cite{adams2017persistence}, and PersLay \cite{Carrire2019PersLayAN}.

Existing persistence diagram vectorization methods and persistence kernel methods are typically proposed and studied as individual constructions, and comparatively little general theory explains how these constructions are related \cite{ali2023survey,hensel2021survey}. Existing work does not provide a canonical persistence diagram vectorization from which many existing vectorizations and kernels can be derived. Such a canonical vectorization would provide a common mathematical framework for relating and comparing existing vectorization and kernel methods. In this paper, we develop such a canonical vectorization for certain classes of persistence diagrams using Fourier analysis.

Our approach uses virtual persistence diagrams, which extend persistence diagrams to a setting where Fourier analysis becomes available \cite[Section~2.3 and Proposition~3.11]{bubenik2022virtual}. Fourier analysis provides a canonical way to represent the Lipschitz persistence diagram vectorizations considered in this paper. Spectral synthesis studies when functions can be reconstructed from their Fourier representations \cite{szekelyhidi2014harmonic,szekelyhidi2024}. We establish spectral synthesis results for the Lipschitz persistence diagram vectorizations considered in this paper.

We classify persistence diagram vectorization methods and persistence kernel methods from the literature according to whether they have Lipschitz extensions and whether the corresponding functions have spectral synthesis \cite{bubenik2015statistical,adams2017persistence,perea2023approximating,kalivsnik2019tropical,chevyrev2018persistence,zielinski2021persistence,royer2021atol,Carrire2019PersLayAN,carriere2017sliced,le2018persistence,kusano2016persistence,reininghaus2015stable}. Every classified vectorization or kernel with a Lipschitz extension also satisfies spectral synthesis. Tables~\ref{tab:persistence-vectorizations}, \ref{tab:persistence-codebooks}, \ref{tab:perslay-vectorizations}, and~\ref{tab:persistence-kernels} summarize these classifications. These classifications motivate Conjecture~\ref{conj:lipschitz-extension-spectral-synthesis}, which states that Lipschitz continuity is sufficient for spectral synthesis for Lipschitz persistence diagram vectorizations.



The commutative metric monoid of persistence diagrams, denoted as \(D(X,A)\) \cite[Section~2.3]{bubenik2022virtual}, for a metric pair \( (X,A) \) is the free commutative monoid on \( X \) modulo the free commutative monoid on the diagonal, \( A \), equipped with the \(1\)-Wasserstein distance~\cite[Section~2.4]{bubenik2022virtual}. Its Grothendieck completion is the free abelian metric group of virtual persistence diagrams \( K(X,A) \) to which the \( 1 \)-Wasserstein distance on \( D(X,A) \) extends uniquely to a translation-invariant metric \cite[Proposition~3.11]{bubenik2022virtual}. 

When \( X/A \) is uniformly discrete, the metric group \( K(X,A) \) becomes a locally compact abelian group \cite{fanningaktas2026banachrkhs} and enables methods from Fourier analysis on the Pontryagin dual \cite[Section~4.1]{folland2016course} of \( K(X,A) \)~\cite{fanningaktas2025rkhsvpd, fanningaktas2026randomwalks, fanning2026higherorderpersistencediagrams}.

Many vectorizations, which map persistence diagrams into vector spaces, have been proposed~\cite{bubenik2015statistical, adams2017persistence, perea2023approximating, kalivsnik2019tropical, chevyrev2018persistence, Carrire2019PersLayAN} in order to integrate persistence diagram data into machine learning applications~\cite{ali2023survey, hensel2021survey}. In this paper, we consider a persistence diagram vectorization to be a (not necessarily additive) function \( D(X,A) \to E \) which maps persistence diagrams to values in a Banach space \( E \).

A variety on a locally compact abelian group \(G\) is a closed linear
subspace of \( C(G) \) that is invariant under all translations
\cite[Section~11.5]{szekelyhidi2014harmonic}. A variety is said to be synthesizable if its exponential monomials span a dense subspace~\cite[Theorem~15.6]{szekelyhidi2014harmonic} in the compact-open topology. 

The objective of this paper is to characterize the Lipschitz persistence diagram vectorizations that have Lipschitz extensions from \( D(X,A) \) to \( K(X,A) \) whose compositions with bounded linear functionals generate synthesizable varieties.




We first show in Theorem~\ref{thm:banach-valued-cocycle} that a certain Banach space associated with the Lipschitz seminorm is isometrically isomorphic to a space of cocycles. Here, \( (\Delta_h f) (x) = f(x+h) - f(x) \).

\begin{theorem}
Let \(E\) be a real or complex Banach space. The assignment \(f\mapsto(\Delta_h f)_{h\in K(X,A)}\) induces an isometric linear isomorphism
\[ \operatorname{Lip}(K(X,A),E)/E \cong Z^1(K(X,A),\ell^\infty(K(X,A),E)). \]
\end{theorem}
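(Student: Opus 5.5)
We have to fix conventions first, since the statement leaves them implicit. Write \(G=K(X,A)\) with translation-invariant metric \(d\), and regard \(E\) as the constants in \(\operatorname{Lip}(G,E)\). The quotient carries the Lipschitz seminorm \(\operatorname{Lip}(f)=\sup_{x\neq y}\|f(x)-f(y)\|/d(x,y)\), which is a norm on \(\operatorname{Lip}(G,E)/E\).

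On the cocycle side, \(G\) acts on \(\ell^\infty(G,E)\) by translation, \((\tau_h\phi)(x)=\phi(x+h)\). A cocycle is a map \(c\colon G\to \ell^\infty(G,E)\) with \(c_{h+k}=\tau_k c_h + c_k\), or the mirror convention \(c_{h+k}=c_h+\tau_h c_k\); both are consistent because \(G\) is abelian. For the isometry to make sense, \(Z^1\) must carry the norm \(\|c\|=\sup_{h\neq 0}\|c_h\|_\infty/d(h,0)\), restricted to cocycles for which this is finite. I will take that as the definition in force and check that \(\Delta\) matches the convention.

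The key steps, in order:

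1. \textbf{Well-definedness and linearity.} For \(f\) Lipschitz, \(\|(\Delta_h f)(x)\|=\|f(x+h)-f(x)\|\le \operatorname{Lip}(f)\,d(x+h,x)=\operatorname{Lip}(f)\,d(h,0)\) by translation invariance. So \(\Delta_h f\in\ell^\infty(G,E)\) with \(\|\Delta_h f\|_\infty\le \operatorname{Lip}(f)\,d(h,0)\). The cocycle identity is the telescoping \(f(x+h+k)-f(x)=[f(x+k+h)-f(x+k)]+[f(x+k)-f(x)]\), i.e. \(\Delta_{h+k}f=\tau_k\Delta_h f+\Delta_k f\). Constants are killed, so the map descends to a linear map on the quotient with norm at most \(1\).

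2. \textbf{Isometry.} Since
\[
\sup_h \frac{\|\Delta_h f\|_\infty}{d(h,0)}=\sup_{h,x}\frac{\|f(x+h)-f(x)\|}{d(x+h,x)}=\operatorname{Lip}(f),
\]
with every pair \(x\neq y\) arising as \(y=x+h\), the map is isometric. In particular it is injective: \(\Delta f=0\) forces \(f\) to be constant.

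3. \textbf{Surjectivity.} Given a cocycle \(c\), define \(f(x)=c_x(0)\). Evaluating the cocycle identity at \(0\) gives \(c_{h+k}(0)=c_h(k)+c_k(0)\), that is, \(c_h(k)=f(h+k)-f(k)\). Hence \(c_h=\Delta_h f\) exactly. The Lipschitz bound then follows from \(\|f(x+h)-f(x)\|=\|c_h(x)\|\le \|c\|\,d(h,0)\).

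4. \textbf{Completeness (optional).} \(\operatorname{Lip}(G,E)/E\) is Banach, as is standard via the identification with Lipschitz functions vanishing at \(0\). The isometric isomorphism then transfers completeness to \(Z^1\).

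The main obstacle is pinning down the paper's definition of \(Z^1(G,\ell^\infty(G,E))\), namely the action, the cocycle convention and the norm. If the paper instead uses unweighted cocycles, \(\sup_h\|c_h\|_\infty\), the statement would need a Lipschitz-type growth condition. I would adapt step 2 accordingly and verify that the reconstruction \(f(x)=c_x(0)\) still works under the chosen convention; with the other convention one uses \(c_{h+k}(0)=c_h(0)+c_k(h)\) instead, which gives the same formula.
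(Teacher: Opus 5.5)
Your proposal is correct and matches the paper's proof essentially step for step. The paper defines \(Z^1\) with exactly the weighted norm \(\sup_{h\neq 0}\|b_h\|_\infty/\rho(h,0)\) and the convention \(b_{h+k}=\tau_k b_h+b_k\) that you assumed, obtains \(\operatorname{Lip}(f)=\|b\|_{Z^1}\) by the same substitution \(h=y-x\) together with translation invariance, and inverts the map via \(f_b(x)=b_x(0)\) using the cocycle identity evaluated at \(0\).
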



That is, the Lipschitz extensions of vectorizations to \( K(X,A) \), modulo constants, are precisely the \( 1 \)-cocycles for the right regular representation of \( K(X,A) \) on \( \ell^\infty (K(X,A), E) \).


Starting from \( f\), let \( b_h = \Delta_h f \). Adding and subtracting \( f(x+k) \) gives \( b_{h+k}(x) = b_h(x+k) + b_k(x) \) and Definition~\ref{def:vpd-metric} gives \( \rho(x+h, x) = \rho(h, 0) \), so \( \| b \|_{Z^1} = \operatorname{Lip}(f) \).

Starting from \( b \), let \( f_b(x) = b_x(0) \). The identity \( b_{h+k}(x) = b_h (x+k) + b_k (x) \) gives \( \Delta _h f_b = b_h \). The identity \( \ker \left( f \mapsto ( \Delta _h f)_{h \in K(X,A)} \right) = \{ x \mapsto e \colon e \in E \} \) gives the quotient by \( E \).



We next identify in Theorem~\ref{thm:fourier-stieltjes-cocycle-synthesis} a class of Lipschitz vectorizations for which \( \lambda ( b_h ) \), with \( h \in K(X,A) \) and \( \lambda \in E^* \), generate synthesizable varieties.

\begin{theorem}
Let \(E\) be a complex Banach space, and let \(b\) be an element of \(Z^1(K(X,A),\ell^\infty(K(X,A),E))\). Assume that the function \(x\mapsto\lambda(b_x(0))\) belongs to \(\operatorname{Hom}(K(X,A),\mathbb C)+B(K(X,A))\) for every \(\lambda\in E^*\). Then the functions \(\lambda(b_h)\), with \(h\in K(X,A)\) and \(\lambda\in E^*\), generate a synthesizable variety.
\end{theorem}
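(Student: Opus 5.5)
The plan is to reduce the statement to a spectral synthesis result for the single scalar functions \(g_\lambda(x)=\lambda(b_x(0))\), and then to a known synthesis theorem for Fourier–Stieltjes transforms on locally compact abelian groups. We work with \(G=K(X,A)\) in the uniformly discrete setting, where it is a locally compact abelian group. By the cocycle identity \(b_{h+k}(x)=b_h(x+k)+b_k(x)\) and the reconstruction \(f_b(x)=b_x(0)\) used in Theorem~\ref{thm:banach-valued-cocycle}, we have \(b_h=\Delta_h f_b\). Hence
\[
\lambda(b_h)(x)=g_\lambda(x+h)-g_\lambda(x).
\]
So every generator \(\lambda(b_h)\) is a difference of translates of \(g_\lambda\). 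The variety \(V\) generated by all \(\lambda(b_h)\) is therefore contained in the sum of the varieties \(\tau(g_\lambda)\) generated by the \(g_\lambda\), taken over \(\lambda\in E^*\).

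The key steps, in order, are as follows.

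First, write \(g_\lambda=a_\lambda+\widehat{\mu_\lambda}\) with \(a_\lambda\in\operatorname{Hom}(G,\mathbb C)\) and \(\mu_\lambda\) a bounded measure on the dual group \(\widehat G\).

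Second, show that \(\tau(g_\lambda)\) is synthesizable. The translates of \(\widehat{\mu_\lambda}\) are Fourier–Stieltjes transforms of the measures \(\chi(h)\,d\mu_\lambda(\chi)\). The translates of \(a_\lambda\) are \(a_\lambda+\text{const}\). The exponential monomials to use are:
\begin{itemize}
\item the characters \(\chi\), which are uniform limits on compacta of the trigonometric polynomials approximating \(\widehat{\mu_\lambda}\) (each \(\widehat\mu\) is a compact-open limit of finite combinations of characters in the support of \(\mu\), by weak-\(*\) density of discrete measures and equicontinuity on compacta);
\item the constants together with \(a_\lambda\), which are exponential monomials of degree \(\le 1\).
\end{itemize}
I would show that \(\tau(g_\lambda)\) is the closed span of \(\{1,a_\lambda\}\) together with characters \(\chi\in\operatorname{supp}\mu_\lambda\) that lie in the variety. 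Synthesis of varieties generated by Fourier–Stieltjes transforms on LCA groups is classical (a measure-theoretic Wiener-type approximation), and adding a degree-one polynomial part preserves synthesis because annihilating measures can be tested against \(1\) and \(a_\lambda\) separately.

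Third, pass from each \(\tau(g_\lambda)\) to \(V\). The variety \(V\) is the closed sum of the varieties \(\tau(\Delta_h g_\lambda)\). Each difference \(\Delta_h g_\lambda=a_\lambda(h)+\widehat{\nu}\) with \(\nu=(\chi(h)-1)\mu_\lambda\) is again of the same form: the homomorphism part becomes a constant. So each \(\tau(\Delta_h g_\lambda)\) is synthesizable by the second step. The closed span of a union of synthesizable varieties is synthesizable, since its exponential monomials include those of each piece and the union of their spans is dense.

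I expect the main obstacle to be the second step. The issue is to show rigorously that the variety generated by a Fourier–Stieltjes transform is spanned by the characters it contains, rather than merely approximated by characters outside it. I would first try duality: a compactly supported measure \(\nu\) on \(G\) annihilating all characters in \(\tau(\widehat\mu)\) satisfies \(\widehat\nu(\chi)=0\) for all \(\chi\) in that set. For such \(\nu\), compute
\[
\int \widehat\mu(x+h)\,d\nu(x)=\int \chi(h)\,\widehat\nu(\chi)\,d\mu(\chi).
\]
This integral vanishes once the relevant support of \(\mu\) is shown to consist of characters in the variety; that inclusion follows from Fourier inversion on \(\widehat G\). For a general discrete (hence uniformly discrete) \(G\), \(\widehat G\) is compact, which simplifies the support argument.
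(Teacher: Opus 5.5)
Your route is correct in outline, but it differs from the paper's.

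\emph{The paper's route.} The paper handles all generators at once. It fixes a finite $F$ and projects by $T_F$. It then passes to second differences $\tau_g(\lambda(b_h))-\lambda(b_h)$, whose spectral measure $(\chi(g)-1)(\chi(h)-1)\,d\mu$ no longer carries the constant $a(h)$. Via Stone--Weierstrass and Riesz on the compact group $\widehat{K(X,A)}$, this shows that every $\chi_0\in\operatorname{supp}(\mu)\setminus\{\mathbf 1\}$ lies in $\mathcal V_b$. The Fourier--Stieltjes part and the constant $a(h)$ are then recovered separately.

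\emph{Your route.} You write $V$ as the closed sum of the single-generator varieties $\operatorname{Var}\{\lambda(b_h)\}$. You then use that a closed sum of synthesizable varieties is synthesizable, which is valid.

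\emph{Two corrections.} First, your second step for a genuine homomorphism $a_\lambda\neq0$ is never used. The containment $V\subseteq\overline{\sum_\lambda\operatorname{Var}\{g_\lambda\}}$ could not help anyway, since synthesizability does not pass to subvarieties. Also, $a_\lambda(h)+\widehat\nu$ is not literally of the same form as $g_\lambda$, because a nonzero constant is not additive. What you actually need is that on the discrete group $a_\lambda(h)+\widehat\nu$ is the Fourier--Stieltjes transform of $\nu+a_\lambda(h)\delta_{\mathbf 1}$. So every generator $\lambda(b_h)$ already lies in $B(K(X,A))$.

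Second, the inclusion you flag as the main obstacle follows from uniqueness (not inversion) of Fourier--Stieltjes transforms on $\widehat G$. Suppose a finitely supported measure $\sigma$ annihilates $\operatorname{Var}\{\widehat\mu\}$, and set $\widehat\sigma(\chi)=\int\chi\,d\sigma$. Fubini gives $\int\chi(h)\,\widehat\sigma(\chi)\,d\mu(\chi)=0$ for all $h$. Uniqueness then gives $\widehat\sigma\,d\mu=0$, and by continuity $\widehat\sigma=0$ on $\operatorname{supp}\mu$. Hence $\operatorname{Var}\{\widehat\mu\}$ and the closed span of the characters in $\operatorname{supp}\mu$ have the same annihilator, and they coincide by Hahn--Banach.

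\emph{Comparison.} Your approach shows the theorem is a special case of the fact that any family in $B(G)$ generates a synthesizable variety, which is the paper's own Lemma~\ref{lem:fourier-stieltjes-variety-synthesis}. This makes the second-difference detour unnecessary. The paper's argument is more hands-on: on each finite set it produces an explicit element of $\mathcal E_b$ matching a given element of $\mathcal V_b$.
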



That is, for bounded linear scalarizations of Lipschitz extensions of vectorizations which each generate certain synthesizable varieties of their own, the variety generated jointly by all of these scalarizations is also synthesizable.


Starting from \( \lambda(b_x(0)) = a(x) + \widehat{\mu}(x) \), using \( b_h(x) = b_{x+h}(0) - b_x(0) \) gives
\[ \lambda(b_h(x)) = a(h) + \int_{\widehat{K(X,A)}} \chi(x)(\chi(h)-1)\,d\mu(\chi). \]
The multiplier \( \chi(h)-1 \) allows us to identify the exponential monomials corresponding to \( \operatorname{supp} (\mu) \setminus \{ \mathbf{1} \} \) from the expanded form of \( \tau_y (\lambda(b_h)) \) for \( y \in K(X,A) \). Here, \( \tau_y \) denotes translation by \( y \).



We next show in Theorem~\ref{thm:uniformly-discrete-synthesis-from-approximants} that spectral synthesis extends to Lipschitz extensions of vectorizations on virtual persistence diagram groups over separable but not uniformly discrete metric pairs using limits of their restrictions to uniformly discrete subpairs whose virtual persistence diagram groups are locally compact.

\begin{theorem}
Let \((X,d,A)\) be a separable metric pair, let \(E\) be a complex Banach space, and let \(f\colon K(X,A)\to E\) be Lipschitz. Let \(Y_n\) and \(\varepsilon_n\) be as in Lemma~\ref{lem:separable-uniformly-discrete-approximants}, and let \(P_n^Z\colon K(Z,A)\to K(Y_n,A)\) be as in Lemma~\ref{lem:approximating-homomorphisms}. Suppose:
\begin{enumerate}
\item For every \(n\) and \(\lambda\in E^*\), the function \(\lambda\circ f\circ i_{Y_n}\) belongs to \(B(K(Y_n,A))\), with representing measure \(\mu_{n,\lambda}\).
\item For every uniformly discrete subpair \(A\subseteq Z\subseteq X\) and \(\lambda\in E^*\), the pushforward measures \(\nu_{n,\lambda}^Z:=(\widehat{P_n^Z})_\#\mu_{n,\lambda}\) satisfy \(\sup_n\|\nu_{n,\lambda}^Z\|_{\mathrm{TV}}<\infty\).
\end{enumerate}
Then \(f\) has uniformly discrete spectral synthesis in the sense of Definition~\ref{def:vectorization-spectral-synthesis}.
\end{theorem}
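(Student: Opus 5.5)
We need the restriction \(\lambda\circ f|_{K(Z,A)}\), for every uniformly discrete subpair \(A\subseteq Z\subseteq X\), to generate a synthesizable variety on the locally compact group \(K(Z,A)\). We take this to be the content of Definition~\ref{def:vectorization-spectral-synthesis}. The plan has three steps: reduce to Fourier--Stieltjes transforms, build a weak-\(*\) limit measure on \(\widehat{K(Z,A)}\), and then invoke the second theorem above (the Fourier--Stieltjes cocycle synthesis result) on \(K(Z,A)\).

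\emph{Step 1: an approximating sequence on \(K(Z,A)\).} Fix \(Z\) and \(\lambda\in E^*\). Consider the functions \(g_n:=\lambda\circ f\circ i_{Y_n}\circ P_n^Z\colon K(Z,A)\to\mathbb C\). Since \(P_n^Z\) is a homomorphism, \(g_n\) is the Fourier--Stieltjes transform of the pushforward measure \(\nu^Z_{n,\lambda}=(\widehat{P_n^Z})_\#\mu_{n,\lambda}\) on \(\widehat{K(Z,A)}\). Here \(\widehat{P_n^Z}\) is the dual map \(\chi\mapsto\chi\circ P_n^Z\). Indeed,
\[ \int_{\widehat{K(Z,A)}}\chi(x)\,d\nu^Z_{n,\lambda}(\chi)=\int_{\widehat{K(Y_n,A)}}\psi(P_n^Z x)\,d\mu_{n,\lambda}(\psi)=g_n(x). \]
By Lemma~\ref{lem:approximating-homomorphisms} we expect \(\rho(i_{Y_n}P_n^Z x,x)\) to be controlled by \(\varepsilon_n\) times the mass of \(x\). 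Since \(f\) is Lipschitz, this gives \(g_n(x)\to\lambda(f(x))\) pointwise on \(K(Z,A)\).

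\emph{Step 2: identify the limit as a Fourier--Stieltjes transform.} Hypothesis~2 gives \(\sup_n\|\nu^Z_{n,\lambda}\|_{\mathrm{TV}}<\infty\). The group \(K(Z,A)\) is discrete when \(Z/A\) is uniformly discrete, so \(\widehat{K(Z,A)}\) is compact. By Banach--Alaoglu (Riesz representation on \(C(\widehat{K(Z,A)})\)) we may pass to a weak-\(*\) convergent subnet \(\nu^Z_{n_\alpha,\lambda}\to\nu_\lambda\). Evaluation at each \(x\) is the continuous function \(\chi\mapsto\chi(x)\), so \(\widehat{\nu_\lambda}(x)=\lim_\alpha g_{n_\alpha}(x)=\lambda(f(x))\). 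Hence \(\lambda\circ f|_{K(Z,A)}\in B(K(Z,A))\) for every \(\lambda\in E^*\).

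\emph{Step 3: apply the cocycle synthesis theorem.} Set \(b_h:=\Delta_h f|_{K(Z,A)}\). This is the cocycle of the first theorem above (the cocycle isomorphism), and \(\lambda(b_x(0))=\lambda(f(x))-\lambda(f(0))\). A constant is the transform of a point mass at \(\mathbf 1\), so this function lies in \(B(K(Z,A))\subseteq\operatorname{Hom}+B\). The second theorem then yields that the \(\lambda(b_h)\) generate a synthesizable variety. To pass to the variety generated by \(\lambda\circ f\) itself, add the constants. The constants form the exponential \(\mathbf 1\), which is already synthesizable, and differences together with constants span the same translation-invariant closed subspace. Alternatively, we would argue directly: since \(\lambda\circ f\) lies in \(B\), its translates lie in the closed span of the characters in \(\operatorname{supp}\nu_\lambda\), and those characters are recovered by the same \((\chi(h)-1)\)-multiplier argument.

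The main obstacle is Step 1, the pointwise convergence \(g_n\to\lambda\circ f\) on \(K(Z,A)\). It requires \(P_n^Z\) to be a genuine homomorphism that is close to the identity in the Wasserstein metric, uniformly on each element. I would lean on Lemma~\ref{lem:approximating-homomorphisms} for exactly this estimate. A secondary subtlety is that only subnets converge in Step 2. This is harmless, because the pointwise limit is already determined, so every weak-\(*\) cluster point represents the same function.
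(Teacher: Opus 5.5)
Your Steps 1 and 2 are essentially the paper's Lemma~\ref{lem:approximants-give-fourier-stieltjes-coordinates}: the pushforward identity, the Lipschitz estimate via Lemma~\ref{lem:approximating-homomorphisms}, and weak-$*$ compactness. Taking a subnet instead of a subsequence is fine, and it lets you skip the paper's countability and metrizability argument for $\widehat{K(Z,A)}$.

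The gap is in Step 3 as you primarily state it. Theorem~\ref{thm:fourier-stieltjes-cocycle-synthesis} gives synthesizability only of the variety $\mathcal V_b$ generated by the increments $\lambda(b_h)=\tau_h(\lambda\circ f\circ i_Z)-\lambda\circ f\circ i_Z$. In general $\mathcal V_b$ is a proper subvariety of $V_Z(f)$. Your bridge is the claim that increments together with constants generate the same closed translation-invariant subspace as the functions $\lambda\circ f\circ i_Z$. You assert it without proof, and it is false for general Lipschitz $f$. For example, for $a(n)=n$ on $\mathbb Z$ the increments are constants, so $\mathcal V_b+\mathbb C\mathbf 1=\mathbb C\mathbf 1$, while the variety generated by $a$ is $\operatorname{span}\{a,\mathbf 1\}$. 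In your setting the needed inclusion $\lambda\circ f\circ i_Z\in\mathcal V_b+\mathbb C\mathbf 1$ does hold, but only because $\lambda\circ f\circ i_Z\in B(K(Z,A))$. Proving it requires a support argument that already establishes synthesizability of $V_Z(f)$ directly, so the detour through the cocycle theorem adds nothing.

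Your ``alternatively'' sentence points to the right route, which is the paper's Lemma~\ref{lem:fourier-stieltjes-variety-synthesis}. It should be the main argument. Fix a finite $F=\{x_1,\dots,x_N\}$ and set $\Gamma_F(\chi)=(\chi(x_1),\dots,\chi(x_N))$. Stone--Weierstrass on $\widehat{K(Z,A)}$ and Riesz uniqueness give
$\operatorname{span}\{T_F(\tau_y u):y\in K(Z,A)\}=\operatorname{span}\{\Gamma_F(\chi):\chi\in\operatorname{supp}\nu_\lambda\}$ for $u=\lambda\circ f\circ i_Z$. By pointwise closedness, every character in the support then lies in $V_Z(f)$, and hence $T_F(V_Z(f))=T_F(\mathcal E)$.

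This argument applies to the translates of $u$ itself, with no $(\chi(h)-1)$ multiplier. Your multiplier vanishes at the trivial character, so it recovers only $\operatorname{supp}\nu_\lambda\setminus\{\mathbf 1\}$. With it you would still need a separate argument that $\mathbf 1\in V_Z(f)$ whenever $\nu_\lambda(\{\mathbf 1\})\neq 0$.
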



The maps \( P_n^Z \) let functions on \( K(Y_n, A) \) test values on \( K(Z,A) \), and the pushed-forward representing measures induce Fourier--Stieltjes transforms on \(K(Z,A)\) from those on \( K(Y_n, A) \).


Starting from \(\lambda\circ f\circ i_{Y_n}\in B(K(Y_n,A))\), the maps \(P_n^Z\) approximate the values of \(\lambda\circ f\circ i_Z\) by
\(\lambda(f(i_{Y_n}P_n^Z\xi))\). Pushing the representing measures forward along \(\widehat{P_n^Z}\) gives Fourier--Stieltjes transforms on \(K(Z,A)\), and the bound
\(\sup_n\|\nu_{n,\lambda}^Z\|_{\mathrm{TV}}<\infty\) allows these transforms to converge to \(\lambda\circ f\circ i_Z\).


We then propose in Conjecture~\ref{conj:lipschitz-extension-spectral-synthesis} that every Lipschitz function on a virtual persistence diagram group over a separable metric pair has uniformly discrete spectral synthesis.

\begin{conjecture}
Let \((X,d,A)\) be a separable metric pair, let \(E\) be a complex Banach space, and let \(f\colon K(X,A)\to E\) be Lipschitz. Then \(f\) has uniformly discrete spectral synthesis as in Definition~\ref{def:vectorization-spectral-synthesis}.
\end{conjecture}


That is, the conjecture asserts that Lipschitz continuity is not only necessary, but also sufficient for uniformly discrete spectral synthesis.


Starting from the classifications established in Tables~\ref{tab:persistence-vectorizations}, \ref{tab:persistence-codebooks}, \ref{tab:perslay-vectorizations}, and~\ref{tab:persistence-kernels}, we show that every persistence diagram vectorization and persistence kernel feature map considered in this paper that has a Lipschitz extension to \(K(X,A)\) also has uniformly discrete spectral synthesis in the sense of Definition~\ref{def:vectorization-spectral-synthesis}. This pattern is what motivates Conjecture~\ref{conj:lipschitz-extension-spectral-synthesis}.



This paper introduces an extension of spectral synthesis to the varieties generated by bounded linear scalarizations of Lipschitz extensions \(f\colon K(X,A)\to E\) of persistence diagram vectorizations over metric pairs which are separable but not necessarily uniformly discrete.


The significance of this extension is that every function in the varieties generated by the bounded linear scalarizations of a Lipschitz extension can be approximated in the compact-open topology by exponential monomials on every uniformly discrete subpair.


What is perhaps surprising is that spectral synthesis for a persistence diagram vectorization may be understood through the bounded linear scalarizations of its Lipschitz extensions into the Grothendieck completion.



We often assume that a Lipschitz persistence diagram vectorization \( f \colon D(X,A) \to E \) has a Lipschitz extension \( f \colon K(X,A) \to E \). We also assume that \( E \) is a complex Banach space in the context of spectral synthesis.


A natural next question is whether there exist necessary and sufficient conditions for the synthesizability of the variety generated by the functions \( \lambda(b_h) \), for \( h \in K(X,A) \) and \( \lambda \in E^* \), which are verifiable without explicitly constructing the variety.

\section{Background and Notation}



Given a filtration \(K^0\subseteq K^1\subseteq\cdots\subseteq K^m\) of a simplicial complex, persistent homology tracks the birth and death of homology classes in \(H_k(K^i)\) along the filtration \cite{892133, Zomorodian2005ComputingPH, Oudot2015PersistenceT}. The resulting persistence intervals determine the persistence diagram associated with the filtration \cite{Oudot2015PersistenceT}. 

The bottleneck distance is the metric on persistence diagrams that minimizes the maximum matching cost over all matchings of persistence intervals, including matches to the diagonal, and the \(p\)-Wasserstein distances are the corresponding metrics that minimize the \(L^p\)-matching costs over the same class of matchings \cite{CohenSteiner2007StabilityPD, Oudot2015PersistenceT}. For tame functions \(f,g\colon X\to\mathbb{R}\), the bottleneck distance between the associated persistence diagrams satisfies \(d_B(D(f),D(g))\leq\|f-g\|_\infty\) \cite{CohenSteiner2007StabilityPD, Oudot2015PersistenceT}.


Persistent homology grew out of filtrations induced by Morse functions. A Morse function filters a topological space by its sublevel sets, and persistent homology applies the passage from a filtration to the births and deaths of homology classes to filtrations of simplicial complexes \cite{892133, Zomorodian2005ComputingPH, CohenSteiner2007StabilityPD, Oudot2015PersistenceT}. Statistical analysis and machine learning subsequently motivated the development of persistence diagram vectorizations \cite{ali2023survey, hensel2021survey}.

\subsection{Vectorizations in Topological Data Analysis}


Vectorization is standard in the literature on persistent homology and topological data analysis \cite{ali2023survey, hensel2021survey, pun2018persistent}. However, there is no consensus on a formal definition. We therefore introduce a definition for vectorization.


Vectorizations make barcodes and persistence diagrams accessible to statistical and machine-learning methods. Standard examples include persistence landscapes \cite{bubenik2015statistical, bubenik2020persistence}, persistence images \cite{adams2017persistence}, persistence silhouettes \cite{chazal2014stochastic}, tropical coordinates \cite{kalivsnik2019tropical}, persistence paths and signature features \cite{chevyrev2018persistence, giusti2023signatures}, PersLay \cite{Carrire2019PersLayAN}, and persistence kernel methods \cite{reininghaus2015stable, kusano2016persistence, carriere2017sliced, le2018persistence}.


Standard vectorizations differ in their codomains. Persistence images \cite{adams2017persistence} and tropical coordinates \cite{kalivsnik2019tropical} have finite-dimensional normed vector spaces as codomains. Persistence landscapes \cite{bubenik2015statistical, bubenik2020persistence} have Banach spaces as codomains, and persistence kernel methods \cite{reininghaus2015stable, kusano2016persistence, carriere2017sliced, le2018persistence} induce Hilbert spaces as the codomains of their associated feature maps.

Finite-dimensional normed vector spaces and Hilbert spaces are Banach spaces. A Banach space therefore provides a common codomain for these standard constructions while avoiding structure specific to any particular vectorization. This observation motivates the following definition.

\begin{definition}\label{def:tda-vectorization} Let \(\mathcal B\) be a collection of barcodes or persistence diagrams, and let \(E\) be a Banach space. A \emph{vectorization} is a map \(\Phi\colon\mathcal B\to E\).
\end{definition}

\subsection{Virtual Persistence Diagrams}


The objective of this subsection is to present the commutative monoid of persistence diagrams over a metric pair, its Grothendieck completion, and the extended \(1\)-Wasserstein metric \cite[Sections~2.1--2.5 and Definition~4.10]{bubenik2022virtual}.


The Grothendieck completion is the universal abelian group generated by the commutative monoid of persistence diagrams. When the pointed metric quotient associated with the metric pair is uniformly discrete, Theorem~\ref{thm:vpd-local-compactness} implies that this abelian group is locally compact. Pontryagin duality applies to locally compact abelian groups and defines the Fourier transform through the dual group of continuous homomorphisms into the torus.

\begin{definition}\label{def:metric-pair}
A \emph{metric pair} is a triple \((X,d,A)\), where \((X,d)\) is a metric space and \(A\subseteq X\) is a distinguished subset \cite[Section~2.1]{bubenik2022virtual}.
\end{definition}

We assume \(A\neq\varnothing\). For a metric pair \((X,d,A)\), let \(X/A\) be the quotient set obtained by identifying all points of \(A\) with a single class \([A]\), and let \(q\colon X\to X/A\) denote the quotient map.

\begin{definition}\label{def:one-strengthening}
Let \((X,d,A)\) be a metric pair. For \(x\in X\), define \(d(x,A)=\inf_{a\in A}d(x,a)\). The \emph{\(1\)-strengthened distance function} on \(X\) is
\begin{equation}\label{eq:one-strengthening}
d_1(x,y)=\min\{d(x,y),d(x,A)+d(y,A)\}.
\end{equation}
\end{definition}

There is a metric \(\overline d_1\) on \(X/A\) characterized by 
\begin{equation}\label{eq:quotient-metric} \overline d_1(q(x),q(y))=d_1(x,y).
\end{equation}
The quotient distance \(\overline d_1\) is a metric on \(X/A\) \cite[Section~2.1]{bubenik2022virtual}.

\begin{definition}\label{def:persistence-diagram-monoid}
Let \(D(X)\) be the free commutative monoid on \(X\), whose elements are finite formal sums \(\alpha=\sum_{i=1}^{n}m_i x_i\), where \(m_i\in\mathbb N\) and \(x_i\in X\). The \emph{commutative monoid of persistence diagrams over the metric pair} \((X,d,A)\) is \(D(X,A)=D(X)/D(A)\) \cite[Sections~2.2--2.3]{bubenik2022virtual}.
\end{definition}

Therefore, every summand supported in \(A\) represents the zero class, and \(D(X,A)\) is isomorphic to \(D(X\setminus A)\). The quotient metric \(\overline d_1\) determines the \(1\)-Wasserstein metric on the commutative monoid of persistence diagrams.

With the notation of \cite[Section~2.4]{bubenik2022virtual}, let \(\pi_1,\pi_2\colon X\times X\to X\) denote the coordinate projections. A matching between \(\alpha,\beta\in D(X,A)\) is an element \(\sigma \in D(X \times X)\) satisfying
\[ (\pi_1)_*\sigma=\alpha\ (\mathrm{mod}\ D(A)), \qquad (\pi_2)_*\sigma=\beta\ (\mathrm{mod}\ D(A)). \]

\begin{definition}\label{def:wasserstein-diagrams}
For \(\alpha,\beta\in D(X,A)\), the \emph{\(1\)-Wasserstein distance} is
\begin{equation}\label{eq:wasserstein-one}
W_1(\alpha,\beta) = \inf_{\sigma=\sum_{i=1}^{n}(x_i,y_i)} \sum_{i=1}^{n} d_1(x_i,y_i),
\end{equation}
where the infimum ranges over all matchings between \(\alpha\) and \(\beta\) \cite[Section~2.4]{bubenik2022virtual}.
\end{definition}

\begin{theorem}\label{thm:wasserstein-translation-invariant}
The \(1\)-Wasserstein metric on the commutative monoid of persistence diagrams is translation-invariant \cite[Theorem~4.8 and Corollary~4.9]{bubenik2022virtual}. That is,
\begin{equation}\label{eq:wasserstein-translation-invariant}
W_1(\alpha+\eta,\beta+\eta)=W_1(\alpha,\beta)
\end{equation}
for all \(\alpha,\beta,\eta\in D(X,A)\).
\end{theorem}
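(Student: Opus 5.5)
We need to prove translation invariance: \(W_1(\alpha+\eta,\beta+\eta)=W_1(\alpha,\beta)\) for all \(\alpha,\beta,\eta\in D(X,A)\). The plan is to establish the two inequalities separately. The inequality \(W_1(\alpha+\eta,\beta+\eta)\le W_1(\alpha,\beta)\) is immediate; the reverse inequality carries all the content.

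For the easy direction, write \(\eta=\sum_j z_j\) with repetitions allowed. Given any matching \(\sigma=\sum_i (x_i,y_i)\) between \(\alpha\) and \(\beta\), form \(\sigma'=\sigma+\sum_j (z_j,z_j)\). Since pushforward along \(\pi_1,\pi_2\) is additive, \(\sigma'\) is a matching between \(\alpha+\eta\) and \(\beta+\eta\). Because \(d_1(z,z)=0\), its cost equals that of \(\sigma\). Taking the infimum over \(\sigma\) gives \(W_1(\alpha+\eta,\beta+\eta)\le W_1(\alpha,\beta)\).

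For the reverse inequality, it suffices by induction on the number of points of \(\eta\) to treat \(\eta=z\), a single point of \(X\setminus A\); points of \(A\) are zero in \(D(X,A)\). Fix a matching \(\sigma\) between \(\alpha+z\) and \(\beta+z\). We may assume \(\sigma\) has no pairs \((a,a')\) with both entries in \(A\), since deleting them lowers the cost. The copy of \(z\) on the left is matched to some \(w\), and the copy of \(z\) on the right is matched from some \(u\). There are two cases.

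\textbf{Case 1: the same pair.} If these are the same pair \((z,z)\), delete it. The result is a matching between \(\alpha\) and \(\beta\) of no greater cost.

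\textbf{Case 2: different pairs.} Otherwise \(\sigma\) contains \((z,w)+(u,z)\). Replace these two pairs by the single pair \((u,w)\). The triangle inequality for \(d_1\) gives
\[ d_1(u,w)\le d_1(u,z)+d_1(z,w), \]
so the cost does not increase, and the new element is a matching between \(\alpha\) and \(\beta\). The triangle inequality is legitimate because \(d_1\) induces the metric \(\overline d_1\) on \(X/A\) by \eqref{eq:quotient-metric}.

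\textbf{Boundary subcase and main obstacle.} The delicate point is the bookkeeping when \(w\) or \(u\) lies in \(A\), so that the copy of \(z\) is matched to the diagonal. If \(u\in A\), replacing by \((u,w)\) still produces the right marginals modulo \(D(A)\). If both \(u,w\in A\), we simply drop the pair, since the bound \(d_1(u,w)\ge0\) suffices. I expect this careful verification that the marginals agree modulo \(D(A)\) to be the main obstacle, though it is routine. Taking the infimum over \(\sigma\) yields \(W_1(\alpha,\beta)\le W_1(\alpha+z,\beta+z)\), and induction completes the argument.
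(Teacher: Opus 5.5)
Your proof is correct. The paper gives no argument of its own for this statement; it cites Bubenik--Elchesen \cite[Theorem~4.8 and Corollary~4.9]{bubenik2022virtual}. Your proposal is therefore a self-contained replacement for that citation rather than a reconstruction of a proof in the text.

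Both directions are sound.
\begin{itemize}
\item For the easy inequality, padding a matching with the zero-cost pairs $(z_j,z_j)$ gives $W_1(\alpha+\eta,\beta+\eta)\le W_1(\alpha,\beta)$.
\item For the reverse inequality, the element $\sigma-(z,w)-(u,z)+(u,w)$ removes exactly one copy of $z$ from each marginal in $D(X)$. That copy can be cancelled modulo $D(A)$ because $z\notin A$. The cost is controlled by $d_1(u,w)\le d_1(u,z)+d_1(z,w)$.
\end{itemize}

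The boundary subcase you single out as the main obstacle is not actually an obstacle. The marginal computation above is identical whether or not $u$ or $w$ lies in $A$. The only role of $z\notin A$ is to guarantee that summands of the form $(z,w)$ and $(u,z)$ exist at all. Pairs with both entries in $A$ have $d_1$-cost $0$, so they can be kept or dropped freely.

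What your direct route adds over the bare citation is that it shows where $p=1$ is used. The short-circuit step bounds one matching cost by the \emph{sum} of two costs. That matches the additive $W_1$ objective, but the same replacement would not control a $W_p$ objective for $p>1$, since $(s+t)^p>s^p+t^p$ for $s,t>0$.
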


Translation invariance makes the formula in Definition~\ref{def:vpd-metric} well defined.

\begin{definition}\label{def:virtual-persistence-diagrams}
The \emph{Grothendieck completion of the commutative monoid of persistence diagrams} over the metric pair \((X,d,A)\) is \(K(X,A) = K(D(X,A))\). Each element has the form \(\alpha-\beta\), where \(\alpha,\beta\in D(X,A)\) \cite[Section~2.5 and Definition~4.10]{bubenik2022virtual}.
\end{definition}

\begin{definition}\label{def:vpd-metric}
Let \(\rho\) denote the metric on \(K(X,A)\) from \cite[Definition~4.10]{bubenik2022virtual}, defined by
\begin{equation}\label{eq:groth-metric}
\rho(\alpha-\beta,\gamma-\delta) = W_1(\alpha+\delta,\gamma+\beta), \qquad \alpha,\beta,\gamma,\delta\in D(X,A).
\end{equation}
\end{definition}

Since \(D(X,A)\cong D(X\setminus A)\), \(K(X,A)\cong\mathbb Z^{|X\setminus A|}\) whenever \(X/A\) is finite \cite{fanningaktas2025rkhsvpd}. In this finite case, the Grothendieck completion of the commutative monoid of persistence diagrams is a free abelian group of finite rank.

\begin{theorem}\label{thm:vpd-local-compactness}
The following are equivalent \cite[Theorem~3.2 and Corollary~3.3]{fanningaktas2026banachrkhs}:
\begin{enumerate}
\item The pointed metric quotient \((X/A,\overline d_1,[A])\) is uniformly discrete.
\item The metric group \((K(X,A),\rho)\) is discrete.
\item The metric group \((K(X,A),\rho)\) is locally compact.
\end{enumerate}
\end{theorem}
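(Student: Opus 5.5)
The plan is to prove the cycle of implications (1) \(\Rightarrow\) (2) \(\Rightarrow\) (3) \(\Rightarrow\) (1). The middle implication is trivial, since every discrete space is locally compact. The substance is in relating \(\rho\) near \(0\) to \(\overline d_1\) near the basepoint \([A]\).

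\emph{Step (1) \(\Rightarrow\) (2).} Assume there is \(\delta>0\) with \(\overline d_1(u,v)\ge\delta\) for all distinct \(u,v\in X/A\). Then \(d_1(x,A)\ge\delta\) for every \(x\in X\setminus A\), because \(\overline d_1(q(x),[A])=d(x,A)\). Take a nonzero \(\xi=\alpha-\beta\in K(X,A)\). After cancellation, which is allowed by Theorem~\ref{thm:wasserstein-translation-invariant}, we may assume \(\alpha\) and \(\beta\) have no common point of \(X\setminus A\). By Definition~\ref{def:vpd-metric}, \(\rho(\xi,0)=W_1(\alpha,\beta)\). In any matching \(\sigma\) between \(\alpha\) and \(\beta\), some pair \((x_i,y_i)\) has \(q(x_i)\ne q(y_i)\), since \(\alpha\ne\beta\) in \(D(X,A)\). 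That pair contributes \(d_1(x_i,y_i)=\overline d_1(q(x_i),q(y_i))\ge\delta\). Hence \(\rho(\xi,0)\ge\delta\). By translation invariance of \(\rho\), every open ball of radius \(\delta\) is a singleton, so \(K(X,A)\) is discrete (indeed uniformly discrete).

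\emph{Step (3) \(\Rightarrow\) (1).} This is the step I expect to be the main obstacle. I would argue by contrapositive. Suppose \(X/A\) is not uniformly discrete, so there are distinct classes \(u_n\ne v_n\) with \(\overline d_1(u_n,v_n)\to0\). Let \(r>0\) be arbitrary; I will show the closed ball \(\bar B_\rho(0,r)\) is not compact. Since \(\rho\) is a translation-invariant metric, this is enough to rule out local compactness. The single elements \(\xi_n=x_n-y_n\), with \(x_n,y_n\) representatives of \(u_n,v_n\) (or \(\xi_n=x_n\) if one class is \([A]\)), satisfy \(\rho(\xi_n,0)\le d_1(x_n,y_n)\to0\). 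This suggests building a sequence inside the ball with no Cauchy subsequence.

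To do that, I would pass to a subsequence and take sums \(\eta_n\) of roughly \(\lfloor r/(2\epsilon_n)\rfloor\) copies of \(\xi_n\), or of distinct small differences, chosen so that \(\rho(\eta_n,0)\) stays in \([r/4,r]\). I would then show \(\rho(\eta_n,\eta_m)\) is bounded below for \(n\ne m\). The lower bound needs a matching estimate: \(W_1\) between multisets supported on distinct points is at least a fixed fraction of the mass that must be moved. I would get this from the triangle inequality of \(d_1\), comparing with the distance to \([A]\). The two sub-cases are (a) the points \(u_n\) accumulate at \([A]\), and (b) the points accumulate away from \([A]\). These need slightly different choices: multiples of single points in case (a), and multiples of differences in case (b). In case (b), exact control of \(\rho\) on differences is subtle because matchings may route through the diagonal.

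An alternative route for (3) \(\Rightarrow\) (1) is cleaner. A locally compact metric group that is free abelian and generated by a set whose elements can be made arbitrarily close to \(0\) while remaining \(\rho\)-separated in the above sense cannot be locally compact. This follows from Baire category plus the fact that \(K(X,A)\) is countable-rank-free on compact pieces. I would first try the explicit sequence construction, and fall back on citing \cite[Theorem~3.2 and Corollary~3.3]{fanningaktas2026banachrkhs} for the delicate separation estimate.
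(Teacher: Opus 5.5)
Your (1)\(\Rightarrow\)(2)\(\Rightarrow\)(3) is correct. For \(\xi=\alpha-\beta\neq0\), every matching between \(\alpha\) and \(\beta\) contains a pair with \(q(x_i)\neq q(y_i)\), which costs at least \(\delta\). Translation invariance then upgrades discreteness at \(0\) to discreteness everywhere. The gap is (3)\(\Rightarrow\)(1), the only nontrivial implication. The paper gives no proof of it and imports it from \cite{fanningaktas2026banachrkhs}, so falling back on that citation leaves you with only the easy directions.

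Your elements \(\eta_n=k_n\xi_n\) do lie in \(\bar B_\rho(0,r)\), but you never supply the lower bound that rules out a convergent subsequence. The principle you invoke, that \(W_1\) between multisets on distinct points is a fixed fraction of the mass moved, is exactly the kind of uniform bound that fails once \(X/A\) is not uniformly discrete. Pairwise separation is also false for the sequence as constructed. Take \(X=[0,1]\cup\{5\}\), \(A=\{5\}\), \(x_n=0\), \(y_n=1/n\); one checks \(\rho(\eta_n,\eta_{n+1})\to0\). So the subsequence would have to be chosen from a quantitative estimate you have not identified. The ``alternative route'' is not an argument either. Baire category needs an explicit countable cover of \(K(X,A)\) by closed sets, and \(K(X,A)\) can be uncountable. ``Countable-rank-free on compact pieces'' does not provide such a cover.

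The missing tool is the easy half of Kantorovich--Rubinstein duality. Let \(f\) be \(1\)-Lipschitz on \((X/A,\overline d_1)\) with \(f([A])=0\), and set \(\Phi_f(\sum_x n_x x):=\sum_x n_x f(q(x))\). Then \(|\Phi_f(\xi)-\Phi_f(\eta)|\le\rho(\xi,\eta)\). Indeed, for any matching \(\sum_i(x_i,y_i)\) of \(\alpha+\delta\) with \(\gamma+\beta\), one has \(\Phi_f(\alpha+\delta)-\Phi_f(\gamma+\beta)=\sum_i\bigl(f(q(x_i))-f(q(y_i))\bigr)\le\sum_i d_1(x_i,y_i)\).

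With this inequality your sequence argument closes without pairwise separation or a case split:
\begin{enumerate}
\item Suppose \(\eta_{n_j}\to\eta\), and put \(P:=q(\operatorname{supp}\eta)\cup\{[A]\}\), a finite set.
\item Write \(u=q(x_{n_j})\), \(v=q(y_{n_j})\), \(\epsilon=\epsilon_{n_j}\). Set \(f=0\) on \(P\), \(f(u)=\min\{\overline d_1(u,P),\epsilon/2\}\), \(f(v)=-\min\{\overline d_1(v,P),\epsilon/2\}\), and extend by McShane. This is \(1\)-Lipschitz on \(P\cup\{u,v\}\).
\item Once \(2\epsilon\) is below the minimal separation of \(P\), the points \(u\) and \(v\) cannot both lie within \(\epsilon/2\) of \(P\). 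Hence \(f(u)-f(v)\ge\epsilon/2\).
\item Therefore \(\rho(\eta_{n_j},\eta)\ge k_{n_j}\epsilon_{n_j}/2\ge r/4\) for all large \(j\), which is a contradiction.
\end{enumerate}

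Alternatively, the same inequality, tested against small bumps \((s-\overline d_1(\cdot,p))_+\) at support points, shows that \(S_k:=\{\sum_x n_x x:\sum_x|n_x|\le k\}\) is closed. Baire then gives a ball \(B_\rho(\xi_0,s)\subseteq S_k\). Hence every element of \(B_\rho(0,s)\) has total multiplicity at most \(k+\sum_x|(\xi_0)_x|\). But \(N\xi_n\in B_\rho(0,s)\) whenever \(N\epsilon_n<s\), and its total multiplicity is at least \(N\), which is unbounded.
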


\subsection{Lipschitz Extensions}


The objective is to identify Lipschitz extensions with bounded linear extensions on Lipschitz-free spaces and to characterize all such extensions by an affine subspace of bounded linear operators.


Persistence diagram vectorizations may have many Lipschitz extensions, so understanding the space of all such extensions becomes part of understanding the persistence diagram vectorization itself. Lipschitz extensions are nonlinear objects, and Lipschitz-free spaces identify them with bounded linear operators on Lipschitz-free spaces. This identification replaces the original extension problem by an affine extension problem for bounded linear operators.

Let \(X\) and \(E\) be real Banach spaces, and let \(A\subseteq X\) be a closed linear subspace. We write \(\operatorname{Lip}_0(A,E)\) for the Banach space of Lipschitz maps \(f\colon A\to E\) satisfying \(f(0)=0\), equipped with the Lipschitz norm. For a real Banach space \(Y\), let \(\operatorname{Lip}_0(Y)\) denote the Banach space of real-valued Lipschitz functions \(Y\to\mathbb R\) satisfying \(f(0)=0\), equipped with the Lipschitz norm.

\begin{definition}\label{def:lipschitz-free-space}
Let \(Y\) be a real Banach space. The Lipschitz-free space over \(Y\), denoted by \(\mathcal F(Y)\), is the norm closed linear subspace of \(\operatorname{Lip}_0(Y)^*\) spanned by the evaluation functionals \(\delta_Y(y)\), where \(\delta_Y(y)\) denotes evaluation at \(y\), so \(\delta_Y(y)(u)=u(y)\) for \(u\in\operatorname{Lip}_0(Y)\) \cite[Definition~1.1]{godefroy2003lipschitz}.
\end{definition}

\begin{lemma}\label{lem:lipschitz-free-linearization}
Let \(Y\) and \(E\) be real Banach spaces. For every \(g \in \operatorname{Lip}_0 (Y,E)\), there exists a unique bounded linear map \(T_g \colon \mathcal F(Y)\to E\) satisfying \( T_g( \delta_Y (y)) = g(y) \) for every \( y \in Y\), and \( \| T_g \| = \| g \|_{\operatorname{Lip}} \) \cite[Lemma~2.5]{godefroy2003lipschitz}.
\end{lemma}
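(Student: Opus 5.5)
The statement to prove is Lemma~\ref{lem:lipschitz-free-linearization}: for every \(g\in\operatorname{Lip}_0(Y,E)\) there is a unique bounded linear \(T_g\colon\mathcal F(Y)\to E\) with \(T_g\circ\delta_Y=g\) and \(\|T_g\|=\|g\|_{\operatorname{Lip}}\). The plan is to define \(T_g\) on the linear span of the evaluation functionals, prove a norm bound there using Hahn--Banach and real-valued test functions, and then extend by density.

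\textbf{Step 1 (definition on the span).} Let \(\mathcal F_0=\operatorname{span}\{\delta_Y(y):y\in Y\}\). For \(\mu=\sum_{i=1}^n a_i\delta_Y(y_i)\) set
\[
T_g(\mu)=\sum_{i=1}^n a_i\,g(y_i).
\]
Well-definedness and boundedness will follow together from the estimate \(\|T_g\mu\|\le\|g\|_{\operatorname{Lip}}\|\mu\|\) for every representation of \(\mu\). Applied to a difference of two representations of the same \(\mu\), this estimate gives \(T_g(0)=0\), so the value does not depend on the representation.

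\textbf{Step 2 (the norm bound, the main step).} Fix a representation \(\mu=\sum_{i}a_i\delta_Y(y_i)\) and suppose \(v=\sum_i a_i g(y_i)\neq0\). By Hahn--Banach choose \(\phi\in E^*\) with \(\|\phi\|=1\) and \(\phi(v)=\|v\|\). Put \(u=\phi\circ g\). Then \(u\colon Y\to\mathbb R\) is Lipschitz, \(u(0)=0\), and \(\operatorname{Lip}(u)\le\|g\|_{\operatorname{Lip}}\), so \(u\in\operatorname{Lip}_0(Y)\) with \(\|u\|\le\|g\|_{\operatorname{Lip}}\). Evaluating \(\mu\) on \(u\) gives
\[
\|v\|=\phi(v)=\sum_i a_i u(y_i)=\mu(u)\le\|\mu\|\,\|u\|\le\|g\|_{\operatorname{Lip}}\|\mu\|.
\]
This settles Step 1. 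Since \(\mathcal F(Y)\) is the norm closure of \(\mathcal F_0\) and \(E\) is complete, \(T_g\) extends uniquely by continuity to a bounded linear map on \(\mathcal F(Y)\) with \(\|T_g\|\le\|g\|_{\operatorname{Lip}}\).

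\textbf{Step 3 (reverse inequality).} Note that \(\delta_Y(0)=0\) because every \(u\in\operatorname{Lip}_0(Y)\) vanishes at \(0\). For \(x\neq y\) in \(Y\), any \(u\) with \(\operatorname{Lip}(u)\le1\) satisfies \(|u(x)-u(y)|\le\|x-y\|\), hence
\[
\|\delta_Y(x)-\delta_Y(y)\|\le\|x-y\|.
\]
Therefore
\[
\|g(x)-g(y)\|=\|T_g(\delta_Y(x)-\delta_Y(y))\|\le\|T_g\|\,\|x-y\|,
\]
which gives \(\|g\|_{\operatorname{Lip}}\le\|T_g\|\) and so equality of norms.

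\textbf{Step 4 (uniqueness).} Any bounded linear map agreeing with \(g\) on the evaluation functionals agrees with \(T_g\) on the dense subspace \(\mathcal F_0\). By continuity it agrees with \(T_g\) on all of \(\mathcal F(Y)\).
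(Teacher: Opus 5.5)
Your proof is correct. You define \(T_g\) on the span of the evaluation functionals, obtain the bound \(\|\sum_i a_i g(y_i)\|\le\|g\|_{\operatorname{Lip}}\|\mu\|\) by testing \(\mu\) against \(\phi\circ g\) for a norming \(\phi\in E^*\), extend by density, and recover \(\|g\|_{\operatorname{Lip}}\le\|T_g\|\) from \(\|\delta_Y(x)-\delta_Y(y)\|\le\|x-y\|\); this is the standard argument. The paper gives no proof of its own here and simply cites the result of Godefroy and Kalton, whose proof is essentially this argument.
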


By \cite[Lemma~2.3]{godefroy2003lipschitz}, the inclusion map \(i\colon A\hookrightarrow X\) induces an isometric embedding \(J\colon\mathcal F(A)\to\mathcal F(X)\) satisfying \(J(\delta_A(a))=\delta_X(a)\) for every \(a\in A\).

Let \(f\in\operatorname{Lip}_0(A,E)\), and let \(T_f\colon\mathcal F(A)\to E\) denote the bounded linear map from Lemma~\ref{lem:lipschitz-free-linearization}. If \(\widetilde f\in\operatorname{Lip}_0(X,E)\) satisfies \(\widetilde f|_A=f\), then for every \(a\in A\),
\[ T_{\widetilde f}(J(\delta_A(a))) = T_{\widetilde f}(\delta_X(a)) = \widetilde f(a) = f(a) = T_f(\delta_A(a)). \]
Therefore \(T_{\widetilde f}\circ J\) and \(T_f\) agree on \(\delta_A(A)\). Since both maps are bounded and linear and \(\mathcal F(A)\) is the norm closed linear span of \(\delta_A(A)\), we have \(T_{\widetilde f}\circ J=T_f\).

Conversely, suppose \(T\in\mathcal L(\mathcal F(X),E)\) satisfies \( T \circ J = T_f \). Define \(\widetilde f_T=T\circ\delta_X\). Then \(\widetilde f_T\in\operatorname{Lip}_0(X,E)\), and for every \(a\in A\),
\[ \widetilde f_T(a) = T(\delta_X(a)) = T(J(\delta_A(a))) = T_f(\delta_A(a)) = f(a). \]
The maps \(T\) and \(T_{\widetilde f_T}\) agree on \(\delta_X(X)\). Since \(\mathcal F(X)\) is the norm closed linear span of \(\delta_X(X)\), we have \(T=T_{\widetilde f_T}\). Hence \(\|T\|=\|T_{\widetilde f_T}\| = \|\widetilde f_T\|_{\operatorname{Lip}}\).

If \(T_0\in\mathcal L(\mathcal F(X),E)\) satisfies \(T_0\circ J=T_f\), then
\[ \{T\in\mathcal L(\mathcal F(X),E):T\circ J=T_f\} = T_0+ \{R\in\mathcal L(\mathcal F(X),E):R\circ J=0\}. \]
The solution set \( \{T\in\mathcal L(\mathcal F(X),E) \colon T \circ J = T_f\} \) is affine over \(\{R\in\mathcal L(\mathcal F(X), E):R \circ J = 0\}\). The corresponding Lipschitz extensions of \(f\) in \(\operatorname{Lip}_0(X,E)\) are exactly the maps \((T_0+R)\circ\delta_X\), where \(R\in\mathcal L(\mathcal F(X),E)\) satisfies \(R\circ J=0\).

\subsection{Pontryagin Duality}


The purpose of Pontryagin duality is to extend the Fourier transform to arbitrary locally compact abelian groups by identifying the characters that extend the exponential functions in classical Fourier analysis. Integrating these characters then gives the Fourier and Fourier--Stieltjes transforms.


Characters are simultaneous eigenfunctions for translations: if \(\tau_hu(x)=u(x+h)\), then \(\tau_h\chi=\chi(h)\chi\). Since spectral synthesis concerns translation-invariant varieties, characters determine the scalar functions that detect the translation structure. Fourier--Stieltjes transforms express scalar functions as finite regular Borel measures on these characters.

\begin{definition}\label{def:pontryagin-dual}
Let \(G\) be a locally compact abelian group. The \emph{Pontryagin dual} \(\widehat G\) is the group of continuous homomorphisms \(\chi\colon G\to\mathbb T\), equipped with the compact-open topology. The elements of \(\widehat G\) are called \emph{characters} \cite[Definition~4.1]{folland2016course}.
\end{definition}

For \(h\in G\), let \(\tau_hu(x)=u(x+h)\). If \(\chi\in\widehat G\), then \( \tau_h\chi = \chi(h)\chi. \) Thus each character is a simultaneous eigenfunction for the translation operators.

\begin{definition}\label{def:fourier-stieltjes-transform}
Let \(G\) be a locally compact abelian group, and let \(\mu\) be a finite complex regular Borel measure on \(\widehat G\). The \emph{Fourier--Stieltjes transform} of \(\mu\) is the function \(\varphi_\mu\colon G\to\mathbb C\) defined by
\[ \varphi_\mu(x) = \int_{\widehat G}\chi(x)\,d\mu(\chi). \]
We write \(B(G)\) for the space of Fourier--Stieltjes transforms on \(G\) \cite[Chapter~4]{folland2016course}.
\end{definition}

Atomic measures correspond to finite linear combinations of characters, and a scalar function belongs to \(B(G)\) precisely when it has a finite regular Borel measure representation on \(\widehat G\).

\subsection{Spectral Synthesis}


The purpose of spectral synthesis is to determine when a variety is generated densely by the exponential monomials that it contains.


Translations act on each variety by operators. Finite-dimensional translation-invariant subspaces consist of exponential polynomials, so spectral synthesis tests whether these finite-dimensional pieces densely generate the variety.

Fix an abelian group \(G\). Let \(C(G)\) denote the space of all functions \(G\to\mathbb C\), equipped with the topology of pointwise convergence \cite[p.~29]{szekelyhidi2015spectral}. Convergence in this topology is equivalent to convergence on every finite subset of \(G\).

For \(y\in G\) and \(u\in C(G)\), define the translate \(\tau_yu\in C(G)\) by \((\tau_yu)(x)=u(x+y)\) \cite[p.~30]{szekelyhidi2015spectral}.

\begin{definition}\label{def:variety}
A variety on \(G\) is a closed complex linear subspace of \(C(G)\) that is invariant under every translation \cite[p.~30]{szekelyhidi2015spectral} \cite[p.~207]{szekelyhidi2014harmonic}.
\end{definition}

For a collection \(\mathcal F\subseteq C(G)\), the variety generated by \(\mathcal F\) is the intersection of all varieties on \(G\) containing \(\mathcal F\) \cite[p.~30]{szekelyhidi2015spectral}.

An exponential on \(G\) is a homomorphism \(m\colon G\to\mathbb C^\times\), and an additive function on \(G\) is a homomorphism \(a\colon G\to\mathbb C\), where \(\mathbb C\) carries its additive group structure \cite[p.~31]{szekelyhidi2015spectral}. A polynomial on \(G\) is a function of the form \(p(x)=P(a_1(x),\ldots,a_N(x))\), where \(a_1,\ldots,a_N\colon G\to\mathbb C\) are additive functions and \(P\in\mathbb C[z_1,\ldots,z_N]\) is a complex polynomial \cite[p.~31]{szekelyhidi2015spectral}.

\begin{definition}\label{def:exponential-monomial}
An exponential monomial on \(G\) is a function of the form \(x\mapsto p(x)m(x)\), where \(p\) is a polynomial on \(G\) and \(m\) is an exponential on \(G\) \cite[pp.~31--32]{szekelyhidi2015spectral}.
\end{definition}

An exponential polynomial on \(G\) is a finite complex linear combination of exponential monomials on \(G\) \cite[pp.~31--32]{szekelyhidi2015spectral}.

Synthesizability is a property of one variety. Spectral synthesis strengthens this property by requiring every subvariety to be synthesizable.

\begin{definition}\label{def:synthesizable-variety}
A variety \(V\) on \(G\) is synthesizable if the finite-dimensional subvarieties of \(V\) span a dense subvariety of \(V\) \cite[p.~207]{szekelyhidi2014harmonic}.
\end{definition}

\begin{theorem}\label{thm:synthesis-equivalent-characterizations}
Let \(V\) be a variety on \(G\). The following are equivalent:
\begin{enumerate}
\item The variety \(V\) is synthesizable.
\item The exponential monomials in \(V\) span a dense subspace of \(V\).
\item The exponential polynomials in \(V\) span a dense subspace of \(V\).
\end{enumerate}
\cite[Theorem~15.6]{szekelyhidi2014harmonic}.
\end{theorem}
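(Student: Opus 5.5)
The plan is to prove the cycle of implications \((1)\Rightarrow(3)\Rightarrow(2)\Rightarrow(1)\). Two structural facts about exponential monomials and finite-dimensional varieties on an abelian group \(G\) will drive it:
\begin{itemize}
\item every exponential monomial generates a finite-dimensional variety;
\item every finite-dimensional variety is spanned by the exponential monomials it contains.
\end{itemize}
Throughout, we use that finite-dimensional subspaces of \(C(G)\) are automatically closed, so a finite-dimensional translation-invariant subspace is already a variety.

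First, for an exponential monomial \(u=p\,m\) with \(p=P(a_1,\dots,a_N)\) of total degree \(n\), we have \(\tau_y u(x)=P(a_1(x)+a_1(y),\dots,a_N(x)+a_N(y))\,m(x)m(y)\). Expanding shows that every translate lies in the span of the functions \(m\cdot a^\alpha\) with \(|\alpha|\le n\). So the translates of \(u\) span a finite-dimensional translation-invariant space, and the variety generated by \(u\) is finite-dimensional and lies in every variety containing \(u\). This gives \((2)\Rightarrow(1)\): if the exponential monomials in \(V\) span a dense subspace, then so do the finite-dimensional subvarieties they generate.

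Second, for \((1)\Rightarrow(3)\), I will show that any finite-dimensional variety \(W\) consists of exponential polynomials. The restrictions \(\tau_y|_W\) form a commuting family of invertible operators, so \(W\) splits into joint generalized eigenspaces \(W_m\). On \(W_m\) we can write \(\tau_y=m(y)(I+N_y)\) with \(N_y\) nilpotent, and \(m\) is multiplicative, hence an exponential. Put \(L(y)=\log(I+N_y)\); this is a finite series because \(N_y\) is nilpotent. Since the unipotent operators commute, \(L(y+z)=L(y)+L(z)\), so every matrix entry of \(L\) is an additive function. Therefore \(\tau_x|_{W_m}=m(x)\exp(L(x))\), where the exponential is again a finite series. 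For \(u\in W_m\), the formula \(u(x)=(\tau_xu)(0)\) then exhibits \(u\) as \(m\) times a polynomial in additive functions, i.e.\ an exponential polynomial. Hence the span of the finite-dimensional subvarieties of \(V\) consists of exponential polynomials lying in \(V\), and density in \((1)\) gives \((3)\).

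Third, for \((3)\Rightarrow(2)\), write an exponential polynomial \(u\in V\) as \(\sum_j p_jm_j\) with distinct exponentials \(m_j\). It suffices to show that each \(p_jm_j\) lies in the variety generated by \(u\), which is contained in \(V\). For each \(k\ne j\), pick \(y\) with \(m_k(y)\neq m_j(y)\). Applying a product of operators \((\tau_y-m_k(y))^{d}\) with \(d\) large kills the \(k\)-th component, by the degree-lowering computation from the first step. On the \(j\)-th component this product acts invertibly: it equals a nonzero scalar times a unipotent operator on the finite-dimensional variety generated by \(p_jm_j\). Inverting there, and noting that the inverse is a polynomial in \(\tau_y\), recovers \(p_jm_j\) from \(u\) by translations and linear combinations. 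Finally, an exponential monomial times a polynomial splits into monomials \(m\cdot a^\alpha\), and by the same triangular argument these lie in the variety generated by \(p_jm_j\). The exponential monomials in \(V\) thus span everything the exponential polynomials span, which gives \((2)\).

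I expect the main obstacle to be the second step, the structure theorem for finite-dimensional varieties. The delicate points are making the logarithm of the unipotent part additive in \(y\), and verifying that the simultaneous generalized eigenspace decomposition is compatible with all translations at once. If that becomes cumbersome, the fallback is to cite \cite[Theorem~15.6]{szekelyhidi2014harmonic} and the accompanying description of finite-dimensional varieties in \cite{szekelyhidi2015spectral}.
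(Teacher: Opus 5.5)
The paper gives no proof of this statement; it quotes it from \cite[Theorem~15.6]{szekelyhidi2014harmonic}. You replace the citation with a self-contained linear-algebra argument, and that argument is correct once one false sentence is removed (see below).

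The two points you flag as delicate both go through. Since $\tau_y\tau_z=\tau_{y+z}$, the span of $\{\tau_y|_W:y\in G\}$ is already a finite-dimensional commutative algebra. It is therefore spanned by finitely many $\tau_{y_1}|_W,\dots,\tau_{y_r}|_W$. The joint generalized eigenspaces of these finitely many operators are then invariant under every $\tau_y$, and on each of them $\tau_y$ has a single eigenvalue $m(y)$. The logarithm is additive on commuting unipotent operators, because $\exp$ and $\log$ are mutually inverse between nilpotent and unipotent operators and $\exp(S+T)=\exp(S)\exp(T)$ for commuting $S,T$.

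Compared with the citation, your route makes explicit that the exponential-monomial pieces of a finite-dimensional variety are the joint generalized eigencomponents of the translations. The citation buys brevity.

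The false sentence is the last one of your third step. There you claim that the monomials $m\cdot a^\alpha$ of $p_jm_j$ lie in the variety generated by $p_jm_j$. Take $G=\mathbb Z^2$, $a_1(x)=x_1$, $a_2(x)=x_2$, $m\equiv 1$ and $p=a_1+a_2$. The translates of $p$ span the closed, translation-invariant space $\operatorname{span}\{a_1+a_2,\mathbf 1\}$, which does not contain $a_1$. The triangular argument only separates monomials when a single additive function is involved.

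You do not need that sentence. Under Definition~\ref{def:exponential-monomial}, $p_jm_j$ with $p_j$ an arbitrary polynomial is already an exponential monomial. So once you have $p_jm_j\in\operatorname{Var}(u)\subseteq V$, the implication (3)$\Rightarrow$(2) is finished.

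You can also shorten the whole cycle:
\begin{enumerate}
\item Your second step actually shows that every element of $W_m$ is an exponential monomial, not merely an exponential polynomial. Since $W_m\subseteq W\subseteq V$, this gives (1)$\Rightarrow$(2) directly.
\item The implication (2)$\Rightarrow$(3) is trivial.
\item The implication (3)$\Rightarrow$(1) follows from your first step. The translates of $\sum_j p_jm_j$ lie in a finite sum of finite-dimensional spaces $\operatorname{span}\{m_ja^\alpha:|\alpha|\le n_j\}$, so every exponential polynomial in $V$ generates a finite-dimensional subvariety of $V$.
\end{enumerate}
With this ordering, the component-separation argument is not needed at all.
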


A variety \(V\) has spectral synthesis if every subvariety of \(V\) is synthesizable \cite[p.~207]{szekelyhidi2014harmonic} \cite[p.~32]{szekelyhidi2015spectral}. The group \(G\) has spectral synthesis if every variety on \(G\) has spectral synthesis.

Every finite-dimensional variety consists of exponential polynomials and has spectral synthesis \cite[p.~207]{szekelyhidi2014harmonic} \cite[p.~32]{szekelyhidi2015spectral}.

\begin{theorem}\label{thm:discrete-group-spectral-synthesis}
Spectral synthesis holds on a discrete abelian group if and only if its torsion-free rank is finite \cite[Theorem~15.16]{szekelyhidi2014harmonic} \cite[Corollary~3]{szekelyhidi2024}.
\end{theorem}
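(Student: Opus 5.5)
The plan is to translate the statement into commutative algebra through the standard duality between varieties and ideals. For a discrete abelian group \(G\), the space \(C(G)\) with the pointwise topology is the algebraic dual of the group algebra \(\mathbb C[G]\), under the pairing \(\langle u,\sum c_g\delta_g\rangle=\sum c_g u(g)\). Closed subspaces of \(C(G)\) then correspond bijectively to subspaces of \(\mathbb C[G]\) via annihilators. Translation invariance corresponds to invariance under multiplication by the group elements \(\delta_g\). Hence varieties correspond to ideals \(I\subseteq\mathbb C[G]\), via \(V=I^\perp\).

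Under this dictionary, the exponential monomials in \(V\) correspond to the finite-codimensional (primary) ideals that contain \(I\) and whose radical is a maximal ideal of the form \(\ker(\delta_g\mapsto m(g))\). Synthesizability of \(V\) is therefore equivalent to
\[ I=\bigcap\{J\supseteq I : J \text{ of finite codimension}\}. \]
In other words, every subvariety of \(G\) is synthesizable exactly when every ideal of \(\mathbb C[G]\) is an intersection of finite-codimensional ideals. Theorem~\ref{thm:synthesis-equivalent-characterizations} lets me move freely between the formulations with monomials and with finite-dimensional subvarieties.

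\emph{Sufficiency.} First take \(G\) finitely generated, so \(G\cong\mathbb Z^r\oplus T\) with \(T\) finite. Then \(\mathbb C[G]\) is a finitely generated commutative \(\mathbb C\)-algebra, hence Noetherian by the Hilbert basis theorem. Every ideal has a primary decomposition by Lasker--Noether. Krull's intersection theorem, applied in the localizations at maximal ideals, writes each primary ideal as an intersection of ideals \(Q+\mathfrak m^k\), and each of these has finite codimension by the Nullstellensatz. This is Lefranc's theorem in algebraic form.

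For a general \(G\) of finite torsion-free rank \(r\), I would fix a free subgroup \(F\cong\mathbb Z^r\) with \(G/F\) torsion. The goal is to show that a function in \(V\) annihilated by every finite-codimensional ideal containing \(I\) must lie in \(V\). The key step is to handle the torsion directions: on each \(F\)-coset, a function in \(V\) is controlled by finitely many characters of finite order that extend exponentials of \(F\). The approach would be to write \(G\) as a directed union of finitely generated subgroups \(G_\alpha\supseteq F\), each of rank \(r\), apply the finitely generated case on each \(G_\alpha\), and then glue the approximating exponential polynomials. The gluing is where finite rank must be used essentially. Because each \(G_\alpha/F\) is finite, restriction of varieties from \(G_\beta\) to \(G_\alpha\) behaves well: the polynomial parts of exponential monomials are determined by their restriction to \(F\), and the exponentials extend from \(G_\alpha\) to \(G_\beta\) in only finitely many ways. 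This is the approach of Laczkovich and Sz\'ekelyhidi. I expect this gluing step to be the main obstacle, since \(\mathbb C[G]\) is no longer Noetherian.

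\emph{Necessity.} If the torsion-free rank is infinite, then \(G\) contains a free subgroup \(F\cong\mathbb Z^{(\omega)}\). I would first exhibit a non-synthesizable variety on \(F\), following the known construction. The idea is to work in \(\mathbb C[F]\), a Laurent polynomial ring in countably many variables \(x_i\), and choose an ideal, built for example from the relations \(x_i-1\) twisted by increasing-degree conditions, that is not an intersection of finite-codimensional ideals. Concretely, this amounts to finding a nonzero translation-invariant closed space of functions containing no exponential monomial in its closure.

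It then remains to transfer this example from \(F\) to \(G\), using that \(C(G)\) restricts onto \(C(F)\) on each coset. I would take the ideal generated in \(\mathbb C[G]\) by the ideal in \(\mathbb C[F]\), and check, using the freeness of \(\mathbb C[G]\) as a \(\mathbb C[F]\)-module, that non-synthesizability is preserved.
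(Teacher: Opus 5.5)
The paper gives no proof of this statement. It quotes it from Sz\'ekelyhidi (the Laczkovich--Sz\'ekelyhidi theorem), so your sketch has to stand on its own, and it has a genuine gap. Three parts you carry out are correct: the dictionary $V=I^\perp$, the criterion that $I^\perp$ is synthesizable iff $I$ is the intersection of the finite-codimensional ideals containing it, and Lefranc's finitely generated case via Krull and the Nullstellensatz. But the step that carries all the difficulty of sufficiency, passing to an infinitely generated $G$ of finite torsion-free rank, is only announced. On each $G_\alpha$, the finitely generated case separates $x\notin I$ from $I_\alpha:=I\cap\mathbb C[G_\alpha]$ by some $I_\alpha+M_\alpha^{k_\alpha}$. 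To glue these, you need all the $M_\alpha$ to come from one exponential on $G$ and the $k_\alpha$ to stay bounded. Since $I_\beta$ may be strictly larger than $I_\alpha\mathbb C[G_\beta]$, neither of your observations (additive functions are determined on $F$; exponentials extend in finitely many ways) provides this. It is exactly what fails for $\mathbb Z^{(\omega)}=\bigcup_n\mathbb Z^n$, even though every $\mathbb Z^n$ has synthesis.

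The missing ingredient is local, and it is where the rank enters. Because $\mathbb C[G]$ is integral over the finitely generated algebra $\mathbb C[F]$, every maximal ideal has the form $M_m=\ker(\delta_g\mapsto m(g))$ for an exponential $m$. Twisting by $m$ gives $M_m/M_m^2\cong G\otimes_{\mathbb Z}\mathbb C$, which has dimension $r$, so $\mathbb C[G]/(I+M_m^k)$ is finite-dimensional. Moreover, $\mathbb C[G]_{M_m}$ is Noetherian. It is the directed union, along flat local maps, of the rings $\mathbb C[G_\alpha]_{M_m\cap\mathbb C[G_\alpha]}$. These are \'etale over $\mathbb C[F]_{M_m\cap\mathbb C[F]}$, because the trace form of $\mathbb C[G_\alpha]$ over $\mathbb C[F]$ is nondegenerate in characteristic $0$, so their maximal ideals are all generated by the same $r$ elements (cf.\ EGA $0_{\mathrm{III}}$, 10.3.1.3). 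Now, for $x\notin I$, choose $M_m\supseteq\operatorname{Ann}(x+I)$. Krull's theorem in $\mathbb C[G]_{M_m}/I\mathbb C[G]_{M_m}$ gives a single $k$ with $x\notin I\mathbb C[G]_{M_m}+M_m^k\mathbb C[G]_{M_m}$. The preimage of this ideal in $\mathbb C[G]$ is the required finite-codimensional ideal, and no gluing is needed.

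For necessity, your reduction to $F\cong\mathbb Z^{(\omega)}$ and the transfer back to $G$ are fine. Freeness gives $I\mathbb C[G]\cap\mathbb C[F]=I$, and $J\cap\mathbb C[F]$ is a finite-codimensional ideal containing $I$ whenever $J\supseteq I\mathbb C[G]$ is finite-codimensional. But the example on $F$ is never constructed, and your ``concrete'' description of it is wrong as written. A nonzero variety with no exponential monomials would be a failure of spectral analysis, and spectral analysis holds on $\mathbb Z^{(\omega)}$. Indeed, every residue field of the countable-dimensional algebra $\mathbb C[F]$ at a maximal ideal is $\mathbb C$, so every proper ideal $I$ lies in some $M_m$, and then $m\in I^\perp$.

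What you need is a variety that does contain exponential monomials, but whose exponential monomials do not span a dense subspace. For instance, put $t_i:=\delta_{e_i}-1$ and $I:=(t_1^2,\ t_i-t_{i+1}^2: i\ge1)$. Then $\mathbb C[F]/I\cong\varinjlim_n\mathbb C[t_n]/(t_n^{2^n})$ along the injective maps $t_n\mapsto t_{n+1}^2$, so $t_1=t_n^{2^{n-1}}\notin I$. But every $t_i$ is nilpotent modulo $I$, so each finite-codimensional $J\supseteq I$ contains some $M_1^k$, and hence contains $t_1$. In function terms, every exponential monomial in $I^\perp$ is invariant under translation by $e_1$, while $I^\perp$ is not. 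This example also shows concretely why any gluing must use finite rank. With $I_n:=I\cap\mathbb C[\mathbb Z^n]$, the smallest $k$ with $t_1\notin I_n+M_1^k$ is $2^{n-1}+1$, which is unbounded in $n$.
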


For a free abelian group, finite torsion-free rank is equivalent to finite rank. Hence, spectral synthesis holds on a free abelian group if and only if the group has finite rank.

\subsection{Reproducing Kernel Hilbert Spaces}


The purpose of reproducing kernel Hilbert spaces is to associate each positive definite kernel with a unique Hilbert space of functions having that kernel as its reproducing kernel.


This correspondence realizes kernel values as Hilbert-space inner products. Consequently, problems involving positive definite kernels reduce to Hilbert-space arguments.

\begin{definition}\label{def:pd-kernel}
Let \(X\) be a set. A function \(k\colon X\times X\to\mathbb C\) is a \emph{positive definite kernel} if, for every positive integer \(n\), every \(x_1,\ldots,x_n\in X\), and every \(c_1,\ldots,c_n\in\mathbb C\),
\[
\sum_{i,j=1}^n c_i\overline{c_j}k(x_i,x_j)\geq 0.
\]
Equivalently, the matrix \((k(x_i,x_j))_{i,j=1}^n\) is positive semidefinite for every such choice of points \cite[Definition~2 and the following observation, pp.~10--11]{Berlinet2004RKHS}.
\end{definition}

\begin{definition}\label{def:rkhs}
Let \(X\) be a set, and let \(\mathcal H\) be a Hilbert space of complex-valued functions on \(X\). A function \(k\colon X\times X\to\mathbb C\) is a \emph{reproducing kernel} of \(\mathcal H\) if \(k(\cdot,x)\in\mathcal H\) for every \(x\in X\) and
\begin{equation}\label{eq:reproducing-property}
f(x)=\langle f,k(\cdot,x)\rangle_{\mathcal H}
\end{equation}
for every \(f\in\mathcal H\) and every \(x\in X\). A Hilbert space of functions that has a reproducing kernel is called a \emph{reproducing kernel Hilbert space} \cite[Definition~1, pp.~6--7]{Berlinet2004RKHS}.
\end{definition}

A Hilbert space of complex-valued functions on \(X\) has a reproducing kernel if and only if, for every \(x\in X\), the evaluation functional \(e_x\colon\mathcal H\to\mathbb C\), defined by \(e_x(f)=f(x)\), is continuous \cite[Theorem~1, pp.~9--10]{Berlinet2004RKHS}. By the Riesz representation theorem, each \(e_x\) has a unique representing function \(k(\cdot,x)\in\mathcal H\), which satisfies Equation~\eqref{eq:reproducing-property}. Applying Equation~\eqref{eq:reproducing-property} to \(f=k(\cdot,y)\) gives
\[
k(x,y)=\langle k(\cdot,y),k(\cdot,x)\rangle_{\mathcal H}
\]
for every \(x,y\in X\).

\begin{theorem}[Moore--Aronszajn]\label{thm:moore-aronszajn}
Let \(X\) be a set, and let \(k\colon X\times X\to\mathbb C\) be a positive definite kernel. Then there exists a unique reproducing kernel Hilbert space \(\mathcal H_k\) having \(k\) as its reproducing kernel. Moreover, the linear span of the functions \(k(\cdot,x)\), where \(x\in X\), is dense in \(\mathcal H_k\) \cite[Theorem~3, pp.~19--21]{Berlinet2004RKHS}.
\end{theorem}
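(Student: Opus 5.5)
The Moore--Aronszajn theorem follows the standard construction: build a pre-Hilbert space from the span of the kernel sections, complete it inside the space of functions on \(X\), and show that any reproducing kernel Hilbert space with kernel \(k\) must coincide with it.

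\textbf{Existence.} Let \(\mathcal H_0\) be the linear span of the functions \(k_x:=k(\cdot,x)\), \(x\in X\). On \(\mathcal H_0\) define the sesquilinear form
\[
\Bigl\langle \sum_i c_i k_{x_i},\sum_j d_j k_{y_j}\Bigr\rangle=\sum_{i,j}c_i\overline{d_j}\,k(y_j,x_i).
\]
To see that this is well defined, rewrite it as \(\sum_j \overline{d_j}\,f(y_j)\), where \(f=\sum_i c_ik_{x_i}\). This expression depends only on the function \(f\), and by conjugate symmetry it likewise depends only on the second argument. Conjugate symmetry itself, \(k(x,y)=\overline{k(y,x)}\), follows from Definition~\ref{def:pd-kernel}: positivity of the \(2\times2\) complex forms makes each Gram matrix Hermitian.

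The form is positive semidefinite directly by Definition~\ref{def:pd-kernel}, and it has the reproducing property \(\langle f,k_x\rangle=f(x)\) on \(\mathcal H_0\). Cauchy--Schwarz then gives
\[
|f(x)|\le\|f\|\,k(x,x)^{1/2},
\]
so \(\|f\|=0\) forces \(f=0\), and the form is an inner product.

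Now complete \(\mathcal H_0\). A Cauchy sequence \((f_n)\) in \(\mathcal H_0\) converges pointwise by the bound above, so we let \(\mathcal H_k\) be the set of pointwise limits of such sequences. The norm on \(\mathcal H_k\) is defined as the limit of the norms.

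\textbf{Main obstacle.} The key step is showing that this is well defined and injective: a Cauchy sequence in \(\mathcal H_0\) that tends to \(0\) pointwise must tend to \(0\) in norm. Fix \(\varepsilon>0\) and choose \(N\) with \(\|f_n-f_m\|<\varepsilon\) for \(n,m\ge N\). Then
\[
\|f_n\|^2=\langle f_n,f_n-f_N\rangle+\langle f_n,f_N\rangle .
\]
The first term is at most \(\varepsilon\sup_m\|f_m\|\). The second term is a finite combination of the values \(f_n(x_i)\), where \(f_N=\sum_i c_ik_{x_i}\), so it tends to \(0\) as \(n\to\infty\).

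With this in hand, \(\mathcal H_k\) is isometric to the abstract completion of \(\mathcal H_0\), hence a Hilbert space of functions. The reproducing property passes to limits, since \(f_n(x)=\langle f_n,k_x\rangle\to\langle f,k_x\rangle\). Density of \(\mathcal H_0\) in \(\mathcal H_k\) holds by construction.

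\textbf{Uniqueness.} Let \(\mathcal H\) be any reproducing kernel Hilbert space with kernel \(k\). Then \(\mathcal H_0\subseteq\mathcal H\), and the two inner products agree on \(\mathcal H_0\) because both equal \(k(y,x)\) on pairs \(k_x,k_y\).

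Next, \(\mathcal H_0\) is dense in \(\mathcal H\). If \(g\in\mathcal H\) is orthogonal to \(\mathcal H_0\), then \(g(x)=\langle g,k_x\rangle=0\) for every \(x\), so \(g=0\).

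Finally, norm convergence in \(\mathcal H\) implies pointwise convergence, because each evaluation functional is continuous. Hence every \(g\in\mathcal H\) is the pointwise limit of a Cauchy sequence from \(\mathcal H_0\), and its norm is the limit of the norms along that sequence. This shows \(\mathcal H=\mathcal H_k\) isometrically, with the same inner product.
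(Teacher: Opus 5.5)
Your proof is correct and is the standard construction from the reference the paper cites (Berlinet--Thomas-Agnan, Theorem~3); the paper invokes that reference without reproving the theorem. The construction runs as follows: the pre-Hilbert space spanned by the sections \(k(\cdot,x)\), then completion by pointwise limits of Cauchy sequences (using the key lemma that a Cauchy sequence tending to \(0\) pointwise tends to \(0\) in norm), then uniqueness via density of the sections. The only step you leave implicit is the inclusion \(\mathcal H_k\subseteq\mathcal H\) in the uniqueness part: a Cauchy sequence from \(\mathcal H_0\) converges in the complete space \(\mathcal H\), and by continuity of evaluations its limit equals its pointwise limit.
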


Every positive definite kernel on \(X\) therefore determines a unique reproducing kernel Hilbert space of functions on \(X\). Conversely, the reproducing kernel of every reproducing kernel Hilbert space is positive definite \cite[p.~22]{Berlinet2004RKHS}.

\section{Spectral Synthesis of Virtual Persistence Diagram Groups}

\subsection{Increment representation of Lipschitz functions}

We encode Banach-valued Lipschitz functions on \(K(X,A)\) modulo constants by their translation increments. The representatives that vanish at \(0\) correspond to bounded linear maps from the Arens--Eells space \( \widehat V(X,A) \).

Let \(E\) be a real or complex Banach space. We write \( \operatorname{Lip}(K(X,A), E) \) for the vector space of functions \( f \colon K(X,A) \to E \) with finite Lipschitz seminorm
\begin{equation}\label{eq:lipschitz-seminorm}
\operatorname{Lip}(f) := \sup_{\substack{x,y\in K(X,A)\\ x\neq y}} \frac{\|f(x)-f(y)\|_E}{\rho(x,y)}.
\end{equation}

A function \(f\in\operatorname{Lip}(K(X,A),E)\) satisfies \(\operatorname{Lip}(f)=0\) exactly when \(f\) is constant. Since \(\operatorname{Lip}(f)\) depends only on differences \(f(x)-f(y)\), adding a constant \(E\)-valued function preserves \(\operatorname{Lip}(f)\). We identify \(E\) with the subspace of constant \(E\)-valued functions and use the quotient \(\operatorname{Lip}(K(X,A),E)/E\).

For a Lipschitz function \(f\colon K(X,A)\to E\) and \(h\in K(X,A)\), the estimate \(\|f(x+h)-f(x)\|_E\leq\operatorname{Lip}(f)\rho(h,0)\) holds for all \(x\in K(X,A)\). For \(u\in\ell^\infty(K(X,A),E)\) and \(k\in K(X,A)\), set \((\tau_k u)(x):=u(x+k)\). For \(f\colon K(X,A)\to E\), set \(\Delta_h f(x):=f(x+h)-f(x)\).

For a family \(b=(b_h)_{h\in K(X,A)}\) with each \(b_h\in\ell^\infty(K(X,A),E)\), set
\begin{equation}\label{eq:cocycle-norm}
\|b\|_{Z^1} := \sup_{\substack{h\in K(X,A)\\ h\neq0}} \frac{\|b_h\|_{\infty}}{\rho(h,0)}.
\end{equation}
Let \(Z^1(K(X,A),\ell^\infty(K(X,A),E))\) denote the vector space of all families \(b\) such that \(b_{h+k}=\tau_k b_h+b_k\) for all \( h,k\in K(X,A) \) and such that \( \|b\|_{Z^1}<\infty \).

\begin{theorem}\label{thm:banach-valued-cocycle}
Let \(E\) be a real or complex Banach space. The assignment \(f\mapsto(\Delta_h f)_{h\in K(X,A)}\) induces an isometric linear isomorphism
\[ \operatorname{Lip}(K(X,A),E)/E \cong Z^1(K(X,A),\ell^\infty(K(X,A),E)). \]
\end{theorem}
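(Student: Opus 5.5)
I will define the linear map \(\Phi\colon\operatorname{Lip}(K(X,A),E)\to Z^1(K(X,A),\ell^\infty(K(X,A),E))\) by \(\Phi(f)=(\Delta_h f)_{h\in K(X,A)}\). Then I will check four things in turn: \(\Phi\) lands in \(Z^1\), its kernel is exactly the constants, it is isometric for the quotient seminorm, and it is surjective.

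\emph{Well-definedness and the cocycle identity.} For \(f\) Lipschitz, the estimate recorded before the theorem gives \(\|\Delta_h f\|_\infty\le\operatorname{Lip}(f)\rho(h,0)\). This uses translation invariance of \(\rho\) from Definition~\ref{def:vpd-metric} together with Theorem~\ref{thm:wasserstein-translation-invariant}, in the form \(\rho(x+h,x)=\rho(h,0)\). Hence each \(\Delta_hf\) is bounded and \(\|\Phi(f)\|_{Z^1}\le\operatorname{Lip}(f)\). The cocycle identity follows by adding and subtracting \(f(x+k)\):
\[
\Delta_{h+k}f(x)=\bigl(f(x+k+h)-f(x+k)\bigr)+\bigl(f(x+k)-f(x)\bigr)=(\tau_k\Delta_hf)(x)+\Delta_kf(x).
\]
Linearity of \(\Phi\) is immediate.

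\emph{Isometry and kernel.} For the reverse inequality, take \(x\neq y\) and put \(h=y-x\). Then
\[
\|f(y)-f(x)\|_E=\|\Delta_hf(x)\|_E\le\|\Delta_hf\|_\infty\le\|\Phi(f)\|_{Z^1}\,\rho(h,0)=\|\Phi(f)\|_{Z^1}\,\rho(y,x).
\]
So \(\operatorname{Lip}(f)=\|\Phi(f)\|_{Z^1}\). Consequently \(\Phi(f)=0\) if and only if \(\operatorname{Lip}(f)=0\), which holds if and only if \(f\) is constant. Therefore \(\Phi\) descends to an injective map on \(\operatorname{Lip}(K(X,A),E)/E\). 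This map is isometric because \(\operatorname{Lip}\) is invariant under adding constants, so the quotient seminorm is exactly \(\operatorname{Lip}(f)\) and is a norm on the quotient.

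\emph{Surjectivity.} This is the only step that needs an actual construction. Given \(b\in Z^1\), I will define \(f_b(x)=b_x(0)\).

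First, setting \(h=k=0\) in the cocycle identity gives \(b_0=2b_0\), so \(b_0=0\).

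Next, apply the identity with the roles \((h,k)\mapsto(h,x)\) and evaluate at \(0\). This gives
\[
b_{h+x}(0)=b_h(x)+b_x(0),
\]
that is, \(b_h(x)=f_b(x+h)-f_b(x)=\Delta_hf_b(x)\), so \(\Phi(f_b)=b\).

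Finally, \(f_b\) is Lipschitz: by the isometry computation above, \(\|f_b(y)-f_b(x)\|_E=\|b_{y-x}(x)\|_E\le\|b\|_{Z^1}\,\rho(y,x)\).

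I expect no real obstacle. The only care needed is bookkeeping of the translation convention \(\tau_ku(x)=u(x+k)\), so that the cocycle identity evaluated at \(0\) matches the required one, and confirming that translation invariance of \(\rho\) holds on all of \(K(X,A)\), not just on \(D(X,A)\).
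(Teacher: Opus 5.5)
Your proposal is correct and follows the paper's proof essentially step for step: the bound \(\|\Delta_h f\|_\infty\le\operatorname{Lip}(f)\rho(h,0)\) gives \(\Delta_h f\in\ell^\infty\), and adding and subtracting \(f(x+k)\) gives the cocycle identity. The reverse inequality comes from \(h=y-x\) and translation invariance of \(\rho\), and surjectivity comes from \(f_b(x)=b_x(0)\) with the cocycle identity evaluated at \(0\). The only cosmetic difference is that you obtain the kernel (the constants) from the isometry \(\operatorname{Lip}(f)=\|\Phi(f)\|_{Z^1}\). The paper instead states the kernel directly and checks that \(f_{\Delta f}=f-f(0)\) lies in the same quotient class.
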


\begin{proof}
Let \(f\in\operatorname{Lip}(K(X,A),E)\), and set \(b_h:=\Delta_h f\). For \(x,h\in K(X,A)\), we have
\[ \|b_h(x)\|_E = \|f(x+h)-f(x)\|_E \leq \operatorname{Lip}(f)\rho(h,0). \]
Hence \(b_h\in\ell^\infty(K(X,A),E)\), and Equation~\eqref{eq:cocycle-norm} gives \(\|b\|_{Z^1}\le\operatorname{Lip}(f)\).

For \(x,h,k\in K(X,A)\), we compute
\[ b_{h+k}(x) = f(x+h+k)-f(x) = b_h(x+k)+b_k(x) = (\tau_k b_h)(x)+b_k(x), \]
where the middle equality uses that \(K(X,A)\) is abelian. Thus \(b\) satisfies the cocycle identity.

For \(x,y\in K(X,A)\), set \(h:=y-x\). Since \(\rho\) is translation-invariant, \(\rho(h,0)=\rho(x,y)\). Since \(y=x+h\), we obtain
\[ \|f(y)-f(x)\|_E = \|b_h(x)\|_E \leq \|b\|_{Z^1}\rho(x,y). \]
Taking the supremum over distinct \(x,y\in K(X,A)\) gives \(\operatorname{Lip}(f)\le\|b\|_{Z^1}\). Therefore \(\|b\|_{Z^1}=\operatorname{Lip}(f)\).

Conversely, let \(b\in Z^1(K(X,A),\ell^\infty(K(X,A),E))\). The cocycle identity with \(h=k=0\) gives \(b_0=b_0+b_0\), hence \(b_0=0\). Define \(f_b(x):=b_x(0)\). The cocycle identity gives \(b_{h+x}=\tau_xb_h+b_x\). Since \(K(X,A)\) is abelian, evaluating at \(0\) gives \(b_{x+h}(0)=b_h(x)+b_x(0)\). Hence \(\Delta_h f_b(x)=b_h(x)\) for all \(x,h\in K(X,A)\).

For \(x,y\in K(X,A)\), set \(h:=y-x\). Equation~\eqref{eq:cocycle-norm} gives
\[ \|f_b(y)-f_b(x)\|_E = \|b_h(x)\|_E \leq \|b\|_{Z^1}\rho(x,y). \]
Thus \(f_b\in\operatorname{Lip}(K(X,A),E)\).

The kernel of the increment map consists exactly of constant functions, so the map factors through \(\operatorname{Lip}(K(X,A),E)/E\). The construction above recovers every cocycle because \(\Delta_hf_b=b_h\). Starting with a Lipschitz function \(f\), the reconstruction satisfies \(f_{\Delta f}(x)=f(x)-f(0)\), so it gives the same quotient class. Hence the descended map is linear and surjective. The equality \(\|b\|_{Z^1}=\operatorname{Lip}(f)\) makes it isometric.
\end{proof}

\begin{corollary}\label{cor:lipschitz-free-cocycle}
Let \(E\) be a real Banach space, and let \(\mathcal L(\widehat V(X,A),E)\) denote the Banach space of bounded linear maps from \(\widehat V(X,A)\) to \(E\). There exists an isometric linear isomorphism
\[ \mathcal L(\widehat V(X,A),E) \cong Z^1(K(X,A),\ell^\infty(K(X,A),E)). \]
\end{corollary}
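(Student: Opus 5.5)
The plan is to compose Theorem~\ref{thm:banach-valued-cocycle} with the universal property of the Arens--Eells space \(\widehat V(X,A)\). Here \(\widehat V(X,A)\) is taken to be the completion of \(K(X,A)\otimes\mathbb R\) (equivalently, the Lipschitz-free space over the pointed metric space \((K(X,A),\rho,0)\), or over \((X/A,\overline d_1,[A])\)) with its Arens--Eells norm.

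The first step is to select representatives vanishing at \(0\). The map \(f\mapsto f-f(0)\) gives an isometric linear isomorphism
\[
\operatorname{Lip}(K(X,A),E)/E\cong\operatorname{Lip}_0(K(X,A),E).
\]
It is well defined because constants are killed. It is injective because \(\operatorname{Lip}(f)=0\) forces \(f\) to be constant. It is isometric because the seminorm only sees differences.

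The second step is to linearize. The analogue of Lemma~\ref{lem:lipschitz-free-linearization} for a pointed metric space in place of a Banach space gives, for each \(g\in\operatorname{Lip}_0\), a unique bounded linear map \(T_g\colon\widehat V(X,A)\to E\) with \(T_g\circ\delta=g\) and \(\|T_g\|=\operatorname{Lip}(g)\). The inverse sends \(T\) to \(T\circ\delta\). The proof of that lemma only uses the pointed metric structure, so it transfers verbatim. Composing with the isomorphism of Theorem~\ref{thm:banach-valued-cocycle} then yields the claimed isometric isomorphism. Explicitly, \(T\) is sent to \(b_h(x)=T(\delta_{x+h}-\delta_x)\), and the inverse sends \(b\) to the unique \(T\) with \(T(\delta_x)=b_x(0)\).

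The main obstacle is identifying \(\widehat V(X,A)\) with the free space over \((K(X,A),\rho,0)\) rather than over \(X/A\). I would first check which object the paper means. If \(\widehat V(X,A)\) is the Arens--Eells space of \((X/A,\overline d_1,[A])\), then I must show that Lipschitz maps \(K(X,A)\to E\) vanishing at \(0\) correspond isometrically to Lipschitz maps \(X/A\to E\) vanishing at \([A]\). That correspondence fails for non-additive maps: a general Lipschitz function on \(K(X,A)\) is far from determined by its values on generators. So the consistent reading must be the free space over the group \(K(X,A)\) itself, and I would state this identification explicitly before invoking the linearization lemma.
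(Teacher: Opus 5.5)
This is the paper's argument: pass to representatives vanishing at \(0\), linearize through the universal property of the free space, and compose with Theorem~\ref{thm:banach-valued-cocycle}. Your worry about the domain is justified, and your resolution is the right one. The paper's appeal to the universal property yields \(\operatorname{Lip}_0(K(X,A),E)\) only for the free space over \((K(X,A),\rho,0)\), i.e.\ \(\widehat V(K(X,A),0)\) as used in Proposition~\ref{prop:lipschitz-free-synthesis}. By contrast, the completion of \(K(X,A)\otimes\mathbb R\) is the free space over \((X/A,\overline d_1,[A])\) and sees only Lipschitz homomorphisms: for two-point \(X/A\) and \(E=\mathbb R\) one gets \(\mathbb R\) versus \(\ell^\infty(\mathbb Z)\). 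So delete the ``equivalently'' in your opening sentence.
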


\begin{proof}
The universal property of \(\widehat V(X,A)\) from \cite[Theorems~7.8 and~7.9]{bubenik2022virtual} identifies \( \mathcal L (\widehat V (X,A), E) \) isometrically with the space of Lipschitz functions \(K(X,A)\to E\) that vanish at \(0\). Each element of \( \operatorname{Lip}(K(X,A),E)/ E \) has a unique representative that vanishes at \(0\). Theorem~\ref{thm:banach-valued-cocycle} gives the result.
\end{proof}

The preceding results apply to a Lipschitz vectorization \(f \colon D(X,A)\to E\) whenever it has a Lipschitz extension \(\widetilde f\colon K(X,A)\to E\). The embedding \(D(X,A)\hookrightarrow K(X,A)\) is isometric by \cite[Corollary~7.10]{bubenik2022virtual}.

\subsection{Spectral synthesis of Lipschitz cocycles}


For a function \(u\colon K(X,A)\to\mathbb C\) and an element \(y\in K(X,A)\), define the translate \(\tau_yu\) by \((\tau_yu)(x):=u(x+y)\).

Since \((K(X,A),\rho)\) is discrete, every complex-valued function on \(K(X,A)\) is continuous. Let \(C(K(X,A))\) denote the complex-valued functions on \(K(X,A)\). We equip \(C(K(X,A))\) with the topology of pointwise convergence, equivalently the topology induced by the seminorms \(u\mapsto \max_{x\in F}|u(x)|\), with \(F\) a finite subset of \(K(X,A)\). A variety on \(K(X,A)\) is a complex linear subspace of \(C(K(X,A))\) that is closed for this topology and invariant under \(\tau_y\) for every \(y\in K(X,A)\).

An additive function is a function \(a\colon K(X,A)\to\mathbb C\) such that \(a(x+y)=a(x)+a(y)\) for every \(x,y\in K(X,A)\). We write \( \operatorname{Hom}(K(X,A), \mathbb C) \) for the space of additive functions. A polynomial is an element of the complex algebra generated by additive functions. An exponential is a nonzero function \(m\colon K(X,A)\to\mathbb C\) such that \(m(x+y)=m(x)m(y)\) for every \(x,y\in K(X,A)\). An exponential monomial is a function of the form \(x\mapsto p(x)m(x)\), where \(p\) is a polynomial and \(m\) is an exponential.

By \cite[Theorem~15.6]{szekelyhidi2014harmonic}, a variety is synthesizable if and only if the exponential monomials contained in the variety span a dense subspace. For a family of functions on \(K(X,A)\), the variety generated by that family is the intersection of all varieties containing each function in the family.

\begin{remark}\label{rem:finite-rank-spectral-synthesis}
Spectral synthesis holds for a discrete abelian group precisely when the group has finite torsion-free rank \cite[Corollary~3]{szekelyhidi2024}. Since \(K(X,A)\) is free abelian, spectral synthesis holds for \(K(X,A)\) if and only if \(K(X,A)\) has finite rank.

Sz\'ekelyhidi gave sufficient algebraic criteria for particular varieties to be synthesizable in terms of annihilator quotients \cite[Theorem~3.2]{szekelyhidi2015spectral}, and proved spectral synthesis for the variety generated by additive functions and constants when the group has infinite torsion-free rank \cite[Theorem~3.3]{szekelyhidi2015spectral}. We prove an analytic synthesis criterion for varieties generated by scalar coordinates of Lipschitz increment cocycles whose coordinate representatives are sums of additive functions and Fourier--Stieltjes transforms.
\end{remark}

Let \(\mathcal P(K(X,A))\) denote the positive-definite functions on \(K(X,A)\). The discrete group \(K(X,A)\) is locally compact. By Bochner's theorem, for each \(\varphi\in\mathcal P(K(X,A))\), there exists a unique finite positive regular Borel measure \(\mu\) on \(\widehat{K(X,A)}\) such that \(\varphi=\varphi_\mu\) \cite[Theorem~4.18]{folland2016course}, where
\begin{equation}\label{eq:bochner-representation-vpd}
\varphi_\mu(x) = \int_{\widehat{K(X,A)}}\chi(x)\,d\mu(\chi).
\end{equation}
From Equation~\eqref{eq:bochner-representation-vpd}, we obtain \(|\varphi_\mu(x)|\leq\mu(\widehat{K(X,A)})\) for every \(x\in K(X,A)\). Hence \(\varphi_\mu\) is bounded. Since \(\rho\) is translation-invariant and \((K(X,A),\rho)\) is discrete, there exists \(\eta>0\) such that \(\rho(z,0)\geq\eta\) whenever \(z\neq0\). Therefore \(\rho(x,y)\geq\eta\) whenever \(x\neq y\), and each bounded function \(g\colon K(X,A)\to\mathbb C\) satisfies \(\operatorname{Lip}(g)\leq 2\|g\|_\infty/\eta\). Thus every element of \(\mathcal P(K(X,A))\) is Lipschitz.

We write \(B(K(X,A))\) for the space of Fourier--Stieltjes transforms of finite complex regular Borel measures on \(\widehat{K(X,A)}\). For a complex measure, we use support to mean the support of its total variation measure.

\begin{lemma}\label{lem:fourier-stieltjes-increment} Let \(\mu\) be a finite complex regular Borel measure on \(\widehat{K(X,A)}\), and define \(\varphi_\mu(x):=\int_{\widehat{K(X,A)}}\chi(x)\,d\mu(\chi)\). Then
\begin{equation}\label{eq:fourier-stieltjes-increment}
\Delta_h\varphi_\mu(x) = \int_{\widehat{K(X,A)}}\chi(x)(\chi(h)-1)\,d\mu(\chi)
\end{equation}
for every \(x,h\in K(X,A)\).
\end{lemma}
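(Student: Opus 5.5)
The identity follows directly from the definitions, by expanding \(\Delta_h\varphi_\mu(x)=\varphi_\mu(x+h)-\varphi_\mu(x)\) and combining the two integrals. Fix \(x,h\in K(X,A)\). By definition,
\[
\varphi_\mu(x+h)=\int_{\widehat{K(X,A)}}\chi(x+h)\,d\mu(\chi),
\]
and each \(\chi\in\widehat{K(X,A)}\) is a homomorphism into \(\mathbb T\). Hence \(\chi(x+h)=\chi(x)\chi(h)\), and the integrand becomes \(\chi(x)\chi(h)\).

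The only point needing care is that both integrals exist, so that linearity of the integral against the complex measure \(\mu\) may be used. For fixed \(x,h\), the maps \(\chi\mapsto\chi(x)\chi(h)\) and \(\chi\mapsto\chi(x)\) are continuous on \(\widehat{K(X,A)}\). This holds because evaluation at a point is continuous for the compact-open topology; on the discrete group \(K(X,A)\) that topology is the topology of pointwise convergence. These maps are therefore Borel measurable. They are also bounded by \(1\) in modulus. Since \(\mu\) is finite, with \(|\mu|(\widehat{K(X,A)})<\infty\), both maps are \(|\mu|\)-integrable.

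We may then subtract the two integrals:
\[
\Delta_h\varphi_\mu(x)=\int_{\widehat{K(X,A)}}\chi(x)\chi(h)\,d\mu(\chi)-\int_{\widehat{K(X,A)}}\chi(x)\,d\mu(\chi)=\int_{\widehat{K(X,A)}}\chi(x)\bigl(\chi(h)-1\bigr)\,d\mu(\chi),
\]
which is Equation~\eqref{eq:fourier-stieltjes-increment}.

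There is no substantive obstacle. The step most worth stating explicitly is the measurability and integrability justification above, which permits the termwise subtraction for a complex rather than positive measure. It suffices to apply linearity to the finite positive measures in the Jordan decomposition of \(\mu\), or equivalently to integrate with respect to the Radon–Nikodym derivative \(d\mu/d|\mu|\).
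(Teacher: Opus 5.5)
Your proposal is correct and follows the paper's proof: expand \(\Delta_h\varphi_\mu(x)=\varphi_\mu(x+h)-\varphi_\mu(x)\), use \(\chi(x+h)=\chi(x)\chi(h)\), and combine the two integrals by linearity. Your added justification that the integrands are bounded continuous functions, and hence \(|\mu|\)-integrable, is a welcome detail that the paper leaves implicit.
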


\begin{proof}
For \(x,h\in K(X,A)\), we use the definition of \(\Delta_h\), linearity of integration, and multiplicativity of characters to obtain
\begin{align*}
\Delta_h\varphi_\mu(x)
&=
\varphi_\mu(x+h)-\varphi_\mu(x)\\
&=
\int_{\widehat{K(X,A)}}(\chi(x+h)-\chi(x))\,d\mu(\chi)\\
&=
\int_{\widehat{K(X,A)}}\chi(x)(\chi(h)-1)\,d\mu(\chi).
\end{align*}
\end{proof}


\begin{theorem}\label{thm:fourier-stieltjes-cocycle-synthesis}
Let \(E\) be a complex Banach space, and let \(b\) be an element of \(Z^1(K(X,A),\ell^\infty(K(X,A),E))\). Assume that the function \(x\mapsto\lambda(b_x(0))\) belongs to \(\operatorname{Hom}(K(X,A),\mathbb C)+B(K(X,A))\) for every \(\lambda\in E^*\). Then the functions \(\lambda(b_h)\), with \(h\in K(X,A)\) and \(\lambda\in E^*\), generate a synthesizable variety.
\end{theorem}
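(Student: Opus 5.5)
The plan is to identify $V$ explicitly: with $S$ the set of nontrivial characters in the supports of the representing measures, I would show that every character in $S$ lies in $V$, and that $V$ is the closed span of these characters, together with the constant $\mathbf 1$ when $\mathbf 1\in V$. Both inclusions will be proved by duality. Since $C(K(X,A))$ carries the topology of pointwise convergence, its continuous dual consists of finitely supported functionals $u\mapsto\sum_{x}c(x)u(x)$. For such $c$ I set $\widehat c(\chi):=\sum_x c(x)\chi(x)$, which is a continuous function on the compact group $\widehat{K(X,A)}$.

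\emph{Setup.} For each $\lambda\in E^*$, write $\lambda(b_x(0))=a_\lambda(x)+\varphi_{\mu_\lambda}(x)$ with $a_\lambda$ additive. By the proof of Theorem~\ref{thm:banach-valued-cocycle}, $b_h(x)=b_{x+h}(0)-b_x(0)$. Together with Lemma~\ref{lem:fourier-stieltjes-increment} this gives
\[
\tau_y\lambda(b_h)(x)=a_\lambda(h)+\int_{\widehat{K(X,A)}}\chi(x)\chi(y)(\chi(h)-1)\,d\mu_\lambda(\chi).
\]
Let $S:=\bigcup_\lambda\operatorname{supp}(\mu_\lambda)\setminus\{\mathbf 1\}$ and let $V$ be the variety generated by the functions $\lambda(b_h)$. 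Characters are exponentials, so they are exponential monomials.

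\emph{Step 1: $S\subseteq V$.} Let $c$ annihilate $V$. Since $V$ is translation invariant, $c$ annihilates $\tau_y\lambda(b_h)$ for every $y$. Subtracting the identity at $y=0$ removes the constant term $a_\lambda(h)\sum_x c(x)$ and gives
\[
\int \widehat c(\chi)(\chi(h)-1)(\chi(y)-1)\,d\mu_\lambda(\chi)=0\qquad\text{for all } y,h.
\]
Hence the Fourier--Stieltjes transform of $\nu_h:=\widehat c\,(\chi(h)-1)\,\mu_\lambda$ is constant in $y$. The transform of the point mass at $\mathbf 1$ is also constant, so uniqueness of Fourier--Stieltjes transforms (measures on $\widehat{K(X,A)}$ are determined by their transforms) forces $\nu_h$ to be a multiple of the point mass at $\mathbf 1$. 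Since the density $\chi(h)-1$ vanishes at $\mathbf 1$, we get $\nu_h=0$. The continuous function $\widehat c(\chi)(\chi(h)-1)$ therefore vanishes $|\mu_\lambda|$-a.e., hence on all of $\operatorname{supp}(\mu_\lambda)$. For $\chi\in S$, pick $h$ with $\chi(h)\neq1$; this gives $\widehat c(\chi)=0$, i.e.\ $c$ annihilates $\chi$. By Hahn--Banach, $\chi\in V$.

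\emph{Step 2: the integral parts and the constants.} For a finite $F\subseteq K(X,A)$, the vector $\bigl(\int\chi(x)(\chi(h)-1)\,d\mu_\lambda\bigr)_{x\in F}$ lies in the span of $\{(\chi(x))_{x\in F}:\chi\in S\}$. Indeed, any linear functional $c$ on $\mathbb C^F$ killing that span gives $\int\widehat c(\chi)(\chi(h)-1)\,d\mu_\lambda=0$, because the integrand vanishes on $S$ and also at $\mathbf 1$. So each integral part lies in the closed span $W_S$ of $S$, and $W_S\subseteq V$ by Step 1. It follows that $a_\lambda(h)\mathbf 1=\lambda(b_h)-(\text{integral part})\in V$. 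Consequently, if some $a_\lambda(h)\neq0$, then $\mathbf 1\in V$.

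\emph{Step 3: density.} Let $c$ annihilate every exponential monomial in $V$. Then $\widehat c=0$ on $S$. Moreover, either $\sum_x c(x)=0$ (when $\mathbf 1\in V$) or all $a_\lambda(h)=0$ (when $\mathbf 1\notin V$, by Step 2). In both cases the pairing of $c$ with $\tau_y\lambda(b_h)$ is
\[
a_\lambda(h)\sum_x c(x)+\int\widehat c(\chi)\chi(y)(\chi(h)-1)\,d\mu_\lambda(\chi)=0 .
\]
Hence $c$ annihilates the linear span of all translates of the generators. Since $V$ is the closure of that span, $c$ annihilates $V$. By Hahn--Banach, the exponential monomials in $V$ span a dense subspace of $V$, so $V$ is synthesizable by Theorem~\ref{thm:synthesis-equivalent-characterizations}.

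The main obstacle is Step 1, specifically the passage from the integral identity to pointwise vanishing of $\widehat c$ on the supports. This requires the uniqueness theorem for Fourier--Stieltjes transforms on the discrete group $K(X,A)$, which does not have finite rank. It also requires care with the complex total-variation support, and with the uncountably many $h$; the latter is handled by continuity on the support rather than by a.e.\ statements.
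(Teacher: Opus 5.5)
Your proof is correct and follows the same route as the paper's. A difference in the translation variable removes the constant $a_\lambda(h)$, and because $\chi(h)-1$ vanishes at $\mathbf 1$ this forces every nontrivial character in $\operatorname{supp}(\mu_\lambda)$ into $V$; the Fourier--Stieltjes part then lies in the closed span of these characters, so $a_\lambda(h)\mathbf 1\in V$ and density follows. The only difference is packaging: you argue with finitely supported annihilating functionals, Hahn--Banach, and uniqueness of Fourier--Stieltjes transforms, whereas the paper uses the finite-dimensional evaluation maps $T_F$ with Stone--Weierstrass and Riesz directly. Uniqueness holds on every locally compact abelian group, so the infinite rank of $K(X,A)$ is no obstacle.
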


\begin{proof}
By Theorem~\ref{thm:banach-valued-cocycle}, the function $f\colon K(X,A)\to E$ defined by $f(x):=b_x(0)$ satisfies $\Delta_h f=b_h$ for every $h\in K(X,A)$.

Let $\mathcal V_b$ denote the variety generated by the scalar cocycles $\lambda(b_h)$, with $h\in K(X,A)$ and $\lambda\in E^*$. Let $\mathcal E_b$ denote the complex span of the exponential monomials contained in $\mathcal V_b$. Let $M_b$ denote the complex span of the functions $\tau_y(\lambda(b_h))$, where $y,h\in K(X,A)$ and $\lambda\in E^*$. We prove that $\mathcal E_b$ is dense in $\mathcal V_b$.

Fix a finite set $F=\{x_1,\ldots,x_N\}\subseteq K(X,A)$, and define $T_F\colon C(K(X,A))\to\mathbb C^N$ by $T_F(u):=(u(x_1),\ldots,u(x_N))$.

Fix $\lambda\in E^*$. By assumption, there exist $a\in\operatorname{Hom}(K(X,A),\mathbb C)$ and a finite complex regular Borel measure $\mu$ on $\widehat{K(X,A)}$ such that
\begin{equation}\label{eq:scalar-cocycle-primitive-fourier-stieltjes}
\lambda(b_x(0)) = a(x) + \int_{\widehat{K(X,A)}}\chi(x)\,d\mu(\chi)
\end{equation}
for every $x\in K(X,A)$. Since $\Delta_h f=b_h$, Equation~\eqref{eq:scalar-cocycle-primitive-fourier-stieltjes} gives
\begin{equation}\label{eq:scalar-cocycle-fourier-stieltjes-decomposition}
\lambda(b_h(x)) = a(h) + \int_{\widehat{K(X,A)}} \chi(x)(\chi(h)-1)\,d\mu(\chi)
\end{equation}
for every $x,h\in K(X,A)$.

Since $\mathcal V_b$ is translation-invariant, the function $\tau_g(\lambda(b_h))-\lambda(b_h)$ belongs to $\mathcal V_b$ for every $g,h\in K(X,A)$. Equation~\eqref{eq:scalar-cocycle-fourier-stieltjes-decomposition} gives
\begin{equation}\label{eq:cocycle-second-difference-fourier-stieltjes}
(\tau_g(\lambda(b_h))-\lambda(b_h))(x) = \int_{\widehat{K(X,A)}} \chi(x)(\chi(g)-1)(\chi(h)-1)\,d\mu(\chi).
\end{equation}

Fix $g,h\in K(X,A)$. For each $\chi\in\widehat{K(X,A)}$, set
\[ w_{g,h}(\chi) = (\chi(x_i)(\chi(g)-1)(\chi(h)-1))_{i=1}^N. \]
We claim that
\begin{align}
& \operatorname{span}_{\mathbb C} \left\{ \left( \int_{\widehat{K(X,A)}} \chi(x_i+y)(\chi(g)-1)(\chi(h)-1)\,d\mu(\chi) \right)_{i=1}^N : y\in K(X,A) \right\} \notag\\
= {} & \operatorname{span}_{\mathbb C} \left\{ w_{g,h}(\chi): \chi\in\operatorname{supp}(\mu)\setminus\{\mathbf 1\} \right\}.
\label{eq:cocycle-second-difference-fourier-support}
\end{align}
The integrand belongs to the complex span of $w_{g,h}(\operatorname{supp}(\mu)\setminus\{\mathbf 1\})$ for every $\chi\in\operatorname{supp}(\mu)$ because $w_{g,h}(\mathbf 1)=0$. Since this span is a finite-dimensional closed subspace of $\mathbb C^N$, the integral also belongs to this span.

Conversely, let $\ell\in(\mathbb C^N)^*$ vanish on the left side of Equation~\eqref{eq:cocycle-second-difference-fourier-support}. Then
\[ \int_{\widehat{K(X,A)}} \chi(y)\ell(w_{g,h}(\chi))\,d\mu(\chi) = 0 \]
for every $y\in K(X,A)$. The complex span of the functions $\chi\mapsto\chi(y)$, with $y\in K(X,A)$, is a unital self-conjugate subalgebra of $C(\widehat{K(X,A)})$ that separates points. The Stone--Weierstrass theorem gives that this span is dense in $C(\widehat{K(X,A)})$. Hence the measure $\ell(w_{g,h}(\chi))\,d\mu(\chi)$ annihilates $C(\widehat{K(X,A)})$. The Riesz representation theorem implies that this measure is zero. Since $\ell\circ w_{g,h}$ is continuous, we have $\ell(w_{g,h}(\chi))=0$ for every $\chi\in\operatorname{supp}(\mu)\setminus\{\mathbf 1\}$. Finite-dimensional duality proves Equation~\eqref{eq:cocycle-second-difference-fourier-support}.

Fix $h\in K(X,A)$. For each $\chi\in\widehat{K(X,A)}$, set
\( v_h(\chi) = ( \chi (x_i)(\chi(h)-1))_{i=1}^N. \)
The same proof with $v_h$ replacing $w_{g,h}$ gives
\begin{align}
& \operatorname{span}_{\mathbb C}
\left\{
\left(
\int_{\widehat{K(X,A)}}
\chi(x_i+y)(\chi(h)-1)\,d\mu(\chi)
\right)_{i=1}^N
:
y\in K(X,A)
\right\}
\notag\\
= {} &
\operatorname{span}_{\mathbb C}
\left\{
v_h(\chi)
:
\chi\in\operatorname{supp}(\mu)\setminus\{\mathbf 1\}
\right\}.
\label{eq:cocycle-first-difference-fourier-support}
\end{align}

Let $\chi_0\in\operatorname{supp}(\mu)\setminus\{\mathbf 1\}$. Choose $h_0\in K(X,A)$ such that $\chi_0(h_0)\neq1$. Equation~\eqref{eq:cocycle-second-difference-fourier-support}, with $g=h=h_0$, gives $(\chi_0(h_0)-1)^2T_F(\chi_0)\in T_F(\mathcal V_b)$. Hence $T_F(\chi_0)\in T_F(\mathcal V_b)$. Since this holds for every finite subset of $K(X,A)$ and $\mathcal V_b$ is closed, we have $\chi_0\in\mathcal V_b$. Thus $x\mapsto\chi_0(x)(\chi_0(h)-1)$ belongs to $\mathcal E_b$ for every $h\in K(X,A)$.

Since the functions $x\mapsto\chi_0(x)(\chi_0(h)-1)$ belong to $\mathcal E_b$ for every $\chi_0\in\operatorname{supp}(\mu)\setminus\{\mathbf 1\}$, Equation~\eqref{eq:cocycle-first-difference-fourier-support} shows that the Fourier--Stieltjes term in Equation~\eqref{eq:scalar-cocycle-fourier-stieltjes-decomposition} has the same $T_F$-image as an element of $\mathcal E_b$. Since the argument applies to every finite subset of $K(X,A)$, this term belongs to the closure of $\mathcal E_b$.

Since $\overline{\mathcal E_b}\subseteq\mathcal V_b$ and $\lambda(b_h)$ belongs to $\mathcal V_b$, Equation~\eqref{eq:scalar-cocycle-fourier-stieltjes-decomposition} shows that the constant function $x\mapsto a(h)$ belongs to $\mathcal V_b$. This constant function belongs to $\mathcal E_b$ because it is either zero or an exponential monomial contained in $\mathcal V_b$.

We now show that $T_F(M_b)\subseteq T_F(\mathcal E_b)$. Let $y,h\in K(X,A)$. Equation~\eqref{eq:scalar-cocycle-fourier-stieltjes-decomposition} gives
\[ \lambda(b_h(x_i+y)) = a(h) + \int_{\widehat{K(X,A)}} \chi(x_i)\chi(y)(\chi(h)-1)\,d\mu(\chi). \]
The preceding arguments place the $T_F$-images of both terms in $T_F(\mathcal E_b)$. Hence $T_F(\tau_y(\lambda(b_h)))\in T_F(\mathcal E_b)$.

Since the argument applies to every $\lambda\in E^*$ and every $y,h\in K(X,A)$, we have $T_F(M_b)\subseteq T_F(\mathcal E_b)$. By the definition of the generated variety, $\mathcal V_b=\overline{M_b}$. Since $T_F(\mathcal E_b)$ is closed in $\mathbb C^N$, we obtain $T_F(\mathcal V_b)\subseteq T_F(\mathcal E_b)$. The reverse inclusion follows from $\mathcal E_b\subseteq\mathcal V_b$. Hence $T_F(\mathcal V_b)=T_F(\mathcal E_b)$.

The argument applies to every finite subset of $K(X,A)$. Hence every element of $\mathcal V_b$ can be matched on every finite subset by an element of $\mathcal E_b$. Therefore $\mathcal E_b$ is dense in $\mathcal V_b$ for the topology of pointwise convergence on finite subsets. By the definition of spectral synthesis in \cite[Theorem~15.6]{szekelyhidi2014harmonic}, the variety $\mathcal V_b$ is synthesizable.
\end{proof}

\begin{remark}
The conclusion of Theorem~\ref{thm:fourier-stieltjes-cocycle-synthesis} also holds under the following polynomial hypothesis. For each \(\lambda\in E^*\), suppose there exists a polynomial \(p_\lambda\) such that \( x \mapsto \lambda (b_x(0))-p_\lambda(x) \) belongs to \(B(K(X,A))\) and the variety generated by \(p_\lambda\) is finite-dimensional. For every \(y,h\in K(X,A)\), we have \(\tau_y(\Delta_h p_\lambda)=\tau_{y+h}p_\lambda-\tau_y p_\lambda\). Hence, the variety generated by \(\Delta_h p_\lambda\) is finite-dimensional. Therefore, each polynomial increment generates a finite-dimensional variety, which is synthesizable \cite[p.~207]{szekelyhidi2014harmonic} \cite[p.~32]{szekelyhidi2015spectral}. We obtain the same density conclusion by applying the Fourier--Stieltjes support argument from Equations~\eqref{eq:cocycle-second-difference-fourier-support} and \eqref{eq:cocycle-first-difference-fourier-support}.
\end{remark}


\begin{corollary}\label{cor:rkhs-increment-kernel-synthesis}
Let $\varphi\in\mathcal P(K(X,A))$, let $\mu_\varphi$ be its Bochner measure, and let $\Phi_\varphi$ be the canonical feature map of the translation-invariant kernel $k(x,y):=\varphi(x-y)$. For $h\in K(X,A)$, let $k_h$ denote the kernel of the increment feature map $x\mapsto\Phi_\varphi(x+h)-\Phi_\varphi(x)$. Then
\begin{equation}\label{eq:rkhs-increment-kernel-spectrum}
k_h(x,y) = \int_{\widehat{K(X,A)}} \chi(x-y)|\chi(h)-1|^2\,d\mu_\varphi(\chi)
\end{equation}
for every $x,y,h\in K(X,A)$. The functions $x \mapsto \langle \Phi_\varphi(x + h) - \Phi_\varphi(x), u \rangle_{\mathcal H_\varphi}$, with $h\in K(X,A)$ and $u\in\mathcal H_\varphi$, generate a synthesizable variety.
\end{corollary}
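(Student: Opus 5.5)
The plan has two parts: first compute the increment kernel through Bochner's theorem, then obtain the synthesis statement as a direct application of Theorem~\ref{thm:fourier-stieltjes-cocycle-synthesis} with \(E=\mathcal H_\varphi\).

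\textbf{The kernel formula.} Write \(\Phi_\varphi(x)=k(\cdot,x)\), so that \(\langle\Phi_\varphi(x),\Phi_\varphi(y)\rangle_{\mathcal H_\varphi}=\varphi(x-y)\) (up to the conjugation convention, which I fix so that \(k(x,y)=\langle\Phi_\varphi(x),\Phi_\varphi(y)\rangle\)). Expanding the inner product of increments gives
\[ k_h(x,y)=\varphi(x-y)-\varphi(x+h-y)-\varphi(x-y-h)+\varphi(x-y). \]
Substituting \(\varphi=\varphi_{\mu_\varphi}\) from Equation~\eqref{eq:bochner-representation-vpd} turns the integrand into \(\chi(x-y)(2-\chi(h)-\overline{\chi(h)})\). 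Since characters are unimodular, this equals \(\chi(x-y)|\chi(h)-1|^2\), which is Equation~\eqref{eq:rkhs-increment-kernel-spectrum}.

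\textbf{The synthesis claim: setting up the cocycle.} Take \(E=\mathcal H_\varphi\) and \(f=\Phi_\varphi-\Phi_\varphi(0)\). This map is Lipschitz, because \(\|\Phi_\varphi(x)-\Phi_\varphi(y)\|^2=2\varphi(0)-2\operatorname{Re}\varphi(x-y)\) is bounded and \(K(X,A)\) is uniformly discrete. By Theorem~\ref{thm:banach-valued-cocycle}, \(b_h:=\Delta_h\Phi_\varphi\) is a cocycle with \(b_x(0)=\Phi_\varphi(x)-\Phi_\varphi(0)\). By the Riesz representation theorem, every \(\lambda\in E^*\) has the form \(\langle\cdot,u\rangle\), so the functions \(\lambda(b_h)\) are exactly the functions \(x\mapsto\langle\Phi_\varphi(x+h)-\Phi_\varphi(x),u\rangle\) in the statement. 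It therefore remains to verify the hypothesis of Theorem~\ref{thm:fourier-stieltjes-cocycle-synthesis}: that \(x\mapsto\langle\Phi_\varphi(x),u\rangle-\langle\Phi_\varphi(0),u\rangle\) lies in \(B(K(X,A))\). The additive part can be taken to be zero, since constants lie in \(B\) as transforms of point masses at \(\mathbf 1\).

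\textbf{The main step: \(u\mapsto\langle\Phi_\varphi(\cdot),u\rangle\) lands in \(B(K(X,A))\).} By the reproducing property, \(\langle\Phi_\varphi(x),u\rangle=\overline{u(x)}\). So I must show every element of \(\mathcal H_\varphi\) is a Fourier--Stieltjes transform. I would use the spectral realization: the map \(k(\cdot,y)\mapsto\overline{\chi(y)}\in L^2(\mu_\varphi)\) extends to a unitary \(U\colon\mathcal H_\varphi\to L^2(\mu_\varphi)\). Isometry follows from Bochner's formula, and surjectivity from the density of trigonometric polynomials in \(L^2(\mu_\varphi)\), via Stone--Weierstrass as in the proof of Theorem~\ref{thm:fourier-stieltjes-cocycle-synthesis}. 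Under \(U\), each \(u\) corresponds to some \(\psi\in L^2(\mu_\varphi)\) with
\[ u(x)=\langle u,k(\cdot,x)\rangle=\int\chi(x)\psi(\chi)\,d\mu_\varphi(\chi). \]
Since \(\mu_\varphi\) is finite, \(\psi\in L^2\subseteq L^1(\mu_\varphi)\), so \(\psi\,d\mu_\varphi\) is a finite complex measure and \(u\in B(K(X,A))\). Conjugation preserves \(B\), because \(\chi\mapsto\overline\chi\) is a homeomorphism of the dual group, and so the hypothesis holds. Checking the unitary identification and its conjugation conventions is the step needing the most care; the rest is a direct appeal to Theorem~\ref{thm:fourier-stieltjes-cocycle-synthesis}.
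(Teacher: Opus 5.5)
Your proposal is correct and follows the paper's proof essentially step for step: the kernel expansion via Bochner's theorem and $2-\chi(h)-\overline{\chi(h)}=|\chi(h)-1|^2$, Lipschitz continuity of $\Phi_\varphi$ from boundedness plus discreteness, the cocycle from Theorem~\ref{thm:banach-valued-cocycle}, and an embedding $\mathcal H_\varphi\hookrightarrow L^2(\mu_\varphi)\subseteq L^1(\mu_\varphi)$ to verify the hypothesis of Theorem~\ref{thm:fourier-stieltjes-cocycle-synthesis}. The differences are minor. Surjectivity of your $U$ is not needed, since the paper uses only an isometric embedding. Your conjugation bookkeeping, with $k(\cdot,y)\mapsto\overline{\chi(y)}$ and closure of $B(K(X,A))$ under conjugation, is more careful than the paper's assignment $\Phi_\varphi(x)\mapsto(\chi\mapsto\chi(x))$, which under the stated conventions preserves inner products only up to complex conjugation.
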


\begin{proof}
We take Hilbert space inner products to be linear in the first variable. Bochner's theorem \cite[Theorem~4.18]{folland2016course} gives $\varphi(x) = \int_{\widehat{K(X,A)}} \chi(x) \, d \mu_\varphi(\chi)$ for every $x\in K(X,A)$. For every $x\in K(X,A)$, we have $ \| \Phi_\varphi(x) \|_{\mathcal H_\varphi}^2 = k(x,x) = \varphi(0) = \mu_\varphi(\widehat{K(X,A)})$. Since $\rho$ is translation-invariant and $(K(X,A),\rho)$ is discrete, there exists $\eta>0$ such that $\rho(x,y)\geq\eta$ whenever $x\neq y$. For $x\neq y$,
\[ \|\Phi_\varphi(x)-\Phi_\varphi(y)\|_{\mathcal H_\varphi} \leq 2\mu_\varphi(\widehat{K(X,A)})^{1/2} \leq \frac{2\mu_\varphi(\widehat{K(X,A)})^{1/2}}{\eta}\rho(x,y). \]
Thus $\Phi_\varphi$ is Lipschitz.

Set $b_h(x):=\Phi_\varphi(x+h)-\Phi_\varphi(x)$. By Theorem~\ref{thm:banach-valued-cocycle}, the map $b$ is a member of
$Z^1(K(X,A),\ell^\infty(K(X,A),\mathcal H_\varphi))$.

By the reproducing property and the preceding integral representation, the assignment $\Phi_\varphi(x)\mapsto(\chi\mapsto\chi(x))$ preserves inner products on $\operatorname{span}\{\Phi_\varphi(x):x\in K(X,A)\}$. Since the span of the feature vectors is dense in $\mathcal H_\varphi$, this assignment extends to an isometric embedding of $\mathcal H_\varphi$ into $L^2(\mu_\varphi)$.

Therefore, for each $u\in\mathcal H_\varphi$, there exists $g_u\in L^2(\mu_\varphi)$ such that the scalar coordinate $x\mapsto\langle\Phi_\varphi(x),u\rangle_{\mathcal H_\varphi}$ is the Fourier--Stieltjes transform of $g_u\,d\mu_\varphi$. Since $\mu_\varphi$ is finite, we have $g_u\in L^1(\mu_\varphi)$. Thus $x\mapsto\langle\Phi_\varphi(x)-\Phi_\varphi(0),u\rangle_{\mathcal H_\varphi}$ belongs to $B(K(X,A))$.

Since $b_x(0)=\Phi_\varphi(x)-\Phi_\varphi(0)$, the scalar primitive appearing in the hypotheses of Theorem~\ref{thm:fourier-stieltjes-cocycle-synthesis} belongs to $B(K(X,A))$ for every $u\in\mathcal H_\varphi$. By Theorem~\ref{thm:fourier-stieltjes-cocycle-synthesis}, the functions $x\mapsto\langle b_h(x),u\rangle_{\mathcal H_\varphi}$, with $h\in K(X,A)$ and $u\in\mathcal H_\varphi$, generate a synthesizable variety.

For $x,y,h\in K(X,A)$, the reproducing property gives
\begin{align*}
k_h(x,y)
&=
k(x+h,y+h)-k(x+h,y)-k(x,y+h)+k(x,y)\\
&=
\int_{\widehat{K(X,A)}} \chi(x-y)(2-\chi(h)-\chi(-h))\,d\mu_\varphi(\chi).
\end{align*}
Since characters take values in $\mathbb T$, we have $2-\chi(h)-\chi(-h)=|\chi(h)-1|^2$. Hence Equation~\eqref{eq:rkhs-increment-kernel-spectrum} follows.
\end{proof}

\begin{corollary}\label{cor:trigonometric-moment-uniqueness-cocycle}
Let $E$ be a complex Banach space, and let $b$ be an element of $Z^1(K(X,A),\ell^\infty(K(X,A),E))$ satisfy the hypotheses of Theorem~\ref{thm:fourier-stieltjes-cocycle-synthesis}. For each $\lambda\in E^*$, the family $\{\lambda(b_h):h\in K(X,A)\}$ uniquely determines the representing measure of the Fourier--Stieltjes summand in $x\mapsto\lambda(b_x(0))$ under the normalization $\mu(\{\mathbf 1\})=0$.
\end{corollary}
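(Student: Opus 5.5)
Fix \(\lambda\in E^*\) and suppose two decompositions
\[ \lambda(b_x(0)) = a(x)+\varphi_\mu(x) = a'(x)+\varphi_{\mu'}(x), \qquad x\in K(X,A), \]
with \(a,a'\in\operatorname{Hom}(K(X,A),\mathbb C)\) and \(\mu(\{\mathbf 1\})=\mu'(\{\mathbf 1\})=0\). It is enough to show that if the families \(\{\lambda(b_h)\}\) agree then \(\mu=\mu'\). Since the primitive \(x\mapsto\lambda(b_x(0))\) is recovered from the family by \(\lambda(b_x(0))\) (the value of \(\lambda(b_x)\) at \(0\)), it suffices to show that the primitive itself determines \(\mu\) under the normalization. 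So I would set \(\nu:=\mu-\mu'\) and \(c:=a'-a\). The aim is to show that \(\varphi_\nu=c\) with \(c\) additive and \(\nu(\{\mathbf 1\})=0\) forces \(\nu=0\).

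\textbf{Step 1.} Show that \(c=0\). The left side \(\varphi_\nu\) is bounded by \(\|\nu\|_{\mathrm{TV}}\). An additive function with bounded image is zero: if \(c(x)\neq0\), then \(c(nx)=nc(x)\) is unbounded. Hence \(c=0\) and \(\varphi_\nu\equiv0\).

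\textbf{Step 2.} Apply Fourier--Stieltjes uniqueness. From \(\int\chi(y)\,d\nu(\chi)=0\) for all \(y\), I would reuse the Stone--Weierstrass argument from the proof of Theorem~\ref{thm:fourier-stieltjes-cocycle-synthesis}. The span of the evaluations \(\chi\mapsto\chi(y)\) is a unital, self-conjugate, point-separating subalgebra of \(C(\widehat{K(X,A)})\). The dual is compact because \(K(X,A)\) is discrete, so this span is dense. Thus \(\nu\) annihilates \(C(\widehat{K(X,A)})\), and the Riesz representation theorem gives \(\nu=0\).

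\textbf{Alternative route.} One can avoid the primitive entirely by using Equation~\eqref{eq:cocycle-second-difference-fourier-stieltjes}. The second differences \(\tau_g(\lambda(b_h))-\lambda(b_h)\) are the transforms of \((\chi(g)-1)(\chi(h)-1)\,d\mu\). By Step 2, the family therefore determines the measures \((\chi(g)-1)(\chi(h)-1)\,d\mu\) for all \(g,h\). On the set \(\{\chi:\chi(h)\neq1\}\), division recovers \(\mu\). These open sets cover \(\widehat{K(X,A)}\setminus\{\mathbf 1\}\), and \(\mu(\{\mathbf 1\})=0\) handles the remaining point. This route makes the role of the normalization visible: \(\mathbf 1\) is invisible to every increment, because a mass at \(\mathbf 1\) contributes only a constant to the primitive.

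\textbf{Main obstacle.} The only delicate point is the compactness of \(\widehat{K(X,A)}\) needed for Stone--Weierstrass. This holds because \(K(X,A)\) is discrete, which is the standing assumption in this subsection via Theorem~\ref{thm:vpd-local-compactness}. Everything else is routine.
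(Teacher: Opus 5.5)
Your proposal is correct, and your main route differs from the paper's.

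The paper also starts from two representations of the primitive, but it immediately applies $\Delta_h$ and Lemma~\ref{lem:fourier-stieltjes-increment} to get $c(h)+\int_{\widehat{K(X,A)}}\chi(x)(\chi(h)-1)\,d\sigma(\chi)=0$. It then argues in four steps:
\begin{enumerate}
\item $(\chi(h)-1)\,d\sigma$ has constant transform, so by uniqueness it equals $-c(h)\delta_{\mathbf 1}$.
\item $c(h)=0$, because the multiplier vanishes at $\mathbf 1$.
\item $\sigma=0$ off $\mathbf 1$, because the sets $\{\chi(h)\neq1\}$ cover $\widehat{K(X,A)}\setminus\{\mathbf 1\}$.
\item The normalization handles the remaining point $\mathbf 1$.
\end{enumerate}
Your alternative route is essentially this argument, with second differences in place of first ones.

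Your main route instead keeps the primitive, which the indexed family returns exactly via $\lambda(b_x)(0)=\lambda(b_x(0))$. You dispose of the additive part with the elementary fact that a bounded homomorphism into $\mathbb C$ vanishes, and finish with a single application of Fourier--Stieltjes uniqueness. This buys three things:
\begin{enumerate}
\item It is shorter.
\item It avoids the covering step. When $K(X,A)$ is uncountable and its dual is not metrizable, that step implicitly relies on inner regularity of $|\sigma|$ on open sets.
\item It proves slightly more than you state. You never use $\nu(\{\mathbf 1\})=0$, so the decomposition of $x\mapsto\lambda(b_x(0))$ into an additive part plus a Fourier--Stieltjes transform is unique with no normalization at all.
\end{enumerate}

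The normalization only matters when the primitive is known merely modulo constants, as in $\operatorname{Lip}(K(X,A),E)/E$, where a constant can migrate into $\mu(\{\mathbf 1\})$. The paper's increment-level argument is insensitive to such constants by design. Your argument adapts just as easily: write the difference of transforms as an additive function plus a constant, kill the additive part by boundedness, and observe that the remaining measure is a multiple of $\delta_{\mathbf 1}$, which the normalization forces to be zero.
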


\begin{proof}
Fix $\lambda\in E^*$. Suppose that \begin{equation}\label{eq:moment-uniqueness-first-representation}
\lambda(b_x(0)) = a(x) + \int_{\widehat{K(X,A)}}\chi(x)\,d\mu(\chi)
\end{equation}
and
\begin{equation}\label{eq:moment-uniqueness-second-representation}
\lambda(b_x(0)) = a'(x) + \int_{\widehat{K(X,A)}} \chi(x) \, d \nu (\chi)
\end{equation}
for every $x\in K(X,A)$, where $a,a'\in\operatorname{Hom}(K(X,A),\mathbb C)$ and $\mu,\nu$ are finite complex regular Borel measures on $\widehat{K(X,A)}$ satisfying $\mu(\{\mathbf 1\})=\nu(\{\mathbf 1\})=0$. Set $\sigma:=\mu-\nu$ and $c := a - a'$.

Since $x\mapsto b_x(0)$ has increments $b_h$, applying $\Delta_h$ to Equations~\eqref{eq:moment-uniqueness-first-representation} and \eqref{eq:moment-uniqueness-second-representation} and using Lemma~\ref{lem:fourier-stieltjes-increment} gives
\begin{equation}\label{eq:moment-uniqueness-difference}
c(h) + \int_{\widehat{K(X,A)}}\chi(x)(\chi(h)-1)\,d\sigma(\chi) = 0
\end{equation}
for every $x,h\in K(X,A)$.

Fix $h\in K(X,A)$. By Equation~\eqref{eq:moment-uniqueness-difference}, the Fourier--Stieltjes transform of $(\chi(h)-1)d\sigma(\chi)$ is the constant function $-c(h)$. Uniqueness of Fourier--Stieltjes transforms \cite[Theorem~4.33]{folland2016course} gives $(\chi(h)-1)d\sigma(\chi)=-c(h)\delta_{\mathbf 1}$. Since the multiplier $\chi(h)-1$ vanishes at $\mathbf 1$, the measure $(\chi(h)-1)d\sigma(\chi)$ assigns zero mass to $\{\mathbf 1\}$. Hence $c(h)=0$. Thus $a=a'$.

With $c=0$, the Fourier--Stieltjes transform of $(\chi(h)-1)d\sigma(\chi)$ is zero by Equation~\eqref{eq:moment-uniqueness-difference}. By uniqueness of Fourier--Stieltjes transforms, $(\chi(h)-1)d\sigma(\chi)=0$ for every $h\in K(X,A)$. The sets $\{\chi\in\widehat{K(X,A)}:\chi(h)\neq1\}$, with $h\in K(X,A)$, cover $\widehat{K(X,A)}\setminus\{\mathbf 1\}$. Therefore $\sigma$ vanishes on $\widehat{K(X,A)}\setminus\{\mathbf 1\}$. Since $\sigma(\{\mathbf 1\})=0$, we have $\sigma=0$. Thus $\mu=\nu$.
\end{proof}

\begin{corollary}\label{cor:increment-spectral-support}
Let $\mu$ be a finite complex regular Borel measure on $\widehat{K(X,A)}$ satisfying $\mathbf 1\notin\operatorname{supp}(\mu)$. Then
\begin{equation}\label{eq:increment-spectral-support}
\bigcup_{h\in K(X,A)} \operatorname{supp}((\chi(h)-1)\mu) = \operatorname{supp}(\mu).
\end{equation}
\end{corollary}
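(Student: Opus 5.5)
We prove the two inclusions in Equation~\eqref{eq:increment-spectral-support} separately, using only elementary facts about supports of measures with continuous densities. Throughout, write \(\mu_h:=(\chi(h)-1)\mu\). For each fixed \(h\) the multiplier \(\chi\mapsto\chi(h)-1\) is continuous on \(\widehat{K(X,A)}\), because evaluation at \(h\) is continuous in the compact-open topology. We then have \(|\mu_h|=|\chi(h)-1|\,|\mu|\).

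For the inclusion of the left side in the right side, let \(U\) be the complement of \(\operatorname{supp}(\mu)\). Then \(U\) is open and \(|\mu|(U)=0\). Hence \(|\mu_h|(U)=\int_U|\chi(h)-1|\,d|\mu|=0\), so \(\operatorname{supp}(\mu_h)\subseteq\operatorname{supp}(\mu)\) for every \(h\). Since \(\operatorname{supp}(\mu)\) is closed, the union is also contained in it. Depending on whether the union is meant literally or as its closure, the statement reads the same here, because the right side is closed.

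For the reverse inclusion, fix \(\chi_0\in\operatorname{supp}(\mu)\). The hypothesis \(\mathbf 1\notin\operatorname{supp}(\mu)\) gives \(\chi_0\neq\mathbf 1\), so there is \(h_0\in K(X,A)\) with \(\chi_0(h_0)\neq 1\). By continuity there is an open neighbourhood \(W\) of \(\chi_0\) and a constant \(c>0\) such that \(|\chi(h_0)-1|\geq c\) on \(W\).

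Now let \(V\subseteq W\) be any open neighbourhood of \(\chi_0\). Then \(|\mu_{h_0}|(V)\geq c\,|\mu|(V)>0\), where the last inequality holds because \(\chi_0\in\operatorname{supp}(\mu)\). Since every neighbourhood of \(\chi_0\) contains such a \(V\), we get \(\chi_0\in\operatorname{supp}(\mu_{h_0})\), and therefore \(\chi_0\) lies in the union.

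The only delicate point is the existence of \(h_0\) with \(\chi_0(h_0)\ne1\). This is exactly where we use \(\mathbf 1\notin\operatorname{supp}(\mu)\): the character \(\mathbf 1\) is the unique character annihilated by every multiplier \(\chi(h)-1\), which is the same covering fact used in Corollary~\ref{cor:trigonometric-moment-uniqueness-cocycle}. The remaining steps are routine lower bounds on the total variation.

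The hypothesis is genuinely needed. If \(\mathbf 1\) were in the support only as an accumulation point, it would still lie in the closure of the union. If instead \(\mathbf 1\) were an atom of \(\mu\), that atom would be removed by every multiplier, so the union would miss \(\mathbf 1\).
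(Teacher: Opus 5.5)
Your proof is correct and is essentially the paper's argument: absolute continuity of $|(\chi(h)-1)\mu|$ with respect to $|\mu|$ gives one inclusion, and for the other you pick $h$ with $\chi_0(h)\neq 1$ and bound $|\chi(h)-1|$ below by a positive constant on a neighbourhood of $\chi_0$, exactly as the paper does. Only your closing aside is slightly overstated: the union is guaranteed to miss $\mathbf 1$ when $\mathbf 1$ is an isolated atom of $\mu$ (e.g.\ $\mu=\delta_{\mathbf 1}$, which is the clean example showing the hypothesis is needed), but not necessarily when $\mathbf 1$ is an atom that is also a limit of other support points.
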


\begin{proof}
For every \(h\in K(X,A)\), the total variation of \((\chi(h)-1)\mu\) is absolutely continuous with respect to \(|\mu|\). Hence \(\operatorname{supp}((\chi(h)-1)\mu)\subseteq\operatorname{supp}(\mu)\).

Conversely, let \(\chi_0\in\operatorname{supp}(\mu)\). Since \(\mathbf 1\notin\operatorname{supp}(\mu)\), we have \(\chi_0\neq\mathbf 1\). Therefore, there exists \(h\in K(X,A)\) such that \(\chi_0(h)\neq1\). By continuity, there exist an open neighborhood \(U\) of \(\chi_0\) and \(\varepsilon>0\) such that \(|\chi(h)-1|\ge\varepsilon\) for every \(\chi\in U\).

Let \(V\subseteq U\) be an open neighborhood of \(\chi_0\). Since \(\chi_0\in\operatorname{supp}(\mu)\), we have \(|\mu|(V)>0\). Therefore,
\[
|(\chi(h)-1)\mu|(V)
=
\int_V |\chi(h)-1|\,d|\mu|(\chi)
\ge
\varepsilon|\mu|(V)
>
0.
\]
Hence \(\chi_0\in\operatorname{supp}((\chi(h)-1)\mu)\). Since \(\chi_0\) was arbitrary, \(\operatorname{supp}(\mu)\) is a subset of \(\bigcup_{h\in K(X,A)}\operatorname{supp}((\chi(h)-1)\mu)\). Together with the reverse inclusion, this proves Equation~\eqref{eq:increment-spectral-support}.
\end{proof}

\begin{corollary}\label{cor:increment-kernel-spectral-recovery}
With the notation of Corollary~\ref{cor:rkhs-increment-kernel-synthesis}, the family $\{k_h:h\in K(X,A)\}$ determines the restriction $\mu_\varphi|_{\widehat{K(X,A)}\setminus\{\mathbf 1\}}$, and together with $k(0,0)$ determines $\mu_\varphi$.
\end{corollary}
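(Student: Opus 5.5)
Since each $k_h$ is the kernel of the increment feature map, I would work with its spectral representation, Equation~\eqref{eq:rkhs-increment-kernel-spectrum} from Corollary~\ref{cor:rkhs-increment-kernel-synthesis}. Fix $h\in K(X,A)$. Setting $y=0$ gives
\[ k_h(x,0)=\int_{\widehat{K(X,A)}}\chi(x)\,|\chi(h)-1|^2\,d\mu_\varphi(\chi). \]
Thus the function $x\mapsto k_h(x,0)$ is the Fourier--Stieltjes transform of the finite positive measure $|\chi(h)-1|^2\,d\mu_\varphi(\chi)$. By uniqueness of Fourier--Stieltjes transforms \cite[Theorem~4.33]{folland2016course}, $k_h$ determines the measure $\mu_h:=|\chi(h)-1|^2\mu_\varphi$ for every $h$.

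Next I would recover $\mu_\varphi$ away from $\mathbf 1$ from the family $\{\mu_h\}$, following the covering argument in the proof of Corollary~\ref{cor:trigonometric-moment-uniqueness-cocycle}. For each $h$, set $U_h:=\{\chi:\chi(h)\neq1\}$. This set is open, since evaluation at $h$ is continuous on the dual. On $U_h$ the density $|\chi(h)-1|^2$ is strictly positive, so for any Borel set $S\subseteq U_h$,
\[ \mu_\varphi(S)=\int_S|\chi(h)-1|^{-2}\,d\mu_h(\chi). \]
This integral is well defined because $\mu_h\ll\mu_\varphi$ with a density that is positive on $U_h$. Hence $\mu_\varphi|_{U_h}$ is determined by $k_h$. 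The sets $U_h$ cover $\widehat{K(X,A)}\setminus\{\mathbf 1\}$, because a character $\chi\neq\mathbf 1$ has some $h$ with $\chi(h)\neq1$.

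The main obstacle is passing from these local restrictions to the full restriction without countability assumptions, since $K(X,A)$ may be uncountable. I would handle this with a uniqueness formulation rather than a gluing construction. Suppose $\varphi,\psi$ are two positive-definite functions whose measures $\mu,\nu$ produce the same kernels $k_h$ for all $h$. Then $\mu$ and $\nu$ agree on each $U_h$. The signed measure $\sigma=\mu-\nu$ is regular, so $|\sigma|$ is inner regular on open sets. Any compact subset of $\widehat{K(X,A)}\setminus\{\mathbf 1\}$ is covered by finitely many $U_{h_1},\dots,U_{h_m}$. A finite inclusion–exclusion over these sets, or equivalently the observation that $|\sigma|(U_{h_j})=0$ for each $j$, gives $|\sigma|(\text{compact})=0$. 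Inner regularity then gives $|\sigma|(\widehat{K(X,A)}\setminus\{\mathbf 1\})=0$.

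Finally, $k(0,0)=\varphi(0)=\mu_\varphi(\widehat{K(X,A)})$. Therefore
\[ \mu_\varphi(\{\mathbf 1\})=k(0,0)-\mu_\varphi\bigl(\widehat{K(X,A)}\setminus\{\mathbf 1\}\bigr), \]
and the right-hand side is already determined by the family $\{k_h\}$. So the $k_h$ together with $k(0,0)$ determine $\mu_\varphi$ entirely.
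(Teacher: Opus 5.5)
Your proof is correct and follows the paper's own argument. Both apply uniqueness of Fourier--Stieltjes transforms to $x\mapsto k_h(x,0)$, divide out the multiplier $|\chi(h)-1|^2$ where $\chi(h)\neq 1$, cover $\widehat{K(X,A)}\setminus\{\mathbf 1\}$ by such sets, and use $k(0,0)=\mu_\varphi(\widehat{K(X,A)})$ for the atom at $\mathbf 1$. The differences are minor: you divide by the unbounded density on all of $U_h$, which is legitimate because the measures are positive, whereas the paper uses a bounded reciprocal of the signed difference on a smaller neighborhood. You also spell out the compactness and inner-regularity step that passes from local vanishing to vanishing on $\widehat{K(X,A)}\setminus\{\mathbf 1\}$, which the paper asserts directly from the covering.
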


\begin{proof}
By Equation~\eqref{eq:rkhs-increment-kernel-spectrum}, the positive-definite function $x\mapsto k_h(x,0)$ has the Bochner measure $|\chi(h)-1|^2\mu_\varphi$. Hence, uniqueness in Bochner's theorem shows that each $k_h$ determines $|\chi(h)-1|^2\mu_\varphi$.

Let $\nu$ be a finite positive regular Borel measure on $\widehat{K(X,A)}$ such that the Fourier--Stieltjes transforms of $|\chi(h)-1|^2\nu$ and $|\chi(h)-1|^2\mu_\varphi$ agree for every $h\in K(X,A)$. Set $\sigma:=\nu-\mu_\varphi$. By uniqueness of Fourier -- Stieltjes transforms, $|\chi(h)-1|^2\sigma=0$ for every $h\in K(X,A)$.

Let $\chi_0\in\widehat{K(X,A)}$ satisfy $\chi_0\neq\mathbf 1$. Choose $h\in K(X,A)$ such that $\chi_0(h)\neq1$. By continuity, $|\chi(h)-1|^2$ has a bounded Borel reciprocal on an open neighborhood $U$ of $\chi_0$. Multiplying the restricted measure $|\chi(h)-1|^2\sigma|_U$ by this reciprocal gives $\sigma|_U=0$. These neighborhoods cover $\widehat{K(X,A)}\setminus\{\mathbf 1\}$. Thus $\sigma$ vanishes on $\widehat{K(X,A)}\setminus\{\mathbf 1\}$, so the restricted measures agree.

Finally, $k(0,0)=\varphi(0)=\mu_\varphi(\widehat{K(X,A)})$. The restriction determines $\mu_\varphi(\widehat{K(X,A)}\setminus\{\mathbf 1\})$, so the total mass determines $\mu_\varphi(\{\mathbf 1\})$. Thus $\{k_h:h\in K(X,A)\}$ together with $k(0,0)$ determines $\mu_\varphi$.
\end{proof}

\subsection{Extending Spectral Synthesis to Separable Metric Pairs}


Let the metric pair \((X,d,A)\) be separable, let \(E\) be a complex Banach space, and let the map \(f\colon K(X,A)\to E\) be Lipschitz. For \(A\subseteq Z\subseteq X\), let \(i_Z\colon K(Z,A)\to K(X,A)\) denote the inclusion-induced homomorphism.

The variety \(V_Z(f)\) records the scalar spectral data visible on a uniformly discrete subpair. The normalized variety \(\mathcal N_Z(f)\) also records the scalar translates obtained by restricting from uniformly discrete subpairs that contain \(Z\).

A subpair \(A\subseteq Z\subseteq X\) is uniformly discrete when \((Z/A,d_1,[A])\) is uniformly discrete. For each uniformly discrete subpair \(A\subseteq Z\subseteq X\), let
\[ V_Z(f) := \operatorname{Var}_{K(Z,A)} \{\lambda\circ f\circ i_Z:\lambda\in E^*\}, \]
where \(\operatorname{Var}_{K(Z,A)}\) denotes the variety on \(K(Z,A)\) generated by the indicated family of functions.

For \(A\subseteq Z_0\subseteq Z_1\subseteq X\), with \(Z_0\) and \(Z_1\) uniformly discrete, let
\[
R_{Z_1,Z_0}\colon C(K(Z_1,A))\to C(K(Z_0,A))
\]
denote restriction along the inclusion-induced homomorphism \(K(Z_0,A)\to K(Z_1,A)\). These maps satisfy \(R_{Z,Z}=\operatorname{id}\) and \(R_{Z_1,Z_0}R_{Z_2,Z_1}=R_{Z_2,Z_0}\) whenever \(A\subseteq Z_0\subseteq Z_1\subseteq Z_2\subseteq X\) and \(Z_0,Z_1,Z_2\) are uniformly discrete. Each \(R_{Z_1,Z_0}\) is continuous for pointwise convergence because finite subsets of \(K(Z_0,A)\) map to finite subsets of \(K(Z_1,A)\).

\begin{definition}\label{def:normalized-spectral-variety}
For a uniformly discrete subpair \(A\subseteq Z\subseteq X\), define
\[
\mathcal N_Z(f)
:=
\operatorname{Var}_{K(Z,A)}
\left\{
R_{Y,Z}\bigl(\tau_h(\lambda\circ f\circ i_Y)\bigr)
:
Z\subseteq Y
\right\},
\]
where \(Y\) ranges over uniformly discrete subpairs containing \(Z\), \(h\) ranges over \(K(Y,A)\), and \(\lambda\) ranges over \(E^*\).
\end{definition}

\begin{figure}[t]
\centering

\begin{tikzpicture}[scale=1.0, every node/.style={font=\small}]

\tikzset{
  zpoint/.style={fill=black!65},
  support/.style={fill=black},
  netpoint/.style={fill=blue},
  approx/.style={dashed,thick,->,shorten >=2pt,shorten <=2pt}
}


\draw[gray!65,line width=0.65pt,rounded corners] (0,0) rectangle (6.4,3.6);
\node[gray!70] at (0.90,3.35) {$(X/A,d_1)$};

\foreach \x/\y in {
0.25/0.40,1.55/0.40,2.85/0.40,4.15/0.40,5.45/0.40,
0.90/1.35,2.20/1.35,3.50/1.35,4.80/1.35,6.10/1.35,
0.25/2.30,1.55/2.30,2.85/2.30,4.15/2.30,5.45/2.30,
0.90/3.20,2.20/3.20,3.50/3.20,4.80/3.20,6.10/3.20}{
  \fill[blue!14,opacity=0.20] (\x,\y) circle (1.08);
}

\fill[gray!30,opacity=0.22] (2.85,1.75) ellipse (2.35 and 1.20);
\fill[gray!55,opacity=0.18] (3.65,1.75) ellipse (2.15 and 1.08);

\begin{scope}
  \clip (2.85,1.75) ellipse (2.35 and 1.20);
  \fill[gray!70,opacity=0.25] (3.65,1.75) ellipse (2.15 and 1.08);
  \fill[
    pattern=north east lines,
    pattern color=gray!55,
    opacity=0.35
  ] (3.65,1.75) ellipse (2.15 and 1.08);
\end{scope}

\draw[gray!60,line width=0.65pt] (2.85,1.75) ellipse (2.35 and 1.20);
\draw[gray!60,line width=0.65pt] (3.65,1.75) ellipse (2.15 and 1.08);

\node[gray!70] at (0.90,2.72) {$Y/A$};
\node[gray!70] at (5.35,2.72) {$Y'/A$};

\foreach \x/\y in {
0.25/0.40,1.55/0.40,2.85/0.40,4.15/0.40,5.45/0.40,
0.90/1.35,2.20/1.35,3.50/1.35,4.80/1.35,6.10/1.35,
0.25/2.30,1.55/2.30,2.85/2.30,4.15/2.30,5.45/2.30,
0.90/3.20,2.20/3.20,3.50/3.20,4.80/3.20,6.10/3.20}{
  \fill[netpoint] (\x,\y) circle (1.35pt);
}

\node[blue] at (5.65,0.18) {$Y_n/A$};

\foreach \x/\y in {
2.75/1.85,3.30/1.55,3.65/1.75}{
  \fill[zpoint] (\x,\y) circle (1.35pt);
}

\node at (4.09,1.85) {$Z/A$};

\foreach \x/\y in {2.75/1.85,3.30/1.55,3.65/1.75}{
  \fill[support] (\x,\y) circle (2.45pt);
  \draw[white,line width=0.45pt] (\x,\y) circle (1.45pt);
}

\draw[approx] (2.75,1.85) -- (2.85,2.30);
\draw[approx] (3.30,1.55) -- (3.50,1.35);
\draw[approx] (3.65,1.75) -- (3.50,2.30);

\node at (3.45,2.50) {$\mathcal N_Z(f)$};

\end{tikzpicture}

\caption{ The \(\varepsilon_n\)-nets \(Y_n/A\subseteq X/A\) show how \(P_n^Z\) approximates finite supports in \(Z/A\) by points of \(Y_n/A\). The overlap represents compatible restrictions from larger uniformly discrete subpairs to the \(Z\)-level, whose restricted translates generate \(\mathcal N_Z(f)\). }
\label{fig:uniformly-discrete-normalization}
\end{figure}

\begin{lemma}\label{lem:normalized-spectral-variety-basic}
For every uniformly discrete subpair \(A\subseteq Z\subseteq X\), we have \(V_Z(f)\subseteq\mathcal N_Z(f)\). If \(A\subseteq Z_0\subseteq Z_1\subseteq X\), with \(Z_0\) and \(Z_1\) uniformly discrete, then
\[ R_{Z_1,Z_0}(\mathcal N_{Z_1}(f)) \subseteq \mathcal N_{Z_0}(f). \]
\end{lemma}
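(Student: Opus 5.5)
For the first inclusion, take \(Y=Z\) and \(h=0\) in Definition~\ref{def:normalized-spectral-variety}. Since \(R_{Z,Z}=\operatorname{id}\) and \(\tau_0=\operatorname{id}\), each generator \(\lambda\circ f\circ i_Z\) of \(V_Z(f)\) lies among the generators of \(\mathcal N_Z(f)\). Because \(V_Z(f)\) is the smallest variety containing these functions and \(\mathcal N_Z(f)\) is a variety containing them, \(V_Z(f)\subseteq\mathcal N_Z(f)\).

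For the second inclusion, I would reduce to generators. Let \(\mathcal G_{Z_1}\) be the set of generators \(R_{Y,Z_1}(\tau_h(\lambda\circ f\circ i_Y))\) with \(Z_1\subseteq Y\) uniformly discrete, \(h\in K(Y,A)\), \(\lambda\in E^*\). Then \(\mathcal N_{Z_1}(f)\) is the closure of the span of all translates \(\tau_y g\), with \(g\in\mathcal G_{Z_1}\) and \(y\in K(Z_1,A)\).

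First, \(R_{Z_1,Z_0}\) commutes with translations in the following sense. Write \(j\colon K(Z_0,A)\to K(Z_1,A)\) and \(j_Y\colon K(Z_1,A)\to K(Y,A)\) for the inclusion-induced homomorphisms. For \(y\in K(Z_1,A)\) we have
\[
\tau_y\,R_{Y,Z_1}(u)=R_{Y,Z_1}(\tau_{j_Y(y)}u),
\]
because \(j_Y\) is a homomorphism. Hence
\[
\tau_y R_{Y,Z_1}\bigl(\tau_h(\lambda\circ f\circ i_Y)\bigr)=R_{Y,Z_1}\bigl(\tau_{h+j_Y(y)}(\lambda\circ f\circ i_Y)\bigr)\in\mathcal G_{Z_1}.
\]
So \(\mathcal G_{Z_1}\) is already translation-invariant, and \(\mathcal N_{Z_1}(f)=\overline{\operatorname{span}}\,\mathcal G_{Z_1}\).

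Next, apply \(R_{Z_1,Z_0}\) to a generator. By the composition rule \(R_{Z_1,Z_0}R_{Y,Z_1}=R_{Y,Z_0}\), the image is \(R_{Y,Z_0}(\tau_h(\lambda\circ f\circ i_Y))\). Since \(Y\supseteq Z_1\supseteq Z_0\) is uniformly discrete, this is a generator of \(\mathcal N_{Z_0}(f)\). By linearity, \(R_{Z_1,Z_0}(\operatorname{span}\mathcal G_{Z_1})\subseteq\mathcal N_{Z_0}(f)\).

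Finally, pass to closures. \(R_{Z_1,Z_0}\) is continuous for pointwise convergence, as noted before the definition, and \(\mathcal N_{Z_0}(f)\) is closed. Therefore
\[
R_{Z_1,Z_0}\bigl(\overline{\operatorname{span}}\,\mathcal G_{Z_1}\bigr)\subseteq\overline{R_{Z_1,Z_0}(\operatorname{span}\mathcal G_{Z_1})}\subseteq\mathcal N_{Z_0}(f).
\]

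The only mildly delicate step is the description of the generated variety as the closed span of the translates of the generators. This holds because that closed span is translation-invariant, by continuity of \(\tau_y\) for pointwise convergence, and is contained in every variety containing the generators. The translation-invariance of \(\mathcal G_{Z_1}\) shown above then removes the need to handle translates separately.
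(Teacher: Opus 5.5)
Your proof is correct, and your first inclusion is argued exactly as in the paper. For the second inclusion your route differs from the paper's.

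The paper pulls back. It observes that \(R_{Z_1,Z_0}\) sends each generator of \(\mathcal N_{Z_1}(f)\) to a generator of \(\mathcal N_{Z_0}(f)\), asserts that \(R_{Z_1,Z_0}^{-1}(\mathcal N_{Z_0}(f))\) is a variety on \(K(Z_1,A)\), and concludes by minimality of the generated variety.

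You push forward instead. You first show that the generating family \(\mathcal G_{Z_1}\) is closed under all translations by \(K(Z_1,A)\), identify \(\mathcal N_{Z_1}(f)\) with \(\overline{\operatorname{span}}\,\mathcal G_{Z_1}\), and then use linearity and pointwise continuity of \(R_{Z_1,Z_0}\).

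The paper's version is shorter, but its key assertion silently needs exactly your extra observation. The preimage \(R_{Z_1,Z_0}^{-1}(\mathcal N_{Z_0}(f))\) is closed and linear, but it is only guaranteed to be invariant under translations by elements of the image of \(K(Z_0,A)\). For \(y\) outside that image, \(R_{Z_1,Z_0}(\tau_y u)\) reads \(u\) on a different coset. For instance, if \(f\) is a nonzero constant and \(Z_0\subsetneq Z_1\), the preimage consists of the functions that are constant on the image of \(K(Z_0,A)\), and this set is not translation-invariant.

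Your identity \(\tau_y R_{Y,Z_1}(u)=R_{Y,Z_1}(\tau_{j_Y(y)}u)\) is what closes this gap. It can also be used to repair the pullback argument: it shows that the genuine variety \(\{u:R_{Z_1,Z_0}(\tau_y u)\in\mathcal N_{Z_0}(f)\text{ for all }y\in K(Z_1,A)\}\) contains the generators of \(\mathcal N_{Z_1}(f)\).
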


\begin{proof}
Taking \(Y=Z\) and \(h=0\) in Definition~\ref{def:normalized-spectral-variety} gives \(R_{Z, Z} = \operatorname{id}\), so \(\mathcal N_Z(f)\) contains every function \(\lambda\circ f\circ i_Z\). Since \(\mathcal N_Z(f)\) is a variety, it contains \(V_Z(f)\).

Let \(A\subseteq Z_0\subseteq Z_1\subseteq X\), with \(Z_0\) and \(Z_1\) uniformly discrete. A generator of \(\mathcal N_{Z_1}(f)\) has the form \(R_{Y,Z_1}(\tau_h(\lambda\circ f\circ i_Y))\), where \(Z_1\subseteq Y\), the subpair \(A\subseteq Y\subseteq X\) is uniformly discrete, \(h\in K(Y,A)\), and \(\lambda\in E^*\). By functoriality of inclusion-induced homomorphisms and restriction maps, \(R_{Z_1,Z_0}R_{Y,Z_1}(\tau_h(\lambda\circ f\circ i_Y)) = R_{Y,Z_0}(\tau_h(\lambda\circ f\circ i_Y))\). The right side belongs to the generating family for \(\mathcal N_{Z_0}(f)\). Thus the inverse image \(R_{Z_1,Z_0}^{-1}(\mathcal N_{Z_0}(f))\) is a variety containing the generating family of \(\mathcal N_{Z_1}(f)\). The definition of generated variety gives \(\mathcal N_{Z_1}(f)\subseteq R_{Z_1,Z_0}^{-1}(\mathcal N_{Z_0}(f))\), which is equivalent to \(R_{Z_1,Z_0}(\mathcal N_{Z_1}(f))\subseteq\mathcal N_{Z_0}(f)\).
\end{proof}

\begin{theorem}\label{thm:normalized-spectral-variety-minimal}
Let \(Z\mapsto\mathcal W_Z\) assign a variety \(\mathcal W_Z\subseteq C(K(Z,A))\) to each uniformly discrete subpair \(A\subseteq Z\subseteq X\). Suppose that \(V_Z(f)\subseteq\mathcal W_Z\) for every uniformly discrete subpair \(A\subseteq Z\subseteq X\), and suppose that \(R_{Z_1,Z_0}(\mathcal W_{Z_1})\subseteq\mathcal W_{Z_0}\) whenever \(A\subseteq Z_0\subseteq Z_1\subseteq X\), with \(Z_0\) and \(Z_1\) uniformly discrete. Then \(\mathcal N_Z(f)\subseteq\mathcal W_Z\) for every uniformly discrete subpair \(A\subseteq Z\subseteq X\).
\end{theorem}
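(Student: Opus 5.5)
Since \(\mathcal N_Z(f)\) is by definition the smallest variety on \(K(Z,A)\) containing its generating family, it suffices to show that every generator \(R_{Y,Z}\bigl(\tau_h(\lambda\circ f\circ i_Y)\bigr)\) lies in \(\mathcal W_Z\). Here \(Y\supseteq Z\) is a uniformly discrete subpair, \(h\in K(Y,A)\) and \(\lambda\in E^*\). Once this is shown, the definition of the generated variety gives \(\mathcal N_Z(f)\subseteq\mathcal W_Z\), because \(\mathcal W_Z\) is a variety.

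Fix such a generator. First, \(\lambda\circ f\circ i_Y\) is one of the functions generating \(V_Y(f)\), so the hypothesis \(V_Y(f)\subseteq\mathcal W_Y\) puts it in \(\mathcal W_Y\). Second, \(\mathcal W_Y\) is a variety on \(K(Y,A)\) and is therefore invariant under every translation \(\tau_h\) with \(h\in K(Y,A)\). This gives \(\tau_h(\lambda\circ f\circ i_Y)\in\mathcal W_Y\). Third, the compatibility hypothesis applied to \(Z_0=Z\) and \(Z_1=Y\) gives \(R_{Y,Z}(\mathcal W_Y)\subseteq\mathcal W_Z\). Hence the generator lies in \(\mathcal W_Z\).

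Note that the generator uses the translate \(\tau_h\) on \(K(Y,A)\) and only then restricts, so it is essential that the translation is performed in \(\mathcal W_Y\), where \(h\) lives, and not in \(\mathcal W_Z\). The argument is short, and the only point needing care is this order: translate inside \(\mathcal W_Y\) first, then restrict. No closure or continuity arguments are needed, since we only check membership of generators and then invoke minimality of the generated variety.

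The statement combined with Lemma~\ref{lem:normalized-spectral-variety-basic} shows that \(Z\mapsto\mathcal N_Z(f)\) is the minimal restriction-compatible family of varieties containing the \(V_Z(f)\).
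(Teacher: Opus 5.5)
Your proof is correct and follows the paper's argument step for step. Each generator \(R_{Y,Z}\bigl(\tau_h(\lambda\circ f\circ i_Y)\bigr)\) lies in \(\mathcal W_Z\) because \(\lambda\circ f\circ i_Y\in V_Y(f)\subseteq\mathcal W_Y\), translation invariance of \(\mathcal W_Y\) absorbs \(\tau_h\), and the restriction hypothesis with \(Z_0=Z\), \(Z_1=Y\) finishes the membership check, after which minimality of the generated variety gives \(\mathcal N_Z(f)\subseteq\mathcal W_Z\).
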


\begin{proof}
Fix a uniformly discrete subpair \(A\subseteq Z\subseteq X\). A generator of \(\mathcal N_Z(f)\) has the form \(R_{Y,Z}(\tau_h(\lambda\circ f\circ i_Y))\), where \(Z\subseteq Y\), \(Y\) is uniformly discrete, \(h\in K(Y,A)\), and \(\lambda\in E^*\). Since \(\mathcal W_Y\) is a variety and \(\lambda\circ f\circ i_Y\in V_Y(f)\subseteq\mathcal W_Y\), we have \(\tau_h(\lambda\circ f\circ i_Y)\in\mathcal W_Y\). The restriction property gives \(R_{Y,Z}(\tau_h(\lambda\circ f\circ i_Y))\in\mathcal W_Z\). Thus every generator of \(\mathcal N_Z(f)\) belongs to \(\mathcal W_Z\). Since \(\mathcal W_Z\) is a variety, Definition~\ref{def:normalized-spectral-variety} gives \(\mathcal N_Z(f)\subseteq\mathcal W_Z\).
\end{proof}

By Lemma~\ref{lem:normalized-spectral-variety-basic} and Theorem~\ref{thm:normalized-spectral-variety-minimal}, \(Z\mapsto\mathcal N_Z(f)\) is minimal among families of varieties that contain \(Z\mapsto V_Z(f)\) and satisfy \(R_{Z_1,Z_0}(\mathcal W_{Z_1})\subseteq\mathcal W_{Z_0}\) whenever \( A \subseteq Z_0 \subseteq Z_1 \subseteq X \), with \(Z_0\) and \(Z_1\) uniformly discrete.

\begin{definition}\label{def:vectorization-spectral-synthesis}
The Lipschitz function \(f\) has uniformly discrete spectral synthesis if \( V_Z(f) \) is synthesizable for every uniformly discrete subpair \( A \subseteq Z \subseteq X \).
\end{definition}


\begin{lemma}\label{lem:separable-uniformly-discrete-approximants}
Let \((X,d,A)\) be a separable metric pair. There exist subpairs \( A \subseteq Y_n \subseteq X \) such that \( (Y_n/A, d_1, [A]) \) is uniformly discrete for every \(n\) and the images of \(Y_n/A\) in \(X/A\) form \(\varepsilon_n\)-nets in \((X/A,d_1)\) for some sequence \(\varepsilon_n\downarrow0\).
\end{lemma}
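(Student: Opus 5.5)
We prove the lemma by choosing maximal separated subsets of \(X/A\) at scales \(\varepsilon_n=2^{-n}\) and pulling them back to \(X\).

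\textbf{Step 1: maximal separated sets.} Fix \(\varepsilon_n:=2^{-n}\), so \(\varepsilon_n\downarrow0\). For each \(n\), apply Zorn's lemma to the family of subsets \(S\subseteq X/A\) that contain \([A]\) and satisfy \(\overline d_1(s,t)\geq\varepsilon_n\) for all distinct \(s,t\in S\). This family is partially ordered by inclusion. It is nonempty, since it contains \(\{[A]\}\). It is closed under unions of chains, because the separation condition only involves pairs of points. Let \(S_n\) be a maximal element.

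\textbf{Step 2: \(S_n\) is an \(\varepsilon_n\)-net.} Suppose some \(p\in X/A\) has \(\overline d_1(p,s)\geq\varepsilon_n\) for every \(s\in S_n\). Then \(p\notin S_n\), and \(S_n\cup\{p\}\) is still \(\varepsilon_n\)-separated and contains \([A]\). This contradicts maximality. Hence every point of \(X/A\) lies within distance \(<\varepsilon_n\) of some point of \(S_n\).

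\textbf{Step 3: pull back to \(X\).} Set \(Y_n:=q^{-1}(S_n)\subseteq X\). Since \([A]\in S_n\), we have \(A\subseteq Y_n\). The image \(q(Y_n)\) in \(X/A\) is exactly \(S_n\), so it is an \(\varepsilon_n\)-net by Step 2.

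\textbf{Step 4: uniform discreteness of \(Y_n/A\).} The main point to check is that the quotient metric on \(Y_n/A\) agrees with the restriction of \(\overline d_1\) to \(S_n\). For \(x,y\in Y_n\), the distance \(d_1(x,y)=\min\{d(x,y),d(x,A)+d(y,A)\}\) is computed from \(d\) and from distances to \(A\). Both are the same whether we work in \(Y_n\) or in \(X\), because \(A\subseteq Y_n\) and the infimum defining \(d(x,A)\) only ranges over \(A\). Therefore \(Y_n/A\) is isometric to \((S_n,\overline d_1)\), with base point \([A]\). Distinct points of \(S_n\) are at distance at least \(\varepsilon_n>0\), so \((Y_n/A,d_1,[A])\) is uniformly discrete.

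\textbf{On separability.} The construction above does not use separability. If one also wants each \(Y_n/A\) to be countable, use separability as follows. Fix a countable dense set \(\{p_k\}\) in \(X/A\), which exists because \(q\) is \(1\)-Lipschitz from \((X,d)\) to \((X/A,\overline d_1)\) and \(X\) is separable. Build \(S_n\) greedily: start from \([A]\) and add \(p_k\) whenever it is at distance \(\geq\varepsilon_n\) from all points already chosen. The resulting set is countable and \(\varepsilon_n\)-separated. Every \(p_k\) lies within \(\varepsilon_n\) of \(S_n\), since it was either chosen or rejected for being too close to a chosen point. By density, every point of \(X/A\) then lies within \(2\varepsilon_n\) of \(S_n\). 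Replacing \(\varepsilon_n\) by \(2\varepsilon_n\) still gives a sequence decreasing to \(0\).
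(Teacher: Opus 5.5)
Your proof is correct, and its skeleton matches the paper's. You build a uniformly separated net in \((X/A,\overline d_1)\) containing \([A]\), lift it to \(X\), and observe that the strengthened quotient metric on \(Y_n/A\) is the restriction of the one on \(X/A\) because \(A\subseteq Y_n\).

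The difference is in how the net is produced. The paper enumerates a countable dense set \(\{p_j\}\subseteq X/A\) containing \([A]\), and greedily keeps \(p_j\) when it is at distance at least \(\varepsilon_n/2\) from the points already kept. The \(\varepsilon_n\)-net property then follows from density and the triangle inequality. This is exactly your closing remark, with the thresholds rescaled by a factor of \(2\). Your main argument instead takes a maximal \(\varepsilon_n\)-separated set by Zorn's lemma and gets the net property, with strict inequality, directly from maximality. Your route shows that separability is not needed for this lemma, at the cost of a non-constructive choice argument. The paper's route is explicit and uses only a fixed enumeration.

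Two small remarks. First, your full preimage \(q^{-1}(S_n)\) coincides with the paper's ``\(A\) together with one representative per class'', since \(q\) collapses only \(A\). So no choice of representatives is needed. Second, your final paragraph is not needed to obtain countability. Under the lemma's hypothesis, any \(\varepsilon\)-separated subset of the separable space \(X/A\) is automatically countable: the open \(\varepsilon/2\)-balls around its points are pairwise disjoint, and each meets a fixed countable dense set. The paper uses the same observation later for \(Z/A\) in Lemma~\ref{lem:approximants-give-fourier-stieltjes-coordinates}.
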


\begin{proof}
The quotient \(X/A\) is separable because \(X\to X/A\) is \(1\)-Lipschitz. The image of a countable dense subset of \(X\) is dense in \(X/A\). Choose positive numbers \(\varepsilon_n\downarrow0\), and choose a countable dense subset \(\{p_j:j\ge1\}\subseteq X/A\) containing \([A]\).

For each \( n \), construct \( Q_n \) recursively from this enumeration. Start with \( Q_n = \{ [A] \} \). Having constructed the subset from \(p_1,\ldots,p_{j-1}\), add \(p_j\) exactly when its \(d_1\)-distance from the current set is at least \(\varepsilon_n/2\). Then \(Q_n\) is countable and \(\varepsilon_n/2\)-separated.

We prove that \(Q_n\) is an \(\varepsilon_n\)-net. Let \(p\in X/A\). Choose \(p_j\) such that \(d_1(p,p_j)<\varepsilon_n/2\). If \(p_j\in Q_n\), take \(q=p_j\). If \(p_j\notin Q_n\), the construction gives \(q\in Q_n\) such that \(d_1(p_j,q)<\varepsilon_n/2\). Hence \(d_1(p,q)<\varepsilon_n\).

For every \(q\in Q_n\setminus\{[A]\}\), choose one representative in \(X\), and let \(Y_n\) be the union of \(A\) with these representatives. The inclusion-induced map \(Y_n/A\to X/A\) has image \(Q_n\). For \(y,y'\in Y_n\), Equations~\eqref{eq:one-strengthening} and~\eqref{eq:quotient-metric} imply that the quotient metrics on \(Y_n/A\) and \(X/A\) assign the same distance to the classes of \(y\) and \(y'\). Hence \(Y_n/A\) is isometric to \(Q_n\) with its induced \(d_1\)-metric, so \((Y_n/A,d_1,[A])\) is uniformly discrete.
\end{proof}

\begin{lemma}\label{lem:approximating-homomorphisms}
Let \(Y_n\) and \(\varepsilon_n\) be as in Lemma~\ref{lem:separable-uniformly-discrete-approximants}. For every subpair \(A\subseteq Z\subseteq X\), let \(i_{Y_n}\) and \(i_Z\) denote the inclusion-induced homomorphisms into \(K(X,A)\). There exist group homomorphisms \(P_n^Z\colon K(Z,A)\to K(Y_n,A)\) such that \(\rho(i_{Y_n}P_n^Z\xi,i_Z\xi)\to0\) for every \(\xi\in K(Z,A)\).
\end{lemma}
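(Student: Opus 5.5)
Since \(K(Z,A)\) is the free abelian group on \(Z\setminus A\) (via \(D(Z,A)\cong D(Z\setminus A)\) and Grothendieck completion), a homomorphism out of \(K(Z,A)\) is determined by arbitrary values on the free generators. We therefore define \(P_n^Z\) on generators by choosing, for each \(z\in Z\setminus A\), a point \(y_n(z)\in Y_n\) whose class satisfies \(\overline d_1(q(z),q(y_n(z)))<\varepsilon_n\); such a point exists because \(Y_n/A\) is an \(\varepsilon_n\)-net in \((X/A,d_1)\) by Lemma~\ref{lem:separable-uniformly-discrete-approximants}. If \(q(y_n(z))=[A]\), then \(y_n(z)\in A\) and the generator is sent to \(0\). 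We extend \(P_n^Z\) by linearity, sending \(\sum m_i z_i-\sum m'_j z'_j\) to \(\sum m_i y_n(z_i)-\sum m'_j y_n(z'_j)\). This uses the axiom of choice when \(Z\) is uncountable, which is harmless. Well-definedness is automatic from freeness.

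For the convergence, we first bound the distance on a single generator. By Definition~\ref{def:vpd-metric}, \(\rho(i_{Y_n}P_n^Z z, i_Z z)=W_1(y_n(z),z)\) as one-point diagrams in \(D(X,A)\). The matching pairing \(z\) with \(y_n(z)\) costs \(d_1(z,y_n(z))<\varepsilon_n\), so \(W_1(y_n(z),z)<\varepsilon_n\). (If \(y_n(z)\in A\), matching \(z\) to the diagonal costs \(d_1(z,A)\le d_1(z,y_n(z))\) as well.)

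Now take a general \(\xi=\sum_i c_i z_i\) with \(c_i\in\mathbb Z\), a finite sum. The map \(i_{Y_n}P_n^Z-i_Z\colon K(Z,A)\to K(X,A)\) is a homomorphism, so \(\rho(i_{Y_n}P_n^Z\xi,i_Z\xi)=\rho\bigl((i_{Y_n}P_n^Z-i_Z)\xi,0\bigr)\) by translation invariance. The triangle inequality together with translation invariance gives \(\rho(u+v,0)\le\rho(u,0)+\rho(v,0)\) and \(\rho(-u,0)=\rho(u,0)\). Hence
\[
\rho(i_{Y_n}P_n^Z\xi,i_Z\xi)\le\sum_i|c_i|\,\rho(i_{Y_n}P_n^Z z_i,i_Z z_i)\le\Bigl(\sum_i|c_i|\Bigr)\varepsilon_n\to0 .
\]

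The main point requiring care is the single-generator estimate: one must check that the Grothendieck metric of a difference of one-point classes reduces to \(W_1\) of one-point diagrams, which is immediate from Equation~\eqref{eq:groth-metric} with \(\beta=\delta=0\). One must also handle representatives lying in \(A\) consistently, which works because \(D(X,A)=D(X)/D(A)\) identifies them with \(0\) and \(d_1\) already accounts for diagonal matching. The remaining steps are routine. No uniform discreteness of \(Z\) is needed.
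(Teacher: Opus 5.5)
Your proof is correct and follows essentially the same route as the paper: choose, for each off-$A$ point of $Z$, a point of $Y_n$ within $\varepsilon_n$ in $d_1$, extend to a homomorphism $P_n^Z$ (you via freeness of $K(Z,A)$ on $Z\setminus A$, the paper via the induced monoid map on $D(Z,A)$ and the universal property of the Grothendieck group), and bound $\rho(i_{Y_n}P_n^Z\xi,i_Z\xi)$ by $\varepsilon_n$ times the number of points counted with multiplicity. Your generator-by-generator estimate, using translation invariance and the triangle inequality, makes explicit the subadditivity of $W_1$ under sums that the paper uses implicitly in its final bound $(N(\alpha)+N(\beta))\varepsilon_n$.
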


\begin{proof}
For each \(n\), define \(\widetilde r_n^Z\colon Z\to Y_n\). For \(a\in A\), set \(\widetilde r_n^Z(a)=a\). For \(z\in Z\setminus A\), choose \(\widetilde r_n^Z(z)\in Y_n\) whose class satisfies \(d_1([\widetilde r_n^Z(z)],[z])\leq\varepsilon_n\). The \( \varepsilon_n \)-net condition gives such a choice.

The induced map sends each off-\(A\) point of a diagram to its \( \widetilde r_n^Z \)-image, and points mapped into \(A\) represent the diagonal class in \(D(Y_n,A)\). Since each diagram has finite support, this assignment gives a monoid homomorphism \(D(Z,A)\to D(Y_n,A)\). By the universal property of the Grothendieck group, this monoid homomorphism extends to \(K(Z,A)\to K(Y_n,A)\). We denote the induced monoid homomorphism and its Grothendieck extension by \(P_n^Z\).

Let \( \xi = \alpha - \beta \in K(Z,A) \). The matching that pairs each off-\( A \) point with its \( \widetilde r_n^Z \)-image, and pairs points whose image lies in \( A \) with the diagonal class, gives \(W_1(i_{Y_n}P_n^Z \alpha, i_Z \alpha) \leq N(\alpha) \varepsilon_n \), where \(N(\alpha)\) denotes the number of off-\(A\) points counted with multiplicity. Define \(N(\beta)\) analogously. The same argument gives 
\[
W_1(i_{Y_n}P_n^Z\beta,i_Z\beta)
\leq 
N(\beta)\varepsilon_n.
\]

Equation~\eqref{eq:groth-metric} gives
\[
\begin{aligned}
\rho(i_{Y_n}P_n^Z\xi,i_Z\xi)
&=
\rho(i_{Y_n}P_n^Z\alpha-i_{Y_n}P_n^Z \beta, i_Z\alpha-i_Z\beta)\\
&=
W_1(i_{Y_n}P_n^Z\alpha+i_Z\beta, i_Z\alpha+i_{Y_n}P_n^Z\beta)\\
&\leq
(N(\alpha)+N(\beta))\varepsilon_n.
\end{aligned}
\]
Hence \(\rho(i_{Y_n}P_n^Z\xi,i_Z\xi)\to0\).
\end{proof}

\begin{lemma}\label{lem:fourier-stieltjes-variety-synthesis}
Let \(G\) be a discrete abelian group. If \(\mathcal F\subseteq B(G)\), then the variety generated by \(\mathcal F\) is synthesizable.
\end{lemma}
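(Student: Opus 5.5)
We adapt the support argument from the proof of Theorem~\ref{thm:fourier-stieltjes-cocycle-synthesis}, which becomes simpler here because no increment multiplier \(\chi(h)-1\) has to be divided out. Let \(\mathcal V\) be the variety generated by \(\mathcal F\), and let \(\mathcal E\) be the complex span of the exponential monomials contained in \(\mathcal V\). Let \(M\) be the span of the translates \(\tau_y\varphi\), with \(\varphi\in\mathcal F\) and \(y\in G\). Since \(G\) is discrete, the topology is pointwise convergence, so \(\mathcal V=\overline M\).

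By Theorem~\ref{thm:synthesis-equivalent-characterizations}, it suffices to show that \(T_F(\mathcal V)=T_F(\mathcal E)\) for every finite set \(F=\{x_1,\ldots,x_N\}\subseteq G\). Here \(T_F\) is the evaluation map into \(\mathbb C^N\).

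Fix \(\varphi=\varphi_\mu\in\mathcal F\), and write \(v(\chi)=(\chi(x_i))_{i=1}^N\). The key claim is
\[
\operatorname{span}_{\mathbb C}\{T_F(\tau_y\varphi_\mu):y\in G\}=\operatorname{span}_{\mathbb C}\{v(\chi):\chi\in\operatorname{supp}(\mu)\}.
\]
For \(\subseteq\): we have \(T_F(\tau_y\varphi_\mu)=\int\chi(y)v(\chi)\,d\mu(\chi)\). The integrand lies in the finite-dimensional, hence closed, span on the right, so the integral does too.

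For \(\supseteq\): take a functional \(\ell\) annihilating the left side. Then \(\int\chi(y)\ell(v(\chi))\,d\mu(\chi)=0\) for every \(y\in G\). By Stone--Weierstrass, the span of the evaluations \(\chi\mapsto\chi(y)\) is dense in \(C(\widehat G)\), and \(\widehat G\) is compact because \(G\) is discrete. Riesz uniqueness then gives \(\ell(v(\chi))\,d\mu=0\). Since \(\ell\circ v\) is continuous, it vanishes on \(\operatorname{supp}(\mu)\). Finite-dimensional duality finishes the claim.

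Next we show that every \(\chi_0\in\operatorname{supp}(\mu)\) lies in \(\mathcal V\). By the claim, \(T_F(\chi_0)=v(\chi_0)\) lies in \(T_F(M)\subseteq T_F(\mathcal V)\) for every finite \(F\). Hence \(\chi_0\) is a pointwise limit of elements of \(\mathcal V\). Since \(\mathcal V\) is closed, \(\chi_0\in\mathcal V\). Each such \(\chi_0\) is an exponential, hence an exponential monomial, so \(\chi_0\in\mathcal E\).

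The claim then gives \(T_F(\tau_y\varphi)\in T_F(\mathcal E)\) for all \(y\in G\) and \(\varphi\in\mathcal F\). By linearity, \(T_F(M)\subseteq T_F(\mathcal E)\). Since \(T_F\) is continuous and \(T_F(\mathcal E)\) is a closed subspace of \(\mathbb C^N\), we get \(T_F(\mathcal V)=T_F(\overline M)\subseteq T_F(\mathcal E)\). The reverse inclusion holds because \(\mathcal E\subseteq\mathcal V\). Since \(F\) was arbitrary, \(\mathcal E\) is dense in \(\mathcal V\), and \(\mathcal V\) is synthesizable.

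The main obstacle is the Stone--Weierstrass and Riesz step, which must be carried out with care on the compact group \(\widehat G\) for complex measures. This includes passing from the vanishing of the measure \(\ell(v)\,d\mu\) to the pointwise vanishing of \(\ell\circ v\) on the support of \(|\mu|\). That passage uses the continuity of \(\ell\circ v\) together with the definition of support as the support of the total variation measure.
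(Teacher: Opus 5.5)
Your proposal is correct and follows the paper's proof essentially step for step. Both use the same support identity for the \(T_F\)-images of translates, obtained from Stone--Weierstrass and Riesz uniqueness on the compact dual, then deduce that each character in \(\operatorname{supp}(\mu)\) lies in \(\mathcal V\), and finish with the same closure argument giving \(T_F(\mathcal V)=T_F(\mathcal E)\); the one detail the paper writes out that you only describe is the polar decomposition \(d\mu=\theta\,d|\mu|\), which turns \(\ell(v(\chi))\,d\mu=0\) into \(|\mu|\)-almost-everywhere vanishing of \(\ell\circ v\) before continuity is applied.
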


\begin{proof}
Let \(\mathcal V\) be the variety generated by \(\mathcal F\), let \(\mathcal E\) be the complex span of the exponential monomials contained in \(\mathcal V\), and let \(\mathcal M\) be the complex span of the translates \(\tau_yu\), with \(u\in\mathcal F\) and \(y\in G\). Then \(\mathcal V=\overline{\mathcal M}\), where the closure is taken in the topology of pointwise convergence.

Fix \(u\in\mathcal F\). Choose a finite complex regular Borel measure \(\mu\) on \(\widehat G\) such that \(u(x)=\int_{\widehat G}\chi(x)\,d\mu(\chi)\) for every \(x\in G\). We write \(\operatorname{supp}(\mu)\) for \(\operatorname{supp}|\mu|\).

Fix \(F=\{x_1,\ldots,x_N\}\subseteq G\), define \(T_F\colon C(G)\to\mathbb C^N\) by 
\[
T_F(w)
:=
(w(x_1),\ldots,w(x_N)),
\]
and set \(\Gamma_F(\chi):=(\chi(x_1),\ldots,\chi(x_N))\). We prove
\begin{align}
\operatorname{span}_{\mathbb C}\{T_F(\tau_yu):y\in G\}
&=
\operatorname{span}_{\mathbb C} \{\Gamma_F(\chi):\chi\in\operatorname{supp}(\mu)\}.
\label{eq:fourier-stieltjes-variety-support}
\end{align}

Since \(\chi(x_i+y)=\chi(x_i)\chi(y)\), we have \(T_F(\tau_yu)=\int_{\widehat G}\chi(y)\Gamma_F(\chi)\,d\mu(\chi)\). Since the right side of Equation~\eqref{eq:fourier-stieltjes-variety-support} is finite-dimensional and closed, coordinatewise integration gives an element of that span.

Conversely, let \(\ell\in(\mathbb C^N)^*\) vanish on the left side of Equation~\eqref{eq:fourier-stieltjes-variety-support}. Then \(\int_{\widehat G}\chi(y)\ell(\Gamma_F(\chi))\,d\mu(\chi)=0\) for every \(y\in G\). The functions \(\chi\mapsto\chi(y)\) contain the constants, separate points of \(\widehat G\), and are closed under complex conjugation. By the Stone--Weierstrass theorem, their complex span is dense in \(C(\widehat G)\). Hence the complex measure \(\ell(\Gamma_F(\chi))\,d\mu(\chi)\) annihilates \(C(\widehat G)\). The Riesz representation theorem gives \(\ell(\Gamma_F(\chi))\,d\mu(\chi)=0\). Write \(d\mu=\theta\,d|\mu|\), where \(|\theta|=1\) for \(|\mu|\)-almost every \(\chi\). Then \(\ell(\Gamma_F(\chi))=0\) for \(|\mu|\)-almost every \(\chi\). Since \(\ell\circ\Gamma_F\) is continuous, it vanishes on \(\operatorname{supp}(\mu)\). Every linear functional on \(\mathbb C^N\) that vanishes on the left side of Equation~\eqref{eq:fourier-stieltjes-variety-support} also vanishes on the right side. Hence the right side is contained in the left side. Together with the first inclusion, this proves Equation~\eqref{eq:fourier-stieltjes-variety-support}.

Equation~\eqref{eq:fourier-stieltjes-variety-support} gives \(T_F(x\mapsto\chi(x))\in T_F(\mathcal M)\subseteq T_F(\mathcal V)\) for every \(\chi\in\operatorname{supp}(\mu)\). Thus, for each finite \(F\subseteq G\), an element of \(\mathcal V\) agrees with \(x\mapsto\chi(x)\) on \(F\). Closedness of \(\mathcal V\) for pointwise convergence gives \(x\mapsto\chi(x)\in\mathcal V\) for every \(\chi\in\operatorname{supp}(\mu)\). By Definition~\ref{def:exponential-monomial}, each character is an exponential monomial and belongs to \(\mathcal E\).

Taking spans gives that the right side of Equation~\eqref{eq:fourier-stieltjes-variety-support} is contained in \(T_F(\mathcal E)\). Equation~\eqref{eq:fourier-stieltjes-variety-support} gives \(T_F(\tau_yu)\in T_F(\mathcal E)\) for every \(y\in G\). Applying this argument to every \(u\in\mathcal F\) gives \(T_F(\mathcal M)\subseteq T_F(\mathcal E)\). Since \(T_F\) is continuous, \(\mathcal V=\overline{\mathcal M}\), and \(T_F(\mathcal E)\) is closed in \(\mathbb C^N\), the inclusion \(T_F(\mathcal M)\subseteq T_F(\mathcal E)\) gives \(T_F(\mathcal V)\subseteq T_F(\mathcal E)\). The reverse inclusion follows from \(\mathcal E\subseteq\mathcal V\), so \(T_F(\mathcal V)=T_F(\mathcal E)\).

Since this holds for every finite \(F\subseteq G\), every element of \(\mathcal V\) can be matched on every finite subset of \(G\) by an element of \(\mathcal E\). Hence \(\mathcal E\) is dense in \(\mathcal V\) for the topology of pointwise convergence. By Theorem~\ref{thm:synthesis-equivalent-characterizations}, \(\mathcal V\) is synthesizable.
\end{proof}


\begin{lemma}\label{lem:approximants-give-fourier-stieltjes-coordinates} Let \((X,d,A)\) be a separable metric pair, let \(E\) be a complex Banach space, and let \(f\colon K(X,A)\to E\) be Lipschitz. Let \(Y_n\) and \(\varepsilon_n\) be as in Lemma~\ref{lem:separable-uniformly-discrete-approximants}, and let \(i_{Y_n}\colon K(Y_n,A)\to K(X,A)\) be the inclusion-induced homomorphism. For each subpair \(A\subseteq Z\subseteq X\), let \(P_n^Z\) be as in Lemma~\ref{lem:approximating-homomorphisms}. Suppose:
\begin{enumerate}
\item For every \(n\) and \(\lambda\in E^*\), the function \(\lambda\circ f\circ i_{Y_n}\) belongs to \(B(K(Y_n,A))\), with representing measure \(\mu_{n,\lambda}\), a finite complex regular Borel measure on \(\widehat{K(Y_n,A)}\).
\item For every uniformly discrete subpair \(A\subseteq Z\subseteq X\) and \(\lambda\in E^*\), the pushforward measures \(\nu_{n,\lambda}^Z:=(\widehat{P_n^Z})_\#\mu_{n,\lambda}\) on \(\widehat{K(Z,A)}\) satisfy \(\sup_n\|\nu_{n,\lambda}^Z\|_{\mathrm{TV}}<\infty\), where \(\widehat{P_n^Z}\colon\widehat{K(Y_n,A)}\to\widehat{K(Z,A)}\) is given by \(\widehat{P_n^Z}(\chi)=\chi\circ P_n^Z\).
\end{enumerate}
Then \(\lambda\circ f\circ i_Z\in B(K(Z,A))\) for every uniformly discrete subpair \(A\subseteq Z\subseteq X\) and every \(\lambda\in E^*\).
\end{lemma}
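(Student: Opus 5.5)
Fix a uniformly discrete subpair \(A\subseteq Z\subseteq X\) and \(\lambda\in E^*\). The plan is to show that \(\lambda\circ f\circ i_Z\) is a pointwise limit of the Fourier--Stieltjes transforms \(\varphi_{\nu^Z_{n,\lambda}}\). We then pass to a weak-\(*\) limit of the measures and identify the limit transform with \(\lambda\circ f\circ i_Z\).

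\textbf{Identify the approximants.} For \(\xi\in K(Z,A)\), the change of variables formula for pushforwards gives
\[
\int_{\widehat{K(Z,A)}}\psi(\xi)\,d\nu^Z_{n,\lambda}(\psi)=\int_{\widehat{K(Y_n,A)}}\chi(P_n^Z\xi)\,d\mu_{n,\lambda}(\chi)=\lambda\bigl(f(i_{Y_n}P_n^Z\xi)\bigr).
\]
The first equality uses \(\widehat{P_n^Z}(\chi)(\xi)=\chi(P_n^Z\xi)\). The map \(\widehat{P_n^Z}\) is continuous for the compact-open topologies on the (compact) duals of these discrete groups, so the pushforward is a well-defined finite regular Borel measure. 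Hence \(\varphi_{\nu^Z_{n,\lambda}}=\lambda\circ f\circ i_{Y_n}\circ P_n^Z\).

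\textbf{Pointwise convergence.} By Lemma~\ref{lem:approximating-homomorphisms}, \(\rho(i_{Y_n}P_n^Z\xi,i_Z\xi)\to0\). Since \(f\) is Lipschitz,
\[
|\varphi_{\nu^Z_{n,\lambda}}(\xi)-\lambda(f(i_Z\xi))|\le\|\lambda\|\operatorname{Lip}(f)\,\rho(i_{Y_n}P_n^Z\xi,i_Z\xi)\to0
\]
for every \(\xi\in K(Z,A)\).

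\textbf{Compactness.} Since \(K(Z,A)\) is discrete by Theorem~\ref{thm:vpd-local-compactness}, the dual \(\widehat{K(Z,A)}\) is compact. By the Riesz representation theorem, \(M(\widehat{K(Z,A)})=C(\widehat{K(Z,A)})^*\). Hypothesis~2 bounds \(\|\nu^Z_{n,\lambda}\|_{\mathrm{TV}}\), so Banach--Alaoglu yields a weak-\(*\) cluster point \(\nu\). A subsequence suffices if \(C(\widehat{K(Z,A)})\) is separable. When \(Z/A\) is countable this holds, since \(K(Z,A)\) is then countable and its dual is metrizable. In general we avoid separability and work with a subnet instead. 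Each evaluation \(\psi\mapsto\psi(\xi)\) lies in \(C(\widehat{K(Z,A)})\), so along the subnet \(\varphi_{\nu^Z_{n,\lambda}}(\xi)\to\varphi_\nu(\xi)\). Combined with the pointwise convergence above, uniqueness of limits gives \(\lambda\circ f\circ i_Z=\varphi_\nu\in B(K(Z,A))\).

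\textbf{Expected obstacle.} The main difficulty is justifying the compactness step: the regularity and continuity needed for the pushforward, and the net versus sequence issue. Both are handled as above, since only evaluation against characters, which are continuous functions on the compact dual, is needed.
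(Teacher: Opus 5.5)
Your proposal is correct and follows the paper's proof essentially step for step. It uses the pushforward identity $\varphi_{\nu^Z_{n,\lambda}}=\lambda\circ f\circ i_{Y_n}\circ P_n^Z$, pointwise convergence from the Lipschitz bound and Lemma~\ref{lem:approximating-homomorphisms}, and a Banach--Alaoglu limit in $M(\widehat{K(Z,A)})=C(\widehat{K(Z,A)})^*$ tested against the evaluations $\psi\mapsto\psi(\xi)$. The only difference is in the compactness step. The paper first proves that $K(Z,A)$ is countable, because a uniformly discrete subset of the separable space $X/A$ is countable, so the weak-$*$ ball is metrizable and a subsequence can be extracted. Your subnet argument makes that countability step unnecessary.
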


\begin{proof}
Fix a uniformly discrete subpair \(A\subseteq Z\subseteq X\), set \(G:=K(Z,A)\), and fix \(\lambda\in E^*\). Let \(i_Z\colon G\to K(X,A)\) be the inclusion-induced homomorphism.

For \(z,z'\in Z\), Equations~\eqref{eq:one-strengthening} and~\eqref{eq:quotient-metric} compute the quotient distances in \(Z/A\) and \(X/A\) from the same values of \(d(z,z')\), \(d(z,A)\), and \(d(z',A)\). Hence the inclusion \(Z/A\to X/A\) is an isometric embedding. A uniformly discrete subspace of a separable metric space is countable, so \(Z/A\) is countable. Since finite multisets in a countable set form a countable set and every element of \(D(Z,A)\) has finite support with finite multiplicities, the monoid \(D(Z,A)\) is countable. Hence, its Grothendieck group \(G\) is countable.

By \cite[Theorem~3.2 and Corollary~3.3]{fanningaktas2026banachrkhs}, the group \(G\) is a discrete locally compact abelian group. Hence \(\widehat G\) is compact by Pontryagin duality. Since \(G\) is countable, the compact group \(\widehat G\) is metrizable. Hence \(C(\widehat G)\) is separable.

We identify the space \(M(\widehat G)\) of finite complex regular Borel measures isometrically with \(C(\widehat G)^*\) by the Riesz representation theorem. Choose \(C>0\) such that \(\|\nu_{n,\lambda}^Z\|_{\mathrm{TV}}\le C\) for every \(n\). By Banach--Alaoglu, the closed ball of radius \(C\) in \(M(\widehat G)\) is weak-* compact. Since \(C(\widehat G)\) is separable, the weak-* topology on this compact ball is metrizable. Hence, after passing to a subsequence, we have \(\nu_{n_j,\lambda}^Z\to\nu_\lambda^Z\) weak-* for some finite complex regular Borel measure \(\nu_\lambda^Z\) on \(\widehat G\).

For \(n\) and \(\xi\in G\), the definition of pushforward gives
\[ \int_{\widehat G}\psi(\xi)\,d\nu_{n,\lambda}^Z(\psi) = \int_{\widehat{K(Y_n,A)}} \chi(P_n^Z\xi)\,d\mu_{n,\lambda}(\chi). \]
Therefore, the pushforward identity and the representing measure \(\mu_{n,\lambda}\) give
\[
\lambda(f(i_{Y_n}P_n^Z\xi)) = \int_{\widehat G}\psi(\xi)\,d\nu_{n,\lambda}^Z(\psi). \]

Moreover,
\[ \left| \lambda(f(i_{Y_{n_j}}P_{n_j}^Z\xi)) - \lambda(f(i_Z\xi)) \right| \leq \|\lambda\|\operatorname{Lip}(f) \rho(i_{Y_{n_j}}P_{n_j}^Z\xi,i_Z\xi). \]
Lemma~\ref{lem:approximating-homomorphisms} implies that this upper bound tends to zero. Hence \(\lambda(f(i_{Y_{n_j}}P_{n_j}^Z\xi))\to\lambda(f(i_Z\xi))\).

Since the function \(\psi\mapsto\psi(\xi)\) belongs to \(C(\widehat G)\), weak-* convergence gives
\[ \int_{\widehat G}\psi(\xi)\,d\nu_{n_j,\lambda}^Z(\psi) \to \int_{\widehat G}\psi(\xi)\,d\nu_\lambda^Z(\psi). \]
Combining these limits gives
\[ \lambda(f(i_Z\xi)) = \int_{\widehat G}\psi(\xi)\,d\nu_\lambda^Z(\psi) \]
for every \(\xi\in G\).

Thus \(\lambda\circ f\circ i_Z\in B(K(Z,A))\). Since \(Z\) and \(\lambda\) were arbitrary, the conclusion follows.
\end{proof}

\begin{theorem}\label{thm:uniformly-discrete-synthesis-from-approximants}
Assume the hypotheses of Lemma~\ref{lem:approximants-give-fourier-stieltjes-coordinates}. Then the Lipschitz function \(f\) has uniformly discrete spectral synthesis.
\end{theorem}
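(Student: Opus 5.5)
The plan is to combine Lemma~\ref{lem:approximants-give-fourier-stieltjes-coordinates} with Lemma~\ref{lem:fourier-stieltjes-variety-synthesis}. By Definition~\ref{def:vectorization-spectral-synthesis}, we must show that \(V_Z(f)\) is synthesizable for every uniformly discrete subpair \(A\subseteq Z\subseteq X\).

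Fix such a \(Z\). Under the standing hypotheses, Lemma~\ref{lem:approximants-give-fourier-stieltjes-coordinates} gives \(\lambda\circ f\circ i_Z\in B(K(Z,A))\) for every \(\lambda\in E^*\). Thus the generating family
\[
\mathcal F_Z:=\{\lambda\circ f\circ i_Z:\lambda\in E^*\}
\]
of \(V_Z(f)\) is contained in \(B(K(Z,A))\). Next, Theorem~\ref{thm:vpd-local-compactness} applies because \((Z/A,d_1,[A])\) is uniformly discrete. It shows that \(K(Z,A)\) is a discrete abelian group. The inclusion \(Z/A\to X/A\) is isometric, as observed in the proof of Lemma~\ref{lem:approximants-give-fourier-stieltjes-coordinates}, so uniform discreteness of the subpair is the same whether computed in \(Z/A\) or in \(X/A\). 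We may therefore apply Lemma~\ref{lem:fourier-stieltjes-variety-synthesis} with \(G=K(Z,A)\) and \(\mathcal F=\mathcal F_Z\). It gives that \(\operatorname{Var}_{K(Z,A)}(\mathcal F_Z)=V_Z(f)\) is synthesizable. Since \(Z\) was arbitrary, \(f\) has uniformly discrete spectral synthesis.

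No serious obstacle is expected, since the analytic work (the weak-* compactness argument producing the limiting measure \(\nu_\lambda^Z\), and the Stone--Weierstrass support argument) is already contained in the two lemmas. The only point needing care is checking that the variety topology and the notion of synthesizability in Lemma~\ref{lem:fourier-stieltjes-variety-synthesis} match those used in defining \(V_Z(f)\). Both use pointwise convergence on the discrete group \(K(Z,A)\), so they agree. If one wishes, a single sentence can also note that \(V_Z(f)\subseteq\mathcal N_Z(f)\) by Lemma~\ref{lem:normalized-spectral-variety-basic}. That inclusion is not needed, however, because Definition~\ref{def:vectorization-spectral-synthesis} refers only to \(V_Z(f)\).
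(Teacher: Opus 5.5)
Your proposal is correct and follows essentially the same route as the paper: you fix a uniformly discrete subpair \(A\subseteq Z\subseteq X\), use Lemma~\ref{lem:approximants-give-fourier-stieltjes-coordinates} to place every generator \(\lambda\circ f\circ i_Z\) of \(V_Z(f)\) in \(B(K(Z,A))\), and then apply Lemma~\ref{lem:fourier-stieltjes-variety-synthesis} with \(G=K(Z,A)\). Your extra checks, namely discreteness of \(K(Z,A)\) via Theorem~\ref{thm:vpd-local-compactness} and the isometric inclusion \(Z/A\to X/A\), are left implicit in the paper and are harmless, though the second one is not actually needed.
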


\begin{proof}
Fix a uniformly discrete subpair \(A\subseteq Z\subseteq X\). By Lemma~\ref{lem:approximants-give-fourier-stieltjes-coordinates}, every function \(\lambda\circ f\circ i_Z\), with \(\lambda\in E^*\), belongs to \(B(K(Z,A))\). By definition, \(V_Z(f)\) is the variety generated by these functions. Lemma~\ref{lem:fourier-stieltjes-variety-synthesis} shows that \(V_Z(f)\) is synthesizable. Since the uniformly discrete subpair \(A\subseteq Z\subseteq X\) was arbitrary, \(f\) has uniformly discrete spectral synthesis by Definition~\ref{def:vectorization-spectral-synthesis}.
\end{proof}

\begin{theorem}\label{thm:normalized-varieties-synthesis-from-approximants}
Assume the hypotheses of Lemma~\ref{lem:approximants-give-fourier-stieltjes-coordinates}. Then \(\mathcal N_Z(f)\) is synthesizable for every uniformly discrete subpair \(A\subseteq Z\subseteq X\).
\end{theorem}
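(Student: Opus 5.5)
The plan is to reduce Theorem~\ref{thm:normalized-varieties-synthesis-from-approximants} to Lemma~\ref{lem:fourier-stieltjes-variety-synthesis}. That lemma says a variety generated by a family of Fourier--Stieltjes transforms on a discrete abelian group is synthesizable. So it suffices to show that every generator of \(\mathcal N_Z(f)\) lies in \(B(K(Z,A))\).

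Fix a uniformly discrete subpair \(A\subseteq Z\subseteq X\), and set \(G:=K(Z,A)\). By Theorem~\ref{thm:vpd-local-compactness}, \(G\) is discrete. A generator of \(\mathcal N_Z(f)\) has the form \(R_{Y,Z}(\tau_h(\lambda\circ f\circ i_Y))\), where:
\begin{itemize}
\item \(Y\supseteq Z\) is a uniformly discrete subpair;
\item \(h\in K(Y,A)\);
\item \(\lambda\in E^*\).
\end{itemize}

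The first step applies Lemma~\ref{lem:approximants-give-fourier-stieltjes-coordinates} to the subpair \(Y\) rather than \(Z\). Its hypotheses are stated for every uniformly discrete subpair, so we obtain \(\lambda\circ f\circ i_Y\in B(K(Y,A))\), represented by some finite measure \(\nu\) on \(\widehat{K(Y,A)}\).

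The second step handles the translate. Since \(\chi(x+h)=\chi(h)\chi(x)\), the function \(\tau_h(\lambda\circ f\circ i_Y)\) is the Fourier--Stieltjes transform of the measure \(\chi(h)\,d\nu(\chi)\). This measure is finite because \(|\chi(h)|=1\).

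The third step handles the restriction \(R_{Y,Z}\), which is precomposition with the inclusion-induced homomorphism \(j\colon K(Z,A)\to K(Y,A)\). Let \(\widehat j\colon\widehat{K(Y,A)}\to\widehat{K(Z,A)}\) be \(\chi\mapsto\chi\circ j\). This map is continuous, since both groups are discrete and the dual topologies are those of pointwise convergence. Then \(R_{Y,Z}\) of the translate is the Fourier--Stieltjes transform of the pushforward \(\widehat j_\#(\chi(h)\,d\nu)\), by the same change-of-variables identity used in the proof of Lemma~\ref{lem:approximants-give-fourier-stieltjes-coordinates}. The pushforward is again a finite complex regular Borel measure, because \(\widehat{K(Z,A)}\) is compact and metrizable. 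Hence every generator lies in \(B(G)\).

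Finally, \(\mathcal N_Z(f)\) is by Definition~\ref{def:normalized-spectral-variety} the variety generated by this family \(\mathcal F\subseteq B(G)\). Lemma~\ref{lem:fourier-stieltjes-variety-synthesis} then gives synthesizability.

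The main point needing care is the first step: the hypotheses must genuinely deliver \(B\)-membership on every larger uniformly discrete \(Y\), not only on \(Z\). As stated, hypothesis~(2) quantifies over all uniformly discrete subpairs, so Lemma~\ref{lem:approximants-give-fourier-stieltjes-coordinates} applies verbatim with \(Y\) in place of \(Z\). The remaining checks are routine: the regularity of pushforward measures and the continuity of \(\widehat j\).
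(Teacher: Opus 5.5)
Your proposal is correct and follows the paper's proof essentially step for step. Like the paper, you apply Lemma~\ref{lem:approximants-give-fourier-stieltjes-coordinates} on the larger subpair \(Y\), absorb the translate into the measure \(\chi(h)\,d\nu(\chi)\), realize \(R_{Y,Z}\) as a pushforward along \(\widehat{j}\), and then invoke Lemma~\ref{lem:fourier-stieltjes-variety-synthesis}.
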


\begin{proof}
Fix a uniformly discrete subpair \(A\subseteq Z\subseteq X\). Let \(R_{Y,Z}(\tau_h(\lambda\circ f\circ i_Y))\) be a generator in Definition~\ref{def:normalized-spectral-variety}, where \(Z\subseteq Y\), \(Y\) is uniformly discrete, \(h\in K(Y,A)\), and \(\lambda\in E^*\). By Lemma~\ref{lem:approximants-give-fourier-stieltjes-coordinates}, the function \(u:=\lambda\circ f\circ i_Y\) belongs to \(B(K(Y,A))\).

The translate \(\tau_hu\) belongs to \(B(K(Y,A))\). Indeed, if \( u( \eta ) = \int_{\widehat{K(Y,A)}} \chi ( \eta ) \, d \mu ( \chi ) \), then
\[ 
\tau_hu(\eta) 
= 
\int_{\widehat{K(Y,A)}}\chi(\eta)\,d\mu_h(\chi). 
\]
The finite complex measure \(\mu_h\) is defined by \(d\mu_h(\chi)=\chi(h)\,d\mu(\chi)\). Let
\[
j_{Y,Z}\colon K(Z,A)\to K(Y,A)
\]
denote the inclusion-induced homomorphism, and let \(\widehat{j_{Y,Z}}\colon\widehat{K(Y,A)}\to\widehat{K(Z,A)}\) be given by \(\widehat{j_{Y,Z}}(\chi)=\chi\circ j_{Y,Z}\).

For \(\zeta\in K(Z,A)\), we have
\begin{align*}
R_{Y,Z}(\tau_hu)(\zeta)
&=
\int_{\widehat{K(Y,A)}}\chi(j_{Y,Z}\zeta)\,d\mu_h(\chi)\\
&=
\int_{\widehat{K(Z,A)}}\psi(\zeta)\, d((\widehat{j_{Y,Z}})_\#\mu_h)(\psi).
\end{align*}
Therefore \(R_{Y,Z}(\tau_h(\lambda\circ f\circ i_Y))\in B(K(Z,A))\). Since \(Y\), \(h\), and \(\lambda\) were arbitrary, the generating family that defines \(\mathcal N_Z(f)\) is contained in \(B(K(Z,A))\). Applying Lemma~\ref{lem:fourier-stieltjes-variety-synthesis} with \(G=K(Z,A)\) to this generating family shows that \(\mathcal N_Z(f)\) is synthesizable.
\end{proof}

\section{Examples}\label{sec:examples}



The objective of this section is to classify some common persistence diagram vectorizations and persistence kernels by whether they have uniformly discrete spectral synthesis.


%

The significance of this classification is that it identifies which persistence diagram vectorizations and persistence kernels generate translation-invariant varieties that can be approximated on finite subsets by exponential monomials.

%

What is perhaps surprising is that every persistence diagram vectorization and persistence kernel among our examples that has a Lipschitz extension also has uniformly discrete spectral synthesis.

\begin{conjecture}\label{conj:lipschitz-extension-spectral-synthesis}
Let \((X,d,A)\) be a separable metric pair, let \(E\) be a complex Banach space, and let \(f\colon K(X,A)\to E\) be Lipschitz. Then \(f\) has uniformly discrete spectral synthesis as in Definition~\ref{def:vectorization-spectral-synthesis}.
\end{conjecture}

\subsection{Vectorizations}\label{sec:vectorizations}

\begin{table}[ht]
\centering
\small
\begin{tabular}{llll}
\hline
Vectorization
&
Codomain
&
\begin{tabular}{c}
Lipschitz\\
Extension
\end{tabular}
&
\begin{tabular}{c}
Spectral\\
Synthesis
\end{tabular}
\\
\hline

Lipschitz-free embedding\textsuperscript{a}
&
\(\mathcal F(K(X,A))\)
&
\cmark
&
\cmark
\\

Additive functions\textsuperscript{b}
&
\(\mathbb C\)
&
\cmark
&
\cmark
\\

Multiplicative characters\textsuperscript{b}
&
\(\mathbb C\)
&
\cmark
&
\cmark
\\

Persistence statistics\textsuperscript{c}
&
\(\mathbb R^{38}\)
&
\xmark
&
\xmark
\\

Persistent entropy\textsuperscript{d}
&
\(\mathbb R\)
&
\xmark
&
\xmark
\\

Entropy summary\textsuperscript{e}
&
\(L^p(\mathbb R)\)
&
\xmark
&
\xmark
\\

Betti curve\textsuperscript{f}
&
\(L^1(\mathbb R)\)
&
\cmark
&
\cmark
\\

Lifespan curve\textsuperscript{g}
&
\(L^p(\mathbb R)\)
&
\xmark
&
\xmark
\\

Persistence landscape\textsuperscript{h}
&
\(L^p(\mathbb N\times\mathbb R)\)
&
\cmark
&
\cmark
\\

Persistence silhouette\textsuperscript{i}
&
\(L^p(\mathbb R)\)
&
\cmark
&
\cmark
\\

Persistence image\textsuperscript{j}
&
\(\mathbb R^N\)
&
\cmark
&
\cmark
\\

Template functions\textsuperscript{k}
&
\(\mathbb R^n\)
&
\cmark
&
\cmark
\\

Tropical coordinates\textsuperscript{l}
&
\(\mathbb R^7\)
&
\xmark
&
\xmark
\\

Schauder basis\textsuperscript{m}
&
\(\ell^1\)
&
\cmark
&
\cmark
\\

\begin{tabular}[c]{@{}l@{}}
Integrated landscape\\
signature features\textsuperscript{n}
\end{tabular}
&
\(T(V)\)
&
\cmark\textsuperscript{\textdagger}
&
\cmark\textsuperscript{\textdagger}
\\

\hline
\end{tabular}

\begin{minipage}{\linewidth}
\footnotesize
\textsuperscript{a}\cite{bubenik2022virtual};
\textsuperscript{b}\cite{szekelyhidi2015spectral};
\textsuperscript{c}\cite{asaad2022persistent,chung2022persistence,pun2018persistent};
\textsuperscript{d}\cite{chintakunta2015entropy,atienza2020stability};
\textsuperscript{e}\cite{atienza2020stability};
\textsuperscript{f}\cite{berry2018functional};
\textsuperscript{g}\cite{asaad2022persistent};
\textsuperscript{h}\cite{bubenik2015statistical};
\textsuperscript{i}\cite{chazal2014stochastic};
\textsuperscript{j}\cite{adams2017persistence};
\textsuperscript{k}\cite{perea2023approximating};
\textsuperscript{l}\cite{kalivsnik2019tropical};
\textsuperscript{m}\cite{bubenik2025schauder};
\textsuperscript{n}\cite{chevyrev2018persistence,giusti2023signatures}.

\textsuperscript{\textdagger}
Proposition~\ref{prop:integrated-landscape-signature-classification} shows that the only Lipschitz integrated landscape signature features with respect to \( W_1 \) are constant functions.
\end{minipage}

\caption{Persistence diagram vectorizations considered in this paper and whether they have Lipschitz extensions with uniformly discrete spectral synthesis.}
\label{tab:persistence-vectorizations}
\end{table}

Table~\ref{tab:persistence-vectorizations} lists which vectorizations have Lipschitz extensions to \(K(X,A)\) and which have Lipschitz extensions with uniformly discrete spectral synthesis.

The \texttt{Lipschitz Extension} column contains \cmark\ for a vectorization with a Lipschitz extension to \(K(X,A)\), and for a class of vectorizations when every Lipschitz vectorization in the class has a Lipschitz extension to \(K(X,A)\). The \texttt{Spectral Synthesis} column contains \cmark\ for a vectorization with a Lipschitz extension with uniformly discrete spectral synthesis as in Definition~\ref{def:vectorization-spectral-synthesis}, and for a class of vectorizations, where every Lipschitz vectorization in the class has a Lipschitz extension with uniformly discrete spectral synthesis as in Definition~\ref{def:vectorization-spectral-synthesis}. For both individual vectorizations and classes of vectorizations, uniformly discrete spectral synthesis requires a Lipschitz extension.

In Appendix~\ref{sec:vectorization-synthesis-derivations}, we construct Lipschitz extensions with uniformly discrete spectral synthesis for the entries marked \cmark. For the entries marked \xmark, we show either that no Lipschitz extension to \(K(X,A)\) exists or that every Lipschitz extension to \(K(X,A)\) lacks uniformly discrete spectral synthesis.

\subsection{Learnable Vectorizations}

\begin{table}[!htbp]
\centering
\small
\begin{tabular}{lllll}
\hline
Vectorization
&
Variant
&
Codomain
&
\begin{tabular}{c}
Lipschitz\\
Extension
\end{tabular}
&
\begin{tabular}{c}
Spectral\\
Synthesis
\end{tabular}
\\
\hline

\multirow{5}{*}{
\begin{tabular}[c]{@{}l@{}}
Persistence\\
codebooks\textsuperscript{a}
\end{tabular}}
&
PBoW
&
\(\mathbb R^N\)
&
\xmark
&
\xmark
\\

&
sPBoW
&
\(\mathbb R^N\)
&
\cmark
&
\cmark
\\

&
PVLAD
&
\(\mathbb R^{2N}\)
&
\xmark
&
\xmark
\\

&
sPVLAD
&
\(\mathbb R^{2N}\)
&
\xmark
&
\xmark
\\

&
PFV
&
\(\mathbb R^{4N}\)
&
\xmark
&
\xmark
\\

\hline

ATOL\textsuperscript{b}
&
&
\(\mathbb R^b\)
&
\xmark
&
\xmark
\\

\hline
\end{tabular}

\begin{minipage}{\linewidth}
\footnotesize
\textsuperscript{a}\cite{zielinski2021persistence};
\textsuperscript{b}\cite{royer2021atol}.
\end{minipage}

\caption{Learnable persistence codebook and ATOL vectorizations together with whether they have Lipschitz extensions with uniformly discrete spectral synthesis.} \label{tab:persistence-codebooks}
\end{table}

For learnable vectorizations, we use the table convention from Table~\ref{tab:persistence-vectorizations}. For each class of learnable vectorizations, we determine the vectorizations in the class that are Lipschitz with respect to the \(1\)-Wasserstein distance. Appendix~\ref{sec:learnable-vectorization-synthesis-derivations} proves the entries given in the tables below.

Persistence codebooks \cite{zielinski2021persistence} aggregate persistence diagrams relative to learned codewords or Gaussian mixture parameters. Among the persistence codebook vectorizations, stable persistence bag-of-words has a Lipschitz extension with uniformly discrete spectral synthesis. Persistence bag-of-words, persistence VLAD, stable persistence VLAD, and the persistence Fisher vector do not have Lipschitz extensions to \(K(X,A)\).

The appendix also shows that the ATOL vectorization introduced in~\cite{royer2021atol} does not have a Lipschitz extension.

\begin{table}[!htbp]
\centering
\small
\setlength{\tabcolsep}{4pt}
\begin{tabular}{llllll}
\hline
Vectorization
&
\begin{tabular}{c}
Point\\
Transformation
\end{tabular}
&
\begin{tabular}{c}
Aggregation\\
Operator
\end{tabular}
&
Codomain
&
\begin{tabular}{c}
Lipschitz\\
Extension
\end{tabular}
&
\begin{tabular}{c}
Spectral\\
Synthesis
\end{tabular}
\\
\hline

\multirow{12}{*}{PersLay\textsuperscript{a}}
&
\multirow{4}{*}{\(\phi_\Lambda\)}
&
Minimum
&
\(\mathbb R^q\)
&
\cmark
&
\cmark
\\

&
&
Maximum
&
\(\mathbb R^q\)
&
\cmark
&
\cmark
\\

&
&
Sum
&
\(\mathbb R^q\)
&
\cmark
&
\cmark
\\

&
&
\(k\)-th largest
&
\(\mathbb R^q\)
&
\cmark
&
\cmark
\\

\cmidrule(lr){2-6}

&
\multirow{4}{*}{\(\phi_\Gamma\)}
&
Minimum
&
\(\mathbb R^q\)
&
\cmark
&
\cmark
\\

&
&
Maximum
&
\(\mathbb R^q\)
&
\cmark
&
\cmark
\\

&
&
Sum
&
\(\mathbb R^q\)
&
\cmark
&
\cmark
\\

&
&
\(k\)-th largest
&
\(\mathbb R^q\)
&
\cmark
&
\cmark
\\

\cmidrule(lr){2-6}

&
\multirow{4}{*}{\(\phi_L\)}
&
Minimum
&
\(\mathbb R^q\)
&
\cmark
&
\cmark
\\

&
&
Maximum
&
\(\mathbb R^q\)
&
\cmark
&
\cmark
\\

&
&
Sum
&
\(\mathbb R^q\)
&
\cmark
&
\cmark
\\

&
&
\(k\)-th largest
&
\(\mathbb R^q\)
&
\cmark
&
\cmark
\\

\hline
\end{tabular}

\begin{minipage}{\linewidth}
\footnotesize
\textsuperscript{a}\cite{Carrire2019PersLayAN}.
\end{minipage}

\caption{PersLay vectorizations together with whether they have Lipschitz extensions with uniformly discrete spectral synthesis.}
\label{tab:perslay-vectorizations}
\end{table}

We study the PersLay vectorization \cite[Section~3.1]{Carrire2019PersLayAN} for the three source point transformations \(\phi_\Lambda\), \(\phi_\Gamma\), and \(\phi_L\), and the four source aggregation operators: minimum, maximum, sum, and \(k\)-th largest. Appendix~\ref{sec:perslay} shows that every PersLay vectorization which is Lipschitz with respect to \( W_1 \) has a Lipschitz extension to \(K(X,A)\) with uniformly discrete spectral synthesis.

\subsection{Kernel Methods}

For persistence kernels, we use the table convention from Table~\ref{tab:persistence-vectorizations}. The \texttt{Feature Map Codomain} column records the target Hilbert space of each feature map. Appendix~\ref{sec:kernel-synthesis-derivations} proves the entries given in Table~\ref{tab:persistence-kernels}.

Among the persistence kernels considered here, the persistence weighted Gaussian kernel and persistence scale-space kernel have feature maps with Lipschitz extensions to \(K(X,A)\) that have uniformly discrete spectral synthesis. Neither the sliced Wasserstein kernel nor the persistence Fisher kernel has a feature map that extends to a Lipschitz function on (K(X,A)).

\begin{table}[htbp]
\centering
\small
\begin{tabular}{llll}
\hline
Kernel
&
\begin{tabular}{c}
Feature Map\\
Codomain
\end{tabular}
&
Lipschitz
&
\begin{tabular}{c}
Spectral\\
Synthesis
\end{tabular}
\\
\hline

Sliced Wasserstein kernel\textsuperscript{a}
&
\(\mathcal H_{k_{SW}}\)
&
\xmark
&
\xmark
\\

Persistence Fisher kernel\textsuperscript{b}
&
\(\mathcal H_{k_{PF}}\)
&
\xmark
&
\xmark
\\

\begin{tabular}[c]{@{}l@{}}
Persistence weighted\\
Gaussian kernel\textsuperscript{c}
\end{tabular}
&
\(\mathcal H_{k_G}\)
&
\cmark
&
\cmark
\\

\begin{tabular}[c]{@{}l@{}}
Persistence scale-space\\
kernel\textsuperscript{d}
\end{tabular}
&
\(L^2(\Omega;\mathbb C)\)
&
\cmark
&
\cmark
\\

\hline
\end{tabular}

\begin{minipage}{\linewidth}
\footnotesize
\textsuperscript{a}\cite{carriere2017sliced};
\textsuperscript{b}\cite{le2018persistence};
\textsuperscript{c}\cite{kusano2016persistence};
\textsuperscript{d}\cite{reininghaus2015stable}.
\end{minipage}

\caption{Persistence kernel feature maps together with whether they have
Lipschitz extensions with uniformly discrete spectral synthesis.}
\label{tab:persistence-kernels}
\end{table}

\FloatBarrier
\section{Conclusion}\label{sec13}

We identified Banach-valued Lipschitz extensions on \(K(X,A)\), modulo constants, with a space of cocycles. We then showed that bounded linear scalarizations of Lipschitz extensions generate synthesizable varieties and that spectral synthesis may be extended to separable metric pairs by their uniformly discrete subpairs. We determined which among several common persistence diagram vectorizations have Lipschitz extensions with uniformly discrete spectral synthesis, and we conjectured that every Lipschitz extension \(f\colon K(X,A)\to E\) over a separable metric pair has uniformly discrete spectral synthesis. The principal limitation is that we prove sufficient, but not necessary, conditions for uniformly discrete spectral synthesis of Lipschitz extensions of persistence diagram vectorizations. The next directions are to construct explicit exponential-polynomial approximations on finite subsets of \(K(Z,A)\) for vectorizations with uniformly discrete spectral synthesis and to construct learnable vectorizations from characters or exponential monomials on \(K(X,A)\).

\section*{Declarations}

\begin{itemize}
\item \textbf{Competing Interests} The authors declare that they have no competing interests.
\item \textbf{Funding} This research received no external funding.
\item \textbf{Data Availability} Not applicable.
\item \textbf{Authors' Contributions}
C.F. developed the theoretical framework, conducted the experiments, and wrote the manuscript. M.E.A. supervised the project and provided feedback on the framework, experiments, and manuscript.
\item \textbf{AI Usage} The authors used an AI language model to rephrase dense analytic descriptions and to proofread and correct the language and grammar during manuscript preparation. The authors assume responsibility for all content in this publication.
\end{itemize}

\bibliography{sn-bibliography}

\begin{appendices}

\section{Derivations for the Spectral Synthesis of Vectorizations}
\label{sec:vectorization-synthesis-derivations}


The tables in Section~\ref{sec:examples} use the following conventions:
\begin{enumerate}
\item For an individual persistence diagram vectorization \( f\colon D(X,A)\to E \), the Lipschitz Extension column contains \cmark\ precisely when there exists a Lipschitz map \( \overline f\colon K(X,A)\to E \) such that \( \overline f|_{D(X,A)}=f \). The column contains \xmark\ precisely when no such map exists.

\item For an individual persistence diagram vectorization \( f\colon D(X,A)\to E \), the Spectral Synthesis column contains \cmark\ precisely when there exists a Lipschitz extension of \( f \) with uniformly discrete spectral synthesis in the sense of Definition~\ref{def:vectorization-spectral-synthesis}. The column contains \xmark\ precisely when every Lipschitz extension of \( f \) fails to have uniformly discrete spectral synthesis.

\item For a class of persistence diagram vectorizations, we restrict attention to the vectorizations in the class that are Lipschitz with respect to \( W_1 \). The Lipschitz Extension column contains \cmark\ precisely when every Lipschitz vectorization in the class has a Lipschitz extension to \( K(X,A) \). The Spectral Synthesis column contains \cmark\ precisely when every Lipschitz vectorization in the class has at least one Lipschitz extension with uniformly discrete spectral synthesis.

\item For a class of persistence diagram vectorizations, the Lipschitz Extension column contains \xmark\ precisely when there exists a Lipschitz vectorization in the class for which no Lipschitz extension to \( K(X,A) \) exists. The Spectral Synthesis column contains \xmark\ precisely when there exists a Lipschitz vectorization in the class for which no Lipschitz extension has uniformly discrete spectral synthesis. Since uniformly discrete spectral synthesis requires a Lipschitz extension, \xmark\ in the Lipschitz Extension column implies \xmark\ in the Spectral Synthesis column.
\end{enumerate}

\begin{lemma}\label{lem:no-lipschitz-extension-obstruction} Let \( E \) be a Banach space, and let \( f\colon D(X,A)\to E \) be a persistence diagram vectorization. Suppose that there exist \( \alpha_n,\beta_n\in D(X,A) \) such that \( W_1(\alpha_n,\beta_n)>0 \) and
\[
\frac{\|f(\alpha_n)-f(\beta_n)\|_E}
{W_1(\alpha_n,\beta_n)}
\longrightarrow\infty.
\]
Then \( f \) is not Lipschitz with respect to \( W_1 \), and no Lipschitz extension of \( f \) to \( K(X,A) \) exists.
\end{lemma}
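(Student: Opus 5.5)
The plan is a direct argument by contradiction in two steps. First, suppose \(f\) were Lipschitz with respect to \(W_1\), with some constant \(L<\infty\). Then
\[
\|f(\alpha_n)-f(\beta_n)\|_E\le L\,W_1(\alpha_n,\beta_n)
\]
for every \(n\). Since \(W_1(\alpha_n,\beta_n)>0\), we may divide to get
\[
\frac{\|f(\alpha_n)-f(\beta_n)\|_E}{W_1(\alpha_n,\beta_n)}\le L
\]
for all \(n\). This contradicts the assumed divergence to \(\infty\). Hence \(f\) is not Lipschitz on \((D(X,A),W_1)\).

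Second, suppose \(\overline f\colon K(X,A)\to E\) were a Lipschitz extension of \(f\) with constant \(\operatorname{Lip}(\overline f)\). By \cite[Corollary~7.10]{bubenik2022virtual}, the embedding \(D(X,A)\hookrightarrow K(X,A)\) is isometric, so \(\rho(\alpha,\beta)=W_1(\alpha,\beta)\) for \(\alpha,\beta\in D(X,A)\). Equivalently, take \(\gamma=\delta=0\) in Definition~\ref{def:vpd-metric}. Since \(\overline f|_{D(X,A)}=f\), we get
\[
\|f(\alpha)-f(\beta)\|_E=\|\overline f(\alpha)-\overline f(\beta)\|_E\le\operatorname{Lip}(\overline f)\,W_1(\alpha,\beta).
\]
So \(f\) would be Lipschitz with constant \(\operatorname{Lip}(\overline f)\), contradicting the first step.

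There is no substantive obstacle. The only point needing care is the identification of \(\rho\) restricted to \(D(X,A)\) with \(W_1\). This follows immediately from Equation~\eqref{eq:groth-metric} with \(\beta=\delta=0\) (the paper's variables), together with the fact that \(D(X,A)\to K(X,A)\) is injective by cancellativity.
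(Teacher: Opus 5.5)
Your proof is correct and follows the paper's argument: both use the isometric embedding \(D(X,A)\hookrightarrow K(X,A)\) from \cite[Corollary~7.10]{bubenik2022virtual} to bound every ratio by \(\operatorname{Lip}(\overline f)\), and you also write out the elementary ``not Lipschitz'' step that the paper leaves implicit. One small slip: the direct substitution in Equation~\eqref{eq:groth-metric} is \(\beta=\delta=0\), as you say at the end, whereas your first choice \(\gamma=\delta=0\) gives \(\rho(\alpha-\beta,0)=W_1(\alpha,\beta)\) and so also needs translation invariance of \(\rho\).
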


\begin{proof}
Suppose that \( \overline f\colon K(X,A)\to E \) is a Lipschitz extension of \( f \). By \cite[Corollary~7.10]{bubenik2022virtual}, the canonical embedding \( D(X,A)\hookrightarrow K(X,A) \) is isometric. Therefore
\[
\frac{\|f(\alpha_n)-f(\beta_n)\|_E}
{W_1(\alpha_n,\beta_n)}
\leq
\operatorname{Lip}(\overline f)
\]
for every \( n \), which contradicts the hypothesis.
\end{proof}

\begin{lemma}\label{lem:homomorphism-variety-synthesis} Let \( G \) be an abelian group, and let \( \mathcal F\subseteq\operatorname{Hom}(G,\mathbb C) \) be a family of additive functions. Then the variety generated by \( \mathcal F \) is synthesizable.
\end{lemma}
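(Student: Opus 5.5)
The plan is a direct computation of the generated variety: I will show it equals the closed span of \(\mathcal F\) together with the constant functions, and that every function in that span is an exponential monomial. Let \(\mathcal V\) be the variety generated by \(\mathcal F\), and let \(\mathcal M\) be the complex span of the translates \(\tau_y a\), with \(a\in\mathcal F\) and \(y\in G\). As in the proof of Lemma~\ref{lem:fourier-stieltjes-variety-synthesis}, the generated variety is the pointwise closure of \(\mathcal M\), so \(\mathcal V=\overline{\mathcal M}\).

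For \(a\in\mathcal F\) and \(y\in G\), additivity gives \(\tau_y a=a+a(y)\mathbf 1\). Hence \(\mathcal M\subseteq \operatorname{span}_{\mathbb C}(\mathcal F\cup\{\mathbf 1\})\). If some \(a\in\mathcal F\) is nonzero, pick \(y\) with \(a(y)\neq0\). Then \(\mathbf 1=(\tau_y a-a)/a(y)\in\mathcal M\), and so \(\mathcal M=\operatorname{span}_{\mathbb C}(\mathcal F\cup\{\mathbf 1\})\). If every element of \(\mathcal F\) is zero, then \(\mathcal V=\{0\}\) is trivially synthesizable, since the empty span is dense in it.

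Next I check that every element of \(\mathcal M\) is an exponential monomial. A function \(c\mathbf 1+\sum_j c_j a_j\) with \(a_j\in\mathcal F\) equals \(p\cdot\mathbf 1\), where \(p(x)=c+\sum_j c_j a_j(x)\) is a polynomial of degree at most one in additive functions, and the constant \(\mathbf 1\) is an exponential. So \(\mathcal M\) is contained in the span \(\mathcal E\) of the exponential monomials lying in \(\mathcal V\). Hence \(\mathcal E\) is dense in \(\mathcal V=\overline{\mathcal M}\), and Theorem~\ref{thm:synthesis-equivalent-characterizations} gives that \(\mathcal V\) is synthesizable.

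The argument has no real obstacle. The only point needing care is that \(\mathcal V\) is the closure of the span of translates, which I justify as the paper does: this closure is a closed, translation-invariant subspace, because translations are continuous for pointwise convergence, and it contains \(\mathcal F\). The closure is not even needed for the density claim, since \(\mathcal M\subseteq\mathcal E\) already yields density.
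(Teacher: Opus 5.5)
Your proposal is correct and follows essentially the same route as the paper: the identity \(\tau_y a=a+a(y)\mathbf 1\) places the constants in the variety once some \(a\in\mathcal F\) is nonzero, additive functions and constants are exponential monomials with exponential \(\mathbf 1\), and \(\mathcal V=\overline{\mathcal M}\) with \(\mathcal M\subseteq\mathcal E\subseteq\mathcal V\) gives density. Your observation that every element of \(\mathcal M\) is itself a single exponential monomial \(p\cdot\mathbf 1\) is only a cosmetic variant of the paper's step of placing \(\mathbf 1\) and each \(b\in\mathcal F\) in \(\mathcal E\) separately.
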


\begin{proof}
Let \( \mathcal V \) be the variety generated by \( \mathcal F \). If every function in \( \mathcal F \) is zero, then \( \mathcal V=\{0\} \), which is synthesizable.

Suppose that \( \mathcal F \) contains a nonzero function. Let \( \mathcal E \) be the complex span of the exponential monomials contained in \( \mathcal V \). Choose \( a\in\mathcal F \) and \( y\in G \) such that \( a(y)\neq0 \), and let \( \mathbf 1 \) denote the constant function with value \( 1 \). Since \( \tau_y a-a=a(y)\mathbf 1 \), we have \( \mathbf 1\in\mathcal V \), and hence \( \mathbf 1\in\mathcal E \).

Let \( b\in\mathcal F \). Every additive function is a polynomial, so \( b\mathbf 1=b \) is an exponential monomial. Since \( b\in\mathcal V \), we have \( b\in\mathcal E \). For every \( y\in G \), we have \( \tau_yb=b+b(y)\mathbf 1 \), so every translate of every function in \( \mathcal F \) belongs to \( \mathcal E \).

Let \( \mathcal M \) be the complex span of all translates of functions in \( \mathcal F \), and take closures in the topology of pointwise convergence. For every \( z\in G \), we have \( \tau_z\mathcal M\subseteq\mathcal M \), and \( \tau_z\colon C(G)\to C(G) \) is continuous for pointwise convergence. Hence \( \overline{\mathcal M} \) is a variety containing \( \mathcal F \). Every variety containing \( \mathcal F \) contains \( \mathcal M \) and \( \overline{\mathcal M} \), so \( \mathcal V=\overline{\mathcal M} \).

Since \( \mathcal M\subseteq\mathcal E \), we have \( \mathcal V\subseteq\overline{\mathcal E} \). Since \( \mathcal E\subseteq\mathcal V \) and \( \mathcal V \) is closed, we obtain \( \overline{\mathcal E}=\mathcal V \). By Theorem~\ref{thm:synthesis-equivalent-characterizations}, \( \mathcal V \) is synthesizable.
\end{proof}

\begin{lemma}\label{lem:additive-extension-synthesis} Let \( E \) be a complex Banach space. Let \( f\colon D(X,A)\to E \) be additive, and suppose that there exists \( C\geq0 \) such that
\[
\|f(\alpha)-f(\beta)\|_E
\leq
C W_1(\alpha,\beta)
\]
for all \( \alpha,\beta\in D(X,A) \). Then the formula
\[
\overline f(\alpha-\beta)
=
f(\alpha)-f(\beta)
\]
defines the unique group homomorphism \( \overline f\colon K(X,A)\to E \) that extends \( f \). Moreover, \( \operatorname{Lip}(\overline f)\leq C \), and \( \overline f \) has uniformly discrete spectral synthesis.
\end{lemma}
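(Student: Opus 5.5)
We split the statement into three parts: well-definedness with the homomorphism property and uniqueness, the Lipschitz bound, and uniformly discrete spectral synthesis. The first two are algebra on the Grothendieck group; the last reduces to Lemma~\ref{lem:homomorphism-variety-synthesis}.

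\emph{Well-definedness and uniqueness.} First, if \(\alpha-\beta=\gamma-\delta\) in \(K(X,A)\), then \(\alpha+\delta+\eta=\gamma+\beta+\eta\) in \(D(X,A)\) for some \(\eta\). Additivity of \(f\) then gives
\[
f(\alpha)+f(\delta)+f(\eta)=f(\gamma)+f(\beta)+f(\eta).
\]
Since \(E\) is a group, we can cancel \(f(\eta)\), so \(f(\alpha)-f(\beta)=f(\gamma)-f(\delta)\). In fact \(D(X,A)\cong D(X\setminus A)\) is free and hence cancellative, so \(\eta\) can be omitted entirely. The formula is additive by construction, and \(\overline f(\alpha-0)=f(\alpha)\). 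Uniqueness follows because every element of \(K(X,A)\) has the form \(\alpha-\beta\), so any homomorphic extension is forced to equal \(f(\alpha)-f(\beta)\). This is just the universal property of the Grothendieck group.

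\emph{Lipschitz bound.} Because \(\overline f\) is a homomorphism and \(\rho\) is translation invariant, it suffices to bound \(\|\overline f(\xi)\|_E\) by \(C\rho(\xi,0)\). Write \(\xi=\alpha-\beta\). By Definition~\ref{def:vpd-metric}, \(\rho(\alpha-\beta,0)=\rho(\alpha-\beta,0-0)=W_1(\alpha,\beta)\). Hence
\[
\|\overline f(\xi)\|_E=\|f(\alpha)-f(\beta)\|_E\le C\,W_1(\alpha,\beta)=C\rho(\xi,0).
\]
For general \(x,y\), we have \(\overline f(x)-\overline f(y)=\overline f(x-y)\) and \(\rho(x-y,0)=\rho(x,y)\). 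Together these give \(\operatorname{Lip}(\overline f)\le C\).

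\emph{Synthesis.} Fix a uniformly discrete subpair \(A\subseteq Z\subseteq X\) and \(\lambda\in E^*\). The map \(\lambda\circ\overline f\circ i_Z\) is a composition of group homomorphisms into \((\mathbb C,+)\), so it lies in \(\operatorname{Hom}(K(Z,A),\mathbb C)\). Thus \(V_Z(\overline f)\) is the variety generated by a family of additive functions on \(K(Z,A)\). Lemma~\ref{lem:homomorphism-variety-synthesis} shows that this variety is synthesizable, and Definition~\ref{def:vectorization-spectral-synthesis} then gives the conclusion.

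I expect no serious obstacle. The only delicate point is the well-definedness step, which is handled by cancellativity of \(D(X,A)\) or, more simply, by cancelling in \(E\).
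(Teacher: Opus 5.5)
Your proposal is correct and follows the paper's proof essentially step for step: well-definedness and uniqueness come from the Grothendieck construction, the Lipschitz bound comes from Definition~\ref{def:vpd-metric}, and synthesis comes from Lemma~\ref{lem:homomorphism-variety-synthesis} applied to the additive functions \(\lambda\circ\overline f\circ i_Z\). The differences are only cosmetic (you cancel \(f(\eta)\) in \(E\), so cancellativity of \(D(X,A)\) is not needed, and you reduce to \(\rho(\xi,0)=W_1(\alpha,\beta)\) via translation invariance rather than computing \(\rho(\alpha-\beta,\gamma-\delta)=W_1(\alpha+\delta,\gamma+\beta)\) directly), and your step \(\overline f(\alpha-0)=f(\alpha)\) tacitly uses \(f(0)=0\), which follows from \(f(0)=f(0+0)=2f(0)\).
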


\begin{proof}
By additivity, \( f(0)=f(0)+f(0) \), so \( f(0)=0 \). Suppose that \( \alpha-\beta=\alpha'-\beta' \) in \( K(X,A) \). By equality in the Grothendieck completion, there exists \( \theta\in D(X,A) \) such that \( \alpha+\beta'+\theta=\alpha'+\beta+\theta \). As observed after Definition~\ref{def:persistence-diagram-monoid}, \( D(X,A)\cong D(X\setminus A) \). Since the free commutative monoid \( D(X\setminus A) \) is cancellative, \( D(X,A) \) is cancellative, and \( \alpha+\beta'=\alpha'+\beta \). By additivity, \( f(\alpha)+f(\beta')=f(\alpha')+f(\beta) \), so \( f(\alpha)-f(\beta)=f(\alpha')-f(\beta') \). Thus \( \overline f \) is well-defined.

For \( \alpha,\beta,\gamma,\delta\in D(X,A) \), by additivity, we have
\[
\begin{aligned}
\overline f\bigl((\alpha-\beta)+(\gamma-\delta)\bigr)
&=
\overline f\bigl((\alpha+\gamma)-(\beta+\delta)\bigr)\\
&=
f(\alpha+\gamma)-f(\beta+\delta)\\
&=
\overline f(\alpha-\beta)+\overline f(\gamma-\delta).
\end{aligned}
\]
Thus \( \overline f \) is a group homomorphism.

For \( \alpha\in D(X,A) \), \( \overline f(\alpha-0)=f(\alpha)-f(0)=f(\alpha) \), so \( \overline f \) extends \( f \). If \( T\colon K(X,A)\to E \) is another group homomorphism extending \( f \), then, for every \( \alpha,\beta\in D(X,A) \), \( T(\alpha-\beta)=f(\alpha)-f(\beta)=\overline f(\alpha-\beta) \). Since every element of \( K(X,A) \) has this form, \( T=\overline f \).

Let \( \xi=\alpha-\beta \) and \( \eta=\gamma-\delta \) be elements of \( K(X,A) \). Then
\[
\begin{aligned}
\|\overline f(\xi)-\overline f(\eta)\|_E
&=
\|f(\alpha)+f(\delta)-f(\gamma)-f(\beta)\|_E\\
&=
\|f(\alpha+\delta)-f(\gamma+\beta)\|_E\\
&\leq
C W_1(\alpha+\delta,\gamma+\beta)\\
&=
C\rho(\xi,\eta),
\end{aligned}
\]
where we use Equation~\eqref{eq:groth-metric} in the final equality. Therefore \( \operatorname{Lip}(\overline f)\leq C \).

Fix a uniformly discrete subpair \( A\subseteq Z\subseteq X \). For every \( \lambda\in E^* \), the function \( \lambda\circ\overline f\circ i_Z \) belongs to \( \operatorname{Hom}(K(Z,A),\mathbb C) \). These functions generate \( V_Z(\overline f) \). By Lemma~\ref{lem:homomorphism-variety-synthesis}, \( V_Z(\overline f) \) is synthesizable. Since \( Z \) was arbitrary, \( \overline f \) has uniformly discrete spectral synthesis by Definition~\ref{def:vectorization-spectral-synthesis}.
\end{proof}

\begin{lemma}\label{lem:collapsed-subset-pseudometric} Let \( (H,\rho) \) be a metric space, and let \( M\subseteq H \) be nonempty. Then \( \delta_M(x,y):= \min\{\rho(x,y), \operatorname{dist}(x,M)+\operatorname{dist}(y,M)\} \) defines a pseudometric on \( H \).
\end{lemma}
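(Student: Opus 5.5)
We verify the pseudometric axioms for \(\delta_M(x,y)=\min\{\rho(x,y),\operatorname{dist}(x,M)+\operatorname{dist}(y,M)\}\) directly. Here \(\operatorname{dist}(x,M)=\inf_{m\in M}\rho(x,m)\) is finite because \(M\neq\varnothing\), so \(\delta_M\) takes values in \([0,\infty)\). Nonnegativity and symmetry are immediate, since both expressions inside the minimum are nonnegative and symmetric in \(x,y\). For \(x=y\) we have \(\delta_M(x,x)\le\rho(x,x)=0\). So only the triangle inequality \(\delta_M(x,z)\le\delta_M(x,y)+\delta_M(y,z)\) needs work. We do not claim definiteness: points of \(M\) collapse to distance \(0\) from each other, which is why the conclusion is only a pseudometric.

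For the triangle inequality, fix \(x,y,z\in H\) and write \(d_w:=\operatorname{dist}(w,M)\). The plan is to split into four cases according to which term realizes each of the two minima on the right-hand side.
\begin{enumerate}
\item If both minima are realized by \(\rho\), the triangle inequality for \(\rho\) gives \(\rho(x,z)\le\rho(x,y)+\rho(y,z)\), which bounds \(\delta_M(x,z)\).
\item If both are realized by the collapsed term, then \(d_x+d_y+d_y+d_z\ge d_x+d_z\ge\delta_M(x,z)\), since \(d_y\ge0\).
\item In the mixed case \(\rho(x,y)+d_y+d_z\), we use the key auxiliary fact that \(w\mapsto\operatorname{dist}(w,M)\) is \(1\)-Lipschitz for \(\rho\), so \(d_x\le\rho(x,y)+d_y\). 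Hence the sum is at least \(d_x+d_z\ge\delta_M(x,z)\).
\item The other mixed case \(d_x+d_y+\rho(y,z)\) is symmetric, using \(d_z\le\rho(y,z)+d_y\).
\end{enumerate}

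The only non-formal ingredient is the \(1\)-Lipschitz property of the distance function. To prove it, take any \(m\in M\): then \(\operatorname{dist}(x,M)\le\rho(x,m)\le\rho(x,y)+\rho(y,m)\). Taking the infimum over \(m\in M\) gives \(d_x\le\rho(x,y)+d_y\). We expect no real obstacle beyond organizing the case split cleanly.
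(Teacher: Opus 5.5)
Your proposal is correct and follows essentially the same route as the paper: the same four-way split according to which branch realizes $\delta_M(x,y)$ and $\delta_M(y,z)$, with the $1$-Lipschitz property of $\operatorname{dist}(\cdot,M)$ handling the two mixed cases. You additionally prove that Lipschitz property and note that $\operatorname{dist}(\cdot,M)$ is finite because $M\neq\varnothing$; the paper takes both for granted.
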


\begin{proof}
Symmetry and \( \delta_M(x,x)=0 \) are immediate. We fix \( x,y,z\in H \) and set \( r_x:=\operatorname{dist}(x,M) \), \( r_y:=\operatorname{dist}(y,M) \), and \( r_z:=\operatorname{dist}(z,M) \). The distance-to-\( M \) map is \( 1 \)-Lipschitz. For the four possible branch choices for \( \delta_M(x,y) \) and \( \delta_M(y,z) \), we obtain
\begin{align*}
\delta_M(x,z)
&\leq \rho(x,z)
\leq \rho(x,y)+\rho(y,z),\\
\delta_M(x,z)
&\leq r_x+r_z
\leq \rho(x,y)+r_y+r_z,\\
\delta_M(x,z)
&\leq r_x+r_z
\leq r_x+r_y+\rho(y,z),\\
\delta_M(x,z)
&\leq r_x+r_z
\leq r_x+2r_y+r_z.
\end{align*}
Therefore \( \delta_M \) satisfies the triangle inequality.
\end{proof}

\begin{lemma}\label{lem:separable-diagrams-and-completion} If \( (X,d,A) \) is a separable metric pair, then \( D(X,A) \) and \( K(X,A) \) are separable.
\end{lemma}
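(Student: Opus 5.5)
We exhibit a countable dense subset of \(D(X,A)\) and transfer it to \(K(X,A)\) through the isometric embedding and the formula for \(\rho\). Since \((X,d)\) is separable, fix a countable dense set \(S\subseteq X\). Let \(\mathcal S\subseteq D(X,A)\) be the set of classes of finite formal sums \(\sum_{i=1}^n m_i s_i\) with \(s_i\in S\) and \(m_i\in\mathbb N\). As in the proof of Lemma~\ref{lem:approximants-give-fourier-stieltjes-coordinates}, finite multisets over a countable set form a countable set, so \(\mathcal S\) is countable.

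For density of \(\mathcal S\) in \(D(X,A)\), take \(\alpha=\sum_{i=1}^n m_i x_i\) with \(N=\sum_i m_i\) points counted with multiplicity, and fix \(\varepsilon>0\). For each \(x_i\), choose \(s_i\in S\) with \(d(x_i,s_i)<\varepsilon/N\). The matching \(\sigma=\sum_i m_i(x_i,s_i)\) is a matching between \(\alpha\) and \(\beta=\sum_i m_i s_i\). Since \(d_1\leq d\) by Equation~\eqref{eq:one-strengthening}, Definition~\ref{def:wasserstein-diagrams} gives \(W_1(\alpha,\beta)\leq\sum_i m_i d(x_i,s_i)<\varepsilon\). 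Points of \(\alpha\) lying in \(A\) represent the zero class and can simply be dropped, but approximating them as well is harmless because the \(d_1\)-cost of matching to the diagonal is bounded by \(d(x_i,A)\).

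For \(K(X,A)\), the candidate countable set is \(\{\beta-\beta':\beta,\beta'\in\mathcal S\}\). Given \(\xi=\alpha-\alpha'\), choose \(\beta,\beta'\in\mathcal S\) with \(W_1(\alpha,\beta)<\varepsilon\) and \(W_1(\alpha',\beta')<\varepsilon\). Translation invariance of \(\rho\) (Theorem~\ref{thm:wasserstein-translation-invariant} together with Definition~\ref{def:vpd-metric}) and the triangle inequality give
\[
\rho(\alpha-\alpha',\beta-\beta')\leq\rho(\alpha-\alpha',\beta-\alpha')+\rho(\beta-\alpha',\beta-\beta')=W_1(\alpha,\beta)+W_1(\beta',\alpha')<2\varepsilon.
\]
Here we use \(\rho(\alpha-\gamma,\beta-\gamma)=W_1(\alpha+\gamma,\beta+\gamma)=W_1(\alpha,\beta)\).

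The only mildly delicate point is this last identity, since it requires the translation invariance of \(W_1\) on the monoid and the well-definedness of \(\rho\). Both are supplied by the cited results, so the argument reduces to bookkeeping.
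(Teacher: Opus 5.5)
Your proof is correct and follows the paper's argument: approximate each point of a diagram by a point of a countable set and match them to get a countable dense subset of \(D(X,A)\), then take differences of those diagrams and bound \(\rho\) using the triangle inequality and translation invariance of \(W_1\). The only difference is that the paper takes its dense set in \((X/A,\overline d_1)\) containing \([A]\), lifts it through a section, and matches to the diagonal when the approximant is \([A]\), so it only needs \(X/A\) to be separable; you approximate in \((X,d)\) and use \(d_1\leq d\), which is more direct under the stated hypothesis.
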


\begin{proof}
Choose a countable dense subset \( S_0\subseteq X/A \) containing \( [A] \). Choose a map \( r\colon S_0\setminus\{[A]\}\to X\setminus A \) such that \( q(r(s))=s \) for every \( s\in S_0\setminus\{[A]\} \), and set \( X_0:=r(S_0\setminus\{[A]\}) \).

Let \( \alpha=\sum_{j=1}^N x_j\in D(X,A) \), and let \( \varepsilon>0 \). For each \( j \), choose \( s_j\in S_0 \) such that \( \overline d_1(q(x_j),s_j)<\varepsilon/N \), and define
\[
\alpha'
:=
\sum_{\substack{1\leq j\leq N\\s_j\neq[A]}}
r(s_j).
\]
We match \( x_j \) with \( r(s_j) \) when \( s_j\neq[A] \), and we match \( x_j \) with \( A \) when \( s_j=[A] \). By Equation~\eqref{eq:quotient-metric}, each pair has cost less than \( \varepsilon/N \), so \( W_1(\alpha,\alpha')<\varepsilon \). Finite multisets in the countable set \( X_0 \) form a countable set. Therefore, the diagrams supported on \( X_0 \) form a countable dense subset of \( D(X,A) \).

Let \( D_0\subseteq D(X,A) \) be countable and dense, and set \( K_0:=\{\alpha-\beta:\alpha,\beta\in D_0\} \). For \( \xi=\alpha-\beta\in K(X,A) \), choose \( \alpha_n,\beta_n\in D_0 \) such that \( W_1(\alpha_n,\alpha)\to0 \) and \( W_1(\beta_n,\beta)\to0 \). Then
\[
\begin{aligned}
\rho(\alpha_n-\beta_n,\alpha-\beta)
&=
W_1(\alpha_n+\beta,\alpha+\beta_n)\\
&\leq
W_1(\alpha_n+\beta,\alpha+\beta)
+
W_1(\alpha+\beta,\alpha+\beta_n)\\
&=
W_1(\alpha_n,\alpha)+W_1(\beta,\beta_n),
\end{aligned}
\]
where we use Equation~\eqref{eq:groth-metric}, the triangle inequality, and Theorem~\ref{thm:wasserstein-translation-invariant}. Thus \( K_0 \) is dense in \( K(X,A) \).
\end{proof}

\begin{definition}\label{def:complex-lipschitz-free-space} Let \( (Q,d_Q,q_0) \) be a pointed metric space. Let \( \operatorname{Lip}_0(Q;\mathbb C) \) denote the complex Banach space of complex-valued Lipschitz functions on \( Q \) that vanish at \( q_0 \), equipped with the Lipschitz norm. The complex Lipschitz-free space over \( Q \), denoted by \( \mathcal F_{\mathbb C}(Q) \), is the norm-closed complex linear span in \( \operatorname{Lip}_0(Q;\mathbb C)^* \) of the evaluation functionals \( \delta_Q(x) \), where \( \delta_Q(x)(u)=u(x) \).
\end{definition}

The evaluation map \( \delta_Q\colon Q\to\mathcal F_{\mathbb C}(Q) \) is isometric, since
\[
\|\delta_Q(x)-\delta_Q(y)\|
=
\sup_{\|u\|_{\operatorname{Lip}}\leq1}
|u(x)-u(y)|
=
d_Q(x,y).
\]
Every \( u\in\operatorname{Lip}_0(Q;\mathbb C) \) defines \( \widehat u\in\mathcal F_{\mathbb C}(Q)^* \) by \( \widehat u(v)=v(u) \), and \( \|\widehat u\|=\|u\|_{\operatorname{Lip}} \).

\begin{lemma}\label{lem:point-masses-give-synthesis} Let \( G \) be a discrete abelian group, and let \( V\subseteq C(G) \) be a variety. If \( 1_{\{x\}}\in V \) for every \( x\in G \), then \( V=C(G) \), and \( V \) is synthesizable.
\end{lemma}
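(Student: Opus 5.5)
The approach is to show directly that \(V\) is dense in \(C(G)\) for the topology of pointwise convergence. Since \(V\) is closed, this gives \(V=C(G)\). Synthesizability then reduces to showing that the exponential monomials in \(C(G)\) span a dense subspace, which we deduce from Theorem~\ref{thm:synthesis-equivalent-characterizations}.

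\textbf{Step 1: \(V=C(G)\).} Fix \(u\in C(G)\) and a finite set \(F=\{x_1,\ldots,x_N\}\subseteq G\). The function \(\sum_{i=1}^N u(x_i)1_{\{x_i\}}\) lies in \(V\), because \(V\) is a complex linear subspace containing every point mass. It agrees with \(u\) on \(F\). The sets on which a function agrees with \(u\) on some finite \(F\) form a neighborhood base of \(u\) for pointwise convergence, so \(u\in\overline V=V\). Hence \(V=C(G)\).

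\textbf{Step 2: synthesizability of \(C(G)\).} By Theorem~\ref{thm:synthesis-equivalent-characterizations}, it suffices to show that the exponential polynomials span a dense subspace of \(C(G)\). All exponential monomials belong to \(V=C(G)\), so the only task is density. Fix a finite \(F\subseteq G\). We show that the restrictions to \(F\) of characters of \(G\) (homomorphisms into \(\mathbb T\subseteq\mathbb C^\times\), hence exponentials) span \(\mathbb C^F\).

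Since \(G\) is discrete, \(\widehat G\) is compact and the characters separate points of \(G\). The span of characters is an algebra, since products of characters are characters. If a functional \(c\in(\mathbb C^F)^*\) annihilates every \(\chi|_F\), then \(\sum_{x\in F}c_x\chi(x)=0\) for all \(\chi\in\widehat G\). Viewing \(x\mapsto(\chi\mapsto\chi(x))\) as the Pontryagin embedding of \(G\) into the characters of \(\widehat G\), this says \(\sum_{x\in F} c_x\,\mathrm{ev}_x=0\) in \(C(\widehat G)\).

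Distinct characters of a compact group are linearly independent; this follows from orthogonality in \(L^2(\widehat G)\) with respect to Haar measure. By Pontryagin duality, distinct \(x\in F\) give distinct characters \(\mathrm{ev}_x\) of \(\widehat G\). Hence all \(c_x=0\).

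\textbf{Step 3: conclusion.} The characters, which are exponential monomials with polynomial part \(1\), therefore match any function on any finite set. So \(\mathcal E\) is dense in \(C(G)=V\), and \(V\) is synthesizable.

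\textbf{Main obstacle.} The main step is the linear independence of the evaluation characters in Step 2. If duality is to be avoided, I would instead argue by Dedekind–Artin independence of distinct homomorphisms \(G\to\mathbb C^\times\), applied dually. The cleaner route is the orthogonality argument on the compact group \(\widehat G\), together with the fact that the characters of a discrete abelian group separate points (Folland, Pontryagin duality).
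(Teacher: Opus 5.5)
Your proof is correct, and your Step~1 is the same as the paper's. Where you differ is in showing that the restrictions $\chi|_F$ of characters span $\mathbb C^F$.

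The paper works on the $G$ side and applies Stone--Weierstrass to $\mathbb C^F$. The span of the $\chi|_F$ is a unital, conjugation-closed subalgebra, since products and conjugates of characters are characters. It separates points, so it is dense in $\mathbb C^F$, and by finite-dimensionality it equals $\mathbb C^F$.

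You instead dualize. A functional $c$ that annihilates every $\chi|_F$ gives a linear relation $\sum_{x\in F}c_x\,\mathrm{ev}_x=0$ among characters of $\widehat G$. Distinct characters are linearly independent, so $c=0$.

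Both routes rest on the same nontrivial input: characters of a discrete abelian group separate points. After that, the paper uses the multiplicative closure of the characters. You use only their linear independence, so your remark that the span of characters is an algebra plays no role and can be dropped.

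Of your two justifications for independence, Dedekind--Artin applied to the homomorphisms $\mathrm{ev}_x\colon\widehat G\to\mathbb C^\times$ is the more economical. It needs neither compactness of $\widehat G$ nor Haar measure. The orthogonality argument is also valid, because each $\mathrm{ev}_x$ is a continuous character of the compact group $\widehat G$.

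Conversely, the paper's step is really algebraic too. A point-separating unital subalgebra of $\mathbb C^F$ already contains each indicator $1_{\{x\}}$, obtained as a product of normalized separating functions, so Stone--Weierstrass is more than is needed there.
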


\begin{proof}
The point masses span \( c_{00}(G) \), which is dense in \( C(G) \) for pointwise convergence. Since \( V \) is pointwise closed, \( V=C(G) \).

For distinct \( x,y\in G \), Pontryagin duality provides \( \chi\in\widehat G \) such that \( \chi(x)\neq\chi(y) \) \cite[Chapter~4]{folland2016course}. For every finite \( F\subseteq G \), the complex span of the restrictions to \( F \) of the characters of \( G \) is a unital self-adjoint subalgebra of \( \mathbb C^F \) that separates points. By the Stone--Weierstrass theorem, this algebra is dense in \( \mathbb C^F \). Since \( \mathbb C^F \) is finite-dimensional, the algebra is closed and therefore equals \( \mathbb C^F \). Hence, finite linear combinations of characters are dense in \( C(G) \) for pointwise convergence. Since characters are exponential monomials, Theorem~\ref{thm:synthesis-equivalent-characterizations} implies that \( V=C(G) \) is synthesizable.
\end{proof}

\begin{lemma}\label{lem:subpair-grothendieck-isometry} For every subpair \( A\subseteq Z\subseteq X \), the group homomorphism 
\[
i_Z\colon K(Z,A)\to K(X,A) 
\]
induced by the inclusion \( Z\hookrightarrow X \) is isometric.
\end{lemma}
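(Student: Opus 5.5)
We must show that for \(\xi,\eta\in K(Z,A)\), \(\rho_X(i_Z\xi,i_Z\eta)=\rho_Z(\xi,\eta)\). Because both metrics are translation-invariant and \(i_Z\) is a homomorphism, it suffices to compare \(\rho\)-distances to \(0\). Using Equation~\eqref{eq:groth-metric}, write \(\xi=\alpha-\beta\) with \(\alpha,\beta\in D(Z,A)\). Then
\[
\rho_Z(\xi,0)=W_1^{Z}(\alpha,\beta), \qquad \rho_X(i_Z\xi,0)=W_1^{X}(i_Z\alpha,i_Z\beta).
\]
So the statement reduces to showing that the inclusion \(D(Z,A)\to D(X,A)\) preserves the \(1\)-Wasserstein distance of Definition~\ref{def:wasserstein-diagrams}.

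First I would record the pointwise fact, already used in the proof of Lemma~\ref{lem:approximants-give-fourier-stieltjes-coordinates}. For \(z,z'\in Z\), the strengthened distance \(d_1(z,z')=\min\{d(z,z'),d(z,A)+d(z',A)\}\) is computed from the same quantities in \(Z\) and in \(X\), because \(A\subseteq Z\) and so \(d(z,A)\) is the same infimum in both. Hence the quotient map \(Z/A\to X/A\) is an isometric embedding for \(\overline d_1\).

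The inequality \(W_1^X\le W_1^Z\) is then immediate. Every matching \(\sigma\in D(Z\times Z)\) between \(\alpha\) and \(\beta\) is also a matching in \(D(X\times X)\), and it has the same cost because the costs \(d_1(z,z')\) agree.

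The reverse inequality \(W_1^Z\le W_1^X\) is the step needing care. A matching in \(X\) may pair points of \(Z\) with points of \(X\setminus Z\). These outside points must occur in pairs inside \(\sigma\), since their images under both projections must cancel modulo \(D(A)\). I would handle this by noting that the marginal conditions force every off-\(A\) point of \(\sigma\)'s first coordinates to lie in the support of \(\alpha\) or in \(A\), and likewise for the second coordinates and \(\beta\). Therefore every pair \((x_i,y_i)\) in \(\sigma\) has \(x_i\in\operatorname{supp}\alpha\cup A\) and \(y_i\in\operatorname{supp}\beta\cup A\), which places it in \(Z\times Z\). Note that \(D(X,A)\) means formal sums in which summands in \(A\) vanish, and the matching condition is taken modulo \(D(A)\), so the only points that can appear beyond the supports are points of \(A\).

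Once every pair lies in \(Z\times Z\), \(\sigma\) is itself a matching in \(D(Z\times Z)\) with identical cost. Taking infima gives equality of the two Wasserstein distances, and therefore \(\rho_X(i_Z\xi,i_Z\eta)=\rho_Z(\xi,\eta)\).

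If one worries that the infimum in \(X\) might use pairs in \(A\times A\) with points of \(A\) outside \(Z\), this cannot happen, because \(A\subseteq Z\). Such pairs have cost \(d_1(a,a')\le d(a,A)+d(a',A)=0\) and can simply be dropped.

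The main obstacle I anticipate is making the support argument rigorous for general matchings \(\sigma\in D(X\times X)\), since the formal statement allows extra summands that cancel modulo \(D(A)\). The fallback is to use instead the equality \(D(X,A)\cong D(X\setminus A)\) together with the observation that a cost-reducing modification replaces each off-support point by \(A\).
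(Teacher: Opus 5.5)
Your proposal is correct and follows the paper's proof essentially step for step: agreement of $d_1$ on $Z$, inclusion of matchings giving $W_1^X\le W_1^Z$, the marginal/support argument forcing every summand of $\sigma$ into $Z\times Z$ for the reverse inequality, and transfer to $\rho$ via Equation~\eqref{eq:groth-metric}; your preliminary reduction to distances from $0$ is a harmless variant. The obstacle you anticipate is not real, and you should drop the remark that outside points ``must occur in pairs'' that ``cancel'': $D(X\times X)$ is free and reduction modulo $D(A)$ only alters multiplicities at points of $A$, so a summand with a coordinate in $X\setminus Z\subseteq X\setminus A$ would give a marginal positive multiplicity off $A$ at a point outside the support, which is exactly the one-line contradiction the paper uses.
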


\begin{proof}
By Equations~\eqref{eq:one-strengthening} and~\eqref{eq:quotient-metric}, the strengthened quotient metrics computed in \( Z/A \) and \( X/A \) agree on \( Z/A \). Let \( \alpha,\beta\in D(Z,A) \). Every matching between \( \alpha \) and \( \beta \) in \( D(Z\times Z) \) remains a matching between \( i_Z\alpha \) and \( i_Z\beta \) in \( D(X\times X) \), with the same cost. Hence the \( 1 \)-Wasserstein distance computed in \( D(X,A) \) does not exceed the \( 1 \)-Wasserstein distance computed in \( D(Z,A) \).

Conversely, let \( \sigma\in D(X\times X) \) be a matching between \( i_Z\alpha \) and \( i_Z\beta \). If a point of \( X\setminus Z \) occurred in the first coordinate of a summand of \( \sigma \), then that point would occur in \( (\pi_1)_*\sigma \) outside \( A \), since \( A\subseteq Z \). This would contradict \( (\pi_1)_*\sigma=i_Z\alpha\pmod{D(A)} \). The same argument applies to the second coordinate. Hence every summand of \( \sigma \) belongs to \( Z\times Z \), so \( \sigma\in D(Z\times Z) \). Its cost is unchanged because the strengthened distances agree on \( Z \). Therefore the two \( 1 \)-Wasserstein distances agree under \( i_Z \).

For \( \alpha,\beta,\gamma,\delta\in D(Z,A) \), Equation~\eqref{eq:groth-metric} gives
\[
\begin{aligned}
\rho\bigl(i_Z(\alpha-\beta),i_Z(\gamma-\delta)\bigr)
&=
W_1(i_Z\alpha+i_Z\delta,i_Z\gamma+i_Z\beta)\\
&=
W_1(\alpha+\delta,\gamma+\beta)\\
&=
\rho(\alpha-\beta,\gamma-\delta).
\end{aligned}
\]
Thus \( i_Z \) is isometric.
\end{proof}

\begin{lemma}\label{lem:linfty-real-line-extension-synthesis} Let \( (X,d,A) \) be a separable metric pair, and let \( f\colon D(X,A)\to L^\infty(\mathbb R;\mathbb C) \) be Lipschitz with \( f(0)=0 \). Then there exists a Lipschitz extension 
\[ 
\overline f\colon K(X,A)\to L^\infty(\mathbb R;\mathbb C) 
\] 
with uniformly discrete spectral synthesis.
\end{lemma}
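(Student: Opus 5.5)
The plan is to build the extension in two layers: first an arbitrary Lipschitz extension, then a Lipschitz correction that vanishes on \(D(X,A)\) and forces every variety \(V_Z(\overline f)\) to be all of \(C(K(Z,A))\). Lemma~\ref{lem:point-masses-give-synthesis} then gives synthesizability, because \(K(Z,A)\) is discrete by Theorem~\ref{thm:vpd-local-compactness}.

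\emph{Existence of an extension.} I will identify \(L^\infty(\mathbb R;\mathbb C)\) with \(C(\Omega;\mathbb C)\) via Gelfand theory, where \(\Omega\) is the extremally disconnected maximal ideal space. The real lattice \(C(\Omega;\mathbb R)\) is order complete, so it is \(1\)-injective. I will apply a McShane-type formula to the real and imaginary parts,
\[
g_{\mathrm{Re}}(\xi)=\inf_{\alpha\in D(X,A)}\bigl(\operatorname{Re}f(\alpha)+L\rho(\xi,\alpha)\mathbf 1\bigr),
\]
with the infimum taken in the Dedekind-complete lattice, and similarly for the imaginary part. This gives a Lipschitz \(g\colon K(X,A)\to L^\infty(\mathbb R;\mathbb C)\) with \(g|_{D(X,A)}=f\) and \(\operatorname{Lip}(g)\le 2L\). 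Here the isometry of \(D(X,A)\hookrightarrow K(X,A)\) from \cite[Corollary~7.10]{bubenik2022virtual} is used to ensure that \(f\) is \(L\)-Lipschitz for \(\rho\).

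\emph{Bump correction.} By Lemma~\ref{lem:separable-diagrams-and-completion}, \(K(X,A)\) has a countable dense set \(K_0\). I will enumerate the countably many pairs \((c_k,r_k)\) with \(c_k\in K_0\), \(r_k=1/m\), and \(\rho(c_k,D(X,A))>2r_k\), and set \(b_k(\xi)=\max\{0,r_k-\rho(\xi,c_k)\}\). Each \(b_k\) is \(1\)-Lipschitz, bounded by \(1\), and vanishes on \(D(X,A)\). Next I will choose disjoint intervals \(I_k\), the indicators \(w_k=1_{I_k}\), and the averaging functionals \(\lambda_k\in E^*\) over \(I_k\). Because the supports are disjoint, \(h=\sum_k b_kw_k\) is \(1\)-Lipschitz into \(L^\infty\), and \(\overline f:=g+h\) extends \(f\).

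\emph{Localization on a uniformly discrete \(Z\).} Fix such a \(Z\), with uniform gap \(\eta_Z\) for \(\rho\) on \(K(Z,A)\) (Theorem~\ref{thm:vpd-local-compactness}, Lemma~\ref{lem:subpair-grothendieck-isometry}). If \(Z/A\) is trivial the group is trivial and there is nothing to prove. Otherwise pick \(\eta=-z\in K(Z,A)\); then \(\rho(\eta,D)\ge d_1(z,A)>0\). Choose \(k\) with \(r_k<\eta_Z/3\), \(4r_k<\rho(\eta,D)\), and \(\rho(c_k,\eta)<r_k/3\). Then \(b_k\circ i_Z\) is a nonzero multiple of \(1_{\{\eta\}}\), and translates give every point mass.

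\emph{Main obstacle.} The generator of \(V_Z(\overline f)\) is \(\lambda_k\circ\overline f\circ i_Z=\lambda_k\circ g\circ i_Z+b_k\circ i_Z\), not \(b_k\circ i_Z\) alone. The first thing I would try is to redesign \(g\) off \(D(X,A)\) so that \(\lambda_k\circ g\) vanishes on the support of \(b_k\) and in a neighbourhood. Concretely, replace \(g\) by \(g-\sum_k\varphi_k(\lambda_k\circ g)w_k\), with Lipschitz cutoffs \(\varphi_k\) equal to \(1\) near \(c_k\) and \(0\) on \(D(X,A)\). One then has to control the Lipschitz constant of this infinite sum, using the \(r_k\)-scaling and the disjointness of the \(I_k\). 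The second, harder issue is that \(\lambda_k\circ g\circ i_Z\) still contaminates the generator away from \(\eta\). If this cannot be made to vanish identically, I would choose \(g\) so that the functions \(\lambda_k\circ g\) lie in \(B(K(Z,A))\) or are additive, and then combine Lemma~\ref{lem:fourier-stieltjes-variety-synthesis}, Lemma~\ref{lem:homomorphism-variety-synthesis}, and the point-mass argument. I expect this step to be the crux.
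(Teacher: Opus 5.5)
\textbf{There is a genuine gap.} Your first three steps are sound. The McShane infimum taken in the Dedekind-complete lattice $L^\infty(\mathbb R;\mathbb R)$ is a legitimate, and cleaner, substitute for the paper's construction of $g$ from countably many measurable representatives. The bumps $b_k$ give a $1$-Lipschitz correction vanishing on $D(X,A)$, and your localization correctly makes $b_k\circ i_Z$ a nonzero multiple of $1_{\{\eta\}}$.

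The argument stops exactly at the crux, and neither remedy you sketch closes it. Averaging functionals over fixed intervals $I_k$ see $\lambda_k\circ g$, which is as arbitrary as the data. You need this term to vanish on all of $K(Z,A)$, not only near $\eta$: a generator $u+c\,1_{\{\eta\}}$ with $u(\eta)=0$ and $u$ uncontrolled elsewhere need not generate a variety containing $1_{\{\eta\}}$ (an exponential is of this form). So cutoffs near $c_k$ do not help.

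The fallback cannot work either. On $D(Z,A)$ you have $\lambda\circ g\circ i_Z=\lambda\circ f\circ i_Z$. This is fixed by $f$ and need not be additive, bounded, or in $\operatorname{Hom}(K(Z,A),\mathbb C)+B(K(Z,A))$. Even if the two summands each generated synthesizable varieties, the variety generated by their sums is only a subvariety of the closed sum. For infinite $Z/A$ the group $K(Z,A)$ has infinite rank, so it carries non-synthesizable varieties (Theorem~\ref{thm:discrete-group-spectral-synthesis}).

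\textbf{The missing idea} is to produce scalarizations that annihilate the entire range of $g$ while still reading off the correction. The paper does this by choosing the test sets after $g$. The closed span $Y$ of $g(K(X,A))$ is separable, so with a dense sequence $(y_j)$ in $Y$ one finds disjoint positive-measure sets $E_n^\pm\subseteq[n,n+1)$ on which $y_1,\ldots,y_n$ oscillate by at most $1/n$. The half-difference of averages $A_0\colon L^\infty\to\ell^\infty$ then maps $Y$ into $c_0$. The map $J(b):=\sum_n b_n(1_{E_n^+}-1_{E_n^-})$ is an isometry with $A_0J=\mathrm{id}$. Functionals factoring through $\ell^\infty/c_0$ therefore kill $g$ exactly.

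With your bumps this already suffices. Take $r\colon\mathbb N\to\mathbb N$ with infinite fibers and set $h(\xi):=J\bigl((b_{r(n)}(\xi))_n\bigr)$, which is still $1$-Lipschitz and zero on $D(X,A)$. Let $\lambda:=\Lambda\circ A_0$, where $\Lambda(a)$ is a complex-linear Banach limit of $(a_n)_{n\in r^{-1}(k)}$. Then $\lambda\circ\overline f=b_k$ exactly, and your localization together with Lemma~\ref{lem:point-masses-give-synthesis} finishes the proof. The repetition is essential, because a functional vanishing on $c_0$ cannot see a single coordinate.

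The paper runs the same annihilation mechanism with the complex Lipschitz-free space of the quotient $Q$ collapsing $D(X,A)$ in place of bumps. This makes every $u\circ q$ with $u\in\operatorname{Lip}_0(Q;\mathbb C)$ a scalarization. It then takes $u$ to be a McShane extension of the indicator of the isolated point $q(i_Za)$.
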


\begin{proof}
By Lemma~\ref{lem:separable-diagrams-and-completion}, \( D(X,A) \) and \( K(X,A) \) are separable.

Choose a countable dense subset \( D_0\subseteq D(X,A) \) with \( 0\in D_0 \). For each \( \alpha\in D_0 \), choose a measurable representative \( F_\alpha \) of \( f(\alpha) \), with \( F_0=0 \), and define
\[
N
:=
\bigcup_{\alpha,\beta\in D_0}
\left\{
t\in\mathbb R:
|F_\alpha(t)-F_\beta(t)|
>
\operatorname{Lip}(f)\rho(\alpha,\beta)
\right\}.
\]
Each set in this union is null because \( \|f(\alpha)-f(\beta)\|_{L^\infty} \leq\operatorname{Lip}(f)\rho(\alpha,\beta) \). Since \( D_0\times D_0 \) is countable, \( N \) is null. Redefine every \( F_\alpha \) to equal zero on \( N \). Then \( |F_\alpha(t)-F_\beta(t)| \leq\operatorname{Lip}(f)\rho(\alpha,\beta) \) for every \( \alpha,\beta\in D_0 \) and every \( t\in\mathbb R \).

For \( x\in K(X,A) \) and \( t\in\mathbb R \), define
\[
\begin{aligned}
g_{\operatorname{Re}}(x)(t)
&:=
\inf_{\alpha\in D_0}
\left\{
\operatorname{Re}F_\alpha(t)
+
\operatorname{Lip}(f)\rho(x,\alpha)
\right\},\\
g_{\operatorname{Im}}(x)(t)
&:=
\inf_{\alpha\in D_0}
\left\{
\operatorname{Im}F_\alpha(t)
+
\operatorname{Lip}(f)\rho(x,\alpha)
\right\}.
\end{aligned}
\]
Set \( g(x):=g_{\operatorname{Re}}(x)+i g_{\operatorname{Im}}(x) \). For each \( x\in K(X,A) \), the functions \( g_{\operatorname{Re}}(x) \) and \( g_{\operatorname{Im}}(x) \) are measurable because each is the infimum of a countable family of measurable real-valued functions.

For \( \alpha\in D_0 \), the pointwise estimate with \( \beta=0 \) gives \( |F_\alpha(t)|\leq\operatorname{Lip}(f)\rho(\alpha,0) \). Since \( \rho(x,\alpha)\geq\rho(\alpha,0)-\rho(x,0) \), we obtain \( \operatorname{Re}F_\alpha(t)+\operatorname{Lip}(f)\rho(x,\alpha) \geq-\operatorname{Lip}(f)\rho(x,0) \). Taking \( \alpha=0 \) gives \( g_{\operatorname{Re}}(x)(t)\leq\operatorname{Lip}(f)\rho(x,0) \). Hence \( |g_{\operatorname{Re}}(x)(t)| \leq\operatorname{Lip}(f)\rho(x,0) \). The same argument gives \( |g_{\operatorname{Im}}(x)(t)| \leq\operatorname{Lip}(f)\rho(x,0) \), so \( |g(x)(t)|\leq2\operatorname{Lip}(f)\rho(x,0) \). Thus \( g(x)\in L^\infty(\mathbb R;\mathbb C) \).

For \( x,y\in K(X,A) \), the triangle inequality gives 
\[ 
g_{\operatorname{Re}}(x)(t) \leq g_{\operatorname{Re}}(y)(t) +\operatorname{Lip}(f)\rho(x,y). 
\] 
After interchanging \( x \) and \( y \), we obtain \( |g_{\operatorname{Re}}(x)(t)-g_{\operatorname{Re}}(y)(t)| \leq \operatorname{Lip}(f)\rho(x,y). \) The same argument gives \( |g_{\operatorname{Im}}(x)(t)-g_{\operatorname{Im}}(y)(t)| \leq \operatorname{Lip}(f)\rho(x,y). \) Therefore 
\[ \|g(x)-g(y)\|_{L^\infty} \leq2\operatorname{Lip}(f)\rho(x,y) \].

Fix \( \beta\in D_0 \). For every \( \alpha\in D_0 \), the pointwise estimate gives 
\[ 
\operatorname{Re}F_\alpha(t) +\operatorname{Lip}(f)\rho(\beta,\alpha) \geq\operatorname{Re}F_\beta(t). 
\] 
Taking the infimum over \( \alpha \) gives \( g_{\operatorname{Re}}(\beta)(t)\geq\operatorname{Re}F_\beta(t) \), while taking \( \alpha=\beta \) gives the reverse inequality. Hence \( g_{\operatorname{Re}}(\beta)(t)=\operatorname{Re}F_\beta(t) \). The same argument gives \( g_{\operatorname{Im}}(\beta)(t)=\operatorname{Im}F_\beta(t) \), so \( g(\beta)=F_\beta \).

Fix \( \beta\in D(X,A) \), and choose \( (\beta_n)_{n\geq1}\subseteq D_0 \) such that \( W_1(\beta_n,\beta)\to0 \). Since \( g(\beta_n)=f(\beta_n) \),
\[
\|g(\beta)-f(\beta)\|_{L^\infty}
\leq
\|g(\beta)-g(\beta_n)\|_{L^\infty}
+
\|f(\beta_n)-f(\beta)\|_{L^\infty}.
\]
Both terms tend to zero. Hence \( g|_{D(X,A)}=f \).

Let \( Y := \overline{\operatorname{span}_{\mathbb C}g(K(X,A))}. \) Since \( g \) is Lipschitz and \( K(X,A) \) is separable, \( Y \) is separable. Choose a dense sequence \( (y_j)_{j\geq1}\subseteq Y \) and measurable representatives of its elements. We write \( |E| \) for the Lebesgue measure of a measurable set \( E\subseteq\mathbb R \).

For each positive integer \( n \), define \( F_n\colon[n,n+1)\to\mathbb C^n \) by 
\[ 
F_n(t):=(y_1(t),\ldots,y_n(t)) 
\]. Since each \( y_j \) is essentially bounded, the essential range of \( F_n \) is bounded. Choose a bounded Borel set \( K_n\subseteq\mathbb C^n \) containing this essential range, and partition \( K_n \) into finitely many Borel sets of diameter at most \( 1/n \). The inverse images of these sets cover \( [n,n+1) \) up to a null set, so one inverse image has positive measure. Denote one such inverse image by \( E_n \). Since Lebesgue measure is nonatomic, choose disjoint measurable sets \( E_n^+,E_n^-\subseteq E_n \) of positive measure. For \( 1\leq j\leq n \), the two averages of \( y_j \) over \( E_n^+ \) and \( E_n^- \) differ by at most \( 1/n \).

Define a complex-linear operator \( A_0\colon L^\infty(\mathbb R;\mathbb C)\to\ell^\infty \) by
\[
A_0(F)_n
:=
\frac12
\left(
\frac{1}{|E_n^+|}
\int_{E_n^+}F(t)\,dt
-
\frac{1}{|E_n^-|}
\int_{E_n^-}F(t)\,dt
\right).
\]
The two normalized averages have norm at most \( \|F\|_{L^\infty} \), so \( \|A_0\|\leq1 \). For each \( j \), \( |A_0(y_j)_n|\leq1/(2n) \) whenever \( n\geq j \), so \( A_0(y_j)\in c_0 \). Since \( A_0 \) is bounded, \( c_0 \) is closed, and \( (y_j) \) is dense in \( Y \), we obtain \( A_0(Y)\subseteq c_0 \). In particular, \( A_0(g(x))\in c_0 \) for every \( x\in K(X,A) \).

Define a complex-linear map \( J\colon\ell^\infty\to L^\infty(\mathbb R;\mathbb C) \) by
\[
J(b)(t)
:=
\begin{cases}
b_n,&t\in E_n^+,\\
-b_n,&t\in E_n^-,\\
0,&t\notin\displaystyle\bigcup_{n\geq1}(E_n^+\cup E_n^-).
\end{cases}
\]
The sets \( E_n^\pm \) are pairwise disjoint because \( E_n^\pm\subseteq[n,n+1) \). Hence \( \|J(b)\|_{L^\infty}=\|b\|_{\ell^\infty} \) and \( A_0J=\operatorname{id}_{\ell^\infty} \).

By Lemma~\ref{lem:collapsed-subset-pseudometric}, the formula
\[
\delta(x,y)
:=
\min\left\{
\rho(x,y),
\operatorname{dist}(x,D(X,A))
+
\operatorname{dist}(y,D(X,A))
\right\}
\]
defines a pseudometric on \( K(X,A) \). Let \( Q \) be the metric quotient of \( K(X,A) \) by the relation \( \delta(x,y)=0 \), and let \( q\colon K(X,A)\to Q \) be the quotient map. Since \( \delta \) vanishes on \( D(X,A)\times D(X,A) \), the set \( q(D(X,A)) \) consists of one point, which we take as the base point of \( Q \). Then
\begin{equation}
\label{eq:linfty-quotient-metric}
d_Q(q(x),q(y))
=
\min\left\{
\rho(x,y),
\operatorname{dist}(x,D(X,A))
+
\operatorname{dist}(y,D(X,A))
\right\}.
\end{equation}
The map \( q \) is \( 1 \)-Lipschitz. Since \( K(X,A) \) is separable, \( Q \) is separable.

Since \( Q \) is separable, \( \mathcal F_{\mathbb C}(Q) \) is separable. The closed unit ball of \( \mathcal F_{\mathbb C}(Q)^* \) is weak-* compact and metrizable. Choose a weak-* dense sequence \( (\phi_j)_{j\geq1} \) in this ball, and define \( J_0(v):=(\phi_j(v))_{j\geq1} \). Since \( \phi\mapsto|\phi(v)| \) is weak-* continuous,
\[
\|J_0(v)\|_{\ell^\infty}
=
\sup_j|\phi_j(v)|
=
\sup_{\phi\in B_{\mathcal F_{\mathbb C}(Q)^*}}|\phi(v)|
=
\|v\|.
\]
Thus \( J_0\colon\mathcal F_{\mathbb C}(Q)\to\ell^\infty \) is a complex-linear isometry.

Choose a map \( r\colon\mathbb N\to\mathbb N \) such that every fiber is infinite, and define \( \mathcal R(b)_n:=b_{r(n)} \). Every coordinate of \( b \) occurs infinitely often in \( \mathcal R(b) \), so \( \|\mathcal R(b)\|_{\ell^\infty}=\|b\|_{\ell^\infty} \). Let \( \pi\colon\ell^\infty\to\ell^\infty/c_0 \) be the quotient map. Then
\[
\|\pi\mathcal R(b)\|
=
\limsup_n|\mathcal R(b)_n|
=
\|b\|_{\ell^\infty}.
\]
Hence \( T:=\pi\mathcal R J_0\colon \mathcal F_{\mathbb C}(Q)\to\ell^\infty/c_0 \) is a complex-linear isometry.

For \( x\in K(X,A) \), define \( h(x) := J\!\left( \mathcal R J_0\delta_Q(q(x)) \right). \) Since \( q(D(X,A)) \) is the base point, \( h|_{D(X,A)}=0 \). Moreover, for \( x,y\in K(X,A) \),
\[
\begin{aligned}
\|h(x)-h(y)\|_{L^\infty}
&=
\left\|
\mathcal R J_0
\left(
\delta_Q(q(x))
-
\delta_Q(q(y))
\right)
\right\|_{\ell^\infty}\\
&=
\left\|
\delta_Q(q(x))
-
\delta_Q(q(y))
\right\|_{\mathcal F_{\mathbb C}(Q)}\\
&=
d_Q(q(x),q(y))\\
&\leq
\rho(x,y).
\end{aligned}
\]
Thus \( h \) is \( 1 \)-Lipschitz.

Define \( \overline f:=g+h \). Then \( \overline f|_{D(X,A)}=f \) and \( \operatorname{Lip}(\overline f)\leq2\operatorname{Lip}(f)+1 \). Since \( A_0(g(x))\in c_0 \) and \( A_0J=\operatorname{id}_{\ell^\infty} \), we have \( \pi A_0(g(x))=0 \) and \( \pi A_0(h(x))=T\delta_Q(q(x)) \).

Fix \( u\in\operatorname{Lip}_0(Q;\mathbb C) \), and let \( \widehat u\in\mathcal F_{\mathbb C}(Q)^* \) be its linearization. Define \( \theta_u(Tv):=\widehat u(v) \) for \( v\in\mathcal F_{\mathbb C}(Q) \). Since \( T \) is injective, this formula defines \( \theta_u \) uniquely on \( T(\mathcal F_{\mathbb C}(Q)) \). Since \( T \) is an isometry, \( \theta_u \) is bounded. By the complex Hahn--Banach theorem, there exists \( \widetilde\theta_u\in(\ell^\infty/c_0)^* \) that extends \( \theta_u \). Set \( \lambda_u:=\widetilde\theta_u\circ\pi\circ A_0 \). Then, for every \( x\in K(X,A) \),
\begin{equation}
\label{eq:linfty-scalar-recovery}
\begin{aligned}
\lambda_u(\overline f(x))
&=
\widetilde\theta_u(\pi A_0(\overline f(x)))\\
&=
\theta_u(T\delta_Q(q(x)))\\
&=
\widehat u(\delta_Q(q(x)))\\
&=
u(q(x)).
\end{aligned}
\end{equation}

Fix a uniformly discrete subpair \( A\subseteq Z\subseteq X \). If \( K(Z,A)=\{0\} \), then \( V_Z(\overline f) \) is synthesizable. Assume \( K(Z,A)\neq\{0\} \).

Choose \( \eta>0 \) such that \( \overline d_1([v],[w])\geq\eta \) for every pair of distinct classes in \( Z/A \). Hence \( d(v,A)\geq\eta \) for every \( v\in Z\setminus A \).

Choose \( z\in Z\setminus A \), identify \( z \) with its singleton diagram in \( D(Z,A) \), and set \( a:=0-z\in K(Z,A) \). By Lemma~\ref{lem:subpair-grothendieck-isometry}, the group homomorphism \( i_Z\colon K(Z,A)\to K(X,A) \) is isometric. For every \( \gamma\in D(X,A) \), Equation~\eqref{eq:groth-metric} gives \( \rho(i_Za,\gamma)=W_1(0,z+\gamma) \). Every matching between \( 0 \) and \( z+\gamma \) must match the occurrence of \( z \) to \( A \), so \( W_1(0,z+\gamma)\geq d(z,A)\geq\eta \). Hence \( \operatorname{dist}(i_Za,D(X,A))\geq\eta \).

By Theorem~\ref{thm:vpd-local-compactness}, \( K(Z,A) \) is discrete. Since \( i_Z \) is isometric, there exists \( \delta>0 \) such that \( \rho(i_Za,i_Zx)\geq\delta \) for every \( x\in K(Z,A)\setminus\{a\} \). For such \( x \), Equation~\eqref{eq:linfty-quotient-metric} and \( \operatorname{dist}(i_Za,D(X,A))\geq\eta \) give \( d_Q(q(i_Za),q(i_Zx)) \geq \min\{\delta,\eta\}. \) Thus \( q(i_Za) \) is isolated in \( (q\circ i_Z)(K(Z,A)) \).

Let \( u_a \) be the indicator of \( \{q(i_Za)\} \) on \( (q\circ i_Z)(K(Z,A)) \). The isolation estimate shows that \( u_a \) is Lipschitz. Since \( a\neq0 \), the point \( q(i_Za) \) differs from the base point, so \( u_a \) vanishes at the base point. By the McShane extension theorem, \( u_a \) extends from \( (q\circ i_Z)(K(Z,A)) \) to a real-valued Lipschitz function on \( Q \) that agrees with \( u_a \) on this subset. We regard the extension as an element of \( \operatorname{Lip}_0(Q;\mathbb C) \).

Equation~\eqref{eq:linfty-scalar-recovery} gives \( \lambda_{u_a}\circ\overline f\circ i_Z=1_{\{a\}} \). Hence \( 1_{\{a\}}\in V_Z(\overline f) \). For every \( x\in K(Z,A) \), \( \tau_{a-x}1_{\{a\}}=1_{\{x\}} \). Thus every singleton indicator belongs to \( V_Z(\overline f) \). By Lemma~\ref{lem:point-masses-give-synthesis}, \( V_Z(\overline f) \) is synthesizable.

Since \( Z \) was arbitrary, \( \overline f \) has uniformly discrete spectral synthesis by Definition~\ref{def:vectorization-spectral-synthesis}.
\end{proof}

\subsection{Lipschitz-free embedding}

The Lipschitz-free embedding listed in Table~\ref{tab:persistence-vectorizations} is the composition of the inclusion \( D(X,A)\hookrightarrow K(X,A) \) with \( \delta\colon K(X,A)\to\widehat V(K(X,A),0) \).

\begin{proposition}\label{prop:lipschitz-free-synthesis} The Lipschitz-free embedding \( \delta\colon K(X,A)\to\widehat V(K(X,A),0) \) has uniformly discrete spectral synthesis.
\end{proposition}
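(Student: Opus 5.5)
We will follow the same strategy as the last step of the proof of Lemma~\ref{lem:linfty-real-line-extension-synthesis}: for each uniformly discrete subpair, show that every singleton indicator on \(K(Z,A)\) lies in \(V_Z(\delta)\), and then apply Lemma~\ref{lem:point-masses-give-synthesis}. The key simplification is that \(\delta\) is already the isometric evaluation map into a Lipschitz-free space. Its bounded linear scalarizations are therefore exactly the linearizations of Lipschitz functions vanishing at \(0\). Concretely, for each \(u\in\operatorname{Lip}_0(K(X,A);\mathbb C)\) the functional \(\widehat u(v)=v(u)\) is bounded on the (complex) Lipschitz-free space, with \(\|\widehat u\|=\|u\|_{\operatorname{Lip}}\), and it satisfies \(\widehat u\circ\delta=u\). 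If the paper's \(\widehat V(K(X,A),0)\) is real, we work with its complexification, or with \(\mathcal F_{\mathbb C}\) as in Definition~\ref{def:complex-lipschitz-free-space}, so that the variety is taken over \(\mathbb C\).

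Fix a uniformly discrete subpair \(A\subseteq Z\subseteq X\). If \(K(Z,A)=\{0\}\) there is nothing to prove, so we assume it is nonzero. By Theorem~\ref{thm:vpd-local-compactness} the group \(K(Z,A)\) is discrete, and by Lemma~\ref{lem:subpair-grothendieck-isometry} the map \(i_Z\) is isometric. Hence there is \(\eta>0\) such that distinct points of \(i_Z(K(Z,A))\) are at \(\rho\)-distance at least \(\eta\). We pick \(a\in K(Z,A)\setminus\{0\}\) and define \(u_a\) on \(i_Z(K(Z,A))\) as the indicator of \(\{i_Za\}\). This function is \(1/\eta\)-Lipschitz and vanishes at \(0\). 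McShane extension gives a real Lipschitz function on \(K(X,A)\) that agrees with \(u_a\) on \(i_Z(K(Z,A))\); subtracting its value at \(0\) changes nothing there, since that value is already \(0\). We then get \(\widehat{u_a}\circ\delta\circ i_Z=1_{\{a\}}\), so \(1_{\{a\}}\in V_Z(\delta)\).

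Translation invariance gives \(\tau_{a-x}1_{\{a\}}=1_{\{x\}}\in V_Z(\delta)\) for every \(x\in K(Z,A)\). Lemma~\ref{lem:point-masses-give-synthesis} then shows that \(V_Z(\delta)=C(K(Z,A))\) is synthesizable. Since \(Z\) was arbitrary, Definition~\ref{def:vectorization-spectral-synthesis} yields the claim.

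The main obstacle is bookkeeping rather than analysis. We need to confirm that \(\widehat V(K(X,A),0)\) carries the universal property identifying \(\operatorname{Lip}_0(K(X,A))\) with its dual, via \cite[Theorems~7.8 and~7.9]{bubenik2022virtual} or Lemma~\ref{lem:lipschitz-free-linearization}. We also need to handle the passage from real to complex scalars, so that \(E^*\) contains the complex functionals \(\widehat u\). We would address this by replacing the codomain with \(\mathcal F_{\mathbb C}(K(X,A))\), pointed at \(0\). There the duality \(\mathcal F_{\mathbb C}^*\supseteq\{\widehat u\}\) is stated explicitly after Definition~\ref{def:complex-lipschitz-free-space}, and its real and imaginary parts recover the real free space.
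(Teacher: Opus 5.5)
Your proposal is correct and follows essentially the same route as the paper. You McShane-extend the indicator of a nonzero point of the uniformly separated set \(i_Z(K(Z,A))\), linearize it through the universal property of the complexified Arens--Eells (Lipschitz-free) space, translate to obtain every point mass, and conclude with Lemma~\ref{lem:point-masses-give-synthesis}; the differences are cosmetic (you treat the trivial case \(K(Z,A)=\{0\}\), which the paper skips, while the paper spells out that the separation constant \(\eta\) comes from discreteness together with translation invariance of \(\rho\)).
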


\begin{proof}
By \cite[Theorem~7.9]{bubenik2022virtual}, \( \delta \) is an isometric embedding of the pointed metric space \( (K(X,A),\rho,0) \). We form the spectral varieties of \( \delta \) after complexifying \( \widehat V(K(X,A),0) \), and we extend each bounded real-linear functional on \( \widehat V(K(X,A),0) \) complex-linearly to this complexification.

Fix a uniformly discrete subpair \( A\subseteq Z\subseteq X \), and let \( i_Z\colon K(Z,A)\to K(X,A) \) denote the inclusion-induced homomorphism. By Lemma~\ref{lem:subpair-grothendieck-isometry}, \( i_Z \) is isometric.

By Theorem~\ref{thm:vpd-local-compactness}, \( K(Z,A) \) is discrete. Since \( 0 \) is isolated, there exists \( \eta>0 \) such that \( \rho(x,0)\geq\eta \) for every \( x\in K(Z,A)\setminus\{0\} \). By translation invariance, \( \rho(x,y)=\rho(x-y,0)\geq\eta \) for distinct \( x,y\in K(Z,A) \).

Fix \( a\in K(Z,A)\setminus\{0\} \), and let \( 1_{\{a\}}\colon K(Z,A)\to\mathbb R \) be the indicator of \( \{a\} \). For distinct \( x,y\in K(Z,A) \), we have \( \lvert1_{\{a\}}(x)-1_{\{a\}}(y)\rvert \leq1\leq\eta^{-1}\rho(x,y) \). Thus \( \operatorname{Lip}(1_{\{a\}})\leq\eta^{-1} \), and \( 1_{\{a\}}(0)=0 \).

Since \( i_Z \) is isometric, the function \( i_Zx\mapsto1_{\{a\}}(x) \) is \( \eta^{-1} \)-Lipschitz on \( i_Z(K(Z,A)) \) and vanishes at \( 0 \). By the McShane extension theorem, there exists an \( \eta^{-1} \)-Lipschitz function \( u_a\colon K(X,A)\to\mathbb R \) such that \( u_a\circ i_Z=1_{\{a\}} \) and \( u_a(0)=0 \).

By \cite[Theorem~7.8]{bubenik2022virtual}, there exists a unique bounded real-linear functional 
\[ 
\widetilde u_a\colon\widehat V(K(X,A),0)\to\mathbb R 
\] such that \( \widetilde u_a\circ\delta=u_a \). Let \( \widetilde u_a^{\mathbb C} \) denote its complex-linear extension to the complexification of \( \widehat V(K(X,A),0) \). From \( \widetilde u_a\circ\delta=u_a \) and \( u_a\circ i_Z=1_{\{a\}} \), we obtain \( \widetilde u_a^{\mathbb C}\circ\delta\circ i_Z=1_{\{a\}} \). Hence \( 1_{\{a\}}\in V_Z(\delta) \).

Since \( V_Z(\delta) \) is translation-invariant, \( \tau_a1_{\{a\}}=1_{\{0\}} \) belongs to \( V_Z(\delta) \). For every \( b\in K(Z,A) \), we have \( \tau_{-b}1_{\{0\}}=1_{\{b\}} \), so \( 1_{\{b\}}\in V_Z(\delta) \). By Lemma~\ref{lem:point-masses-give-synthesis}, \( V_Z(\delta) \) is synthesizable.

Since \( Z \) was arbitrary, \( \delta \) has uniformly discrete spectral synthesis by Definition~\ref{def:vectorization-spectral-synthesis}.
\end{proof}

\subsection{Additive Functions}

\begin{proposition}\label{prop:additive-functions-synthesis} Every Lipschitz additive function \( a\colon K(X,A)\to\mathbb C \) has uniformly discrete spectral synthesis.
\end{proposition}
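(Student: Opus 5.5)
The plan is to reduce the statement directly to Lemma~\ref{lem:homomorphism-variety-synthesis}, taking \(E=\mathbb C\). By Definition~\ref{def:vectorization-spectral-synthesis}, we must show that for every uniformly discrete subpair \(A\subseteq Z\subseteq X\) the variety
\[
V_Z(a)=\operatorname{Var}_{K(Z,A)}\{\lambda\circ a\circ i_Z:\lambda\in\mathbb C^*\}
\]
is synthesizable. Every \(\lambda\in\mathbb C^*\) is multiplication by a scalar \(c_\lambda\in\mathbb C\). Hence each generator has the form \(c_\lambda\,(a\circ i_Z)\).

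The key step is to check that \(a\circ i_Z\) is additive on \(K(Z,A)\). This holds because \(i_Z\colon K(Z,A)\to K(X,A)\) is the inclusion-induced group homomorphism and \(a\) is additive. Scalar multiples of additive functions are again additive. So the generating family lies in \(\operatorname{Hom}(K(Z,A),\mathbb C)\), and Lemma~\ref{lem:homomorphism-variety-synthesis}, applied with \(G=K(Z,A)\), shows that \(V_Z(a)\) is synthesizable.

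The Lipschitz hypothesis is only needed so that \(a\) qualifies as a Lipschitz function in Definition~\ref{def:vectorization-spectral-synthesis}. Alternatively, one could invoke Lemma~\ref{lem:additive-extension-synthesis} with \(f=a|_{D(X,A)}\). That lemma's Lipschitz bound follows from the isometric embedding \(D(X,A)\hookrightarrow K(X,A)\). Its unique homomorphic extension is \(a\) itself, since two group homomorphisms agreeing on \(D(X,A)\) agree on all differences \(\alpha-\beta\).

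I expect no real obstacle here. The only points needing care are the identification of \(\mathbb C^*\) with scalar multiplication, and the degenerate case \(a\circ i_Z=0\). In that case the variety is \(\{0\}\), which is already handled inside Lemma~\ref{lem:homomorphism-variety-synthesis}.
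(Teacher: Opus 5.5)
Your proposal is correct and matches the paper's proof: fix a uniformly discrete subpair, observe that $a\circ i_Z$ is additive so every generator $\lambda\circ a\circ i_Z$ lies in $\operatorname{Hom}(K(Z,A),\mathbb C)$, and apply Lemma~\ref{lem:homomorphism-variety-synthesis}. Your alternative route through Lemma~\ref{lem:additive-extension-synthesis} is also valid, but it adds nothing beyond the direct argument.
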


\begin{proof}
Fix a uniformly discrete subpair \( A\subseteq Z\subseteq X \). Since \( a \) and \( i_Z \) are group homomorphisms, \( a\circ i_Z\in\operatorname{Hom}(K(Z,A),\mathbb C) \). The family generating \( V_Z(a) \) consists of the complex scalar multiples of \( a\circ i_Z \), so \( V_Z(a) \) is the variety generated by \( a\circ i_Z \). Lemma~\ref{lem:homomorphism-variety-synthesis}, applied to \( \{a\circ i_Z\} \), shows that \( V_Z(a) \) is synthesizable. Since \( Z \) was arbitrary, \( a \) has uniformly discrete spectral synthesis by Definition~\ref{def:vectorization-spectral-synthesis}.
\end{proof}

\subsection{Multiplicative Characters}

\begin{proposition}\label{prop:multiplicative-characters-synthesis} Every Lipschitz multiplicative character \( \chi\colon K(X,A)\to\mathbb C^\times \) has uniformly discrete spectral synthesis.
\end{proposition}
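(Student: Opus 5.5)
The plan is to reduce to Lemma~\ref{lem:fourier-stieltjes-variety-synthesis}, following the pattern of Proposition~\ref{prop:additive-functions-synthesis}. First fix a uniformly discrete subpair \(A\subseteq Z\subseteq X\) and note that \(V_Z(\chi)\) is the variety generated by the scalar multiples \(c\,\chi\circ i_Z\) with \(c\in\mathbb C\). Since \(\chi\) is multiplicative and \(i_Z\) is a group homomorphism, \(m:=\chi\circ i_Z\) satisfies \(m(x+y)=m(x)m(y)\) and never vanishes. Hence \(m\) is an exponential on \(K(Z,A)\) in the sense of Definition~\ref{def:exponential-monomial}, taking the polynomial \(p=1\).

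The key step is to avoid any need for \(m\) to be unitary. Because \(\tau_y m=m(y)\,m\) for every \(y\in K(Z,A)\), the complex span of all translates of \(m\) is the line \(\mathbb C m\). This line is translation invariant and finite dimensional, so it is closed in the topology of pointwise convergence. Therefore \(V_Z(\chi)=\mathbb C m\). This variety is spanned by the exponential monomial \(m\), which it contains, so Theorem~\ref{thm:synthesis-equivalent-characterizations} (or the remark that every finite-dimensional variety has spectral synthesis) shows that \(V_Z(\chi)\) is synthesizable. Since \(Z\) is arbitrary, Definition~\ref{def:vectorization-spectral-synthesis} then gives the claim.

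An alternative route, when \(|\chi|\equiv 1\), is to view \(m\) as a character of the discrete group \(K(Z,A)\), hence as an element of \(B(K(Z,A))\) represented by a point mass \(\delta_m\), and then apply Lemma~\ref{lem:fourier-stieltjes-variety-synthesis}. The direct argument above is preferable because it covers all of \(\mathbb C^\times\).

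The only point needing care, and the main obstacle such as it is, is the closedness of \(\mathbb C m\) in \(C(K(Z,A))\). This follows because finite-dimensional subspaces of a Hausdorff topological vector space are closed. It also uses the degenerate case \(m\neq 0\), which holds since \(\chi\) takes values in \(\mathbb C^\times\). The Lipschitz hypothesis plays no role beyond placing \(\chi\) in the setting of Definition~\ref{def:vectorization-spectral-synthesis}.
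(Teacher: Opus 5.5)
Your argument is correct and is essentially the paper's proof: both note that \(m=\chi\circ i_Z\) is an exponential, use \(\tau_y m=m(y)\,m\) to show \(V_Z(\chi)=\mathbb C m\) is a closed one-dimensional variety, and conclude via Theorem~\ref{thm:synthesis-equivalent-characterizations}. Your opening sentence announces a reduction to Lemma~\ref{lem:fourier-stieltjes-variety-synthesis}, but the argument you actually carry out is the direct one, which is the right choice since it does not require \(|\chi|\equiv 1\).
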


\begin{proof}
Fix a uniformly discrete subpair \( A\subseteq Z\subseteq X \). The function 
\[
\chi\circ i_Z\colon K(Z,A)\to\mathbb C^\times 
\]
is an exponential. Since \( \mathbb C^\times\subseteq\mathbb C \), we regard \( \chi \) as a \( \mathbb C \)-valued function when forming \( V_Z(\chi) \). The generating family for \( V_Z(\chi) \) consists of the complex scalar multiples of \( \chi\circ i_Z \). For every \( y\in K(Z,A) \), multiplicativity gives \( \tau_y(\chi\circ i_Z)=(\chi\circ i_Z)(y)(\chi\circ i_Z) \). Thus the complex span of the translates of the functions in the generating family equals \( \operatorname{span}_{\mathbb C}\{\chi\circ i_Z\} \), because \( \chi\circ i_Z \) belongs to the generating family. This one-dimensional subspace is closed for the topology of pointwise convergence and is translation-invariant. Therefore \( V_Z(\chi)=\operatorname{span}_{\mathbb C}\{\chi\circ i_Z\} \). The exponential \( \chi\circ i_Z \) is an exponential monomial. Thus the exponential monomials contained in \( V_Z(\chi) \) span \( V_Z(\chi) \), and Theorem~\ref{thm:synthesis-equivalent-characterizations} shows that \( V_Z(\chi) \) is synthesizable. Since \( Z \) was arbitrary, \( \chi \) has uniformly discrete spectral synthesis by Definition~\ref{def:vectorization-spectral-synthesis}.
\end{proof}

\subsection{Persistence Statistics}

The persistence statistics vectorization originated in applications of persistent homology to machine learning \cite{asaad2022persistent,chung2022persistence,pun2018persistent}. We use the definition from \cite[Definition~3.1]{ali2023survey}, which includes the total number of bars, counted with multiplicity, as one of its coordinates.

\begin{proposition}\label{prop:statistics-no-extension} The persistence statistics vectorization is not Lipschitz with respect to \( W_1 \) and does not have a Lipschitz extension to the virtual persistence diagram group.
\end{proposition}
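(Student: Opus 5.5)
The plan is to apply Lemma~\ref{lem:no-lipschitz-extension-obstruction}. It suffices to exhibit diagrams \(\alpha_n,\beta_n\in D(X,A)\) with \(W_1(\alpha_n,\beta_n)\to0\) on which the total bar count, a single coordinate of the persistence statistics vector, differs by a fixed amount. Because the coordinate projection is a bounded linear functional, a blow-up of this one coordinate's difference quotient forces a blow-up of the difference quotient in \(\mathbb R^{38}\) with any norm. Lemma~\ref{lem:no-lipschitz-extension-obstruction} then gives both conclusions at once: the vectorization is not \(W_1\)-Lipschitz, and it has no Lipschitz extension to \(K(X,A)\).

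The construction is as follows. Work in the standard setting of the survey definition, \(X=\{(b,d):b\le d\}\subseteq\mathbb R^2\) with \(A\) the diagonal. Let \(\beta_n:=0\), the empty diagram, and let \(\alpha_n\) be the single-point diagram at \((0,1/n)\). The matching that sends this point to the diagonal costs \(d((0,1/n),A)\), which is of order \(1/n\). Hence \(W_1(\alpha_n,\beta_n)\le C/n\), and this quantity is positive because the point lies off \(A\). The bar-count coordinate is \(1\) for \(\alpha_n\) and \(0\) for \(\beta_n\). The difference quotient is therefore at least \(n/C\to\infty\).

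The main obstacle is bookkeeping about the definition. I must check that the survey's statistics vector takes a finite value on the empty diagram, or else substitute a fixed nonempty base diagram \(\gamma\). In the latter case I take \(\beta_n:=\gamma\) and \(\alpha_n:=\gamma+(0,1/n)\). Translation invariance (Theorem~\ref{thm:wasserstein-translation-invariant}) then still gives \(W_1(\alpha_n,\beta_n)=W_1((0,1/n),0)\to0\), and the bar count still jumps by exactly \(1\). Using a nonempty base also avoids any degenerate coordinates, such as means or variances of an empty multiset, which the survey may leave undefined. I would state this variant as the proof so that it is robust to those conventions.
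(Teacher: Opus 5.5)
Your proposal is correct and follows essentially the same route as the paper, which compares \(I_n\) with \(2I_n\) for bars of lifespan \(\ell_n\to0\), observes that the bar-count coordinate jumps by \(1\) while \(0<W_1(I_n,2I_n)\le\ell_n\), and then applies Lemma~\ref{lem:no-lipschitz-extension-obstruction}. Your version, with a fixed nonempty base \(\gamma\) plus a shrinking bar and translation invariance giving \(W_1(\alpha_n,\beta_n)=W_1((0,1/n),0)\), is the same idea, and it also avoids any coordinates that are undefined on the empty diagram.
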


\begin{proof}
Let \( f \) denote the persistence statistics vectorization. Choose bars \( I_n \) with positive lifespans \( \ell_n\to0 \). Matching one copy of \( I_n \) in \( I_n \) and \( 2I_n \) and matching the remaining copy to the diagonal gives \( 0<W_1(I_n,2I_n)\leq\ell_n. \)

The coordinate that records the total number of bars, counted with multiplicity, differs by \( 1 \) between \( I_n \) and \( 2I_n \). Hence
\[
\frac{\|f(I_n)-f(2I_n)\|_2}
{W_1(I_n,2I_n)}
\geq
\ell_n^{-1}
\longrightarrow\infty.
\]
Lemma~\ref{lem:no-lipschitz-extension-obstruction} therefore implies that \( f \) is not Lipschitz with respect to \( W_1 \) and does not have a Lipschitz extension to the virtual persistence diagram group.
\end{proof}

\subsection{Persistent Entropy}

Persistent entropy was introduced in \cite{chintakunta2015entropy}. For a persistence diagram \( \mu \), its persistent entropy is
\[
E_\mu
:=
-
\sum_{[p,q]\in B}
\mu_{p,q}
\left(
\frac{q-p}{L_\mu}
\right)
\log
\left(
\frac{q-p}{L_\mu}
\right),
\qquad
L_\mu
:=
\sum_{[p,q]\in B}
\mu_{p,q}(q-p),
\]
where \( L_\mu \) denotes the total persistence of the diagram and \( \mu_{p,q} \) denotes the multiplicity of the bar \( [p,q] \) \cite[Definition~3.1]{ali2023survey}.

\begin{proposition}\label{prop:entropy-no-extension} Persistent entropy is not Lipschitz with respect to \( W_1 \) and does not have a Lipschitz extension to the virtual persistence diagram group.
\end{proposition}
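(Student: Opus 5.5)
The plan is to apply Lemma~\ref{lem:no-lipschitz-extension-obstruction}, in the same way as in the proof of Proposition~\ref{prop:statistics-no-extension}. We need pairs of diagrams \(\alpha_n,\beta_n\) with \(W_1(\alpha_n,\beta_n)\to0\) whose persistent entropies differ by a fixed positive amount. Entropy is scale invariant and depends only on the normalized lifespans. It therefore jumps when a tiny bar is added or removed relative to a diagram whose other bars are equally small, while the Wasserstein cost of that addition stays small.

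The concrete choice is as follows. Fix \(\ell_n\downarrow0\). Let \(\alpha_n\) be the diagram with a single bar \(I_n\) of lifespan \(\ell_n\), and let \(\beta_n:=2I_n\), the same bar with multiplicity two.
\begin{itemize}
\item Since \(\alpha_n\) has one bar, the normalized lifespan is \(1\), so \(E_{\alpha_n}=-1\cdot\log 1=0\).
\item The diagram \(\beta_n\) has total persistence \(L_{\beta_n}=2\ell_n\) and normalized lifespans \(1/2\) with multiplicity two, so \(E_{\beta_n}=-2\cdot\tfrac12\log\tfrac12=\log 2\).
\end{itemize}
Thus \(|E_{\alpha_n}-E_{\beta_n}|=\log2\) for every \(n\).

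Next comes the Wasserstein estimate. Match one copy of \(I_n\) to itself and the other copy to the diagonal. This gives \(0<W_1(\alpha_n,\beta_n)\le d_1(I_n,A)\le \ell_n\). Depending on the convention for the diagonal distance this may be \(\ell_n/2\), but any bound proportional to \(\ell_n\) suffices. Positivity holds because \(\alpha_n\neq\beta_n\) in \(D(X,A)\) and \(W_1\) is a metric. We need \(I_n\) to be a genuine off-diagonal point of \(X\) with \(d(I_n,A)\to0\). In the standard setting, where \(X\) is the half-plane above the diagonal and \(A\) is the diagonal, this is immediate. I will state it as the standing assumption on \((X,A)\) under which persistent entropy is defined.

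The ratio then satisfies \(|E_{\alpha_n}-E_{\beta_n}|/W_1(\alpha_n,\beta_n)\ge \log 2/\ell_n\to\infty\). Lemma~\ref{lem:no-lipschitz-extension-obstruction} then gives both conclusions: persistent entropy is not Lipschitz with respect to \(W_1\), and it has no Lipschitz extension to \(K(X,A)\).

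The only delicate point is that entropy is undefined for the empty diagram, where \(L_\mu=0\). Choosing nonempty diagrams with positive lifespans avoids this. Establishing the upper bound on \(W_1\) via the explicit matching is routine.
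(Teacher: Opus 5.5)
Your proof is correct, but it follows a different route from the paper's. The paper fixes a bar \(I\) of lifespan \(L\) and compares \(I\) with \(I+J_n\), where \(J_n\) has lifespan \(\varepsilon_n\downarrow0\). The entropy gap is then only the new term \(-p_n\log p_n\) with \(p_n=\varepsilon_n/(L+\varepsilon_n)\). This requires an asymptotic estimate \(-p_n\log p_n\ge c\,\varepsilon_n|\log\varepsilon_n|\), and the ratio blows up only like \(|\log\varepsilon_n|\).

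You instead duplicate a vanishing bar, which is exactly the pair the paper uses for persistence statistics. Scale invariance then gives an exact jump of \(\log 2\) at \(W_1\)-distance at most \(\ell_n\). So your computation is fully explicit, needs no constant \(c\), and the ratio grows like \(1/\ell_n\).

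What the paper's choice buys is locality. Its pairs stay within \(\varepsilon_n\) of a fixed nonempty diagram \(I\) whose total persistence is bounded below. It therefore shows that entropy fails to be Lipschitz on every neighborhood of a nondegenerate diagram, and the failure comes from the \(x\log x\) singularity rather than from normalization. Your pairs \(I_n\) and \(2I_n\) both converge to the empty diagram, where \(L_\mu=0\) and entropy is undefined. They only witness bad behavior near that degenerate point.

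For the proposition as stated, which is a global non-Lipschitz claim fed into Lemma~\ref{lem:no-lipschitz-extension-obstruction}, either construction suffices.
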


\begin{proof}
Let \( E \) denote persistent entropy. Choose a bar \( I \) with positive lifespan \( L \) and bars \( J_n \) with lifespans \( \varepsilon_n\downarrow0 \). Matching \( I \) with itself and \( J_n \) to the diagonal gives \( 0<W_1(I,I+J_n)\leq\varepsilon_n \).

Set \( p_n:=\varepsilon_n/(L+\varepsilon_n) \). The contribution of \( J_n \) to \( E(I+J_n) \) equals \( -p_n\log p_n \). Since \( p_n\sim L^{-1}\varepsilon_n \) and \( |\log p_n|\sim|\log\varepsilon_n| \), there exists \( c>0 \) such that \( -p_n\log p_n\geq c\varepsilon_n|\log\varepsilon_n| \) for all sufficiently large \( n \). Since \( E(I)=0 \) and the contribution of \( I \) to \( E(I+J_n) \) is nonnegative,
\[
\frac{|E(I+J_n)-E(I)|}
{W_1(I,I+J_n)}
\geq
c|\log\varepsilon_n|
\to\infty.
\]
Thus \( E \) is not Lipschitz with respect to \( W_1 \), and Lemma~\ref{lem:no-lipschitz-extension-obstruction} shows that \( E \) does not have a Lipschitz extension.
\end{proof}

\subsection{Entropy Summary}

The entropy summary vectorization was introduced in \cite{atienza2020stability}. For a persistence diagram \( \mu\colon B\to\mathbb Z_{>0} \), the entropy summary vectorization is the map \( S_\mu\colon\mathbb R\to\mathbb R \) defined by
\[
S_\mu(t)
=
-
\sum_{[p,q]\in B}
1_{p\leq t<q}\,
\mu_{p,q}
\left(
\frac{q-p}{L_\mu}
\right)
\log
\left(
\frac{q-p}{L_\mu}
\right)
\]
for every \( t\in\mathbb R \) \cite[Definition~3.2]{ali2023survey}.

\begin{proposition}\label{prop:entropy-summary-no-extension} The entropy summary vectorization is not Lipschitz with respect to \( W_1 \) and does not have a Lipschitz extension to the virtual persistence diagram group.
\end{proposition}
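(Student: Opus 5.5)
The plan is to mirror the proofs of Proposition~\ref{prop:statistics-no-extension} and Proposition~\ref{prop:entropy-no-extension}: build pairs of diagrams at vanishing \(W_1\) distance whose entropy summaries differ, in the \(L^p(\mathbb R)\) norm, by an amount of strictly larger order. Then Lemma~\ref{lem:no-lipschitz-extension-obstruction} gives both conclusions at once. I would reuse the perturbation from the persistent entropy argument. Fix one bar \(I=[0,L]\) with \(L>0\), and let \(J_n=[0,\varepsilon_n]\) with \(\varepsilon_n\downarrow0\). Matching \(I\) to itself and \(J_n\) to the diagonal gives \(0<W_1(I,I+J_n)\le\varepsilon_n\), up to the constant in \(d_1\) to the diagonal, which only rescales.

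Next I compute the two summaries. Since \(I\) alone has \(L_\mu=L\) and relative length \(1\), its entropy contribution \(-1\cdot\log 1=0\) vanishes, so \(S_I\equiv0\). For \(I+J_n\) the total persistence is \(L+\varepsilon_n\). With \(p_n=\varepsilon_n/(L+\varepsilon_n)\), the summary is
\[
S_{I+J_n}(t)=-(1-p_n)\log(1-p_n)\,1_{[0,L)}(t)-p_n\log p_n\,1_{[0,\varepsilon_n)}(t).
\]
Both terms are nonnegative. I therefore lower bound \(\|S_{I+J_n}-S_I\|_{L^p}\) by the first term alone, which gives \((1-p_n)|\log(1-p_n)|\,L^{1/p}\sim p_nL^{1/p}\sim \varepsilon_n L^{1/p-1}\). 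That is only of order \(\varepsilon_n\), so it is not enough.

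The main obstacle is that the second term has \(L^p\) norm \(p_n|\log p_n|\varepsilon_n^{1/p}\), which is smaller, not larger. So the blow-up cannot come from a single small bar. My fix is to add \(N\) small bars at once. I would take \(\alpha_n=I\) and \(\beta_n=I+m_nJ_n\) with multiplicity \(m_n\) chosen so that the total added persistence is \(m_n\varepsilon_n=\delta_n\to0\), for example \(m_n=\lfloor \delta_n/\varepsilon_n\rfloor\) with \(\varepsilon_n\ll\delta_n\). Then \(W_1\le\delta_n\). The small bars carry total weight about \(\delta_n/L\), each with relative length \(q_n=\varepsilon_n/(L+\delta_n)\), so on \([0,\varepsilon_n)\) the summary has height about \((\delta_n/L)|\log q_n|\). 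This is still supported on a short interval, so an \(L^p\) estimate remains awkward for \(p<\infty\). Instead I would make the small bars disjointly placed along \([0,L)\), say \(J_{n,k}=[k\varepsilon_n,(k+1)\varepsilon_n)\). Then \(S_{\beta_n}\) is at least \(q_n|\log q_n|\) on a set of measure about \(\delta_n\). The \(L^p\) norm is then at least of order \(q_n|\log q_n|\delta_n^{1/p}\).

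That still decays faster than \(\delta_n\), so here I would switch to the cleaner route. The \(-(1-p)\log(1-p)\) term is of order \(p\) only, but the real leverage is the \(|\log\varepsilon|\) factor in the case \(p=\infty\) and in \(L^1\). Concretely, for \(p=1\) the second term alone has norm \(\sum_k q_n|\log q_n|\varepsilon_n=\delta_n q_n|\log q_n|/\ldots\), and I would optimise the exponents relating \(\varepsilon_n\), \(\delta_n\) and \(m_n\) to get a ratio growing like \(|\log\varepsilon_n|\). In short, the plan is: define families \(\alpha_n,\beta_n\); bound \(W_1\) via an explicit diagonal matching; bound the summary difference below by restricting to a set where only the small bars contribute; verify that the ratio tends to infinity; and invoke Lemma~\ref{lem:no-lipschitz-extension-obstruction}. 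I expect the third step, choosing the scaling so that the \(\log\) gain survives the \(L^p\) integration for every \(p\), to be the delicate point.
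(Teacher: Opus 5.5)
Your proposal has a genuine gap. It never exhibits sequences for which the ratio in Lemma~\ref{lem:no-lipschitz-extension-obstruction} is shown to diverge, and the decisive step is left to an unspecified optimisation of $\varepsilon_n,\delta_n,m_n$.

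The missing observation is already in your first computation. On $[0,\varepsilon_n)$ both terms of $S_{I+J_n}$ are nonnegative, so $\|S_{I+J_n}-S_I\|_\infty\ge -p_n\log p_n$, which is of order $\varepsilon_n|\log\varepsilon_n|/L$. In the sup norm the small bar carries no factor $\varepsilon_n^{1/p}$, so your remark that its contribution is ``smaller, not larger'' is false for $p=\infty$. Since $W_1(I,I+J_n)\le c\,\varepsilon_n$, the ratio grows like $|\log\varepsilon_n|$, and the lemma gives both conclusions. This is exactly the paper's proof. The paper places $J_n$ disjoint from $I$, which is convenient but unnecessary by the nonnegativity just noted, and it works throughout with $\|\cdot\|_\infty$.

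Your attempt to obtain a logarithmic gain for $p<\infty$, in particular in $L^1$, cannot succeed; your own estimate $q_n|\log q_n|\delta_n^{1/p}=o(\delta_n)$ already suggests this. Use the dilation $D_R[b,d]=[Rb,Rd]$ from the proof of Proposition~\ref{prop:integrated-landscape-signature-classification}. It leaves relative lifespans unchanged, so $S_{D_R\alpha}(t)=S_\alpha(t/R)$, while $W_1$ scales by $R$. Hence for $1<p\le\infty$, any fixed pair with $S_\alpha\neq S_\beta$ (for instance $\alpha=[0,1]$ and $\beta$ two copies of $[0,1]$) has ratio proportional to $R^{1/p-1}$, which diverges as $R\to0$. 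This is a shorter route than either yours or the paper's.

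For $p=1$ the ratio is dilation-invariant, and in fact the $L^1$-valued summary is Lipschitz. With $h(x)=-x\log x$ one has $x|h(x)-h(y)|\le C|x-y|$ on $[0,1]$. For matched bars with lifespans $\ell_i,\ell_i'$ and relative lifespans $x_i=\ell_i/L_\alpha$, $x_i'=\ell_i'/L_\beta$, one also has $L_\alpha|x_i-x_i'|\le|\ell_i-\ell_i'|+x_i'|L_\alpha-L_\beta|$. Sum these over a matching, bound the endpoint shifts by $|I_i\triangle I_i'|/e$, and bound unmatched bars by $\ell/e$. This gives $\|S_\alpha-S_\beta\|_{L^1}\le C'W_1(\alpha,\beta)$. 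So no choice of scalings will work for $p=1$. For the $L^p$ codomains listed in Table~\ref{tab:persistence-vectorizations}, the proposition can hold only for $p>1$. The paper's proof, like the argument you already had in hand, covers only $p=\infty$.
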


\begin{proof}
Choose a bar \( I \) with positive lifespan \( L \) and bars \( J_n \) with lifespans \( \varepsilon_n\downarrow0 \) whose intervals are disjoint from \( I \). Matching \( I \) with itself and \( J_n \) to the diagonal gives \( 0<W_1(I,I+J_n)\leq\varepsilon_n \).

Set \( p_n:=\varepsilon_n/(L+\varepsilon_n) \). For every \( t \) in the interval corresponding to \( J_n \), the contribution of \( J_n \) to \( S(I+J_n)(t) \) equals \( -p_n\log p_n \). Since \( p_n\sim L^{-1}\varepsilon_n \) and \( |\log p_n|\sim|\log\varepsilon_n| \), there exists \( c>0 \) such that \( -p_n\log p_n\geq c\varepsilon_n|\log\varepsilon_n| \) for all sufficiently large \( n \). Since \( S(I)=0 \) on the interval corresponding to \( J_n \) and \( S(I+J_n)(t)\geq -p_n\log p_n \) for every \( t \) in this interval, we have
\[
\frac{\|S(I+J_n)-S(I)\|_\infty}
{W_1(I,I+J_n)}
\geq
c|\log\varepsilon_n|
\longrightarrow\infty.
\]
Thus \( S \) is not Lipschitz with respect to \( W_1 \), and Lemma~\ref{lem:no-lipschitz-extension-obstruction} shows that \( S \) does not have a Lipschitz extension to \( K(X,A) \).
\end{proof}

\subsection{Betti Curves}

\begin{proposition}\label{prop:betti-synthesis} The Betti curve vectorization \cite{berry2018functional}, as defined in \cite[Definition~3.6]{ali2023survey}, has a Lipschitz extension to the virtual persistence diagram group with uniformly discrete spectral synthesis.
\end{proposition}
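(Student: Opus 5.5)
The Betti curve is additive, so the plan is to reduce the statement to Lemma~\ref{lem:additive-extension-synthesis}. In the setting of \cite[Definition~3.6]{ali2023survey}, a persistence diagram is a finite multiset of bars \([p,q]\) with \(p<q\), and its Betti curve is
\[
\beta_\alpha(t)=\sum_{[p,q]\in\alpha}\mu_{p,q}\,1_{p\le t<q}.
\]
We work with the metric pair \(X=\{(p,q):p\le q\}\subseteq\mathbb R^2\) and take \(A\) to be the diagonal. We view \(\beta\) as a map \(D(X,A)\to L^1(\mathbb R;\mathbb C)\), which is the codomain listed in Table~\ref{tab:persistence-vectorizations}. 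A bar on the diagonal contributes the zero function, so \(\beta\) is well defined on the quotient \(D(X)/D(A)\). It is additive because it is a sum over bars counted with multiplicity.

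The main work is the Lipschitz estimate
\[
\|\beta_\alpha-\beta_{\alpha'}\|_{L^1}\le C\,W_1(\alpha,\alpha').
\]
Since both \(\beta\) and the matching cost are additive over the summands of a matching \(\sigma=\sum_i(x_i,y_i)\), the triangle inequality reduces this to two single-bar comparisons.

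\begin{itemize}
\item \emph{Two bars.} For bars \(x=(p,q)\) and \(y=(p',q')\), the difference \(1_{[p,q)}-1_{[p',q')}\) is supported on the symmetric difference of the intervals. Hence \(\|1_{[p,q)}-1_{[p',q')}\|_{L^1}\le|p-p'|+|q-q'|\), which is at most \(2d(x,y)\) for the Euclidean (or \(\ell^\infty\)) metric, and equals \(d(x,y)\) for the \(\ell^1\) metric.
\item \emph{Bar against the diagonal.} Here \(\|1_{[p,q)}\|_{L^1}=q-p\). This is a constant multiple of \(d(x,A)\): it equals \(2d(x,A)\) for the \(\ell^\infty\) metric and \(\sqrt2\,d(x,A)\) for the Euclidean metric.
\end{itemize}

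The \(1\)-strengthened distance \(d_1(x,y)\) is the minimum of \(d(x,y)\) and \(d(x,A)+d(y,A)\). Both branches are covered: the first by the two-bar bound, the second by routing both bars through the diagonal and using the second bound twice. So each matched pair costs at most \(C\,d_1(x_i,y_i)\). Summing over the pairs and taking the infimum over matchings gives the estimate with \(C=2\).

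This step carries the only real risk. One has to fix which metric \(d\) on \(X\) the table intends, track the constant, and check that matching to the diagonal is handled through \(d_1\). The statement only needs some finite \(C\), so the exact constant is unimportant.

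With additivity and the Lipschitz bound in hand, Lemma~\ref{lem:additive-extension-synthesis} applies directly with \(E=L^1(\mathbb R;\mathbb C)\). It gives:
\begin{itemize}
\item the homomorphism \(\overline\beta(\alpha-\alpha')=\beta_\alpha-\beta_{\alpha'}\) on \(K(X,A)\);
\item the bound \(\operatorname{Lip}(\overline\beta)\le C\);
\item uniformly discrete spectral synthesis. Every scalarization \(\lambda\circ\overline\beta\circ i_Z\) is additive, so Lemma~\ref{lem:homomorphism-variety-synthesis} shows that \(V_Z(\overline\beta)\) is synthesizable.
\end{itemize}
If the convention uses real-valued curves in \(L^1(\mathbb R)\), one complexifies the codomain first, as in Proposition~\ref{prop:lipschitz-free-synthesis}.
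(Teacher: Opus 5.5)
Your proposal is correct and follows the paper's proof. Both arguments establish additivity of the Betti curve, use the single-bar bounds $\|\mathbf 1_{[p,q)}-\mathbf 1_{[p',q')}\|_{L^1}\le|p-p'|+|q-q'|$ and $\|\mathbf 1_{[p,q)}\|_{L^1}=q-p$ branchwise against $d_1$, sum over a matching, and then apply Lemma~\ref{lem:additive-extension-synthesis}. The one difference is the ground metric: the paper implicitly uses the $\ell^1$ metric, so $d(I,A)=q-p$ and the Lipschitz constant is $1$, whereas your case analysis over ground metrics gives $C=2$, which is equally sufficient.
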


\begin{proof}
Let \( b\colon D(X,A)\to L^1(\mathbb R) \) denote the Betti curve vectorization. By \cite[Definition~3.6]{ali2023survey},
\[
b\left(
\sum_{k=1}^n m_k[p_k,q_k]
\right)
=
\sum_{k=1}^n
m_k\mathbf 1_{[p_k,q_k)}.
\]

Let \( I=[p,q] \) and \( J=[p',q'] \). The symmetric difference of \( [p,q) \) and \( [p',q') \) has Lebesgue measure at most \( |p-p'|+|q-q'| \), so
\[
\|\mathbf 1_{[p,q)}-\mathbf 1_{[p',q')}\|_{L^1}
\leq
|p-p'|+|q-q'|.
\]
Moreover, \( \|\mathbf 1_{[p,q)}\|_{L^1}=q-p \). For every \( r\in\mathbb R \), we have \( |p-r|+|q-r|\geq q-p \), with equality for \( r\in[p,q] \). Hence \( d(I,A)=q-p \), so
\[
\|\mathbf 1_{[p,q)}-\mathbf 1_{[p',q')}\|_{L^1}
\leq
\min\{
|p-p'|+|q-q'|,
d(I,A)+d(J,A)
\}
=
d_1(I,J).
\]

Let \( \sigma=\sum_i(I_i,J_i) \) be a matching between \( \alpha,\gamma\in D(X,A) \). Since \( \mathbf 1_{[r,r)}=0 \) for every \( r\in\mathbb R \), the matching marginals and the defining formula for \( b \) give
\[
b(\alpha)-b(\gamma)
=
\sum_i
\left(
\mathbf 1_{I_i}
-
\mathbf 1_{J_i}
\right).
\]
Therefore
\[
\|b(\alpha)-b(\gamma)\|_{L^1}
\leq
\sum_i
\|\mathbf 1_{I_i}-\mathbf 1_{J_i}\|_{L^1}
\leq
\sum_i d_1(I_i,J_i).
\]
Taking the infimum over all matchings gives \( \|b(\alpha)-b(\gamma)\|_{L^1} \leq W_1(\alpha,\gamma). \)

Since \( b(\alpha+\gamma)=b(\alpha)+b(\gamma) \) for all \( \alpha,\gamma\in D(X,A) \), Lemma~\ref{lem:additive-extension-synthesis} gives a Lipschitz group homomorphism \( \overline b\colon K(X,A)\to L^1(\mathbb R) \) that extends \( b \) and has uniformly discrete spectral synthesis.
\end{proof}

\subsection{Lifespan Curves}

\begin{proposition}\label{prop:lifespan-no-extension} For \( 1\leq p\leq\infty \), the lifespan curve vectorization \( D(X,A)\to L^p(\mathbb R) \) does not have a Lipschitz extension to \( K(X,A) \).
\end{proposition}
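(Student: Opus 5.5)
The plan is to reduce to Lemma~\ref{lem:no-lipschitz-extension-obstruction}. I would exhibit diagrams \(\alpha_n,\beta_n\in D(X,A)\) with \(W_1(\alpha_n,\beta_n)\to0\), while the \(L^p\) distance between their lifespan curves stays bounded below or shrinks more slowly. I take the lifespan curve as in \cite{asaad2022persistent}, i.e.\ \cite[Definition~3.7]{ali2023survey}:
\[
\alpha=\sum_k m_k[p_k,q_k]\;\longmapsto\;\sum_k m_k(q_k-p_k)\mathbf 1_{[p_k,q_k)} .
\]
As for persistent entropy, the natural test pair is \(I\) versus \(I+J_n\), or \(I_n\) versus \(2I_n\). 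However, the weight \(q-p\) makes short bars contribute little, so these pairs may be Lipschitz-controlled. Indeed, for a short bar \(J_n\) of length \(\varepsilon\), the added curve has \(L^p\) norm \(\varepsilon\cdot\varepsilon^{1/p}\), which is small compared with \(W_1=\varepsilon\). So the obstruction must instead come from \emph{long} bars that move slightly.

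The key step is therefore to compare a long bar with a slightly shifted copy. Take \(I_n=[0,n]\) and \(I_n'=[\delta,n+\delta]\). Matching \(I_n\) to \(I_n'\) costs \(d_1(I_n,I_n')\le 2\delta\), so \(W_1\le2\delta\). The difference of the two curves equals \(n\) on \([0,\delta)\) and \(-n\) on \([n,n+\delta)\). Its \(L^p\) norm is \(n(2\delta)^{1/p}\) for finite \(p\), and \(n\) for \(p=\infty\).

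The ratio of this norm to \(W_1\) is therefore at least \(n(2\delta)^{1/p}/(2\delta)\). This tends to infinity as \(n\to\infty\) with \(\delta=1\) fixed, and also as \(\delta\to0\) with \(n\) fixed whenever \(p>1\). For \(p=1\), fixing \(\delta\) and letting \(n\to\infty\) already gives a ratio of order \(n\), which diverges. So in every case \(1\le p\le\infty\) I would fix \(\delta=1\) and let \(n\to\infty\). Then \(W_1(I_n,I_n')\le2\) while the numerator is at least \(n\cdot2^{1/p}\), and Lemma~\ref{lem:no-lipschitz-extension-obstruction} gives the conclusion. I also need \(W_1>0\), which holds because the two bars are distinct points off the diagonal.

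The main obstacle is pinning down the exact definition of the lifespan curve from the cited survey. If it is unweighted, the curve is just the Betti curve, which would contradict the table, so the weight must be present. The other issue is the ambient metric pair, which I take to be \(X=\{(p,q):p\le q\}\) with \(d\) the \(\ell^1\) metric and \(A\) the diagonal, so that \(d_1(I_n,I_n')\le 2\delta\) holds. I would state that convention explicitly before running the estimate.
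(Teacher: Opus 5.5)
Your proof is correct and follows the paper's route: compare a long bar $[0,n]$ with a shifted copy and apply Lemma~\ref{lem:no-lipschitz-extension-obstruction}. The only difference is the parameter choice. The paper shrinks the shift ($\varepsilon_n=n^{-2p}$, resp.\ $n^{-2}$ for $p=\infty$) so that $W_1\to0$, whereas your fixed shift $\delta=1$ with $W_1\le2$ already suffices for the lemma and treats all $p$ uniformly; consequently your opening remark that $W_1(\alpha_n,\beta_n)\to0$ is not actually needed.
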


\begin{proof}
Fix \( p\in[1,\infty] \), and let \( L\colon D(X,A)\to L^p(\mathbb R) \) denote the lifespan curve vectorization. By \cite[Definition~3.7]{ali2023survey},
\[
L\left(
\sum_{k=1}^n m_k[p_k,q_k]
\right)(t)
=
\sum_{k=1}^n
m_k(q_k-p_k)\mathbf 1_{[p_k,q_k)}(t).
\]

Suppose first that \( 1\leq p<\infty \). Set \( R_n:=n \) and \( \varepsilon_n:=R_n^{-2p} \). Matching \( [0,R_n] \) with \( [\varepsilon_n,R_n+\varepsilon_n] \) gives \( 0<W_1([0,R_n],[\varepsilon_n,R_n+\varepsilon_n]) \leq2\varepsilon_n \). The defining formula gives \( L([0,R_n])=R_n\mathbf 1_{[0,R_n)} \) and \( L([\varepsilon_n,R_n+\varepsilon_n]) = R_n\mathbf 1_{[\varepsilon_n,R_n+\varepsilon_n)} \). Their difference has absolute value \( R_n \) on a set of Lebesgue measure \( 2\varepsilon_n \) and vanishes elsewhere. Hence
\[
\frac{
\|L([0,R_n])-L([\varepsilon_n,R_n+\varepsilon_n])\|_{L^p}
}{
W_1([0,R_n],[\varepsilon_n,R_n+\varepsilon_n])
}
\geq
2^{1/p-1}R_n\varepsilon_n^{1/p-1}
=
2^{1/p-1}R_n^{2p-1}
\to\infty.
\]

Suppose now that \( p=\infty \). Set \( R_n:=n \) and \( \varepsilon_n:=R_n^{-2} \). Matching \( [0,R_n] \) with \( [\varepsilon_n,R_n+\varepsilon_n] \) gives \( 0<W_1([0,R_n],[\varepsilon_n,R_n+\varepsilon_n]) \leq2\varepsilon_n \), while \( \|L([0,R_n])-L([\varepsilon_n,R_n+\varepsilon_n])\|_{L^\infty}=R_n \). Therefore
\[
\frac{
\|L([0,R_n])-L([\varepsilon_n,R_n+\varepsilon_n])\|_{L^\infty}
}{
W_1([0,R_n],[\varepsilon_n,R_n+\varepsilon_n])
}
\geq
\frac12R_n^3
\to\infty.
\]

Lemma~\ref{lem:no-lipschitz-extension-obstruction} therefore shows that \( L \) does not have a Lipschitz extension to the group of virtual persistence diagrams \( K(X,A) \).
\end{proof}

\subsection{Persistence Landscapes}

\begin{proposition}\label{prop:landscape-synthesis} For \( 1\leq p\leq\infty \), equip the persistence landscape vectorization \cite{bubenik2015statistical}, as defined in \cite[Definition~3.8]{ali2023survey}, with the \( L^p(\mathbb N\times\mathbb R;\mathbb C) \)-norm. The resulting vectorization is Lipschitz with respect to \( W_1 \) if and only if \( p=\infty \). In that case, it has a Lipschitz extension with uniformly discrete spectral synthesis.
\end{proposition}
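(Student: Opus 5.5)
The plan splits into three parts: stability for \(p=\infty\), failure of the Lipschitz property for \(p<\infty\), and then the extension via Lemma~\ref{lem:linfty-real-line-extension-synthesis}.

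Throughout, \(X=\{(b,d):b\le d\}\) is the closed upper half-plane and \(A\) is the diagonal. As in Proposition~\ref{prop:betti-synthesis}, I take \(d(I,J)=|p-p'|+|q-q'|\), so that \(d(I,A)=q-p\). For a bar \(I=[b,d]\), write \(\Lambda_I(t)=\max\{0,\min(t-b,d-t)\}\) for its tent function. For a diagram \(\alpha\), the landscape \(\lambda_k(\alpha)(t)\) is the \(k\)-th largest value among the tents \(\Lambda_I(t)\), with \(I\) running over the bars of \(\alpha\) counted with multiplicity.

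\emph{Lipschitz for \(p=\infty\).} First I show \(\|\Lambda_I-\Lambda_J\|_\infty\le d_1(I,J)\). There are two cases.
\begin{itemize}
\item Each tent is \(1\)-Lipschitz in the endpoints for the sup norm, which gives \(\|\Lambda_I-\Lambda_J\|_\infty\le\max(|b-b'|,|d-d'|)\le d(I,J)\).
\item The height bound \(\|\Lambda_I\|_\infty=(d-b)/2\le d(I,A)\) gives \(\|\Lambda_I-\Lambda_J\|_\infty\le d(I,A)+d(J,A)\).
\end{itemize}
Now take a matching \(\sigma=\sum_i(I_i,J_i)\) between \(\alpha\) and \(\gamma\), padding with diagonal points (whose tents vanish). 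For fixed \(t\), the multisets \(\{\Lambda_{I_i}(t)\}\) and \(\{\Lambda_{J_i}(t)\}\) are matched with pointwise gaps at most \(d_1(I_i,J_i)\). The \(k\)-th order statistic is \(1\)-Lipschitz for the sup over the matched pairs. Hence
\[
\sup_{k,t}|\lambda_k(\alpha)(t)-\lambda_k(\gamma)(t)|\le\max_i d_1(I_i,J_i)\le\sum_i d_1(I_i,J_i).
\]
Taking the infimum over matchings gives \(\operatorname{Lip}\le1\) with respect to \(W_1\).

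\emph{Failure for \(p<\infty\).} I compare \([0,R]\) with \([\varepsilon,R+\varepsilon]\), which satisfy \(W_1\le2\varepsilon\). The two first landscapes \(\lambda_1\) are tents of height \(R/2\) shifted by \(\varepsilon\). Their difference has absolute value \(\varepsilon\) on an interval of length about \(R\), so its \(L^p\) norm is about \(\varepsilon R^{1/p}\). The ratio to \(W_1\) is therefore of order \(R^{1/p}\to\infty\). Lemma~\ref{lem:no-lipschitz-extension-obstruction} then rules out both the Lipschitz property and any Lipschitz extension.

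\emph{Extension with synthesis for \(p=\infty\).} The landscape is not additive, so Lemma~\ref{lem:additive-extension-synthesis} does not apply. Instead I use Lemma~\ref{lem:linfty-real-line-extension-synthesis}. Its hypotheses are:
\begin{itemize}
\item the pair is separable, which holds since the half-plane is separable;
\item \(f(0)=0\), which holds since the empty diagram has zero landscape;
\item \(f\) is Lipschitz, which is the first step above.
\end{itemize}
The remaining issue is the codomain. I fix a bijection \(\Theta\colon\mathbb N\times\mathbb R\to\mathbb R\), for instance \((k,t)\mapsto k+\tfrac12+\tfrac1\pi\arctan t\), which maps each copy of \(\mathbb R\) onto the interval \((k,k+1)\). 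This bijection is a diffeomorphism on each sheet, so it preserves and reflects Lebesgue-null sets. Composition with \(\Theta^{-1}\) is therefore an isometric complex-linear isomorphism \(L^\infty(\mathbb N\times\mathbb R;\mathbb C)\cong L^\infty(\mathbb R;\mathbb C)\).

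I apply the lemma to the transported map, obtaining an extension \(\overline g\). Pulling \(\overline g\) back gives a Lipschitz extension \(\overline f\) of the landscape. Bounded functionals correspond bijectively under the isomorphism, so the scalarizations \(\lambda\circ\overline f\circ i_Z\) are identical for both maps. Hence each \(V_Z\) is unchanged, and uniformly discrete spectral synthesis transfers to \(\overline f\).

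I expect the main obstacle to be checking carefully that the order-statistic stability survives padding with diagonal matches. If that step resists, the fallback is to cite the standard bottleneck stability of landscapes, \(\|\lambda(\alpha)-\lambda(\gamma)\|_\infty\le d_B\le W_1\).
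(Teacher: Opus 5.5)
\paragraph{Comparison with the paper's proof.} Your treatment of the ``if and only if'' agrees with the paper's. For \(p<\infty\) the paper uses the same counterexample, \([0,R_n]\) against \([\varepsilon_n,R_n+\varepsilon_n]\). For \(p=\infty\) the paper cites Bubenik's bottleneck stability and then proves \(d_B\le W_1\) by splitting matched pairs that fall on the second branch of \(d_1\). Your two-branch tent estimate combined with order-statistic stability is the same argument written out directly. Padding with diagonal matches causes no trouble, because those tents vanish and the landscapes are zero-padded.

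The extension step is where you take a different route. The paper builds \(\overline\Lambda=g+h\) by hand. Here \(g\) is a pointwise McShane infimum, and \(h\) is a Lipschitz-free-space term written into the paired sheets \(2n,2n-1\) as functions constant in \(t\). The recovery functional \(A_0\) averages \(F_{2n}-F_{2n-1}\) over \([0,1]\), and it kills \(g\) modulo \(c_0\). That works because of a landscape-specific fact: \(g(x)_k\to\operatorname{dist}(x,D(X,A))\) uniformly as \(k\to\infty\). You instead transport the problem to \(L^\infty(\mathbb R;\mathbb C)\) and invoke Lemma~\ref{lem:linfty-real-line-extension-synthesis}. That lemma does the same job for an arbitrary Lipschitz \(L^\infty\)-valued map, using its nonconstructive choice of near-constancy sets \(E_n^\pm\). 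Your route is shorter and shows that landscapes are just another instance of the general lemma, as the paper itself does for silhouettes. The paper's route is self-contained and gives an explicit recovery functional that exploits the sheet structure of \(\mathbb N\times\mathbb R\).

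\paragraph{A step that must be corrected.} Your example \(\Theta(k,t)=k+\tfrac12+\tfrac1\pi\arctan t\) is not a bijection onto \(\mathbb R\), even modulo null sets. Its image is \(\bigcup_{k\in\mathbb N}(k,k+1)\), which misses a half-line of infinite measure. Composition with \(\Theta^{-1}\) is therefore only an isometric embedding onto a proper subspace of \(L^\infty(\mathbb R;\mathbb C)\). The extension \(\overline g\) produced by the lemma does not stay in that subspace. Off the image, its McShane part equals \(\operatorname{Lip}(f)\operatorname{dist}(x,D(X,A))(1+i)\) almost everywhere, which is nonzero whenever \(x\notin D(X,A)\).

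You could pull back by restriction, \(G\mapsto G\circ\Theta\). That still gives a Lipschitz extension, but it only yields \(V_Z(\overline f)\subseteq V_Z(\overline g)\). Synthesizability does not pass to subvarieties when \(K(Z,A)\) has infinite rank, so the black-box transfer breaks at exactly this point.

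The fix is immediate. Take \((k,t)\mapsto m(k)+\tfrac12+\tfrac1\pi\arctan t\), where \(m\colon\mathbb N\to\mathbb Z\) is a bijection. The image then misses only \(\mathbb Z\), which is a null set, and composition with \(\Theta^{-1}\) becomes an onto isometric isomorphism. Bounded functionals then correspond bijectively, and \(V_Z(\overline f)=V_Z(\overline g)\), exactly as you argue.
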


\begin{proof}
We use counting measure on \( \mathbb N \) and Lebesgue measure on \( \mathbb R \).

Fix \( 1\leq p<\infty \). Set \( R_n:=n \), \( \varepsilon_n:=R_n^{-2} \), \( B_n:=[0,R_n] \), and \( C_n:=[\varepsilon_n,R_n+\varepsilon_n] \). Matching the two bars gives \( 0<W_1(B_n,C_n)\leq2\varepsilon_n \).

By \cite[Definition~3.8]{ali2023survey}, a one-bar diagram \( [a,b] \) satisfies \( \Lambda_{[a,b],1}(t)=\max\{\min(t-a,b-t),0\} \) and \( \Lambda_{[a,b],k}=0 \) for every \( k\geq2 \). For \( t\in[\varepsilon_n,R_n/2] \), we have \( \Lambda_{B_n,1}(t)=t \) and \( \Lambda_{C_n,1}(t)=t-\varepsilon_n \). Hence \( \|\Lambda^{(p)}(B_n)-\Lambda^{(p)}(C_n)\|_{L^p} \geq\varepsilon_n(R_n/2-\varepsilon_n)^{1/p} \), so
\[
\frac{
\|\Lambda^{(p)}(B_n)-\Lambda^{(p)}(C_n)\|_{L^p}
}{
W_1(B_n,C_n)
}
\geq
\frac12
\left(
\frac{R_n}{2}-\varepsilon_n
\right)^{1/p}
\to\infty.
\]
Thus \( \Lambda^{(p)} \) is not Lipschitz with respect to \( W_1 \) for \( 1\leq p<\infty \).

Suppose \( p=\infty \). By \cite[Theorem~13]{bubenik2015statistical}, \( \|\Lambda^{(\infty)}(\alpha)-\Lambda^{(\infty)}(\beta)\|_{L^\infty} \leq d_B(\alpha,\beta) \) for all \( \alpha,\beta\in D(X,A) \). For off-diagonal intervals \( I=[a,b] \) and \( J=[c,d] \), define \( d_\infty(I,J):=\max\{|a-c|,|b-d|\} \) and \( d_\infty(I,A):=(b-a)/2 \).

Fix a matching between \( \alpha \) and \( \beta \). For each matched pair \( I,J\notin A \), retain the pair when \( d_1(I,J)=d(I,J) \), and replace it by matches from \( I \) and \( J \) to the diagonal when \( d_1(I,J)=d(I,A)+d(J,A) \). Retain each pair already involving the diagonal. In the first case, \( d_\infty(I,J)\leq d(I,J)=d_1(I,J) \). In the second case, \( d_\infty(I,A)\leq d(I,A)\leq d_1(I,J) \) and \( d_\infty(J,A)\leq d(J,A)\leq d_1(I,J) \). Since diagonal points may occur with arbitrary multiplicity, these replacements define a bottleneck matching whose maximum cost does not exceed the total \( d_1 \)-cost of the original matching. Taking the infimum over all matchings gives \( d_B(\alpha,\beta)\leq W_1(\alpha,\beta) \). Consequently, \( \|\Lambda^{(\infty)}(\alpha)-\Lambda^{(\infty)}(\beta)\|_{L^\infty} \leq W_1(\alpha,\beta) \), so \( \Lambda^{(\infty)} \) is \( 1 \)-Lipschitz.

For \( x\in K(X,A) \), \( k\in\mathbb N \), and \( t\in\mathbb R \), define
\[
g(x)_k(t)
:=
\inf_{\alpha\in D(X,A)}
\left\{
\Lambda^{(\infty)}(\alpha)_k(t)+\rho(x,\alpha)
\right\}.
\]
For fixed \( x \) and \( k \), each function \( t\mapsto\Lambda^{(\infty)}(\alpha)_k(t)+\rho(x,\alpha) \) is continuous, so \( g(x)_k \) is upper semicontinuous and Borel measurable. Since the landscape coordinates are nonnegative, every term in the infimum is nonnegative. Choosing \( \alpha=0 \) gives \( g(x)_k(t)\leq\rho(x,0) \). Hence \( 0\leq g(x)_k(t)\leq\rho(x,0) \), so \( g(x)\in L^\infty(\mathbb N\times\mathbb R;\mathbb C) \).

The triangle inequality gives \( g(x)_k(t)\leq g(y)_k(t)+\rho(x,y) \). Interchanging \( x \) and \( y \) gives \( |g(x)_k(t)-g(y)_k(t)|\leq\rho(x,y) \), and therefore \( \|g(x)-g(y)\|_{L^\infty}\leq\rho(x,y) \).

For \( \beta\in D(X,A) \), the \( 1 \)-Lipschitz estimate for \( \Lambda^{(\infty)} \) gives \( \Lambda^{(\infty)}(\alpha)_k(t)+\rho(\beta,\alpha) \geq\Lambda^{(\infty)}(\beta)_k(t) \) for every \( \alpha\in D(X,A) \). Taking the infimum gives \( g(\beta)_k(t)\geq\Lambda^{(\infty)}(\beta)_k(t) \), while choosing \( \alpha=\beta \) gives the reverse inequality. Thus \( g|_{D(X,A)}=\Lambda^{(\infty)} \).

For \( k\geq1 \), let \( D_{<k}\subseteq D(X,A) \) consist of the diagrams with fewer than \( k \) bars, counted with multiplicity. Since \( \Lambda^{(\infty)}(\alpha)_k(t)\geq0 \) and the \( k \)-th landscape coordinate vanishes on \( D_{<k} \), we have \( \operatorname{dist}(x,D(X,A)) \leq g(x)_k(t)\leq\operatorname{dist}(x,D_{<k}) \). The sets \( D_{<k} \) increase and satisfy \( \bigcup_{k\geq1}D_{<k}=D(X,A) \), so \( \operatorname{dist}(x,D_{<k})\downarrow \operatorname{dist}(x,D(X,A)) \). Therefore
\[
\sup_{t\in\mathbb R}
\left|
g(x)_k(t)-\operatorname{dist}(x,D(X,A))
\right|
\leq
\operatorname{dist}(x,D_{<k})
-
\operatorname{dist}(x,D(X,A))
\to0.
\]

By Lemma~\ref{lem:collapsed-subset-pseudometric}, the formula 
\[ \delta(x,y):= \min\{\rho(x,y), \operatorname{dist}(x,D(X,A))+\operatorname{dist}(y,D(X,A))\} 
\]
defines a pseudometric on \( K(X,A) \). Let \( Q \) be the metric quotient of \( K(X,A) \) by the relation \( \delta(x,y)=0 \), and let \( \kappa\colon K(X,A)\to Q \) be the quotient map. Since \( \delta \) vanishes on \( D(X,A)\times D(X,A) \), the set \( \kappa(D(X,A)) \) consists of one point, which we take as the base point of \( Q \). Moreover,
\begin{equation}
\label{eq:landscape-quotient-pseudometric}
d_Q(\kappa(x),\kappa(y))
=
\min\left\{
\rho(x,y),
\operatorname{dist}(x,D(X,A))
+
\operatorname{dist}(y,D(X,A))
\right\}.
\end{equation}
Equation~\eqref{eq:landscape-quotient-pseudometric} shows that \( \kappa \) is \( 1 \)-Lipschitz.

By Lemma~\ref{lem:separable-diagrams-and-completion} and continuity of \( \kappa \), the space \( Q \) is separable, so \( \mathcal F_{\mathbb C}(Q) \) is separable. By Definition~\ref{def:complex-lipschitz-free-space}, the evaluation map \( \delta_Q\colon Q\to\mathcal F_{\mathbb C}(Q) \) is isometric.

The closed unit ball of \( \mathcal F_{\mathbb C}(Q)^* \) is weak-* compact and metrizable. Choose a weak-* dense sequence \( (\phi_j)_{j\geq1} \) in this ball, and define \( J_0(v):=(\phi_j(v))_{j\geq1} \). Since \( \phi\mapsto|\phi(v)| \) is weak-* continuous,
\[
\|J_0(v)\|_{\ell^\infty}
=
\sup_j|\phi_j(v)|
=
\sup_{\phi\in B_{\mathcal F_{\mathbb C}(Q)^*}}
|\phi(v)|
=
\|v\|.
\]
Thus \( J_0\colon\mathcal F_{\mathbb C}(Q)\to\ell^\infty \) is a complex-linear isometry.

Choose \( r\colon\mathbb N\to\mathbb N \) with infinite fibers, and define \( \mathcal R(v)_n:=v_{r(n)} \). Since every fiber of \( r \) is infinite, for every \( j\in\mathbb N \), the value \( v_j \) occurs at infinitely many coordinates of \( \mathcal R(v) \). Hence \( \|\mathcal R(v)\|_{\ell^\infty}=\|v\|_{\ell^\infty} \). Let \( \pi\colon\ell^\infty\to\ell^\infty/c_0 \) be the quotient map. The same property gives \( \|\pi\mathcal R(v)\| =\limsup_n|\mathcal R(v)_n| =\|v\|_{\ell^\infty} \). Thus \( T:=\pi\mathcal RJ_0\colon \mathcal F_{\mathbb C}(Q)\to\ell^\infty/c_0 \) is a complex-linear isometry.

For \( x\in K(X,A) \), set \( c(x):=\mathcal RJ_0\delta_Q(\kappa(x)) \), and define \( h(x)\in L^\infty(\mathbb N\times\mathbb R;\mathbb C) \) by
\[
h(x)(2n,t):=c(x)_n,
\qquad
h(x)(2n-1,t):=-c(x)_n.
\]
For \( x,y\in K(X,A) \),
\[
\begin{aligned}
\|h(x)-h(y)\|_{L^\infty}
&=
\|c(x)-c(y)\|_{\ell^\infty}\\
&=
\left\|
\mathcal RJ_0
\bigl(
\delta_Q(\kappa(x))-\delta_Q(\kappa(y))
\bigr)
\right\|_{\ell^\infty}\\
&=
\left\|
\delta_Q(\kappa(x))-\delta_Q(\kappa(y))
\right\|_{\mathcal F_{\mathbb C}(Q)}\\
&=
d_Q(\kappa(x),\kappa(y))\\
&\leq
\rho(x,y).
\end{aligned}
\]
Thus \( h \) is \( 1 \)-Lipschitz. Since \( \kappa(D(X,A)) \) is the base point of \( Q \), we have \( h|_{D(X,A)}=0 \).

Set \( \overline\Lambda:=g+h \). Then \( \overline\Lambda|_{D(X,A)}=\Lambda^{(\infty)} \) and \( \operatorname{Lip}(\overline\Lambda)\leq2 \).

For \( F\in L^\infty(\mathbb N\times\mathbb R;\mathbb C) \), let \( F_k\in L^\infty(\mathbb R;\mathbb C) \) denote its \( k \)-th coordinate, and define
\[
A_0(F)_n
:=
\frac12
\int_0^1
\bigl(
F_{2n}(t)-F_{2n-1}(t)
\bigr)\,dt.
\]
Then \( A_0\colon L^\infty(\mathbb N\times\mathbb R;\mathbb C)\to\ell^\infty \) is complex-linear and satisfies \( \|A_0\|\leq1 \). Since \( g(x)_k \) converges uniformly to \( \operatorname{dist}(x,D(X,A)) \), we have \( \|g(x)_{2n}-g(x)_{2n-1}\|_{L^\infty(\mathbb R)}\to0 \). Hence \( |A_0(g(x))_n| \leq \frac12 \|g(x)_{2n}-g(x)_{2n-1}\|_{L^\infty([0,1])} \to0 \), so \( A_0(g(x))\in c_0 \). By the definition of \( h \), \( A_0(h(x))=c(x) \). Therefore \( \pi A_0(g(x))=0 \) and \( \pi A_0(h(x))=T\delta_Q(\kappa(x)) \).

Fix \( u\in\operatorname{Lip}_0(Q;\mathbb C) \), and let \( \widehat u\in\mathcal F_{\mathbb C}(Q)^* \) be its linearization. Define \( \theta_u(Tv):=\widehat u(v) \) for \( v\in\mathcal F_{\mathbb C}(Q) \). Since \( T \) is injective, this formula defines \( \theta_u \) uniquely on \( T(\mathcal F_{\mathbb C}(Q)) \). Since \( T \) is an isometry, \( \theta_u \) is bounded. By the complex Hahn--Banach theorem, there exists \( \widetilde\theta_u\in(\ell^\infty/c_0)^* \) that extends \( \theta_u \). Set \( \lambda_u:=\widetilde\theta_u\circ\pi\circ A_0 \). Then, for every \( x\in K(X,A) \),
\begin{equation}
\label{eq:landscape-scalar-recovery}
\begin{aligned}
\lambda_u(\overline\Lambda(x))
&=
\widetilde\theta_u\bigl(
\pi A_0(\overline\Lambda(x))
\bigr)\\
&=
\theta_u\bigl(
T\delta_Q(\kappa(x))
\bigr)\\
&=
\widehat u\bigl(
\delta_Q(\kappa(x))
\bigr)\\
&=
u(\kappa(x)).
\end{aligned}
\end{equation}

Fix a uniformly discrete subpair \( A\subseteq Z\subseteq X \). By Lemma~\ref{lem:subpair-grothendieck-isometry}, the group homomorphism \( i_Z\colon K(Z,A)\to K(X,A) \) is isometric. If \( K(Z,A)=\{0\} \), then \( V_Z(\overline\Lambda) \) is synthesizable. Assume \( K(Z,A)\neq\{0\} \).

Choose \( \eta>0 \) such that \( \overline d_1([v],[w])\geq\eta \) for every pair of distinct classes \( [v],[w]\in Z/A \). Hence \( d(v,A)\geq\eta \) for every \( v\in Z\setminus A \). Choose \( z\in Z\setminus A \), identify \( z \) with its singleton diagram, and set \( a:=0-z\in K(Z,A) \). For every \( \gamma\in D(X,A) \), Equation~\eqref{eq:groth-metric} gives \( \rho(i_Za,\gamma)=W_1(0,z+\gamma) \). Every matching between \( 0 \) and \( z+\gamma \) matches the occurrence of \( z \) to \( A \). Hence \( W_1(0,z+\gamma)\geq d(z,A)\geq\eta \), so \( \operatorname{dist}(i_Za,D(X,A))\geq\eta \).

By Theorem~\ref{thm:vpd-local-compactness}, \( K(Z,A) \) is discrete. Since \( i_Z \) is isometric, there exists \( \delta>0 \) such that \( \rho(i_Za,i_Zx)\geq\delta \) for every \( x\in K(Z,A)\setminus\{a\} \). Equation~\eqref{eq:landscape-quotient-pseudometric} gives \( d_Q(\kappa(i_Za),\kappa(i_Zx)) \geq \min\{\delta,\eta\} \) for every \( x\in K(Z,A)\setminus\{a\} \). Thus \( \kappa(i_Za) \) is isolated in \( (\kappa\circ i_Z)(K(Z,A)) \). Since \( \operatorname{dist}(i_Za,D(X,A))\geq\eta \), the point \( \kappa(i_Za) \) differs from the base point.

Let \( u_a \) be the indicator of \( \{\kappa(i_Za)\} \) on \( (\kappa\circ i_Z)(K(Z,A)) \). The isolation estimate shows that \( u_a \) is Lipschitz, and \( u_a \) vanishes at the base point of \( Q \). By the McShane extension theorem, \( u_a \) extends to a real-valued Lipschitz function \( \widetilde u_a\colon Q\to\mathbb R \). Since the base point belongs to the domain of \( u_a \), the extension still vanishes there. We regard \( \widetilde u_a \) as an element of \( \operatorname{Lip}_0(Q;\mathbb C) \). Equation~\eqref{eq:landscape-scalar-recovery} gives \( \lambda_{\widetilde u_a}\circ\overline\Lambda\circ i_Z=1_{\{a\}} \). Hence \( 1_{\{a\}}\in V_Z(\overline\Lambda) \). For every \( x\in K(Z,A) \), we have \( \tau_{a-x}1_{\{a\}}=1_{\{x\}} \). Thus every singleton indicator on \( K(Z,A) \) belongs to \( V_Z(\overline\Lambda) \). By Lemma~\ref{lem:point-masses-give-synthesis}, \( V_Z(\overline\Lambda) \) is synthesizable.

Since \( Z \) was arbitrary, \( \overline\Lambda \) has uniformly discrete spectral synthesis by Definition~\ref{def:vectorization-spectral-synthesis}.
\end{proof}

\subsection{Persistence Silhouettes}

\begin{proposition}\label{prop:silhouette-synthesis} For \( 1\leq p\leq\infty \), equip the \( w \)-weighted persistence silhouette vectorization \cite{chazal2014stochastic}, as defined in \cite[Definition~3.9]{ali2023survey}, with the \( L^p(\mathbb R;\mathbb C) \)-norm. For every positive weight function \( w\colon X\setminus A\to\mathbb R_{>0} \), the resulting vectorization is not Lipschitz with respect to \( W_1 \) when \( 1\leq p<\infty \). If the \( L^\infty(\mathbb R;\mathbb C) \)-valued silhouette is Lipschitz with respect to \( W_1 \), then it has a Lipschitz extension with uniformly discrete spectral synthesis.
\end{proposition}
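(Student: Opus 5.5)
The proof has two independent parts, following the pattern of Proposition~\ref{prop:landscape-synthesis}.

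\emph{Failure of the Lipschitz property for $p<\infty$.} I will use a translated single-bar example. Recall \cite[Definition~3.9]{ali2023survey}: the silhouette is
\[
\phi(\alpha)=\frac{\sum w_j\Lambda_j}{\sum w_j},
\]
so a one-bar diagram $[a,b]$ is sent to its tent function $\Lambda_{[a,b]}(t)=\max\{\min(t-a,b-t),0\}$, whatever positive weight it carries. I take $B_n=[0,R_n]$ and $C_n=[\varepsilon_n,R_n+\varepsilon_n]$ with $R_n=n$ and $\varepsilon_n=R_n^{-2}$. These satisfy $0<W_1(B_n,C_n)\le 2\varepsilon_n$. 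On $[\varepsilon_n,R_n/2]$ the two tents differ by exactly $\varepsilon_n$, so
\[
\frac{\|\phi(B_n)-\phi(C_n)\|_{L^p}}{W_1(B_n,C_n)}\ \ge\ \tfrac12\,(R_n/2-\varepsilon_n)^{1/p}\ \longrightarrow\ \infty.
\]
Lemma~\ref{lem:no-lipschitz-extension-obstruction} then finishes this part.

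\emph{Extension when $p=\infty$.} Assume the $L^\infty$ silhouette $\phi$ is Lipschitz, and first normalize so that $\phi(0)=0$. The silhouette of the empty diagram is either undefined or conventionally $0$; in either case I set $\phi(0)=0$. If this normalization breaks the Lipschitz property at $0$, I will instead subtract the constant $\phi(0)$; translating by a constant does not change the variety generated by the scalarizations up to the constants, and the indicator argument below still applies. I then invoke Lemma~\ref{lem:linfty-real-line-extension-synthesis} directly. It needs only a separable metric pair and a Lipschitz map $D(X,A)\to L^\infty(\mathbb R;\mathbb C)$ vanishing at $0$, and it produces a Lipschitz extension $\overline\phi\colon K(X,A)\to L^\infty(\mathbb R;\mathbb C)$ with uniformly discrete spectral synthesis.

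\emph{Main obstacle.} The delicate point is the standing hypotheses. Lemma~\ref{lem:linfty-real-line-extension-synthesis} assumes $(X,d,A)$ is separable, which holds for the usual pair of intervals in $\mathbb R^2$ with the diagonal. It also needs $\phi(0)=0$, so I must check the value at the empty diagram. If the silhouette's normalization makes $\phi(0)$ ill-defined, I define it as $0$ and verify the Lipschitz bound near $0$ from the hypothesis restricted to nonempty diagrams together with the triangle inequality. Should that fail, I fall back on the constant-shift argument above: subtract $\phi(0)$, apply the lemma, and add the constant back, which preserves the singleton-indicator conclusion of that lemma's proof.
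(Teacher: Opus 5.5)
Your proposal is correct and follows the paper's proof essentially verbatim: the same translated one-bar pair $B_n=[0,R_n]$, $C_n=[\varepsilon_n,R_n+\varepsilon_n]$, with the weight cancelling, gives the divergent $L^p$ ratio for $p<\infty$ and then Lemma~\ref{lem:no-lipschitz-extension-obstruction}, while the $p=\infty$ case is a direct application of Lemma~\ref{lem:linfty-real-line-extension-synthesis}. Your extra care about $\phi(0)=0$, which the paper leaves implicit, settles without the constant-shift fallback: the silhouette is a convex combination of tents, so $\|\phi(\alpha)\|_\infty\le\max_j\operatorname{pers}(x_j)/2\le W_1(\alpha,0)$, and the convention $\phi(0)=0$ therefore preserves the Lipschitz property.
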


\begin{proof}
Fix \( 1\leq p<\infty \). Set \( R_n:=n \), \( \varepsilon_n:=R_n^{-2} \), \( B_n:=[0,R_n] \), and \( C_n:=[\varepsilon_n,R_n+\varepsilon_n] \). Since each diagram contains one bar, the weight cancels in the normalized average defining the silhouette. By \cite[Definition~3.9]{ali2023survey}, the corresponding one-bar silhouettes are the associated tent functions. Hence \( \phi^w_{B_n}(t)=t \) and \( \phi^w_{C_n}(t)=t-\varepsilon_n \) for every \( t\in[\varepsilon_n,R_n/2] \). Therefore \( \|\phi^w_{B_n}-\phi^w_{C_n}\|_{L^p} \geq \varepsilon_n(R_n/2-\varepsilon_n)^{1/p} \). Matching the two bars gives \( 0<W_1(B_n,C_n)\leq2\varepsilon_n \), so
\[
\frac{
\|\phi^w_{B_n}-\phi^w_{C_n}\|_{L^p}
}{
W_1(B_n,C_n)
}
\geq
\frac12
\left(
\frac{R_n}{2}-\varepsilon_n
\right)^{1/p}
\longrightarrow\infty.
\]
Thus \( \phi^w \), regarded as \( L^p(\mathbb R;\mathbb C) \)-valued, is not Lipschitz with respect to \( W_1 \) for \( 1\leq p<\infty \).

If the \( L^\infty(\mathbb R;\mathbb C) \)-valued silhouette is Lipschitz with respect to \( W_1 \), then Lemma~\ref{lem:linfty-real-line-extension-synthesis} gives a Lipschitz extension to the virtual persistence diagram group with uniformly discrete spectral synthesis.
\end{proof}

\subsection{Persistence Images}

\begin{proposition}\label{prop:persistence-image-synthesis} The persistence image vectorization \cite{adams2017persistence}, as defined in \cite[Definition~3.10]{ali2023survey}, has a Lipschitz extension to the virtual persistence diagram group with uniformly discrete spectral synthesis whenever it is Lipschitz with respect to the \(1\)-Wasserstein distance.
\end{proposition}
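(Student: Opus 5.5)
Recall the definition from \cite[Definition~3.10]{ali2023survey}. A diagram \(\alpha=\sum_k m_k[p_k,q_k]\) is sent to birth--persistence coordinates \((p_k,q_k-p_k)\). A weighted sum of Gaussians \(\sum_k m_k\,w(p_k,q_k-p_k)\,\phi_{(p_k,q_k-p_k)}\) is formed and integrated over the \(N\) pixels. This produces \(PI(\alpha)\in\mathbb R^N\), which we regard inside \(\mathbb C^N\).

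The key observation is that the persistence image is additive on \(D(X,A)\). Each off-diagonal point contributes a fixed vector \(v(x)\in\mathbb C^N\), and diagonal points contribute zero because the weight \(w\) vanishes on the diagonal (zero persistence) in the standard definition. So \(PI(\alpha+\gamma)=PI(\alpha)+PI(\gamma)\) for all \(\alpha,\gamma\in D(X,A)\), and \(PI(0)=0\). This is the same route as Proposition~\ref{prop:betti-synthesis}. Under the hypothesis that \(PI\) is Lipschitz with respect to \(W_1\), there is a constant \(C\) with \(\|PI(\alpha)-PI(\beta)\|\le C\,W_1(\alpha,\beta)\). Lemma~\ref{lem:additive-extension-synthesis} then applies directly. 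It gives the unique group homomorphism \(\overline{PI}(\alpha-\beta)=PI(\alpha)-PI(\beta)\) on \(K(X,A)\) with \(\operatorname{Lip}(\overline{PI})\le C\), and this extension has uniformly discrete spectral synthesis. Each scalarization \(\lambda\circ\overline{PI}\circ i_Z\) is additive, so Lemma~\ref{lem:homomorphism-variety-synthesis} handles every \(V_Z(\overline{PI})\).

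The steps, in order, are as follows.
\begin{enumerate}
\item Write \(PI(\alpha)=\sum_k m_k v(p_k,q_k)\) with \(v(x)_j=w(T(x))\int_{P_j}\phi_{T(x)}\), and note that \(v\) vanishes on \(A\).
\item Deduce additivity and \(PI(0)=0\), so \(PI\) is a well-defined monoid homomorphism on \(D(X,A)=D(X)/D(A)\).
\item Invoke the Lipschitz hypothesis and Lemma~\ref{lem:additive-extension-synthesis}.
\end{enumerate}

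The main obstacle is step 2 when the chosen weight does not vanish on \(A\). In that case \(PI\) is not well defined modulo \(D(A)\), because adding a diagonal point would change the image. It would then fail to be Lipschitz at all, since \(W_1\) ignores diagonal points, so the hypothesis excludes it. I would handle this by remarking that the Lipschitz hypothesis forces \(v|_A=0\): taking \(\alpha=0\) and \(\beta=a\in A\) gives \(W_1=0\), hence \(v(a)=0\). This makes additivity valid on \(D(X,A)\). If one wished to avoid relying on additivity, the fallback is Lemma~\ref{lem:linfty-real-line-extension-synthesis}-style reasoning. However, the codomain here is finite-dimensional, and the additive route is cleaner.
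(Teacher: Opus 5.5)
Your proposal is correct and follows the paper's proof: additivity of the persistence image under addition of diagrams, which comes from pixelwise integration of the additive weighted Gaussian surface, combined with the Lipschitz hypothesis, feeds directly into Lemma~\ref{lem:additive-extension-synthesis}. Your extra remarks are clarifications the paper leaves implicit. One is that the Lipschitz hypothesis forces diagonal points to contribute zero, so the map is well defined on \(D(X,A)\); the other is that \(\mathbb R^N\) is regarded inside \(\mathbb C^N\).
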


\begin{proof}
Let \( I_{f,\Psi,\mathcal P} \) be a Lipschitz persistence image vectorization with respect to \( W_1 \). Definition~3.10 of \cite{ali2023survey} gives \( \rho^{\alpha+\beta}_{f,\Psi} = \rho^\alpha_{f,\Psi} + \rho^\beta_{f,\Psi} \) for all \( \alpha,\beta\in D(X,A) \). Integrating over each pixel in \( \mathcal P \) gives \( I_{f,\Psi,\mathcal P}(\alpha+\beta) = I_{f,\Psi,\mathcal P}(\alpha) + I_{f,\Psi,\mathcal P}(\beta). \) Hence \( I_{f,\Psi,\mathcal P} \) is additive. Therefore Lemma~\ref{lem:additive-extension-synthesis} gives a Lipschitz extension to the virtual persistence diagram group with uniformly discrete spectral synthesis.
\end{proof}


\subsection{Template Functions}

\begin{proposition}\label{prop:template-synthesis} Every Lipschitz template-function vectorization, as defined in \cite[Definition~3.11]{ali2023survey}, has a Lipschitz extension to the virtual persistence diagram group with uniformly discrete spectral synthesis.
\end{proposition}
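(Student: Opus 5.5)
Template functions in the sense of \cite[Definition~3.11]{ali2023survey} are built by summing a fixed family of template functions over the points of a diagram. Concretely, for templates $\tau_1,\ldots,\tau_n$ supported away from the diagonal, the vectorization is
\[
\alpha\mapsto\Bigl(\sum_{x\in\alpha}\tau_j(x)\Bigr)_{j=1}^n\in\mathbb R^n,
\]
with multiplicity counted. The plan is therefore to reduce to Lemma~\ref{lem:additive-extension-synthesis}, exactly as in the proofs of Propositions~\ref{prop:betti-synthesis} and~\ref{prop:persistence-image-synthesis}.

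First I would check additivity on $D(X,A)$. Writing $T(\alpha)=\sum_{x}m_x\tau(x)$ for $\alpha=\sum m_x x$, additivity $T(\alpha+\beta)=T(\alpha)+T(\beta)$ is immediate from linearity of the sum over a multiset. Well-definedness modulo $D(A)$ holds because each template vanishes on the diagonal, so summands supported in $A$ contribute zero; this is the only place the support condition of the definition is used. I would then complexify the codomain to $\mathbb C^n$ so that the complex-Banach hypothesis of Lemma~\ref{lem:additive-extension-synthesis} applies; the Lipschitz estimate is unchanged, since the real and complex Euclidean norms agree on real vectors.

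Next, by hypothesis the vectorization is Lipschitz with respect to $W_1$ with some constant $C$. This is precisely the inequality $\|T(\alpha)-T(\beta)\|\le C\,W_1(\alpha,\beta)$ required in Lemma~\ref{lem:additive-extension-synthesis}. That lemma then produces the unique group homomorphism $\overline T(\alpha-\beta)=T(\alpha)-T(\beta)$ on $K(X,A)$ with $\operatorname{Lip}(\overline T)\le C$. Moreover, for every uniformly discrete subpair $Z$, each scalarization $\lambda\circ\overline T\circ i_Z$ is additive, so Lemma~\ref{lem:homomorphism-variety-synthesis} yields synthesizability of $V_Z(\overline T)$. Hence $\overline T$ has uniformly discrete spectral synthesis in the sense of Definition~\ref{def:vectorization-spectral-synthesis}.

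The main obstacle is definitional rather than analytic. I must confirm that the version in \cite[Definition~3.11]{ali2023survey} really is a plain additive sum over points, and not normalized by the number of points or the total persistence. A normalization would destroy additivity and force the inf-convolution and Lipschitz-free construction of Lemma~\ref{lem:linfty-real-line-extension-synthesis} instead. If some variant there is normalized, I would fall back on that construction: a McShane-type extension $g$ plus a correction $h$ recovering point indicators on $K(Z,A)$. For a finite-dimensional codomain, however, there is no room for the $\ell^\infty/c_0$ trick, so I expect the additive case to be the intended reading.
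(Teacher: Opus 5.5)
Your proposal is correct and matches the paper's proof: the paper reads off from \cite[Equation~(3)]{ali2023survey} that each coordinate is the unnormalized sum $V_\gamma(\varphi_j)=\sum_{[p,q]\in\operatorname{supp}\gamma} m_\gamma([p,q])\,\varphi_j(p,q-p)$, concludes additivity, and applies Lemma~\ref{lem:additive-extension-synthesis}. The normalization worry you raise therefore does not arise, so the fallback to the construction of Lemma~\ref{lem:linfty-real-line-extension-synthesis} is never needed.
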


\begin{proof}
Let \( \boldsymbol{\varphi}=(\varphi_1,\ldots,\varphi_n) \) be the selected templates, and let \( \Phi_{\boldsymbol{\varphi}}\colon D(X,A)\to\mathbb C^n \) be the resulting Lipschitz vectorization. By \cite[Equation~(3)]{ali2023survey}, each coordinate has the form
\[
V_\gamma(\varphi_j)
=
\sum_{[p,q]\in\operatorname{supp}\gamma}
m_\gamma([p,q])\varphi_j(p,q-p).
\]
For every \( \alpha,\beta\in D(X,A) \) and \( 1\leq j\leq n \), \( V_{\alpha+\beta}(\varphi_j) = V_\alpha(\varphi_j)+V_\beta(\varphi_j) \). Hence \( \Phi_{\boldsymbol{\varphi}} \) is additive. Lemma~\ref{lem:additive-extension-synthesis} therefore gives the asserted Lipschitz extension with uniformly discrete spectral synthesis.
\end{proof}

\subsection{Tropical Coordinates}

\begin{proposition}\label{prop:tropical-no-extension} For every \( r\in\mathbb Z_{>0} \), the tropical coordinate vectorization \( \Phi_r\colon D(X,A)\to\mathbb R^7 \) from \cite[Definition~3.4]{kalivsnik2019tropical} does not have a Lipschitz extension to the virtual persistence diagram group.
\end{proposition}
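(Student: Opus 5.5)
Following the pattern of Propositions~\ref{prop:statistics-no-extension}--\ref{prop:lifespan-no-extension}, I will reduce the statement to Lemma~\ref{lem:no-lipschitz-extension-obstruction}. For each fixed \(r\in\mathbb Z_{>0}\), I will exhibit diagrams \(\alpha_n,\beta_n\in D(X,A)\) with \(W_1(\alpha_n,\beta_n)>0\) and \(\|\Phi_r(\alpha_n)-\Phi_r(\beta_n)\|/W_1(\alpha_n,\beta_n)\to\infty\). By the lemma, this shows that \(\Phi_r\) is not Lipschitz. Since \(D(X,A)\hookrightarrow K(X,A)\) is isometric, it also shows that no Lipschitz extension to \(K(X,A)\) exists.

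The seven coordinates in \cite[Definition~3.4]{kalivsnik2019tropical} are tropical rational functions of the bars. They include sums and max-plus expressions in the lengths \(q-p\), and terms of the form \(\min(r\,(q-p),p)\). At least one coordinate behaves like a count, or like the total length weighted so that it is not controlled by \(d_1\).

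I will test two kinds of perturbation. The first is to add a short bar, as in the persistent entropy argument. I take \(\alpha_n=I\) and \(\beta_n=I+J_n\), where \(J_n\) has length \(\varepsilon_n\downarrow0\), so that \(W_1\le\varepsilon_n\). The second is to shift a long bar slightly, as in the lifespan curve argument. I take \(B_n=[0,R_n]\) and \(C_n=[\varepsilon_n,R_n+\varepsilon_n]\), so that \(W_1\le 2\varepsilon_n\).

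For the first test, I will compute each coordinate. Coordinates that are sums of lengths, or maxima of them, change by \(O(\varepsilon_n)\), so they are harmless. I will look for a coordinate that jumps by a fixed amount, for instance one involving \(\min(r\ell,p)\) when the short bar is placed at a large birth time \(p\). Multiplying the added bar up is not available as a device, because \(W_1\) scales with it.

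If every coordinate turns out to be \(1\)-Lipschitz in each bar, I will instead exploit an order-statistic coordinate. Examples are the second-largest length, or the sum of the top two lengths. Such a coordinate is controlled in the bottleneck sense but not linearly in \(W_1\)? That route fails, because order statistics are Lipschitz for \(W_1\). So the real obstruction must come from a term that is not Lipschitz in the bar coordinates themselves.

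The main obstacle is therefore identifying precisely which coordinate of \(\Phi_r\) fails the Lipschitz bound, since the argument depends on the exact form in \cite[Definition~3.4]{kalivsnik2019tropical}. What I would try first is a coordinate involving \(r\) and products or multi-bar max-plus combinations. An example is \(\max_{i<j}(\ell_i+\ell_j)\) combined with \(\min(r\ell_i,p_i)\) across distinct bars, evaluated on configurations where two bars interact.

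As a concrete candidate, let \(\alpha_n\) consist of many bars \(N_n\) whose births are spread far apart, and let \(\beta_n\) shift each bar by \(\varepsilon_n/N_n\), so that \(W_1\le 2\varepsilon_n\). A coordinate that sums \(\min(r\ell_i,p_i)\) over \(i\) changes by at most \(2r\varepsilon_n\), so it is harmless. A coordinate involving pairwise maxima can, however, switch its maximizer discontinuously in its derivative, and this still only gives a Lipschitz change.

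I therefore expect the decisive example to be a jump in a coordinate taken at a boundary. If the definition uses the bars with \(\ell\) below a cutoff, or with \(r\ell<p\), then crossing that cutoff changes an additive term by a fixed amount \(c>0\) while \(W_1\to0\). I would verify this against the definition and then conclude with the ratio \(\ge c/(2\varepsilon_n)\to\infty\).
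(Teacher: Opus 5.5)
Your proposal has a genuine gap. It never identifies a coordinate of $\Phi_r$ that violates the Lipschitz bound, and it never produces a witness pair $\alpha_n,\beta_n$. The step you defer (``I would verify this against the definition'') is the entire content of the proof.

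The mechanism you end up predicting also does not exist. The seven coordinates of \cite[Definition~3.4]{kalivsnik2019tropical} are max-plus expressions built from lengths $q-p$ and terms $\min\{r(q-p),p\}$. For a fixed number of bars they are continuous piecewise-linear functions of the bar endpoints. So there is no cutoff at which an additive term jumps by a fixed $c>0$; in particular $\min\{r\ell,p\}$ is continuous across $r\ell=p$. Your explicit candidate fails as well: for a short bar, $\min\{r\ell,p\}\le r\ell$ no matter how large the birth $p$ is, so it contributes only $O(\varepsilon_n)$. Any blow-up must therefore come from adding bars near the diagonal.

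The missing idea is the structure of the seventh coordinate,
\[
F_{7,r}(\alpha)=\sum_{u\in\operatorname{supp}\alpha}m_\alpha(u)\Bigl(\max_{v\in\operatorname{supp}\alpha}z_r(v)-z_r(u)\Bigr),\qquad z_r([p,q])=\min\{r(q-p),p\}+q-p.
\]
This equals $N(\alpha)\max_v z_r(v)-\sum_u m_\alpha(u)z_r(u)$, where $N(\alpha)$ counts bars with multiplicity. It is exactly the ``count-like'' coordinate you suspected: a bar with $z_r(u)\approx 0$ contributes approximately $\max_v z_r(v)$, not approximately zero.

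With this coordinate, your first test already works:
\begin{itemize}
\item Take $I=[0,1]$, so $z_r(I)=1$ and $F_{7,r}(I)=0$.
\item Let $J_n$ have length $\varepsilon_n\downarrow 0$, so $z_r(J_n)\le (r+1)\varepsilon_n<1$.
\item Then $F_{7,r}(I+J_n)=1-z_r(J_n)\ge 1-(r+1)\varepsilon_n$, while $W_1(I,I+J_n)\le\varepsilon_n$.
\end{itemize}
The ratio is at least $\varepsilon_n^{-1}-(r+1)\to\infty$, and Lemma~\ref{lem:no-lipschitz-extension-obstruction} finishes the argument.

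The paper does the same thing with $n$ short bars $e_{n,k}$ of length $n^{-2}$ added to $[0,1]$, which gives a ratio of order $n^2$. A single added bar already suffices.
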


\begin{proof}
Fix \( r\in\mathbb Z_{>0} \). For an interval \( u=[p,q] \), set \( z_r(u):=\min\{r(q-p),p\}+q-p \). The seventh tropical coordinate is
\[
F_{7,r}(\alpha)
=
\sum_{u\in\operatorname{supp}\alpha}
m_\alpha(u)
\left(
\max_{v\in\operatorname{supp}\alpha}z_r(v)-z_r(u)
\right).
\]

Let \( a:=[0,1] \). Then \( z_r(a)=1 \). For \( 1\leq k\leq n \), set \( \delta_{n,k}:=k/n^4 \) and \( e_{n,k}:=[\delta_{n,k},\delta_{n,k}+n^{-2}] \). Since \( \delta_{n,k}\leq n^{-3}\leq rn^{-2} \), we have \( z_r(e_{n,k})=\delta_{n,k}+n^{-2}\leq n^{-3}+n^{-2}<1 \).

Set \( \alpha_n:=a \) and \( \beta_n:=a+\sum_{k=1}^n e_{n,k} \). The unique bar in \( \alpha_n \) has \( z_r \)-value \( 1 \), so \( F_{7,r}(\alpha_n)=0 \). The maximum \( z_r \)-value on \( \operatorname{supp}\beta_n \) also equals \( 1 \), and each \( e_{n,k} \) contributes at least \( 1-n^{-2}-n^{-3} \). Hence \( F_{7,r}(\beta_n)\geq n(1-n^{-2}-n^{-3}) \).

Matching \( a \) with itself and each \( e_{n,k} \) to the diagonal gives \( 0<W_1(\alpha_n,\beta_n)\leq n^{-1} \), since \( d(e_{n,k},A)=n^{-2} \). Therefore
\[
\frac{
\|\Phi_r(\beta_n)-\Phi_r(\alpha_n)\|_2
}{
W_1(\alpha_n,\beta_n)
}
\geq
n^2\left(1-n^{-2}-n^{-3}\right)
\longrightarrow\infty.
\]
Lemma~\ref{lem:no-lipschitz-extension-obstruction} shows that \( \Phi_r \) does not have a Lipschitz extension to the virtual persistence diagram group.
\end{proof}

\subsection{Schauder Basis}

\begin{proposition}\label{prop:schauder-basis-synthesis} The Schauder basis vectorization \( F_B \) of \cite{bubenik2025schauder} has a Lipschitz extension to the virtual persistence diagram group with uniformly discrete spectral synthesis.
\end{proposition}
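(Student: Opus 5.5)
The plan is to reduce to Lemma~\ref{lem:additive-extension-synthesis}. That lemma requires two things of \(F_B\): it is additive on \(D(X,A)\), and it satisfies a \(W_1\)-Lipschitz bound. The Schauder basis vectorization of \cite{bubenik2025schauder} is built by expanding diagrams in a Schauder basis of a Lipschitz-free-type space. Concretely, a diagram \(\alpha=\sum_i m_i x_i\) is sent to the coefficient sequence, in \(\ell^1\), of the linear functional/element \(\sum_i m_i \delta(x_i)\) relative to a fixed (Schauder) basis. The coefficients are computed by bounded coordinate functionals composed with a linear map applied to \(\sum_i m_i\delta(x_i)\). Hence \(F_B(\alpha+\beta)=F_B(\alpha)+F_B(\beta)\) and \(F_B(0)=0\). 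I will verify additivity directly from the defining formula in \cite{bubenik2025schauder}, namely that \(F_B\) is a sum over the points of the diagram, counted with multiplicity, of a fixed pointwise feature \(x\mapsto F_B(x)\) that vanishes on \(A\).

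For the Lipschitz estimate, I will use that a single point feature \(x\mapsto F_B(x)\in\ell^1\) is Lipschitz with respect to \(d_1\) on \(X/A\) and vanishes at \([A]\). This should follow from the construction: the basis coefficients of \(\delta(x)\) are bounded linear in \(\delta(x)\), and \(\|\delta(x)-\delta(y)\|=d_1(x,y)\). If the construction gives only a bound by \(d\), the vanishing on \(A\) upgrades it to \(d_1\) by the triangle inequality through \(A\), exactly as in the Betti curve argument. Given a matching \(\sigma=\sum_i(x_i,y_i)\) between \(\alpha,\gamma\), additivity gives \(F_B(\alpha)-F_B(\gamma)=\sum_i(F_B(x_i)-F_B(y_i))\), since terms in \(A\) vanish. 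Therefore \(\|F_B(\alpha)-F_B(\gamma)\|_{\ell^1}\le C\sum_i d_1(x_i,y_i)\). Taking the infimum over matchings gives the \(W_1\) bound with constant \(C\), the basis constant times the norm of the linearizing map.

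Finally, I will complexify, viewing the target as \(\ell^1(\mathbb C)\), which preserves additivity and the bound. Lemma~\ref{lem:additive-extension-synthesis} then yields the unique homomorphic extension \(\overline{F_B}\colon K(X,A)\to\ell^1\) with \(\operatorname{Lip}\le C\). Its scalarizations are additive, so uniformly discrete spectral synthesis follows via Lemma~\ref{lem:homomorphism-variety-synthesis}.

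The main obstacle is the pointwise Lipschitz bound. It depends on the precise form of the basis in \cite{bubenik2025schauder}, and the coefficient functionals could have unbounded norms. If summing coefficient bounds fails, I will instead invoke the paper's statement that \(F_B\) is an isomorphic embedding of the Lipschitz-free space. This gives \(F_B=\Theta\circ\delta\) with \(\Theta\) bounded linear, so the Lipschitz constant is at most \(\|\Theta\|\) directly.
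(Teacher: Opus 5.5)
Your proposal is correct and is essentially the paper's proof: additivity of $F_B(\alpha)=(\alpha(K_v))_{v\in V}$, which is a multiplicity-weighted sum of a pointwise feature, combined with a $W_1$-Lipschitz bound and fed into Lemma~\ref{lem:additive-extension-synthesis}. The paper gets the Lipschitz step directly from \cite[Theorem~5.5]{bubenik2025schauder}, namely $\|F_B(\alpha)-F_B(\beta)\|_1\le\sqrt{2d}\,L\,W_1(\alpha,\beta)$, so only this one-sided estimate is needed; drop your fallback claim that $F_B$ is an isomorphic embedding of the Lipschitz-free space into $\ell^1$, which is unnecessary and cannot hold, because free spaces over planar regions (isometrically present away from the diagonal) do not embed isomorphically into $L^1$ by Naor--Schechtman.
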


\begin{proof}
Let \( B=\{K_v\}_{v\in V} \) be the family from \cite[Definition~5.1]{bubenik2025schauder}. For \( \alpha\in D(X,A) \), we have \( F_B(\alpha)=(\alpha(K_v))_{v\in V} \), where \( \alpha(K_v)=\sum_{x\in\alpha}K_v(x) \), counted with multiplicity.

For every \( \alpha,\beta\in D(X,A) \) and \( v\in V \), \( F_B(\alpha+\beta)_v = F_B(\alpha)_v+F_B(\beta)_v \). Hence \( F_B \) is additive.

By \cite[Theorem~5.5]{bubenik2025schauder}, \( \|F_B(\alpha)-F_B(\beta)\|_1 \leq \sqrt{2d}L\,W_1(\alpha,\beta) \) for all \( \alpha,\beta\in D(X,A) \). Therefore Lemma~\ref{lem:additive-extension-synthesis} gives the asserted Lipschitz extension with uniformly discrete spectral synthesis.
\end{proof}

\subsection{Integrated Landscape Signature Features}

\begin{proposition}
\label{prop:integrated-landscape-signature-classification}
Let \( \iota_{iL}^{(N)} \) denote the projection of the integrated landscape embedding from \cite[Definition~3.3]{chevyrev2018persistence} onto its first \( N \) coordinates. For \( N,M\geq1 \), define \( \Phi_{iL}^{(N,M)}(\alpha) :=(S_m(\iota_{iL}^{(N)}(\alpha)))_{m=0}^{M} \), with codomain \( \bigoplus_{m=0}^{M}(\mathbb C^N)^{\otimes m} \), equipped with an arbitrary norm, where \( (\mathbb C^N)^{\otimes0}:=\mathbb C \). Then \( \Phi_{iL}^{(N,M)} \) is not Lipschitz with respect to \(W_1\) and does not have a Lipschitz extension to \( K(X,A) \).

Let \( N\geq1 \), let \( M\in\mathbb Z_{\geq0} \), let \( E \) be a finite-dimensional complex normed space, and let \( T_m\colon(\mathbb C^N)^{\otimes m}\to E \) be complex-linear for \( 0\leq m\leq M \). Define
\[
f(\alpha)
:=
\sum_{m=0}^{M}
T_m\!\left(
S_m\!\left(
\iota_{iL}^{(N)}(\alpha)
\right)
\right).
\]
Then \( f \) is Lipschitz with respect to \(W_1\) if and only if \( f \) is constant. Consequently, every Lipschitz function of this form has a Lipschitz extension to \( K(X,A) \) with uniformly discrete spectral synthesis.
\end{proposition}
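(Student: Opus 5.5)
The plan is a scaling argument. It exploits the mismatch between the linear growth of $W_1$ under dilation of diagrams and the higher-order growth of signature levels. I work in the standard setting where $X$ is the birth--death half-plane and $A$ is the diagonal. For $R>0$, let $D_R\colon D(X,A)\to D(X,A)$ send each bar $[p,q]$ to $[Rp,Rq]$. Then $d_1$ scales by $R$, since both $d$ and $d(\cdot,A)$ are homogeneous. Hence $W_1(D_R\alpha,D_R\beta)=R\,W_1(\alpha,\beta)$, and in particular $W_1(D_R\alpha,0)=R\,W_1(\alpha,0)$.

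On the landscape side, \cite[Definition~3.8]{ali2023survey} gives $\Lambda_{D_R\alpha,k}(t)=R\,\Lambda_{\alpha,k}(t/R)$ for each $k$. Substituting $s=Ru$ in the integral then shows that the integrated landscape path of $D_R\alpha$ is $t\mapsto R^2\,\iota^{(N)}_{iL}(\alpha)(t/R)$. Signatures are invariant under reparametrization, and the level-$m$ signature of $c\gamma$ is $c^m$ times that of $\gamma$. Therefore
\[
S_m\bigl(\iota^{(N)}_{iL}(D_R\alpha)\bigr)=R^{2m}S_m\bigl(\iota^{(N)}_{iL}(\alpha)\bigr).
\]
Also, the empty diagram gives a constant path, so $S_m(\iota^{(N)}_{iL}(0))=0$ for $m\ge1$, while $S_0\equiv1$. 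Checking these scaling identities against the precise conventions of \cite[Definition~3.3]{chevyrev2018persistence} is the step I expect to need the most care: how the path is parametrized and truncated, and whether each $\lambda_k$ is integrated from $-\infty$. The rest is elementary.

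For the second statement, suppose $f$ is Lipschitz with constant $C$ and fix $\alpha$. Put $c_m:=T_m(S_m(\iota^{(N)}_{iL}(\alpha)))\in E$. By the scaling identity,
\[
\Bigl\|\sum_{m=1}^M c_mR^{2m}\Bigr\|_E=\|f(D_R\alpha)-f(0)\|_E\le C\,R\,W_1(\alpha,0)
\]
for all $R>0$. The left side is an $E$-valued polynomial in $R$ with only even degrees $\ge2$, and it is $O(R)$ as $R\to\infty$. Dividing by $R^{2M}$ and letting $R\to\infty$ shows $c_M=0$, and descending induction gives $c_m=0$ for every $m\ge1$. Thus $f(\alpha)=f(0)$, so $f$ is constant. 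The converse is trivial.

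For the ``consequently'' clause, a constant $f\equiv e$ extends to the constant map on $K(X,A)$, which is Lipschitz. For each uniformly discrete $Z$, the variety $V_Z$ is generated by the constants $\lambda(e)$. It is therefore either $\{0\}$ or $\operatorname{span}_{\mathbb C}\{\mathbf 1\}$, and in both cases it is spanned by an exponential. Theorem~\ref{thm:synthesis-equivalent-characterizations} then gives synthesizability, exactly as in Proposition~\ref{prop:multiplicative-characters-synthesis}.

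For the first statement, take $T_m$ to be the coordinate inclusions into $E=\bigoplus_{m\le M}(\mathbb C^N)^{\otimes m}$. I use that all norms on this finite-dimensional space are equivalent. The level-one term $S_1(\iota^{(N)}_{iL}(\alpha))$ is the total increment of the path, namely $(\int\lambda_k)_{k\le N}$. For the one-bar diagram $\alpha_1=[0,1]$ its first coordinate is $\int\lambda_1=1/4\neq0$. So $\Phi^{(N,M)}_{iL}$ is nonconstant and, by the previous paragraph, not Lipschitz.

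To apply Lemma~\ref{lem:no-lipschitz-extension-obstruction} explicitly, take $\alpha_n=D_n\alpha_1$ and $\beta_n=0$. Then $W_1(\alpha_n,\beta_n)\le n$, while
\[
\|\Phi(\alpha_n)-\Phi(0)\|\ge c\,\|S_1(\iota^{(N)}_{iL}(\alpha_n))\|\ge c'n^2
\]
for constants $c,c'>0$. The ratio therefore tends to $\infty$, so no Lipschitz extension to $K(X,A)$ exists.
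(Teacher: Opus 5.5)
Your proposal is correct and follows the paper's proof essentially step for step. Both use the dilation $D_R$, compare the linear scaling of $W_1$ with the $R^{2m}$ scaling of the $m$-th signature level, use the nonvanishing first level for the bar $[0,1]$, and take the constant extension, whose varieties are $\{0\}$ or $\operatorname{span}_{\mathbb C}\{\mathbf 1\}$. The only differences are cosmetic: you derive the first claim from the second, and you use a norm-based descending induction where the paper uses functionals in $E^*$ and the largest nonvanishing index.
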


\begin{proof}
For \( R>0 \), define \( D_R([b,d]):=[Rb,Rd] \) and extend \( D_R \) multiplicitywise to diagrams. Since \( d(D_Ru,D_Rv)=R\,d(u,v) \) and \( D_R(A)=A \), we have \( d(D_Ru,A)=R\,d(u,A) \). Definition~\ref{def:one-strengthening} therefore gives \( d_1(D_Ru,D_Rv)=R\,d_1(u,v) \).

Applying \( D_R \) coordinatewise to a matching preserves its marginals modulo \( D(A) \) and multiplies its cost by \( R \). Applying the same argument with \( D_{1/R} \) gives
\begin{equation}
\label{eq:integrated-landscape-wasserstein-scaling}
W_1(D_R\alpha,D_R\beta)
=
R\,W_1(\alpha,\beta).
\end{equation}

By \cite[Definition~3.2]{chevyrev2018persistence}, \( \lambda_k^{D_R\alpha}(t)=R\lambda_k^\alpha(t/R) \). Hence Definition~3.3 and the substitution \( s=Ru \) give \( \iota_{iL}^{(N)}(D_R\alpha)(t) = R^2\iota_{iL}^{(N)}(\alpha)(t/R) \). Each persistence landscape has compact support, so each integrated landscape path is constant outside a compact interval. We may therefore compute its signature on any compact interval containing all path increments. Increasing reparametrization preserves iterated Riemann--Stieltjes integrals, while multiplication of a path by \( R^2 \) multiplies its \( m \)-th signature level by \( R^{2m} \). Thus
\begin{equation}
\label{eq:integrated-landscape-signature-scaling}
S_m\!\left(
\iota_{iL}^{(N)}(D_R\alpha)
\right)
=
R^{2m}
S_m\!\left(
\iota_{iL}^{(N)}(\alpha)
\right)
\end{equation}
for every \( m\geq0 \), where the case \( m=0 \) follows from \( S_0=1 \).

Set \( \alpha:=[0,1] \). The function \( \lambda_1^\alpha \) is positive on \( (0,1) \), so the first coordinate of \( S_1(\iota_{iL}^{(N)}(\alpha)) \) equals \( \int_{\mathbb R}\lambda_1^\alpha(t)\,dt>0 \). Hence \( S_1(\iota_{iL}^{(N)}(\alpha))\neq0 \).

Let \( P_1 \) denote projection onto the first signature level. Since the domain of \( P_1 \) is finite-dimensional, \( P_1 \) is bounded and \( \|x\|\geq\|P_1x\|/\|P_1\| \). Equations \eqref{eq:integrated-landscape-wasserstein-scaling} and~\eqref{eq:integrated-landscape-signature-scaling} therefore give
\[
\frac{
\left\|
\Phi_{iL}^{(N,M)}(D_R\alpha)
-
\Phi_{iL}^{(N,M)}(0)
\right\|
}{
W_1(D_R\alpha,0)
}
\geq
R\,
\frac{
\left\|
S_1(\iota_{iL}^{(N)}(\alpha))
\right\|
}{
\|P_1\|W_1(\alpha,0)
}
\longrightarrow\infty.
\]
Lemma~\ref{lem:no-lipschitz-extension-obstruction} shows that \( \Phi_{iL}^{(N,M)} \) is not Lipschitz with respect to \(W_1\) and does not have a Lipschitz extension to \( K(X,A) \).

Now suppose that \( f \) is Lipschitz, and set \( L:=\operatorname{Lip}(f) \). Fix \( \alpha\in D(X,A) \). Since \( S_0=1 \), Equation~\eqref{eq:integrated-landscape-signature-scaling} gives \( f(D_R\alpha)-f(0) = \sum_{m=1}^{M} R^{2m} T_m(S_m(\iota_{iL}^{(N)}(\alpha))) \). For every \( \lambda\in E^* \), the Lipschitz estimate and Equation~\eqref{eq:integrated-landscape-wasserstein-scaling} yield
\[
\left|
\sum_{m=1}^{M}
R^{2m}
\lambda\!\left(
T_m\!\left(
S_m\!\left(
\iota_{iL}^{(N)}(\alpha)
\right)
\right)
\right)
\right|
\leq
\|\lambda\|LR\,W_1(\alpha,0).
\]

Suppose that \( \lambda(T_m(S_m(\iota_{iL}^{(N)}(\alpha))))\neq0 \) for some \( m\in\{1,\ldots,M\} \), and let \( m_0 \) be the largest such index. After division by \( R^{2m_0} \), the expression inside the absolute value converges to \( \lambda(T_{m_0}(S_{m_0}(\iota_{iL}^{(N)}(\alpha))))\neq0 \), while the right-hand side converges to zero because \( 2m_0>1 \). This contradiction shows that \( \lambda(T_m(S_m(\iota_{iL}^{(N)}(\alpha))))=0 \) for every \( 1\leq m\leq M \). Since \( E^* \) separates points, \( T_m(S_m(\iota_{iL}^{(N)}(\alpha)))=0 \) for every \( 1\leq m\leq M \). Since \( \alpha \) was arbitrary, \( f(\alpha)=T_0(1) \) for every \( \alpha\in D(X,A) \). Thus \( f \) is constant. Conversely, every constant function is Lipschitz with respect to the \(1\)-Wasserstein distance.

Set \( c:=T_0(1) \), and define \( \overline f(\xi):=c \) for every \( \xi\in K(X,A) \). Then \( \overline f \) extends \( f \) and has Lipschitz constant zero. Fix a uniformly discrete subpair \( A\subseteq Z\subseteq X \). Every scalarization \( \lambda\circ\overline f\circ i_Z \), where \( \lambda\in E^* \), is constant. Hence \( V_Z(\overline f) \) equals either \( \{0\} \) or \( \operatorname{span}_{\mathbb C}\{\mathbf 1\} \). In either case, its exponential monomials span it, so Theorem~\ref{thm:synthesis-equivalent-characterizations} shows that \( V_Z(\overline f) \) is synthesizable. Since \( Z \) was arbitrary, \( \overline f \) has uniformly discrete spectral synthesis by Definition~\ref{def:vectorization-spectral-synthesis}.
\end{proof}

\section{Derivations for the Spectral Synthesis of Learnable Vectorizations}
\label{sec:learnable-vectorization-synthesis-derivations}

We use the conventions and some general results from Section~\ref{sec:vectorization-synthesis-derivations}.

\subsection{Persistence Codebooks}
\label{subsec:persistence-codebooks}

For an off-diagonal interval \( I=[p,q] \), denote its birth-persistence coordinate by \( \xi(I):=(p,q-p) \). For \( b\in\mathbb R \) and \( n\geq1 \), set \( I_n(b):=[b,b+n^{-1}] \), and let \( \alpha_n(b) \) denote the one-bar diagram consisting of \( I_n(b) \). Then \( W_1(\alpha_n(b),0)=n^{-1} \) and \( \xi(I_n(b))\to(b,0) \).

\begin{proposition}\label{prop:pbow-no-lipschitz-subclass} No persistence bag-of-words vectorization associated with a finite codebook is Lipschitz with respect to the \(1\)-Wasserstein distance.
\end{proposition}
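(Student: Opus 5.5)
Recall the persistence bag-of-words vectorization of \cite{zielinski2021persistence}. For a finite codebook \(\{c_1,\ldots,c_N\}\) in the birth--persistence plane, the vectorization assigns to a diagram \(\alpha\) the histogram whose \(j\)-th coordinate counts, with multiplicity, the points \(I\) of \(\alpha\) with \(\operatorname{NN}(\xi(I))=c_j\). Here \(\operatorname{NN}\) denotes the nearest codeword (ties broken by a fixed rule). Each coordinate is integer-valued, and the total mass \(\sum_j\) equals the number of bars. The plan is to apply Lemma~\ref{lem:no-lipschitz-extension-obstruction} with \(\beta_n=0\) and \(\alpha_n=\alpha_n(b)\), the one-bar diagrams introduced just before the statement.

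The key steps are as follows.

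\begin{enumerate}
\item \textbf{The empty diagram.} The vectorization of the empty diagram \(0\) is the zero vector, since it has no points.

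\item \textbf{A single bar.} For any \(b\), the one-bar diagram \(\alpha_n(b)\) is mapped to a standard basis vector \(e_{j_n}\), where \(c_{j_n}=\operatorname{NN}(\xi(I_n(b)))\). Its norm is therefore bounded below by a constant \(c_E>0\). This constant depends only on the chosen norm on \(\mathbb R^N\), which is fixed and finite-dimensional, so all norms are equivalent and the minimum of \(\|e_j\|\) is positive.

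\item \textbf{Distance to the empty diagram.} Using \(W_1(\alpha_n(b),0)=n^{-1}\), as recorded before the statement, we get
\[
\frac{\|f(\alpha_n(b))-f(0)\|}{W_1(\alpha_n(b),0)}\geq c_E\,n\longrightarrow\infty .
\]
Lemma~\ref{lem:no-lipschitz-extension-obstruction} then yields the claim: the vectorization is not Lipschitz, and it has no Lipschitz extension. Note that \(b\) can be any fixed real number.
\end{enumerate}

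No nearest-neighbour discontinuity argument is needed, because the counting coordinate already jumps by \(1\) between the empty diagram and an arbitrarily short bar. This mirrors the proof of Proposition~\ref{prop:statistics-no-extension}.

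The main obstacle is fixing the definition precisely enough to justify Step~2. One must check that in \cite{zielinski2021persistence} the unstable PBoW is genuinely a hard assignment count, with no weighting by persistence and no normalization. A normalization by the number of points would still give norm at least \(1/1\) for one-bar diagrams, so the argument survives. If a weighting by persistence were present, that would be the stable variant instead. If some variant assigned points to codewords only within a radius, I would instead choose \(b\) so that \((b,0)\) lies in the Voronoi cell interior of some codeword, which keeps \(f(\alpha_n(b))\neq0\) for large \(n\).
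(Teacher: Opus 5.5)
Your proposal is correct and follows essentially the same argument as the paper. You compare the one-bar diagrams \(\alpha_n(b)\) with the empty diagram, note that a single bar is sent to a standard basis vector while \(0\) is sent to \(0\), and invoke Lemma~\ref{lem:no-lipschitz-extension-obstruction} using \(W_1(\alpha_n(b),0)=n^{-1}\); the only cosmetic difference is that you bound \(\|e_{j_n}\|\) below by \(\min_j\|e_j\|>0\), whereas the paper passes to a subsequence on which the nearest-codeword index is constant.
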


\begin{proof}
Fix a finite codebook \( \mu_1,\ldots,\mu_N\in\mathbb R^2 \) and a nearest-codeword assignment. Let \( \nu(I)\in\{1,\ldots,N\} \) denote the selected nearest-codeword index of \( \xi(I) \). By \cite[Section~3.1]{zielinski2021persistence}, the contribution of \( I \) is the standard basis vector \( e_{\nu(I)} \).

Fix \( b\in\mathbb R \). After passing to a subsequence, we may assume that \( \nu(I_n(b))=i \) for some fixed \( i \). Hence \( F(\alpha_n(b))=e_i \) and \( F(0)=0 \), so
\[
\frac{\|F(\alpha_n(b))-F(0)\|}
{W_1(\alpha_n(b),0)}
=
n\|e_i\|
\longrightarrow\infty.
\]
Lemma~\ref{lem:no-lipschitz-extension-obstruction} shows that \( F \) is not Lipschitz. Since the codebook and nearest-codeword assignment were arbitrary, no persistence bag-of-words vectorization is Lipschitz with respect to \( W_1 \).
\end{proof}

For the Gaussian-mixture constructions, fix \( \lambda=\{(w_i,\mu_i,\Sigma_i):1\leq i\leq N\} \), where \( w_i>0 \), \( \sum_{i=1}^N w_i=1 \), and each \( \Sigma_i \) is positive definite. Let \( p_i(\,\cdot\mid\lambda) \) denote the Gaussian likelihood from \cite[Equation~(4)]{zielinski2021persistence}, and set \( \gamma_i(z):= w_ip_i(z\mid\lambda)/ \sum_{j=1}^N w_jp_j(z\mid\lambda) \). Each \( p_i(\,\cdot\mid\lambda) \) and \( \gamma_i \) is continuous and strictly positive on \( \mathbb R^2 \).

\begin{proposition}\label{prop:spbow-no-lipschitz-subclass} The Lipschitz subclass of the stable persistence bag-of-words vectorizations from \cite[Equation~(4)]{zielinski2021persistence} is empty.
\end{proposition}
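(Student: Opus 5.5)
The plan is to adapt the proof of Proposition~\ref{prop:pbow-no-lipschitz-subclass}, replacing the hard nearest-codeword assignment by the soft Gaussian-mixture assignment. Fix an arbitrary parameter family \(\lambda=\{(w_i,\mu_i,\Sigma_i)\}_{i=1}^N\) and let \(F_\lambda\colon D(X,A)\to\mathbb R^N\) be the associated stable persistence bag-of-words vectorization. By \cite[Equation~(4)]{zielinski2021persistence}, each off-diagonal interval \(I\) contributes a fixed vector built from \(\xi(I)\). With the normalization used there, this is either \((\gamma_i(\xi(I)))_{i=1}^N\) or \((w_ip_i(\xi(I)\mid\lambda))_{i=1}^N\). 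In both cases \(F_\lambda\) is additive, \(F_\lambda(0)=0\), and each contribution is a continuous function of \(\xi(I)\) with strictly positive coordinates. I will check which normalization the source uses. The argument needs only continuity and strict positivity, which the paragraph preceding the statement already provides for both \(p_i\) and \(\gamma_i\).

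The key step is to use the one-bar diagrams \(\alpha_n(b)\) introduced at the start of Subsection~\ref{subsec:persistence-codebooks}. Fix any \(b\in\mathbb R\). Then \(W_1(\alpha_n(b),0)=n^{-1}\) and \(\xi(I_n(b))\to(b,0)\). Continuity gives
\[
F_\lambda(\alpha_n(b))\to v(b,0),
\]
where \(v(z)\) denotes the contribution vector of a point with coordinate \(z\). Strict positivity gives \(v(b,0)\neq0\); in the \(\gamma\)-normalization, for instance, its coordinates sum to \(1\). Hence there is \(c>0\) with \(\|F_\lambda(\alpha_n(b))-F_\lambda(0)\|\geq c\) for all large \(n\), using any fixed norm on \(\mathbb R^N\). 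Since all norms on \(\mathbb R^N\) are equivalent, the choice of norm does not matter. Therefore
\[
\frac{\|F_\lambda(\alpha_n(b))-F_\lambda(0)\|}{W_1(\alpha_n(b),0)}\geq cn\longrightarrow\infty .
\]

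Lemma~\ref{lem:no-lipschitz-extension-obstruction} then shows that \(F_\lambda\) is not Lipschitz with respect to \(W_1\). Since \(\lambda\) was arbitrary, the Lipschitz subclass is empty. Under convention~3 of Appendix~\ref{sec:vectorization-synthesis-derivations}, the \cmark\ entries for sPBoW in Table~\ref{tab:persistence-codebooks} then hold vacuously.

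The main obstacle is fixing the exact form of the sPBoW formula in \cite{zielinski2021persistence}. If it includes a normalization by the number of points, or an extra weighting that vanishes at the diagonal, the argument above fails. For a normalization by the number of points, I would first try the diagrams \(\beta+\alpha_n(b)\) for a fixed nonempty \(\beta\): adding one near-diagonal point changes the average by an amount bounded below, while the \(W_1\) cost is only \(n^{-1}\). For a weighting that vanishes at the diagonal, I would need to reconsider the whole argument.
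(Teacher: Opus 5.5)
Your proposal is correct and is essentially the paper's own proof. The paper takes the contribution of $I$ to be $(w_ip_i(\xi(I)\mid\lambda))_{i=1}^N$, the unnormalized option you listed, so your contingency for averaging or diagonal-vanishing weights is not needed. It then uses the one-bar diagrams $\alpha_n(b)$ and combines continuity with strict positivity of the limit vector at $(b,0)$ to get the ratio bound $C_0n\to\infty$, before invoking Lemma~\ref{lem:no-lipschitz-extension-obstruction} exactly as you do.
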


\begin{proof}
Fix \( \lambda \) as above. By \cite[Equation~(4)]{zielinski2021persistence}, the contribution of \( I \) is \( \kappa_\lambda(I):= (w_ip_i(\xi(I)\mid\lambda))_{i=1}^N \).

Fix \( b\in\mathbb R \), and set \( v_b:=(w_ip_i((b,0)\mid\lambda))_{i=1}^N \). Continuity gives \( \kappa_\lambda(I_n(b))\to v_b \). Since every coordinate of \( v_b \) is positive, \( v_b\neq0 \). Hence there exist \( C_0>0 \) and \( n_0\geq1 \) such that \( \|\kappa_\lambda(I_n(b))\|\geq C_0 \) for every \( n\geq n_0 \). Since \( F_\lambda(\alpha_n(b))=\kappa_\lambda(I_n(b)) \), we obtain
\[
\frac{\|F_\lambda(\alpha_n(b))-F_\lambda(0)\|}
{W_1(\alpha_n(b),0)}
\geq
C_0n
\longrightarrow\infty.
\]
Lemma~\ref{lem:no-lipschitz-extension-obstruction} shows that \( F_\lambda \) is not Lipschitz. Since \( \lambda \) was arbitrary, the subclass which is Lipschitz with respect to \( W_1 \) is empty.
\end{proof}

\begin{proposition}\label{prop:pvlad-no-lipschitz-subclass} The of the persistence VLAD vectorizations from \cite[Equation~(5)]{zielinski2021persistence} which is Lipschitz with respect to the \(1\)-Wasserstein distance is empty.
\end{proposition}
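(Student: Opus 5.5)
The statement to prove is that the Lipschitz subclass of persistence VLAD vectorizations is empty. I would argue exactly as in Propositions~\ref{prop:pbow-no-lipschitz-subclass} and~\ref{prop:spbow-no-lipschitz-subclass}: fix an arbitrary codebook, show the resulting map $F$ violates the Lipschitz bound, and then invoke Lemma~\ref{lem:no-lipschitz-extension-obstruction}. The test diagrams are again the one-bar diagrams $\alpha_n(b)$, compared against the empty diagram $0$. These satisfy $W_1(\alpha_n(b),0)=n^{-1}$ and $F(0)=0$, since $F$ is a sum over bars.

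\textbf{Step 1 (form of the map).} By \cite[Equation~(5)]{zielinski2021persistence}, persistence VLAD assigns each bar $I$ to its nearest codeword $\mu_{\nu(I)}$. The resulting vector has block $\nu(I)$ equal to the residual $\xi(I)-\mu_{\nu(I)}\in\mathbb R^2$ and all other blocks zero, and $F$ sums these contributions over bars. Since the codebook is finite, I pass to a subsequence along which $\nu(I_n(b))=i$ is constant. Then
\[
F(\alpha_n(b))\to (0,\ldots,(b,0)-\mu_i,\ldots,0).
\]

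\textbf{Step 2 (nonvanishing limit).} The limit is nonzero unless $\mu_i=(b,0)$. A finite codebook can coincide with $(b,0)$ for only finitely many $b$, so I choose $b\in\mathbb R$ with $(b,0)\notin\{\mu_1,\ldots,\mu_N\}$. Then $\|F(\alpha_n(b))\|\ge C_0>0$ for all large $n$. This gives
\[
\frac{\|F(\alpha_n(b))-F(0)\|}{W_1(\alpha_n(b),0)}\ge C_0 n\to\infty .
\]
Lemma~\ref{lem:no-lipschitz-extension-obstruction} then says $F$ is not Lipschitz. Since the codebook and the nearest-codeword assignment were arbitrary, the Lipschitz subclass is empty.

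\textbf{Main obstacle.} The only delicate point is whether the cited definition applies a normalization, such as power or $\ell^2$ normalization of the VLAD vector. If it does, $F(0)$ may need a convention, and the limit must still be nonzero. I expect normalization only rescales a nonzero vector to unit norm, which still gives a lower bound $C_0>0$. If instead $F(0)$ is defined to be $0$, the same estimate holds.
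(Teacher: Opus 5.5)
Your proposal is correct and follows essentially the same route as the paper: fix the codebook and nearest-codeword assignment, choose $b$ with $(b,0)$ distinct from every codeword, and pass to a subsequence on which $\nu(I_n(b))=i$ is constant. The contributions then converge to a nonzero vector, which gives a ratio of at least $C_0 n$, and Lemma~\ref{lem:no-lipschitz-extension-obstruction} finishes the argument. The paper picks $b$ before extracting the subsequence, which is the cleaner order, and it treats the unnormalized map without discussing normalization.
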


\begin{proof}
Fix a finite codebook \( \mu_1,\ldots,\mu_N\in\mathbb R^2 \) and a nearest-codeword assignment. Let \( \nu(I) \) denote the selected nearest-codeword index of \( \xi(I) \). By \cite[Equation~(5)]{zielinski2021persistence}, the contribution of \( I \) is \( \kappa(I):= (\mathbf 1_{\{\nu(I)=j\}}(\xi(I)-\mu_j))_{j=1}^N \).

Since the codebook is finite, choose \( b\in\mathbb R \) such that \( (b,0)\neq\mu_j \) for every \( j \). After passing to a subsequence, we may assume that \( \nu(I_n(b))=i \) for some fixed \( i \). Then \( \kappa(I_n(b))\to v \), where the \( i \)-th block of \( v \) equals \( (b,0)-\mu_i \) and every other block equals zero. Hence \( v\neq0 \), so there exist \( C_0>0 \) and \( n_0\geq1 \) such that \( \|\kappa(I_n(b))\|\geq C_0 \) for every \( n\geq n_0 \). Since \( F(\alpha_n(b))=\kappa(I_n(b)) \), we have
\[
\frac{\|F(\alpha_n(b))-F(0)\|}
{W_1(\alpha_n(b),0)}
\geq
C_0n
\longrightarrow\infty.
\]
Lemma~\ref{lem:no-lipschitz-extension-obstruction} shows that \( F \) is not Lipschitz with respect to the \(1\)-Wasserstein distance. Since the codebook and nearest-codeword assignment were arbitrary, the Lipschitz subclass is empty.
\end{proof}

\begin{proposition}\label{prop:spvlad-no-lipschitz-subclass} The subclass of the stable persistence VLAD vectorizations from \cite[Equation~(6)]{zielinski2021persistence} which is Lipschitz with respect to the \(1\)-Wasserstein distance is empty.
\end{proposition}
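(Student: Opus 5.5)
The plan is to follow the template of Proposition~\ref{prop:pvlad-no-lipschitz-subclass}. We show that every stable persistence VLAD vectorization fails to be Lipschitz by testing it on the one-bar diagrams \(\alpha_n(b)\), which satisfy \(W_1(\alpha_n(b),0)=n^{-1}\). Fix Gaussian mixture parameters \(\lambda=\{(w_i,\mu_i,\Sigma_i)\}\) as above. By \cite[Equation~(6)]{zielinski2021persistence}, the contribution of an off-diagonal interval \(I\) to the stable VLAD vector is the block vector
\[
\kappa_\lambda(I):=\bigl(\gamma_j(\xi(I))(\xi(I)-\mu_j)\bigr)_{j=1}^N,
\]
possibly with a fixed positive normalization per block. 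The empty diagram is sent to \(F_\lambda(0)=0\), and the one-bar diagram is sent to \(F_\lambda(\alpha_n(b))=\kappa_\lambda(I_n(b))\).

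The key step is to choose \(b\in\mathbb R\) such that \((b,0)\neq\mu_j\) for every \(j\). This is possible because the codebook of means is finite. Since each \(\gamma_j\) is continuous, we have \(\kappa_\lambda(I_n(b))\to v_b:=(\gamma_j((b,0))((b,0)-\mu_j))_{j=1}^N\). Each \(\gamma_j((b,0))\) is strictly positive, so every block of \(v_b\) is nonzero, and in particular \(v_b\neq0\). It follows that there exist \(C_0>0\) and \(n_0\) with \(\|F_\lambda(\alpha_n(b))-F_\lambda(0)\|\geq C_0\) for all \(n\geq n_0\). Dividing by \(W_1(\alpha_n(b),0)=n^{-1}\) then gives a ratio of at least \(C_0n\to\infty\).

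Lemma~\ref{lem:no-lipschitz-extension-obstruction} then shows that \(F_\lambda\) is neither Lipschitz nor Lipschitz-extendable. Since \(\lambda\) was arbitrary, the Lipschitz subclass is empty.

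The main obstacle is matching the exact form of Equation~(6) in \cite{zielinski2021persistence}. If that equation includes power or \(\ell^2\) normalization of the aggregated vector, the one-bar value is instead a normalized version of \(v_b\). Such a normalization maps nonzero vectors to nonzero vectors and is continuous away from zero, so the uniform lower bound \(C_0\) persists. If the formula uses whitening by \(\Sigma_j^{-1/2}\) or scaling by \(w_j^{-1/2}\), these are invertible linear maps applied blockwise and do not affect \(v_b\neq0\). In every variant, the essential point is that the contribution of a single vanishingly short bar does not tend to zero.
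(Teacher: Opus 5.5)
Your proposal is correct and is essentially the paper's proof. It uses the same one-bar diagrams \(\alpha_n(b)\), the same limit vector \(v_b\neq0\) obtained from continuity and positivity of the \(\gamma_j\), the same lower bound \(C_0 n\) on the difference quotient, and the same appeal to Lemma~\ref{lem:no-lipschitz-extension-obstruction}. The only cosmetic differences are that the paper fixes one index \(i\) and requires only \((b,0)\neq\mu_i\), so that just the \(i\)-th block of \(v_b\) must be nonzero, and that your remarks on normalization variants go beyond what the paper needs for the formula it uses.
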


\begin{proof}
Fix \( \lambda \) as above. By \cite[Equation~(6)]{zielinski2021persistence}, the contribution of \( I \) is \( \kappa_\lambda(I):= (\gamma_i(\xi(I))(\xi(I)-\mu_i))_{i=1}^N \).

Fix \( i\in\{1,\ldots,N\} \), and choose \( b\in\mathbb R \) such that \( (b,0)\neq\mu_i \). Set 
\[ 
v_b:= (\gamma_j((b,0))((b,0)-\mu_j))_{j=1}^N. 
\] 
Its \( i \)-th block is nonzero because \( \gamma_i((b,0))>0 \) and \( (b,0)\neq\mu_i \). Hence \( v_b\neq0 \). Continuity gives \( \kappa_\lambda(I_n(b))\to v_b \). Thus there exist \( C_0>0 \) and \( n_0\geq1 \) such that \( \|\kappa_\lambda(I_n(b))\|\geq C_0 \) for every \( n\geq n_0 \). Since \( F_\lambda(\alpha_n(b))=\kappa_\lambda(I_n(b)) \), we obtain
\[
\frac{\|F_\lambda(\alpha_n(b))-F_\lambda(0)\|}
{W_1(\alpha_n(b),0)}
\geq
C_0n
\longrightarrow\infty.
\]
Lemma~\ref{lem:no-lipschitz-extension-obstruction} shows that \( F_\lambda \) is not Lipschitz with respect to the \(1\)-Wasserstein distance. Since \( \lambda \) was arbitrary, the Lipschitz subclass is empty.
\end{proof}

\begin{proposition}\label{prop:pfv-no-lipschitz-subclass} Consider the diagonal-covariance persistence Fisher vectorizations from \cite[Equations~(9)--(11)]{zielinski2021persistence} for which the Fisher-information entries \( f_{\mu_i^d} \) and \( f_{\sigma_i^d} \) are finite and strictly positive for every \( 1\leq i\leq N \) and \( d\in\{1,2\} \). The subclass of this family which is Lipschitz with respect to \(W_1\) is empty.
\end{proposition}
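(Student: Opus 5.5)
The plan follows the template of Propositions~\ref{prop:pbow-no-lipschitz-subclass}--\ref{prop:spvlad-no-lipschitz-subclass}. I will exhibit, for every admissible parameter choice $\lambda=\{(w_i,\mu_i,\Sigma_i)\}$ with diagonal $\Sigma_i=\operatorname{diag}((\sigma_i^1)^2,(\sigma_i^2)^2)$, a sequence of one-bar diagrams $\alpha_n(b)$ with $W_1(\alpha_n(b),0)=n^{-1}$ and $\|F_\lambda(\alpha_n(b))-F_\lambda(0)\|\geq C_0>0$ for large $n$. Lemma~\ref{lem:no-lipschitz-extension-obstruction} then shows that $F_\lambda$ is not Lipschitz. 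Since $\lambda$ is arbitrary, the Lipschitz subclass is empty.

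First I will write out the Fisher vector from \cite[Equations~(9)--(11)]{zielinski2021persistence} for a diagram $\alpha$. Its $\mu$-block has entries $\frac{1}{\sqrt{f_{\mu_i^d}}}\sum_{I\in\alpha}\gamma_i(\xi(I))\frac{\xi(I)^d-\mu_i^d}{(\sigma_i^d)^2}$. Its $\sigma$-block has entries $\frac{1}{\sqrt{f_{\sigma_i^d}}}\sum_{I\in\alpha}\gamma_i(\xi(I))\bigl(\frac{(\xi(I)^d-\mu_i^d)^2}{(\sigma_i^d)^3}-\frac{1}{\sigma_i^d}\bigr)$. There may also be weight entries $\gamma_i-w_i$ and possibly a normalization by the number of bars. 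I will check the exact form, but each version is a finite sum of per-bar contributions $\kappa_\lambda(I)$ that depend continuously on $\xi(I)$. The empty diagram maps to $0$, or at least to a fixed vector.

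The key step is to choose $b$ so that the limiting contribution $v_b:=\lim_n\kappa_\lambda(I_n(b))=\kappa_\lambda$ evaluated at $(b,0)$ is nonzero. The $\mu$-block entry for $i=1$, $d=1$ is $\gamma_1((b,0))(b-\mu_1^1)/((\sigma_1^1)^2\sqrt{f_{\mu_1^1}})$. It is nonzero whenever $b\neq\mu_1^1$, because $\gamma_1>0$ and the Fisher entry is finite and positive. This is the only place where the finiteness and positivity hypothesis is used. Choosing such a $b$ and using continuity of $\gamma_i$, there are $C_0>0$ and $n_0$ with $\|F_\lambda(\alpha_n(b))\|\geq C_0$ for all $n\geq n_0$. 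This gives a ratio of at least $C_0 n\to\infty$.

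The main obstacle is a possible normalization in \cite{zielinski2021persistence}, namely division by the number of bars. It does not break the argument, since one-bar diagrams have count $1$. The concern is whether $F_\lambda(0)$ is defined at all, or defined as $0$. If the empty diagram is problematic, I will compare instead $\beta_n:=\alpha_n(b)+J$ against $J$ for a fixed bar $J$. In that case I will use $W_1\leq n^{-1}$ and show that the change in the normalized sum has a nonzero limit. This reduces to checking that $\kappa_\lambda((b,0))\neq\kappa_\lambda(\xi(J))$ for a suitable choice of $b$, which holds because the relevant $\mu$-coordinate is a nonconstant function of $b$.
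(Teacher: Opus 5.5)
Your proposal is correct and is essentially the paper's argument: one-bar diagrams $\alpha_n(b)$ collapsing to the diagonal, continuity of the per-bar contribution $\kappa(\xi(I))$ to a nonzero limit at $(b,0)$ (using $\gamma_i>0$ and the finite, strictly positive Fisher entries), a ratio bounded below by $C_0 n$, and Lemma~\ref{lem:no-lipschitz-extension-obstruction}. The only difference is that you choose $b\neq\mu_1^1$ and use a single $\mu$-block entry. The paper instead lets $b$ be arbitrary and, when $a^d=\mu_i^d$ for $a=(b,0)$, uses the $\sigma$-block entry $-f_{\sigma_i^d}^{-1/2}\gamma_i(a)/\sigma_i^d\neq0$. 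The paper also treats $F$ as the unnormalized sum of per-bar contributions with $F(0)=0$, so your fallback for a bar-count normalization is not needed there, although it is also sound.
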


\begin{proof}
Fix an admissible diagonal-covariance persistence Fisher vectorization associated with a Gaussian mixture model \( \lambda=\{(w_i,\mu_i,\Sigma_i):1\leq i\leq N\} \). For \( 1\leq i\leq N \) and \( d\in\{1,2\} \), let \( \sigma_i^d \) have the meaning assigned in \cite[Section~3.7]{zielinski2021persistence}, and let \( f_{\mu_i^d} \) and \( f_{\sigma_i^d} \) denote the corresponding diagonal Fisher-information entries used in \cite[Equation~(11)]{zielinski2021persistence}.

For \( z\in\mathbb R^2 \), define
\[
\kappa_{i,d}^{\mu}(z)
:=
f_{\mu_i^d}^{-1/2}
\gamma_i(z)
\frac{z^d-\mu_i^d}{(\sigma_i^d)^2}
\]
and
\[
\kappa_{i,d}^{\sigma}(z)
:=
f_{\sigma_i^d}^{-1/2}
\gamma_i(z)
\left(
\frac{(z^d-\mu_i^d)^2}{(\sigma_i^d)^3}
-
\frac{1}{\sigma_i^d}
\right).
\]
Let \( \kappa(z) \) denote the concatenation of these coordinates. By \cite[Equations~(9)--(11)]{zielinski2021persistence}, the contribution of an interval \( I \) is \( \kappa(\xi(I)) \).

Fix \( b\in\mathbb R \), set \( a:=(b,0) \), and fix \( i\in\{1,\ldots,N\} \) and \( d\in\{1,2\} \). If \( a^d\neq\mu_i^d \), then \( \kappa_{i,d}^{\mu}(a)\neq0 \). If \( a^d=\mu_i^d \), then \( \kappa_{i,d}^{\sigma}(a) = - f_{\sigma_i^d}^{-1/2} \gamma_i(a)/\sigma_i^d \neq0. \) Thus \( \kappa(a)\neq0 \). Since \( \kappa \) is continuous and \( \xi(I_n(b))\to a \), we have \( \kappa(\xi(I_n(b)))\to\kappa(a) \). Hence there exist \( C_0>0 \) and \( n_0\geq1 \) such that \( \|\kappa(\xi(I_n(b)))\|\geq C_0 \) for every \( n\geq n_0 \). For the associated persistence Fisher vectorization \( F \), \( F(\alpha_n(b))=\kappa(\xi(I_n(b))) \), so
\[
\frac{\|F(\alpha_n(b))-F(0)\|}
{W_1(\alpha_n(b),0)}
\geq
C_0n
\longrightarrow\infty.
\]
Lemma~\ref{lem:no-lipschitz-extension-obstruction} shows that \( F \) is not Lipschitz with respect to \(W_1\). Since the admissible persistence Fisher vectorization was arbitrary, the Lipschitz subclass is empty.
\end{proof}

\subsection{ATOL}
\label{subsec:atol}

\begin{proposition}\label{prop:atol-no-lipschitz-subclass} The subclass of the ATOL vectorizations \cite[Section~2.1]{royer2021atol} which is Lipschitz with respect to \(W_1\) and for which \( \sigma_i(\widehat c_n)>0 \) for every \( i\in\{1,\ldots,b\} \) is empty.
\end{proposition}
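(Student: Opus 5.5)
Following the template of Propositions~\ref{prop:pbow-no-lipschitz-subclass}--\ref{prop:pfv-no-lipschitz-subclass}, I would use Lemma~\ref{lem:no-lipschitz-extension-obstruction} with the one-bar diagrams \(\alpha_n(b)\) from Subsection~\ref{subsec:persistence-codebooks}. These satisfy \(W_1(\alpha_n(b),0)=n^{-1}\) and \(\xi(I_n(b))\to(b,0)\). The empty diagram has \(F(0)=0\), so it suffices to show that \(\|F(\alpha_n(b))\|\) stays bounded below for some fixed \(b\).

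First I recall the ATOL structure from \cite[Section~2.1]{royer2021atol}. After a measure \(\widehat c_n\) has been fitted and centers \(c_1,\ldots,c_b\) with scales \(\sigma_i(\widehat c_n)>0\) chosen, the vectorization of a diagram \(\alpha\) is
\[
F(\alpha)=\Bigl(\sum_{x\in\alpha}\Omega\bigl(\|x-c_i\|/\sigma_i(\widehat c_n)\bigr)\Bigr)_{i=1}^b,
\]
counted with multiplicity. Here \(\Omega\) is a fixed continuous, strictly positive contrast function, for instance the Laplacian or Gaussian kernel. The hypothesis \(\sigma_i(\widehat c_n)>0\) makes every coordinate map \(z\mapsto\Omega(\|z-c_i\|/\sigma_i)\) continuous and strictly positive on \(\mathbb R^2\). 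The contribution of the single bar \(I_n(b)\) is \(\kappa(\xi(I_n(b)))\), where \(\kappa\) is this vector of kernel evaluations. By continuity it converges to \(\kappa((b,0))\), and every coordinate of \(\kappa((b,0))\) is positive.

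This gives constants \(C_0>0\) and \(n_0\) such that \(\|F(\alpha_n(b))\|\ge C_0\) for every \(n\ge n_0\). Hence the ratio
\[
\frac{\|F(\alpha_n(b))-F(0)\|}{W_1(\alpha_n(b),0)}\ge C_0 n\to\infty.
\]
Lemma~\ref{lem:no-lipschitz-extension-obstruction} then shows that \(F\) is not Lipschitz. Since the parameters were arbitrary, the Lipschitz subclass is empty.

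The main obstacle is pinning down the exact form of \(\Omega\) and of the embedding coordinates used in \cite{royer2021atol}. Points may be represented in birth--death rather than birth--persistence coordinates. In that case I would use \((b,b)\) as the limit point instead of \((b,0)\), and the argument is unchanged. All that is needed is that \(\Omega\) is continuous and nonvanishing at the limit point. If \(\Omega\) could vanish somewhere, I would instead choose \(b\) so that the limit point avoids the zero set of the first coordinate, as was done in Proposition~\ref{prop:pvlad-no-lipschitz-subclass}.
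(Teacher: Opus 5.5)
Your proposal is correct and follows essentially the same route as the paper. In both, a single bar shrinking to the diagonal has \(W_1\)-distance to the empty diagram tending to zero, while an ATOL coordinate converges by continuity to a strictly positive kernel value, so the difference quotient against \(F(0)=0\) blows up. The paper fixes the point \((0,\varepsilon)\) in birth--death coordinates, with \(W_1(\alpha_\varepsilon,0)=\varepsilon/2\), and uses the explicit contrast \(\exp\bigl(-\|x-c_i\|^2/\sigma_i(\widehat c_n)\bigr)\); this settles the coordinate and contrast-function ambiguities you flagged.
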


\begin{proof}
Fix an ATOL vectorization \( v_{\mathrm{Atol}}^{\widehat c_n} \) with learned codebook \( \widehat c_n=(c_1,\ldots,c_b) \) such that \( \sigma_i(\widehat c_n)>0 \) for every \( i\in\{1,\ldots,b\} \). Fix \( i\in\{1,\ldots,b\} \). For \( \varepsilon>0 \), let \( \alpha_\varepsilon \) denote the persistence diagram consisting of the single point \( (0,\varepsilon) \). Then
\[
W_1(\alpha_\varepsilon,0)
=
\inf_{t\in\mathbb{R}}
\|(0,\varepsilon)-(t,t)\|_\infty
=
\frac{\varepsilon}{2}.
\]
Moreover, \cite[Equations~(1)--(2)]{royer2021atol} gives
\[
v_{\mathrm{Atol}}^{\widehat c_n}(\alpha_\varepsilon)_i
=
\exp\left(
-\frac{\|(0,\varepsilon)-c_i\|^2}
{\sigma_i(\widehat c_n)}
\right).
\]

Since \( \sigma_i(\widehat c_n)>0 \), continuity gives
\[
v_{\mathrm{Atol}}^{\widehat c_n}(\alpha_\varepsilon)_i
\to
\exp\left(
-\frac{\|c_i\|^2}
{\sigma_i(\widehat c_n)}
\right)
>0
\qquad
(\varepsilon\downarrow0).
\]
Hence there exist constants \( C_0>0 \) and \( \varepsilon_0>0 \) such that
\[
\left|
v_{\mathrm{Atol}}^{\widehat c_n}(\alpha_\varepsilon)_i
\right|
\ge
C_0
\qquad
(0<\varepsilon<\varepsilon_0).
\]

Since \( v_{\mathrm{Atol}}^{\widehat c_n}(0)=0 \), we obtain
\[
\frac{
\|v_{\mathrm{Atol}}^{\widehat c_n}(\alpha_\varepsilon)
-
v_{\mathrm{Atol}}^{\widehat c_n}(0)\|_\infty
}{
W_1(\alpha_\varepsilon,0)
}
\ge
\frac{
\left|
v_{\mathrm{Atol}}^{\widehat c_n}(\alpha_\varepsilon)_i
\right|
}{
\varepsilon/2
}
\ge
\frac{2C_0}{\varepsilon}
\longrightarrow
\infty.
\]
Thus \( v_{\mathrm{Atol}}^{\widehat c_n} \) is not Lipschitz with respect to the \(1\)-Wasserstein distance. Since the ATOL vectorization was arbitrary, the Lipschitz subclass is empty.
\end{proof}

\subsection{PersLay}\label{sec:perslay}

For the PersLay construction of \cite[Section~3.1]{Carrire2019PersLayAN}, let \( g_j:=w\phi_j \) denote the \( j \)-th weighted point-transformation coordinate. We use zero-padded coordinatewise aggregation, so maximum, minimum, and \( k \)-th-largest aggregation correspond respectively to the largest value of \( (g_j)_+ \), the negative of the largest value of \( (-g_j)_+ \), and the \( k \)-th-largest value of \( (g_j)_+ \).

\begin{lemma}\label{lem:perslay-scalar-reduction} Let \( F_j\colon D(X,A)\to\mathbb R \) be a scalar PersLay coordinate which is Lipschitz with respect to \(W_1\).
\begin{enumerate}
\item For maximum aggregation, \( (g_j)_+ \) extends by zero on \( A \) to a function which is Lipschitz with respect to \(d_1\).
\item For minimum aggregation, \( (-g_j)_+ \) extends by zero on \( A \) to a function which is Lipschitz with respect to \(d_1\).
\item For \( k \)-th-largest aggregation, \( (g_j)_+ \) extends by zero on \( A \) to a function which is Lipschitz with respect to \(d_1\).
\end{enumerate}
\end{lemma}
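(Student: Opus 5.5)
The plan is to test the Lipschitz hypothesis on diagrams that differ by a single bar, so that $W_1$ collapses to a single $d_1$-distance. Fix a point $u\in X\setminus A$ (an off-diagonal interval) and write $F_j$ in terms of its coordinate values on the bars of a diagram, padding with zeros. For maximum aggregation, compare the one-bar diagram $\alpha=u$ with the empty diagram $0$. Zero padding gives $F_j(u)=\max\{(g_j(u))_+,0\}=(g_j(u))_+$ and $F_j(0)=0$. Since $W_1(u,0)=d(u,A)$, we get
\[
|(g_j)_+(u)|\le L\,d(u,A),
\]
where $L=\operatorname{Lip}(F_j)$. For two points $u,v$, compare the one-bar diagrams $u$ and $v$. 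Here $W_1(u,v)=d_1(u,v)$, since the only matchings pair $u$ with $v$ or send both to the diagonal. Also $F_j(u)-F_j(v)=(g_j)_+(u)-(g_j)_+(v)$, so
\[
|(g_j)_+(u)-(g_j)_+(v)|\le L\,d_1(u,v).
\]
Extending $(g_j)_+$ by zero on $A$, the estimate for a pair $u\notin A$, $a\in A$ is $|(g_j)_+(u)|\le L\,d(u,A)\le L\,d_1(u,a)$, because $d_1(u,a)=d(u,A)$ when $a\in A$. Hence the extension is $L$-Lipschitz for $d_1$.

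Minimum aggregation is the same argument applied to $-F_j$. On a one-bar diagram this aggregate is $-(-g_j(u))_+$, and on the empty diagram it is $0$, so the identical two comparisons give the Lipschitz bound for $(-g_j)_+$.

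For $k$-th-largest aggregation, one-bar diagrams no longer isolate $g_j(u)$ when $k\ge2$, since the $k$-th largest entry of a zero-padded one-bar diagram is $0$. The plan is to use diagrams $k u$ and $k v$, with $k$ copies of the same bar. Then the $k$-th largest value equals $(g_j(u))_+$, while $W_1(ku,0)\le k\,d(u,A)$ and $W_1(ku,kv)\le k\,d_1(u,v)$ (obtained by matching copies pairwise). This yields the same estimates with constant $kL$, and the zero extension is handled exactly as above.

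The main obstacle is the exact definition of zero-padded aggregation in \cite{Carrire2019PersLayAN}, in particular whether padding zeros participate in the max, min, and $k$-th-largest as the convention stated just before the lemma asserts. I would fix that convention explicitly before computing. Beyond that, only upper bounds on $W_1$ via explicit matchings are needed, so no lower bounds on Wasserstein distances are required.
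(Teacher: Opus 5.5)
Your proposal is correct and follows the paper's proof essentially step for step. For maximum aggregation you compare one-bar diagrams with each other and with the empty diagram; for minimum aggregation you apply the same argument to $-F_j$; and for the $k$-th-largest case you use the $k$-fold diagrams $ku$, $kv$ with $W_1(ku,kv)\le k\,d_1(u,v)$ and $W_1(ku,0)\le k\,d(u,A)$. The zero-padding convention you flag as the main obstacle is exactly the one the paper fixes just before the lemma.
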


\begin{proof}
Let \( L_j:=\operatorname{Lip}(F_j) \), and identify \( x\in X\setminus A \) with its one-point diagram.

For maximum aggregation, \( F_j(x)=(g_j)_+(x) \). Hence \( |(g_j)_+(x)-(g_j)_+(y)|\leq L_jd_1(x,y) \) and \( (g_j)_+(x)=|F_j(x)-F_j(0)|\leq L_jd(x,A) \). Since \( d_1(x,a)=d(x,A) \) for \( a\in A \), the zero extension is Lipschitz with respect to \(d_1\).

For minimum aggregation, \( F_j(x)=-(-g_j)_+(x) \), so the preceding argument applied to \( -F_j \) gives the second conclusion.

For \( k \)-th-largest aggregation, let \( kx \) denote the diagram containing \( k \) copies of \( x \). Then \( F_j(kx)=(g_j)_+(x) \). Matching corresponding copies gives \( W_1(kx,ky)\leq kd_1(x,y) \), so \( |(g_j)_+(x)-(g_j)_+(y)|\leq kL_jd_1(x,y) \) and \( (g_j)_+(x)\leq kL_jd(x,A) \). Thus the zero extension is Lipschitz with respect to \(d_1\).
\end{proof}

\begin{lemma}\label{lem:threshold-order-statistic-extension} Let \( h\colon X\to[0,\infty) \) be Lipschitz with respect to \(d_1\) with \( h|_A=0 \), and let \( k\in\mathbb Z_{>0} \). For \( \xi=\sum_{x\in X\setminus A}n_xx\in K(X,A) \), define \( N_t(\xi):=\sum_{h(x)\geq t}n_x \) for \( t>0 \). Then
\begin{equation}
\label{eq:threshold-order-statistic-extension}
\mathcal T_{k,h}(\xi)
:=
\int_0^\infty
1_{\{N_t(\xi)\geq k\}}\,dt
\end{equation}
defines a Lipschitz extension of the zero-padded \( k \)-th-largest statistic associated with \( h \), and \( \operatorname{Lip}(\mathcal T_{k,h})\leq\operatorname{Lip}(h) \).
\end{lemma}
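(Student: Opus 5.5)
The plan has three steps: check that \(\mathcal T_{k,h}\) is well defined on \(K(X,A)\), check that it extends the zero-padded statistic on \(D(X,A)\), and then prove the Lipschitz bound. The last step is the real work.

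\textbf{Well-definedness and extension.} Since \(D(X,A)\cong D(X\setminus A)\), every \(\xi\in K(X,A)\) has a unique finite expansion \(\xi=\sum_{x\in X\setminus A}n_xx\) with \(n_x\in\mathbb Z\). Hence \(N_t(\xi)\) is well defined, and it is nonincreasing in \(t\). It vanishes for \(t>\max_{\operatorname{supp}\xi}h\), so the integrand is a compactly supported indicator of an interval \((0,s]\) or is zero, and the integral is finite.

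For \(\alpha\in D(X,A)\) all \(n_x\geq0\). In that case \(N_t(\alpha)\geq k\) holds exactly when at least \(k\) points, counted with multiplicity, satisfy \(h(x)\geq t\). That happens exactly when \(t\) is at most the \(k\)-th largest value of \(h\) in the zero-padded list. The integral therefore equals that value, so \(\mathcal T_{k,h}\) extends the statistic.

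\textbf{Lipschitz estimate: reduction to elementary moves.} Write \(L:=\operatorname{Lip}(h)\). By Definition~\ref{def:vpd-metric}, for \(\xi=\alpha-\beta\) and \(\eta=\gamma-\delta\) we have \(\rho(\xi,\eta)=W_1(\alpha+\delta,\gamma+\beta)\). Fix a matching \(\sigma=\sum_i(x_i,y_i)\) between \(\alpha+\delta\) and \(\gamma+\beta\), where unmatched points are paired with points of \(A\).

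Modulo \(D(A)\), and using \(h|_A=0\), we get \(\eta=\xi+\sum_i(y_i-x_i)\) in \(K(X,A)\). This decomposes the passage from \(\xi\) to \(\eta\) into a chain \(\xi=\zeta_0,\zeta_1,\dots,\zeta_m=\eta\), with \(\zeta_i=\zeta_{i-1}+y_i-x_i\). Here a term with \(x_i\in A\) or \(y_i\in A\) contributes nothing to any \(N_t\). By the triangle inequality it then suffices to prove the single-move bound
\[
|\mathcal T_{k,h}(\zeta+y-x)-\mathcal T_{k,h}(\zeta)|\leq|h(y)-h(x)|\leq L\,d_1(x,y).
\]
Summing over the moves and taking the infimum over matchings gives \(\operatorname{Lip}(\mathcal T_{k,h})\leq L\).

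\textbf{Single-move bound.} Compare \(N_t(\zeta+y-x)=N_t(\zeta)+1_{\{h(y)\geq t\}}-1_{\{h(x)\geq t\}}\). The correction term is nonzero only for \(t\) between \(\min\{h(x),h(y)\}\) and \(\max\{h(x),h(y)\}\), and it equals \(\pm1\) there with constant sign. So the two indicator integrands in \eqref{eq:threshold-order-statistic-extension} can differ only on an interval of length \(|h(y)-h(x)|\), which bounds the difference of the integrals. The degenerate cases where \(x\) or \(y\) lies in \(A\) are covered by \(h|_A=0\); there \(|h(y)-h(x)|=h(y)\leq L\,d(y,A)\), which is consistent with the \(d_1\) cost.

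The main obstacle is being careful that the move decomposition is legitimate in \(K(X,A)\), where intermediate \(\zeta_i\) may have negative coefficients. This is exactly why \(N_t\) is defined with signed \(n_x\) and the bound never uses positivity. One also needs the matching cost \(d_1(x_i,y_i)\) to dominate \(|h(x_i)-h(y_i)|\), which is precisely the assumption that \(h\) is \(d_1\)-Lipschitz and vanishes on \(A\).
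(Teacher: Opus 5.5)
Your proof is correct and follows the paper's route: both write \(\rho(\xi,\eta)=W_1(\alpha+\delta,\gamma+\beta)\), fix a matching \(\sum_r(x_r,y_r)\), and bound the difference of the indicator integrands pointwise in \(t\) by \(\sum_r|1_{\{h(x_r)\geq t\}}-1_{\{h(y_r)\geq t\}}|\); the only difference is that the paper does this in one step via \(|1_{\{a\geq k\}}-1_{\{b\geq k\}}|\leq|a-b|\) for integers \(a,b\), whereas you telescope through single moves. One slip in your well-definedness paragraph: for signed \(\xi\) the map \(t\mapsto N_t(\xi)\) need not be nonincreasing (for \(\xi=x-y+z\) with \(0<h(x)<h(y)<h(z)\) and \(k=1\), one gets \(\{t:N_t(\xi)\geq1\}=(0,h(x)]\cup(h(y),h(z)]\)), so the integrand is in general a finite union of intervals rather than a single interval, but it is still a bounded step function supported in \((0,\max_{\operatorname{supp}\xi}h]\), which is all that finiteness of the integral requires.
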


\begin{proof}
Each \( N_t\colon K(X,A)\to\mathbb Z \) is additive. For fixed \( \xi \), the function \( t\mapsto N_t(\xi) \) is a finitely supported step function, so Equation~\eqref{eq:threshold-order-statistic-extension} is well-defined.

For \( \alpha\in D(X,A) \), let \( a_{[k]} \) be the \( k \)-th-largest value of \( h \) on the points of \( \alpha \), counted with multiplicity and padded by \( k \) zeros. Then \( N_t(\alpha)\geq k \) exactly when \( a_{[k]}\geq t \), so \( \mathcal T_{k,h}(\alpha)=a_{[k]} \).

Let \( \xi=\alpha-\beta \) and \( \eta=\gamma-\delta \). Set \( \alpha':=\alpha+\delta \) and \( \beta':=\gamma+\beta \), so \( \rho(\xi,\eta)=W_1(\alpha',\beta') \). For a matching \( \sigma=\sum_r(x_r,y_r) \) between \( \alpha' \) and \( \beta' \), the marginal identities give \( N_t(\xi)-N_t(\eta)=\sum_r (1_{\{h(x_r)\geq t\}}-1_{\{h(y_r)\geq t\}}) \). Since \( |1_{\{a\geq k\}}-1_{\{b\geq k\}}|\leq|a-b| \) for integers \( a,b \),
\begin{align*}
|\mathcal T_{k,h}(\xi)-\mathcal T_{k,h}(\eta)|
&\leq
\sum_r
\int_0^\infty
\left|
1_{\{h(x_r)\geq t\}}
-
1_{\{h(y_r)\geq t\}}
\right|dt\\
&=
\sum_r|h(x_r)-h(y_r)|\\
&\leq
\operatorname{Lip}(h)
\sum_r d_1(x_r,y_r).
\end{align*}
Taking the infimum over all matchings proves the Lipschitz estimate.
\end{proof}

\begin{lemma}\label{lem:threshold-order-statistic-synthesis} Under the hypotheses of Lemma~\ref{lem:threshold-order-statistic-extension}, \( \mathcal T_{k,h} \) has uniformly discrete spectral synthesis.
\end{lemma}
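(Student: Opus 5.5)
The plan is to work one uniformly discrete subpair \(A\subseteq Z\subseteq X\) at a time and to reduce everything to the single scalar function \(u:=\mathcal T_{k,h}\circ i_Z\) on the discrete group \(G:=K(Z,A)\). Since \(\mathcal T_{k,h}\) is complex-valued, \(V_Z(\mathcal T_{k,h})\) is the variety generated by \(u\) alone. If \(h\) vanishes on \(Z\setminus A\), then \(N_t\circ i_Z\equiv0\) for every \(t>0\), so \(u\equiv0\), and \(V_Z=\{0\}\) is synthesizable. Otherwise I will aim to show that every singleton indicator \(1_{\{x\}}\), \(x\in G\), lies in \(V_Z(\mathcal T_{k,h})\). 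Lemma~\ref{lem:point-masses-give-synthesis} then gives \(V_Z(\mathcal T_{k,h})=C(G)\), which is synthesizable.

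The first step is a layer-cake rewriting of \(u\) that makes translation explicit. Each \(N_t\) is additive. For \(z\in Z\setminus A\), viewed as a one-point diagram, this gives \(N_t(\xi+z)=N_t(\xi)+1_{\{h(z)\geq t\}}\), and hence
\[
\Delta_z u(\xi)=\int_0^{h(z)}1_{\{N_t(\xi)=k-1\}}\,dt .
\]
Iterating over one point \(z\) with \(c:=h(z)>0\) reduces the dependence on the direction \(z\) to functions of the integer \(N_t(\xi)\) on \(t\in(0,c]\). Along the line \(\xi=mz\), the function \(u(mz)=c\,1_{\{m\geq k\}}\) has a single jump, and \(\Delta_z u\) at \(m=k-1\) is already a point mass of height \(c\). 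I will try to isolate a point mass in general by combining three operations. First, second differences in \(z\) and \(-z\). Second, translation by large multiples \(\pm nz'\) of auxiliary points \(z'\), which saturates or kills the thresholds \(N_t\geq k\) on \(t\leq h(z')\). Third, pointwise limits, which are allowed because varieties are closed for pointwise convergence.

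The main obstacle is that \(u\) couples all thresholds \(t\) through the integral, so differences in one direction do not localize in the other coordinates of \(\xi\). If direct localization to \(1_{\{0\}}\) fails, my fallback is the finite-window argument used in Lemma~\ref{lem:fourier-stieltjes-variety-synthesis} and Theorem~\ref{thm:fourier-stieltjes-cocycle-synthesis}: fix a finite \(F\subseteq G\) and show \(T_F(V_Z)\subseteq T_F(\mathcal E)\). On \(F+y\), only finitely many \(t\)-intervals matter, cut out by the \(h\)-values on the supports of \(F\) and \(y\). On each such interval, \(t\mapsto(N_t(x))_{x\in F}\) is a fixed integer vector \(v_i\), shifted by the integer \(N_t(y)\). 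So \(T_F(\tau_y u)\) is a finite sum of terms \(\ell_i\,\psi(v_i+N_t(y)\mathbf 1)\), where \(\psi=1_{[k,\infty)}\) on \(\mathbb Z\) and \(\ell_i\) is the interval length.

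Each \(\psi\circ(\text{additive map to }\mathbb Z)\) lives in a variety pulled back from \(\mathbb Z\). That variety has spectral synthesis by Theorem~\ref{thm:discrete-group-spectral-synthesis}, and pulling back along the additive map keeps exponential monomials as exponential monomials. I would then check that the needed pulled-back exponential monomials actually belong to \(V_Z\), using the differences computed above to extract them. This membership check is the step I expect to be hardest.
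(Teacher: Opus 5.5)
Your primary route, producing every \(1_{\{x\}}\) inside \(V_Z(\mathcal T_{k,h})\), is not just hard but impossible once \(Z\setminus A\) contains two distinct points. In that case \(V_Z(\mathcal T_{k,h})\neq C(G)\), so Lemma~\ref{lem:point-masses-give-synthesis} is unavailable.

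To see this, take distinct \(z_1,z_2\in Z\setminus A\) with \(h(z_1)\le h(z_2)\), and consider the continuous functional \(\ell(F):=F(z_2)-F(z_1)-F(z_2-z_1)+F(0)\) on \(C(G)\). We have \(\tau_yu(\xi)=\int_0^\infty 1_{\{N_t(y)+N_t(\xi)\ge k\}}\,dt\). The counts \((N_t(z_2),N_t(z_1),N_t(z_2-z_1),N_t(0))\) equal \((1,1,0,0)\), \((1,0,1,0)\), or \((0,0,0,0)\) according as \(t\le h(z_1)\), \(h(z_1)<t\le h(z_2)\), or \(t>h(z_2)\). In each case the integrand of \(\ell(\tau_yu)\) cancels. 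Hence \(\ell\) annihilates every translate of \(u\), and therefore all of \(V_Z\), whereas \(\ell(1_{\{0\}})=1\). In the simplest case \(h(z_1)=h(z_2)\), every element of \(V_Z\) is invariant under translation by \(z_1-z_2\neq0\).

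Your observation that \(\Delta_zu\) is a point mass along \(\xi=mz\) concerns only the cyclic subgroup generated by \(z\). The function \(u\) sees \(\xi\) only through the additive counts \(N_r\), one threshold at a time.

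Your fallback has the right architecture, and it is how the paper concludes. On a finite window \(\Omega\) whose supports carry the positive \(h\)-values \(r_1<\dots<r_m\), one has \(u=\sum_j(r_j-r_{j-1})1_{\{N_{r_j}\ge k\}}\). Each indicator is then interpolated on \(\Omega\) by a polynomial in \(e^{i\theta_jN_{r_j}}\), with \(\theta_j\) chosen so that \(n\mapsto e^{i\theta_jn}\) is injective on the finitely many values of \(N_{r_j}\) on \(\Omega\).

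However, the step you defer is the substance of the lemma: proving \(e^{i\theta N_r}\in V_Z\) for every \(\theta\in\mathbb R\) and every value \(r=h(z_r)>0\) with \(z_r\in Z\setminus A\). Spectral synthesis on \(\mathbb Z\) says nothing about membership in \(V_Z\). Your difference \(\Delta_zu=\int_0^{h(z)}1_{\{N_t=k-1\}}\,dt\) mixes all thresholds below \(h(z)\), so it cannot isolate one.

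The missing idea is a normalized difference between two nearby levels:
\begin{enumerate}
\item Set \(s:=\sup(\{0\}\cup(h(Z\setminus A)\cap[0,r)))\). Choose \(s_n\uparrow s\) with \(s_n<r\) and \(s_n\in h(Z\setminus A)\cup\{0\}\), and let \(y_n\) be \(0\) or a point with \(h(y_n)=s_n\).
\item Then \((\tau_{z_r}u-\tau_{y_n}u)/(r-s_n)=(r-s_n)^{-1}\int_{s_n}^r1_{\{N_t=k-1\}}\,dt\).
\item No \(h\)-value lies in \((s,r)\), and each \(\xi\) has finite support. Hence these functions converge pointwise to \(1_{\{N_r=k-1\}}\), which therefore lies in \(V_Z\).
\item Translating by multiples of \(z_r\) gives \(1_{\{N_r=n\}}\in V_Z\) for all \(n\in\mathbb Z\).
\item The partial sums \(\sum_{|n|\le M}e^{i\theta n}1_{\{N_r=n\}}\) converge pointwise to \(e^{i\theta N_r}\), which is therefore in \(V_Z\).
\end{enumerate}

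Your device of translating by \(-mz'\) with \(m\to\infty\) yields \(\int_{h(z')}^\infty1_{\{N_t\ge k\}}\,dt\in V_Z\) and could replace the first difference in step 2. It too must be followed by a normalized difference between nearby levels to isolate a single threshold. Once this membership is established, your finite-window interpolation finishes the proof; without it, the proposal does not yet prove the lemma.
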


\begin{proof}
Fix a subpair \( A\subseteq Z\subseteq X \), set \( G:=K(Z,A) \), and write \( u:=\mathcal T_{k,h}\circ i_Z \) and \( V:=\operatorname{Var}_G\{u\} \). If \( h|_Z=0 \), then \( V=\{0\} \). Otherwise, choose \( z_r\in Z\setminus A \) with \( h(z_r)=r>0 \).

Set \( s:=\sup(\{0\}\cup(h(Z\setminus A)\cap[0,r))) \), and choose \( s_n\uparrow s \) with \( s_n\in h(Z\setminus A)\cup\{0\} \) and \( s_n<r \). Set \( y_n:=0 \) when \( s_n=0 \). When \( s_n>0 \), choose \( z_n\in Z\setminus A \) with \( h(z_n)=s_n \) and set \( y_n:=z_n \). Thus \( N_t(y_n)=1_{\{t\leq s_n\}} \) for \( t>0 \), and
\[
\frac{\tau_{z_r}u-\tau_{y_n}u}{r-s_n}(\xi)
=
\frac{1}{r-s_n}
\int_{s_n}^r
1_{\{N_t(\xi)=k-1\}}\,dt.
\]
If \( s<r \), then \( N_t(\xi)=N_r(\xi) \) for \( s<t\leq r \). If \( s=r \), the finite support of \( \xi \) implies the same equality for \( s_n<t\leq r \) once \( n \) is sufficiently large. Hence the displayed functions converge pointwise to \( 1_{\{N_r=k-1\}} \), which therefore belongs to \( V \).

Since \( N_r(\xi+mz_r)=N_r(\xi)+m \), translation yields \( 1_{\{N_r=n\}}\in V \) for every \( n\in\mathbb Z \). Consequently, \( \chi_{r,\theta}(\xi):=e^{i\theta N_r(\xi)} \) belongs to \( V \) for every \( \theta\in\mathbb R \), because \( \sum_{|n|\leq M}e^{i\theta n}1_{\{N_r=n\}} \) converges pointwise to \( \chi_{r,\theta} \).

Let \( \Omega\subseteq G \) be finite, and let \( 0=r_0<r_1<\cdots<r_m \) be the distinct positive values of \( h \) that occur in the supports of elements of \( \Omega \). For every \( \xi\in\Omega \),
\begin{equation}
\label{eq:finite-threshold-decomposition}
u(\xi)
=
\sum_{j=1}^m
(r_j-r_{j-1})
1_{\{N_{r_j}(\xi)\geq k\}}.
\end{equation}
For each \( j \), the set \( \{N_{r_j}(\xi):\xi\in\Omega\} \) is finite. Choose \( \theta_j\in\mathbb R \) so that \( n\mapsto e^{i\theta_j n} \) is injective on this set. Polynomial interpolation then expresses \( 1_{\{N_{r_j}\geq k\}} \) on \( \Omega \) as a polynomial in \( \chi_{r_j,\theta_j} \). Since \( \chi_{r_j,\ell\theta_j}\in V \) for every \( \ell\in\mathbb Z \) and \( V \) is a linear space, this interpolating polynomial belongs to \( V \). Equation~\eqref{eq:finite-threshold-decomposition} therefore shows that \( u|_\Omega \) agrees with an exponential polynomial belonging to \( V \).

The same conclusion holds for every translate of \( u \). Indeed, for \( y\in G \), an exponential polynomial that agrees with \( u \) on \( \Omega+y \) yields, after translation by \( y \), an exponential polynomial in \( V \) that agrees with \( \tau_yu \) on \( \Omega \). Finite linear combinations of translates of \( u \) therefore have exponential-polynomial interpolation on every finite subset of \( G \).

Now let \( f\in V \), let \( \Omega\subseteq G \) be finite, and let \( \varepsilon>0 \). Since \( V \) is the pointwise closure of the linear span of the translates of \( u \), some finite linear combination \( v \) of such translates satisfies \( |f(\xi)-v(\xi)|<\varepsilon \) for every \( \xi\in\Omega \). An exponential polynomial in \( V \) agrees with \( v \) on \( \Omega \), so the exponential monomials contained in \( V \) span a pointwise dense subspace of \( V \). Theorem~\ref{thm:synthesis-equivalent-characterizations} therefore shows that \( V \) is synthesizable. Since \( Z \) was arbitrary, \( \mathcal T_{k,h} \) has uniformly discrete spectral synthesis.
\end{proof}

\begin{theorem}\label{thm:perslay-synthesis} Every PersLay vectorization \( F\colon D(X,A)\to\mathbb R^q \) from \( \phi_\Lambda \), \( \phi_\Gamma \), or \( \phi_L \) with coordinatewise minimum, maximum, sum, or \( k \)-th-largest aggregation, for which the vectorization is Lipschitz with respect to \(W_1\), has a Lipschitz extension to \( K(X,A) \) with uniformly discrete spectral synthesis.
\end{theorem}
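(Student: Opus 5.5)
The plan is to work one coordinate at a time. Write \(F=(F_1,\ldots,F_q)\), where each \(F_j\colon D(X,A)\to\mathbb R\) is a scalar PersLay coordinate. Each \(F_j\) is Lipschitz with respect to \(W_1\), since coordinate projections are bounded. I will build a Lipschitz extension \(\overline F_j\colon K(X,A)\to\mathbb C\) for each \(j\), set \(\overline F:=(\overline F_1,\ldots,\overline F_q)\) valued in \(\mathbb C^q\), and then check uniformly discrete spectral synthesis of \(\overline F\).

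The extension is built by aggregation type.

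For sum aggregation, \(F_j(\alpha)=\sum_{x\in\alpha}g_j(x)\) is additive. Lemma~\ref{lem:additive-extension-synthesis} then gives the unique homomorphism extension \(\overline F_j\), which is Lipschitz.

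For maximum aggregation, \(F_j\) is the zero-padded first-largest statistic of \((g_j)_+\). Lemma~\ref{lem:perslay-scalar-reduction}(1) makes \((g_j)_+\), extended by zero on \(A\), Lipschitz for \(d_1\). Lemma~\ref{lem:threshold-order-statistic-extension} with \(k=1\) and \(h=(g_j)_+\) then gives \(\overline F_j:=\mathcal T_{1,h}\).

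For \(k\)-th-largest aggregation, I proceed the same way with \(\mathcal T_{k,h}\), using part (3) of Lemma~\ref{lem:perslay-scalar-reduction}.

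For minimum aggregation, \(F_j=-\mathcal T_{1,(-g_j)_+}\) on \(D(X,A)\), using part (2), so I set \(\overline F_j:=-\mathcal T_{1,(-g_j)_+}\).

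In every case \(\operatorname{Lip}(\overline F_j)<\infty\). Hence \(\overline F\) is Lipschitz into \(\mathbb C^q\) with any norm, and it extends \(F\). The choice of \(\phi_\Lambda,\phi_\Gamma,\phi_L\) plays no role beyond supplying \(g_j\).

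Next comes synthesis. Fix a uniformly discrete subpair \(Z\). Every \(\lambda\in(\mathbb C^q)^*\) has the form \(\lambda=\sum_j c_j e_j^*\), so \(\lambda\circ\overline F\circ i_Z=\sum_j c_j u_j\) with \(u_j:=\overline F_j\circ i_Z\). Therefore \(V_Z(\overline F)=\operatorname{Var}\{u_1,\ldots,u_q\}\), the closure of the span of translates of the \(u_j\).

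Each \(u_j\) has the key finite interpolation property. For every finite \(\Omega\subseteq K(Z,A)\) and every \(y\), the translate \(\tau_y u_j\) agrees on \(\Omega\) with an exponential polynomial lying in \(\operatorname{Var}\{u_j\}\subseteq V_Z(\overline F)\). I would obtain this as follows:
\begin{itemize}
\item For threshold coordinates, it is exactly what the proof of Lemma~\ref{lem:threshold-order-statistic-synthesis} establishes, and negation does not affect it.
\item For additive coordinates, the proof of Lemma~\ref{lem:homomorphism-variety-synthesis} shows that each translate \(u_j+u_j(y)\mathbf 1\) already lies in the span of exponential monomials in the variety.
\end{itemize}

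Given \(f\in V_Z(\overline F)\), finite \(\Omega\) and \(\varepsilon>0\), I approximate \(f\) on \(\Omega\) by a finite combination \(v=\sum c_{j,y}\tau_y u_j\). Replacing each term on \(\Omega\) by its interpolating exponential polynomial inside \(V_Z(\overline F)\) shows that the exponential monomials in \(V_Z(\overline F)\) are pointwise dense. Theorem~\ref{thm:synthesis-equivalent-characterizations} then gives synthesizability, and Definition~\ref{def:vectorization-spectral-synthesis} concludes.

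The main obstacle is that the earlier lemmas prove synthesis of \(\operatorname{Var}\{u\}\) for a single generator, while synthesizability is not obviously preserved under joins of varieties. The argument must therefore extract the stronger interpolation property from inside those proofs. I would state it as a short auxiliary claim: interpolating exponential polynomials for each translate lie in the generated variety. The claim reduces the join to the single-generator case.

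A secondary point to check is that the zero-padded minimum really equals \(-\mathcal T_{1,(-g_j)_+}\) on \(D(X,A)\), including on the empty diagram.
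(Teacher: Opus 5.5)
Your proposal is correct. The extension you build is the paper's: Lemma~\ref{lem:perslay-scalar-reduction} together with Lemma~\ref{lem:threshold-order-statistic-extension} for maximum, minimum (with the sign flip) and $k$-th-largest aggregation, and Lemma~\ref{lem:additive-extension-synthesis} for sum aggregation. The paper in fact applies that last lemma to the whole additive $F$ at once.

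You diverge only at the synthesis step, and the obstacle you name there is not real. Synthesizability of a single variety (as opposed to spectral synthesis of all its subvarieties) passes immediately to closed sums. The paper observes that $V_Z(\overline F)=\overline{V_1+\cdots+V_q}$ with $V_j:=\operatorname{Var}_{K(Z,A)}\{\overline F_j\circ i_Z\}$. Each $V_j$ is synthesizable by the statement of Lemma~\ref{lem:threshold-order-statistic-synthesis}. The exponential monomials in each $V_j$ also lie in $V_Z(\overline F)$, so the sum $\mathcal E_1+\cdots+\mathcal E_q$ of their spans is pointwise dense in $V_1+\cdots+V_q$, and hence in $V_Z(\overline F)$.

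Your route instead reopens the lemma proofs to extract exact interpolation of every translate on finite sets by exponential polynomials inside the generated variety. That route is valid and yields a slightly stronger finite-set statement. However, it needs an auxiliary claim that the paper avoids by using only the lemma statements.
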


\begin{proof}
For sum aggregation, \( F \) is additive, so Lemma~\ref{lem:additive-extension-synthesis} gives the conclusion.

Assume that \( F \) uses maximum, minimum, or \( k \)-th-largest aggregation. For maximum aggregation, set \( m:=1 \), \( \varepsilon:=1 \), and \( h_j:=(g_j)_+ \). For minimum aggregation, set \( m:=1 \), \( \varepsilon:=-1 \), and \( h_j:=(-g_j)_+ \). For \( k \)-th-largest aggregation, set \( m:=k \), \( \varepsilon:=1 \), and \( h_j:=(g_j)_+ \). Lemma~\ref{lem:perslay-scalar-reduction} shows that each \( h_j \) extends by zero on \( A \) to a Lipschitz function. Define \( \overline F := \varepsilon \bigl( \mathcal T_{m,h_1}, \ldots, \mathcal T_{m,h_q} \bigr). \) Lemma~\ref{lem:threshold-order-statistic-extension} shows that \( \overline F \) extends \( F \). Since every norm on \( \mathbb R^q \) is bounded above by a constant multiple of the coordinatewise maximum, the coordinatewise Lipschitz estimates imply that \( \overline F \) is Lipschitz.

Fix a uniformly discrete subpair \( A\subseteq Z\subseteq X \). For each \( j \), let 
\[ 
V_j:=\operatorname{Var}_{K(Z,A)} \{\mathcal T_{m,h_j}\circ i_Z\}. 
\] Lemma~\ref{lem:threshold-order-statistic-synthesis} shows that every \( V_j \) is synthesizable. Since scalarizations of \( \overline F\circ i_Z \) are exactly finite complex linear combinations of its coordinate functions, \( V_Z(\overline F) = \overline{V_1+\cdots+V_q}, \) where the closure uses pointwise convergence.

Let \( \mathcal E_j \) be the span of the exponential monomials contained in \( V_j \). For any finite subset of \( K(Z,A) \), each element of \( V_1+\cdots+V_q \) can be approximated there by an element of \( \mathcal E_1+\cdots+\mathcal E_q \). Hence the exponential monomials contained in \( V_Z(\overline F) \) span a pointwise dense subspace of \( V_Z(\overline F) \). By Theorem~\ref{thm:synthesis-equivalent-characterizations}, \( V_Z(\overline F) \) is synthesizable.

Since \( Z \) was arbitrary, \( \overline F \) has uniformly discrete spectral synthesis.
\end{proof}

\section{Derivations for the Spectral Synthesis of Kernel Methods}
\label{sec:kernel-synthesis-derivations}

We use the conventions and some general results from Section~\ref{sec:vectorization-synthesis-derivations}.

\subsection{Sliced Wasserstein Kernel}

\begin{proposition}\label{prop:sliced-wasserstein-no-lipschitz-subclass} For every \( \sigma>0 \), the Moore--Aronszajn feature map associated with the sliced Wasserstein kernel of \cite[Definition~3.1 and Equation~(5)]{carriere2017sliced} is not Lipschitz with respect to \(W_1\) on the space of finite bounded persistence diagrams. Consequently, this family of feature maps has an empty Lipschitz subclass.
\end{proposition}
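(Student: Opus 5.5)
Following the pattern of Propositions~\ref{prop:statistics-no-extension}--\ref{prop:atol-no-lipschitz-subclass}, the plan is to exhibit a sequence of pairs of diagrams for which the Hilbert-space distance between feature vectors stays bounded below while the \(1\)-Wasserstein distance tends to zero, and then invoke Lemma~\ref{lem:no-lipschitz-extension-obstruction}. The feature map \(\Phi_{SW}\) satisfies
\[
\|\Phi_{SW}(\alpha)-\Phi_{SW}(\beta)\|^2=k_{SW}(\alpha,\alpha)+k_{SW}(\beta,\beta)-2k_{SW}(\alpha,\beta)=2-2\exp\!\left(-\frac{SW(\alpha,\beta)}{2\sigma^2}\right),
\]
since \(k_{SW}(\alpha,\beta)=\exp(-SW(\alpha,\beta)/(2\sigma^2))\). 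Therefore it suffices to control the sliced Wasserstein distance \(SW(\alpha,\beta)\) from below.

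First I will show that the ratio \(SW(\alpha,\beta)/W_1(\alpha,\beta)\) can be made unbounded near zero, and I will use the scaling from Proposition~\ref{prop:integrated-landscape-signature-classification}. The sliced Wasserstein distance is built from one-dimensional \(1\)-Wasserstein distances of projections, each augmented by the diagonal projections of the other diagram. It is therefore positively \(1\)-homogeneous under \(D_R\), and by Equation~\eqref{eq:integrated-landscape-wasserstein-scaling} so is \(W_1\). Homogeneity alone gives a bounded ratio, so I instead use many small bars. Take \(\alpha_n:=0\) and \(\beta_n:=\sum_{k=1}^{n}e_{n,k}\), with \(n\) bars of persistence \(n^{-2}\), so that \(W_1(\beta_n,0)\le n^{-1}\to0\). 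The obstacle is that \(SW(\beta_n,0)\) is also of order \(n\cdot n^{-2}\), so this ratio stays bounded too.

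Since the kernel is bounded, the ratio \(\|\Phi(\alpha)-\Phi(\beta)\|/W_1\) is governed by the small-distance behavior of \(\sqrt{2-2e^{-s}}\sim\sqrt{s}\). This is the real mechanism: with \(s=SW(\alpha_n,\beta_n)/(2\sigma^2)\), we get \(\|\Phi_{SW}(\alpha_n)-\Phi_{SW}(\beta_n)\|\gtrsim \sqrt{SW(\alpha_n,\beta_n)}/\sigma\). So I will take the single-bar diagrams \(\alpha_\varepsilon:=[0,\varepsilon]\) and \(\beta:=0\). Then \(W_1(\alpha_\varepsilon,0)\le\varepsilon\), and a direct computation gives \(SW(\alpha_\varepsilon,0)=c\,\varepsilon\) for an explicit constant \(c>0\), obtained by integrating \(|\langle p-\pi_\Delta(p),\theta\rangle|\) over directions. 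The ratio then behaves like \(\sqrt{c\varepsilon}/(\sigma\varepsilon)\to\infty\) as \(\varepsilon\downarrow0\).

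The main step to verify carefully is the lower bound \(SW(\alpha_\varepsilon,0)\ge c\varepsilon\) with \(c>0\) independent of \(\varepsilon\), taken from \cite[Definition~3.1]{carriere2017sliced}. The key fact is that the projection of \((0,\varepsilon)\) and of its diagonal projection onto a direction \(\theta\) differ by \(\frac{\varepsilon}{\sqrt2}|\langle(-1,1)/\sqrt2,\theta\rangle|\), which is positive for almost every \(\theta\). After this, I apply \(1-e^{-s}\ge s/2\) for small \(s\) and then Lemma~\ref{lem:no-lipschitz-extension-obstruction}. This gives non-Lipschitzness for every \(\sigma>0\), and the Lipschitz subclass is empty.
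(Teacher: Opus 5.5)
Your final argument is correct and is essentially the paper's proof. It uses a single bar $(0,\varepsilon)$ against the empty diagram, the exact value $SW=\tfrac{2}{\pi}\|p-\pi_\Delta(p)\|_2$, the identity $\|\Phi(\alpha)-\Phi(\beta)\|^2=2\bigl(1-e^{-SW/(2\sigma^2)}\bigr)$, the bound $1-e^{-s}\ge s/2$, and Lemma~\ref{lem:no-lipschitz-extension-obstruction}. Your opening sentence misstates the mechanism, though: the feature-space distance is not bounded below but tends to zero like $\sqrt{\varepsilon}$, which is what your later paragraphs correctly use, and the detours through scaling and many small bars can be dropped.
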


\begin{proof}
Fix \( \sigma>0 \), and let \( \Phi_{SW}^{\sigma} \) denote the corresponding Moore--Aronszajn feature map. Choose \( \ell_n\downarrow0 \), set \( p_n:=(0,\ell_n) \), and let \( B_n:=\{p_n\} \). Since \( \pi_{\Delta}(p_n)=(\ell_n/2,\ell_n/2) \) and \( \|p_n-\pi_{\Delta}(p_n)\|_{\infty}=\ell_n/2 \), we have \( W_1(B_n,0)=\ell_n/2 \).

Specializing \cite[Definition~3.1]{carriere2017sliced} to \( B_n \) and \( 0 \) gives
\[
SW(B_n,0)
=
\frac{1}{2\pi}
\int_{S^1}
W\!\left(
\delta_{\pi_{\theta}(p_n)},
\delta_{\pi_{\theta}(\pi_{\Delta}(p_n))}
\right)
\,d\theta.
\]
Under the identification \( z\mapsto\langle z,\theta\rangle \) of \( L(\theta) \) with \( \mathbb R \), the integrand equals \( \lvert\langle p_n-\pi_{\Delta}(p_n),\theta\rangle\rvert \). Rotational invariance and \( \int_0^{2\pi}\lvert\cos t\rvert\,dt=4 \) therefore yield
\[
SW(B_n,0)
=
\frac{2}{\pi}
\|p_n-\pi_{\Delta}(p_n)\|_2
=
\frac{\sqrt{2}}{\pi}\ell_n.
\]

By \cite[Equation~(5)]{carriere2017sliced}, \( k_{SW}^{\sigma}(D_1,D_2) = \exp(-SW(D_1,D_2)/(2\sigma^2)) \). Since we have \( k_{SW}^{\sigma}(D,D)=1 \), Theorem~\ref{thm:moore-aronszajn} gives
\[
\left\|
\Phi_{SW}^{\sigma}(B_n)
-
\Phi_{SW}^{\sigma}(0)
\right\|_{\mathcal H_{k_{SW}^{\sigma}}}^2
=
2\left(
1-
\exp\!\left(
-\frac{SW(B_n,0)}{2\sigma^2}
\right)
\right).
\]
For all sufficiently large \( n \), the inequality \( 1-e^{-u}\geq u/2 \) for \( 0\leq u\leq1 \) implies
\[
\left\|
\Phi_{SW}^{\sigma}(B_n)
-
\Phi_{SW}^{\sigma}(0)
\right\|_{\mathcal H_{k_{SW}^{\sigma}}}^2
\geq
\frac{SW(B_n,0)}{2\sigma^2}
=
\frac{\sqrt{2}}{2\pi\sigma^2}\ell_n.
\]
Hence
\[
\frac{
\left\|
\Phi_{SW}^{\sigma}(B_n)
-
\Phi_{SW}^{\sigma}(0)
\right\|_{\mathcal H_{k_{SW}^{\sigma}}}
}{
W_1(B_n,0)
}
\geq
2
\left(
\frac{\sqrt{2}}{2\pi\sigma^2}
\right)^{1/2}
\ell_n^{-1/2}
\longrightarrow
\infty.
\]
Thus \( \Phi_{SW}^{\sigma} \) is not Lipschitz with respect to \(W_1\). Since \( \sigma>0 \) was arbitrary, the Lipschitz subclass is empty.
\end{proof}

\subsection{Persistence Fisher Kernel}

For \( t,\sigma>0 \), let \( k_{PF}^{t,\sigma} \) denote the Persistence Fisher kernel of \cite[Equation~(4)]{le2018persistence}, and let \( \Phi_{PF}^{t,\sigma} \) denote its Moore--Aronszajn feature map.

\begin{proposition}
\label{prop:persistence-fisher-no-lipschitz-subclass}
For every \( t,\sigma>0 \), the Persistence Fisher feature map \( \Phi_{PF}^{t,\sigma} \) is not Lipschitz with respect to \(W_1\) on the space of finite bounded persistence diagrams.
\end{proposition}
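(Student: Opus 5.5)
The plan is to follow the template of Proposition~\ref{prop:sliced-wasserstein-no-lipschitz-subclass} and Lemma~\ref{lem:no-lipschitz-extension-obstruction}. I will exhibit one-point diagrams $B_n$ approaching the empty diagram, for which $W_1(B_n,0)\to0$ linearly in a small parameter. The feature-map distance $\|\Phi_{PF}^{t,\sigma}(B_n)-\Phi_{PF}^{t,\sigma}(0)\|$ should decay only like a square root, or not at all.

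Concretely, I take $p_n=(0,\ell_n)$ with $\ell_n\downarrow0$ and $B_n=\{p_n\}$, so $W_1(B_n,0)=\ell_n/2$, exactly as in the sliced Wasserstein case. The kernel of \cite[Equation~(4)]{le2018persistence} has the form $k_{PF}^{t,\sigma}(D_1,D_2)=\exp(-t\,d_{FIM}(D_1,D_2))$. Here $d_{FIM}$ is the Fisher information metric $\arccos\int\sqrt{\rho_1\rho_2}$ between the normalized smoothed densities $\rho_{D_1\cup\Delta D_2}$ and $\rho_{D_2\cup\Delta D_1}$, each obtained from Gaussian smoothing with bandwidth $\sigma$ and normalization. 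Since $k(D,D)=1$, Theorem~\ref{thm:moore-aronszajn} gives
\[
\|\Phi(B_n)-\Phi(0)\|^2=2\bigl(1-e^{-t\,d_{FIM}(B_n,0)}\bigr).
\]
The two augmented diagrams are $B_n$ itself, i.e.\ $\{p_n\}$, and $\Delta B_n=\{\pi_\Delta(p_n)\}$ with $\pi_\Delta(p_n)=(\ell_n/2,\ell_n/2)$. The corresponding densities are two Gaussians $N(p_n,\sigma^2 I)$ and $N(\pi_\Delta(p_n),\sigma^2I)$. Their Bhattacharyya coefficient is $\exp(-|p_n-\pi_\Delta p_n|_2^2/(8\sigma^2))=\exp(-\ell_n^2/(16\sigma^2))$. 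Therefore
\[
d_{FIM}=\arccos\bigl(e^{-\ell_n^2/(16\sigma^2)}\bigr)\sim \frac{\ell_n}{2\sqrt2\,\sigma},
\]
using $\arccos(1-u)\sim\sqrt{2u}$. Hence $d_{FIM}(B_n,0)\ge c\,\ell_n$ for large $n$.

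Applying $1-e^{-u}\ge u/2$ on $[0,1]$ gives $\|\Phi(B_n)-\Phi(0)\|\ge c'\ell_n^{1/2}$. The ratio with $W_1(B_n,0)=\ell_n/2$ is therefore at least $2c'\ell_n^{-1/2}\to\infty$. Lemma~\ref{lem:no-lipschitz-extension-obstruction} then finishes the proof.

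The main obstacle is matching the exact conventions of \cite{le2018persistence}. The questions are whether the empty diagram is admissible, or whether it must instead be compared with a diagram whose union with the diagonal projection is nondegenerate, and whether the smoothing is normalized on a finite set of points. If the empty diagram causes trouble, I would instead use $D=\{q\}$ and $D'=\{q,p_n\}$ for a fixed off-diagonal point $q$. Then $W_1\le\ell_n/2$, and the mixture densities differ by a component of weight $1/2$ shifted by $\ell_n/\sqrt2$. The Bhattacharyya deficit is still of order $\ell_n^2$, so $d_{FIM}$ is again of order $\ell_n$. The only computation needed is a second-order expansion of $\int\sqrt{\rho_1\rho_2}$ with a nonvanishing quadratic coefficient, which holds because the Fisher information of a Gaussian location family is positive.
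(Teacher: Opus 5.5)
Your proposal is correct and follows the paper's route: the one-point diagrams $\{(0,\ell)\}$ against the empty diagram, the Bhattacharyya coefficient of the two Gaussians showing that $d_{FIM}$ is of order $\ell$, and $1-e^{-u}\ge u/2$ showing that the feature-map distance is of order $\ell^{1/2}$, compared with $W_1=\ell/2$. The paper uses a Gaussian of variance $\sigma$ instead of $\sigma^2$, and it replaces your $\arccos$ asymptotics with the explicit bound $1-\cos v\le v^2/2$; both changes affect only constants, and your fallback pair is unnecessary because the construction of \cite{le2018persistence} handles the empty diagram directly.
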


\begin{proof}
Fix \( t,\sigma>0 \). For \( x,u\in\mathbb R^2 \), set
\[
N_\sigma(x;u)
:=
\frac{1}{2\pi\sigma}
\exp\left(
-\frac{\|x-u\|_2^2}{2\sigma}
\right).
\]
For \( \ell>0 \), let \( p_\ell:=(0,\ell) \), \( q_\ell:=\pi_\Delta(p_\ell) \), and \( B_\ell:=\{p_\ell\} \). Since \( \|p_\ell-q_\ell\|_\infty=\ell/2 \), we have \( W_1(B_\ell,0)=\ell/2 \).

For \( D=B_\ell \) and \( E=0 \), Algorithm~1 and Equation~(1) of \cite{le2018persistence} use the multisets \( D\cup E_\Delta=\{p_\ell\} \) and \( E\cup D_\Delta=\{q_\ell\} \). Since each multiset contains one point, the corresponding normalized densities are \( N_\sigma(\,\cdot\,;p_\ell) \) and \( N_\sigma(\,\cdot\,;q_\ell) \). Therefore
\[
\begin{aligned}
\int_{\mathbb R^2}
\sqrt{
N_\sigma(x;p_\ell)
N_\sigma(x;q_\ell)
}
\,dx
&=
\exp\left(
-\frac{\|p_\ell-q_\ell\|_2^2}{8\sigma}
\right)\\
&=
\exp\left(
-\frac{\ell^2}{16\sigma}
\right),
\end{aligned}
\]
where we use
\[
\|x-p_\ell\|_2^2+\|x-q_\ell\|_2^2
=
2\left\|
x-\frac{p_\ell+q_\ell}{2}
\right\|_2^2
+
\frac{\|p_\ell-q_\ell\|_2^2}{2}.
\]
Hence
\begin{equation}
\label{eq:persistence-fisher-one-bar-distance}
d_{FIM}^\sigma(B_\ell,0)
=
\arccos\left(
\exp\left(
-\frac{\ell^2}{16\sigma}
\right)
\right).
\end{equation}

For all sufficiently small \( \ell>0 \), Equation~\eqref{eq:persistence-fisher-one-bar-distance} gives
\[
\frac{\ell^2}{32\sigma}
\le
1-
\exp\left(
-\frac{\ell^2}{16\sigma}
\right)
=
1-
\cos\bigl(d_{FIM}^\sigma(B_\ell,0)\bigr)
\le
\frac{d_{FIM}^\sigma(B_\ell,0)^2}{2},
\]
where we use \( 1-e^{-u}\ge u/2 \) for \( 0\le u\le1 \) and \( 1-\cos v\le v^2/2 \). Thus \( d_{FIM}^\sigma(B_\ell,0)\ge\ell/(4\sqrt{\sigma}) \).

Since \( k_{PF}^{t,\sigma}(D,D)=1 \), Theorem~\ref{thm:moore-aronszajn} gives, for all sufficiently small \( \ell>0 \),
\[
\begin{aligned}
\left\|
\Phi_{PF}^{t,\sigma}(B_\ell)
-
\Phi_{PF}^{t,\sigma}(0)
\right\|_{\mathcal H_{k_{PF}^{t,\sigma}}}^2
&=
2\left(
1-
e^{-t\,d_{FIM}^\sigma(B_\ell,0)}
\right)\\
&\ge
t\,d_{FIM}^\sigma(B_\ell,0)\\
&\ge
\frac{t}{4\sqrt{\sigma}}\ell,
\end{aligned}
\]
where the first inequality uses \( 1-e^{-u}\ge u/2 \) together with Equation~\eqref{eq:persistence-fisher-one-bar-distance}, which implies \( t\,d_{FIM}^\sigma(B_\ell,0)\le1 \) for all sufficiently small \( \ell>0 \). Hence
\[
\frac{
\left\|
\Phi_{PF}^{t,\sigma}(B_\ell)
-
\Phi_{PF}^{t,\sigma}(0)
\right\|_{\mathcal H_{k_{PF}^{t,\sigma}}}
}{
W_1(B_\ell,0)
}
\ge
2
\left(
\frac{t}{4\sqrt{\sigma}}
\right)^{1/2}
\ell^{-1/2}
\longrightarrow
\infty
\]
as \( \ell\downarrow0 \). Therefore \( \Phi_{PF}^{t,\sigma} \) is not Lipschitz.
\end{proof}

\subsection{Persistence Weighted Gaussian Kernel}

For \( C,\sigma,p>0 \), let 
\[
k_G(x,y):=\exp(-\|x-y\|_2^2/(2\sigma^2)) 
\] 
and \( w_{C,p}(x):=\arctan(C\operatorname{pers}(x)^p) \). Let
\[
\Phi_{PWG}^{C,\sigma,p}(\alpha)
:=
\sum_{x\in\operatorname{supp}(\alpha)}
\alpha(x)w_{C,p}(x)k_G(\cdot,x)
\]
denote the Persistence Weighted Gaussian feature map defined in \cite[Equation~(2)]{kusano2016persistence}.

\begin{proposition}
\label{prop:pwgk-lipschitz-classification}
The map \( \Phi_{PWG}^{C,\sigma,p}:D(X,A)\to\mathcal H_{k_G,\mathbb C} \) is Lipschitz with respect to the \(1\)-Wasserstein distance if and only if \( p\geq1 \).
\end{proposition}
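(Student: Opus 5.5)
The plan is to reduce the statement to a pointwise estimate for the single-point feature map
\[
F(x):=w_{C,p}(x)\,k_G(\cdot,x)\in\mathcal H_{k_G,\mathbb C},\qquad x\in X\setminus A,
\]
extended by \(F|_A:=0\). This extension is consistent with \(\Phi_{PWG}^{C,\sigma,p}\), since \(w_{C,p}\) vanishes on the diagonal. With it, \(\Phi_{PWG}^{C,\sigma,p}(\alpha)=\sum_{x}\alpha(x)F(x)\) is additive on \(D(X,A)\). I take \(X\) to be the birth--death half-plane with a norm metric \(d\) and \(A\) the diagonal. For such a metric, \(\operatorname{pers}\) is Lipschitz with respect to \(d\), and \(d(x,A)\) is comparable to \(\operatorname{pers}(x)\): there are constants \(c_1,c_2>0\) with \(c_1\operatorname{pers}(x)\le d(x,A)\le c_2\operatorname{pers}(x)\). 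This comparability, which depends only on the chosen norm, is the one point to check carefully at the start.

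\emph{Necessity.} Suppose \(p<1\). Take one-bar diagrams \(B_\ell=\{(0,\ell)\}\) with \(\ell\downarrow0\). Then \(W_1(B_\ell,0)=d(B_\ell,A)\le c_2\ell\). Since \(\|k_G(\cdot,x)\|=1\),
\[
\|\Phi(B_\ell)-\Phi(0)\|=w_{C,p}(B_\ell)=\arctan(C\ell^p)\sim C\ell^p .
\]
The ratio is therefore of order \(\ell^{p-1}\to\infty\), and Lemma~\ref{lem:no-lipschitz-extension-obstruction} shows that \(\Phi\) is not Lipschitz.

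\emph{Sufficiency.} Suppose \(p\ge1\). I first show that \(F\) is Lipschitz with respect to \(d_1\), treating the two branches of Definition~\ref{def:one-strengthening} separately.

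For the \(d\)-branch, write
\[
\|F(x)-F(y)\|\le|w(x)-w(y)|+w(y)\,\|k_G(\cdot,x)-k_G(\cdot,y)\|.
\]
For the second term, \(w\le\pi/2\) and
\[
\|k_G(\cdot,x)-k_G(\cdot,y)\|^2=2\bigl(1-e^{-\|x-y\|^2/(2\sigma^2)}\bigr)\le\|x-y\|^2/\sigma^2,
\]
so this term is \(O(d(x,y))\). For the first term, \(t\mapsto\arctan(Ct^p)\) has derivative \(Cpt^{p-1}/(1+C^2t^{2p})\). This is bounded on \([0,\infty)\) exactly when \(p\ge1\). Since \(\operatorname{pers}\) is Lipschitz in \(d\), the first term is also \(O(d(x,y))\).

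For the diagonal branch, use \(\|F(x)\|=w(x)\le L\operatorname{pers}(x)\le (L/c_1)\,d(x,A)\). Here \(L=\sup_{t>0}\arctan(Ct^p)/t\) is finite for \(p\ge1\), again because the bound near \(0\) needs \(p\ge1\) and the bound near \(\infty\) comes from \(\arctan\le\pi/2\). Hence
\[
\|F(x)-F(y)\|\le\|F(x)\|+\|F(y)\|\le (L/c_1)\bigl(d(x,A)+d(y,A)\bigr).
\]
Combining the two branches gives \(\|F(x)-F(y)\|\le K\,d_1(x,y)\) for some constant \(K\).

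Finally, pass to \(W_1\) exactly as in the proof of Proposition~\ref{prop:betti-synthesis}. Given a matching \(\sigma=\sum_i(x_i,y_i)\) between \(\alpha\) and \(\beta\), additivity and \(F|_A=0\) give
\[
\Phi(\alpha)-\Phi(\beta)=\sum_i\bigl(F(x_i)-F(y_i)\bigr),
\]
so \(\|\Phi(\alpha)-\Phi(\beta)\|\le K\sum_i d_1(x_i,y_i)\). Taking the infimum over matchings yields \(\operatorname{Lip}(\Phi)\le K\).

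The main obstacle is the bookkeeping relating \(d\), \(d(\cdot,A)\) and \(\operatorname{pers}\) for the metric actually used on \(X\). The rest is the derivative bound on \(\arctan(Ct^p)\), which is precisely where the threshold \(p\ge1\) enters.
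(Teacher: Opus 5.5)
Your proposal is correct and follows essentially the same argument as the paper. It uses the same one-bar family $B_\ell=\{(0,\ell)\}$ for necessity, and for sufficiency the same split of the single-point feature $w_{C,p}(x)k_G(\cdot,x)$ into a weight-difference term controlled by the bounded derivative of $\arctan(Cr^p)$ and a kernel-difference term, together with the bound $\|w_{C,p}(x)k_G(\cdot,x)\|\lesssim d(x,A)$ and summation over matchings. The only differences are cosmetic: you prove the single-point bound directly with respect to $d_1$ and use full matchings as in Proposition~\ref{prop:betti-synthesis}, where the paper works with $d_\infty$ and partial matchings, and you allow an arbitrary norm on the half-plane rather than $\|\cdot\|_\infty$.
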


\begin{proof}
Suppose that \( 0<p<1 \). For \( \ell>0 \), set \( x_\ell:=(0,\ell) \) and \( B_\ell:=\{x_\ell\} \). Since \( d_\infty(x_\ell,A)=\ell/2 \), we have \( W_1(B_\ell,0)=\ell/2 \). By Theorem~\ref{thm:moore-aronszajn} and \( k_G(x_\ell,x_\ell)=1 \), we have
\[
\left\|
\Phi_{PWG}^{C,\sigma,p}(B_\ell)
-
\Phi_{PWG}^{C,\sigma,p}(0)
\right\|_{\mathcal H_{k_G,\mathbb C}}
=
\arctan(C\ell^p).
\]
For all sufficiently small \( \ell>0 \), the inequality \( \arctan u\geq u/2 \) for \( 0\leq u\leq1 \) gives
\[
\frac{
\left\|
\Phi_{PWG}^{C,\sigma,p}(B_\ell)
-
\Phi_{PWG}^{C,\sigma,p}(0)
\right\|_{\mathcal H_{k_G,\mathbb C}}
}{
W_1(B_\ell,0)
}
\geq
C\ell^{p-1}
\longrightarrow
\infty
\]
as \( \ell\downarrow0 \). Hence \( \Phi_{PWG}^{C,\sigma,p} \) is not Lipschitz with respect to the \(1\)-Wasserstein distance.

Suppose that \( p\geq1 \), and set \( g(r):=\arctan(Cr^p) \) for \( r\geq0 \). For \( r>0 \), we have \( g'(r)=Cp\,r^{p-1}/(1+C^2r^{2p}) \). Since \( p\geq1 \), the derivative \( g' \) remains bounded near \( 0 \), and \( \lim_{r\to\infty}g'(r)=0 \). Hence \( L_w:=\sup_{r>0}|g'(r)|<\infty \). The mean value theorem gives \( |g(r)-g(s)|\leq L_w|r-s| \) for \( r,s\geq0 \), and \( g(0)=0 \) gives \( g(r)\leq L_wr \).

Define
\[
\psi(x)
:=
\begin{cases}
w_{C,p}(x)k_G(\cdot,x), & x\in X\setminus A,\\
0, & x\in A.
\end{cases}
\]
For \( x\in X\setminus A \), the identity \( \operatorname{pers}(x)=2d_\infty(x,A) \) gives \( \|\psi(x)\|_{\mathcal H_{k_G,\mathbb C}} =g(\operatorname{pers}(x)) \leq2L_wd_\infty(x,A) \).

For \( x,y\in X\setminus A \), Theorem~\ref{thm:moore-aronszajn}, \( 1-e^{-u}\leq u \), and \( \|x-y\|_2\leq\sqrt{2}\|x-y\|_\infty \) give
\[
\|k_G(\cdot,x)-k_G(\cdot,y)\|_{\mathcal H_{k_G,\mathbb C}}
\leq
\frac{\sqrt{2}}{\sigma}\|x-y\|_\infty.
\]
Moreover, \( |\operatorname{pers}(x)-\operatorname{pers}(y)| \leq2\|x-y\|_\infty \), so \( |w_{C,p}(x)-w_{C,p}(y)| \leq2L_w\|x-y\|_\infty \). Since \( \|k_G(\cdot,x)\|_{\mathcal H_{k_G,\mathbb C}}=1 \) and \( 0\leq w_{C,p}\leq\pi/2 \), we obtain
\[
\begin{aligned}
\|\psi(x)-\psi(y)\|_{\mathcal H_{k_G,\mathbb C}}
&\leq
|w_{C,p}(x)-w_{C,p}(y)|
+
w_{C,p}(y)
\|k_G(\cdot,x)-k_G(\cdot,y)\|_{\mathcal H_{k_G,\mathbb C}}\\
&\leq
\left(
2L_w+\frac{\pi}{\sqrt{2}\sigma}
\right)
\|x-y\|_\infty.
\end{aligned}
\]
Set \( K:=2L_w+\pi/(\sqrt{2}\sigma) \). For \( x\in X\setminus A \) and \( a\in A \), we have
\[
\|\psi(x)-\psi(a)\|_{\mathcal H_{k_G,\mathbb C}}
\leq
2L_wd_\infty(x,A)
\leq
2L_wd_\infty(x,a)
\leq
Kd_\infty(x,a).
\]
For \( a,a'\in A \), we have \( \|\psi(a)-\psi(a')\|_{\mathcal H_{k_G,\mathbb C}}=0 \). Thus \( \psi:X\to\mathcal H_{k_G,\mathbb C} \) is \( K \)-Lipschitz.

Let \( \alpha,\beta\in D(X,A) \), and let \( M \) be a partial matching between their off-diagonal points, with multiplicities represented by repeated points. Let \( U_\alpha \) and \( U_\beta \) denote the unmatched points of \( \alpha \) and \( \beta \), respectively. Since \( \Phi_{PWG}^{C,\sigma,p} \) sums \( \psi \) over diagram points, we have
\[
\begin{aligned}
\left\|
\Phi_{PWG}^{C,\sigma,p}(\alpha)
-
\Phi_{PWG}^{C,\sigma,p}(\beta)
\right\|_{\mathcal H_{k_G,\mathbb C}}
&\leq
\sum_{(x,y)\in M}
\|\psi(x)-\psi(y)\|_{\mathcal H_{k_G,\mathbb C}}\\
&\quad+
\sum_{x\in U_\alpha}
\|\psi(x)\|_{\mathcal H_{k_G,\mathbb C}}
+
\sum_{y\in U_\beta}
\|\psi(y)\|_{\mathcal H_{k_G,\mathbb C}}\\
&\leq
K\bigl(
\sum_{(x,y)\in M}\|x-y\|_\infty\\
&\quad+
\sum_{x\in U_\alpha}d_\infty(x,A)
+
\sum_{y\in U_\beta}d_\infty(y,A)
\bigr).
\end{aligned}
\]
Taking the infimum over all partial matchings gives
\[
\left\|
\Phi_{PWG}^{C,\sigma,p}(\alpha)
-
\Phi_{PWG}^{C,\sigma,p}(\beta)
\right\|_{\mathcal H_{k_G,\mathbb C}}
\leq
KW_1(\alpha,\beta).
\]
Hence \( \Phi_{PWG}^{C,\sigma,p} \) is Lipschitz with respect to \(W_1\).
\end{proof}

\begin{proposition}
\label{prop:pwgk-synthesis}
For every \( C,\sigma>0 \) and \( p\geq1 \), \( \Phi_{PWG}^{C,\sigma,p} \) has a Lipschitz extension to \( K(X,A) \) with uniformly discrete spectral synthesis. Moreover,
\[
\overline\Phi_{PWG}^{C,\sigma,p}(\alpha-\beta)
=
\Phi_{PWG}^{C,\sigma,p}(\alpha)
-
\Phi_{PWG}^{C,\sigma,p}(\beta)
\]
is the unique group-homomorphic extension.
\end{proposition}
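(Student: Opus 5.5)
The plan is to reduce everything to Lemma~\ref{lem:additive-extension-synthesis}, as was done for persistence images, template functions and the Schauder basis. The feature map \(\Phi_{PWG}^{C,\sigma,p}\) is defined as a sum over diagram points of \(\psi(x)=w_{C,p}(x)k_G(\cdot,x)\), where \(\psi\) is the map from the proof of Proposition~\ref{prop:pwgk-lipschitz-classification}. Two facts are needed: additivity, and a global Lipschitz bound with respect to \(W_1\).

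First, I will verify additivity on \(D(X,A)\). For \(\alpha,\beta\in D(X,A)\), the multiplicity of a point in \(\alpha+\beta\) is \(\alpha(x)+\beta(x)\). Points of \(A\) are identified with zero, and \(\psi|_A=0\), so they contribute nothing. Hence
\[
\Phi_{PWG}^{C,\sigma,p}(\alpha+\beta)=\Phi_{PWG}^{C,\sigma,p}(\alpha)+\Phi_{PWG}^{C,\sigma,p}(\beta),
\]
and this is well defined modulo \(D(A)\).

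Second, for \(p\geq1\), Proposition~\ref{prop:pwgk-lipschitz-classification} supplies a constant \(K\) with
\[
\|\Phi_{PWG}^{C,\sigma,p}(\alpha)-\Phi_{PWG}^{C,\sigma,p}(\beta)\|\leq K\,W_1(\alpha,\beta).
\]
Here we read that proposition with the underlying metric pair being the half-plane of intervals with the diagonal as \(A\), and with \(W_1\) computed from the \(d_\infty\)-based strengthened distance, as in its proof. The codomain \(\mathcal H_{k_G,\mathbb C}\) is a complex Banach space, so the hypotheses of Lemma~\ref{lem:additive-extension-synthesis} hold with \(C=K\).

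Lemma~\ref{lem:additive-extension-synthesis} then gives the rest. The formula \(\overline\Phi(\alpha-\beta)=\Phi(\alpha)-\Phi(\beta)\) is well defined by cancellativity of \(D(X,A)\). It is the unique group-homomorphic extension, it satisfies \(\operatorname{Lip}(\overline\Phi)\leq K\), and every scalarization \(\lambda\circ\overline\Phi\circ i_Z\) is additive. Lemma~\ref{lem:homomorphism-variety-synthesis} therefore makes each \(V_Z(\overline\Phi)\) synthesizable, which is uniformly discrete spectral synthesis.

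The only delicate point is making sure that the Lipschitz estimate of Proposition~\ref{prop:pwgk-lipschitz-classification} is against the same \(W_1\) that defines \(\rho\). That proposition bounds by matchings whose cost is \(\|x-y\|_\infty\) for matched pairs and \(d_\infty(\cdot,A)\) for unmatched points. The \(d_1\)-cost \(\min\{d(x,y),d(x,A)+d(y,A)\}\) of a pair can be realized either by matching the two points or by sending both to the diagonal. Both options are already included in the infimum over partial matchings, so the bound is against \(W_1\) as defined in Definition~\ref{def:wasserstein-diagrams}. With that checked, the proof is a direct citation.
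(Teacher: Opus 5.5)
Your proposal is correct and follows the paper's proof: additivity from the defining sum, the $W_1$-Lipschitz bound from Proposition~\ref{prop:pwgk-lipschitz-classification} for $p\geq1$, and then Lemma~\ref{lem:additive-extension-synthesis}, which gives the unique homomorphic extension, its Lipschitz bound, and uniformly discrete spectral synthesis. Your extra check is also correct, and the paper leaves it implicit: the partial-matching bound dominates the $d_1$-based $W_1$, because you can unmatch any pair whose $d_1$-cost is realized through the diagonal.
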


\begin{proof}
The map \( \Phi_{PWG}^{C,\sigma,p} \) is additive by its defining sum. By Proposition~\ref{prop:pwgk-lipschitz-classification}, it is Lipschitz for \( p\geq1 \). By Lemma~\ref{lem:additive-extension-synthesis}, the stated extension is Lipschitz, has uniformly discrete spectral synthesis, and is unique among group-homomorphic extensions.
\end{proof}

\subsection{Persistence Scale-Space Kernel}

Let \( \Omega:=\{(x_1,x_2)\in\mathbb R^2:x_2\geq x_1\} \), let \( X:=\Omega \), let \( A:=\partial\Omega \), and equip \( X \) with the metric \( d_\infty(x,y):=\|x-y\|_\infty \). For this metric pair, the diagram metric \( W_1 \) equals the distance \( d_{W,1} \) of \cite[Equation~(2)]{reininghaus2015stable}. By \cite[Definition~1 and Equation~(9)]{reininghaus2015stable}, for every \( \sigma>0 \), the persistence scale-space feature map \( \Phi_\sigma\colon D(X,A)\to L^2(\Omega;\mathbb C) \) is defined by
\[
\Phi_\sigma(\alpha)
=
\sum_{p\in\operatorname{supp}\alpha}
\alpha(p)
\left(
N_p^\sigma-N_{p^\Delta}^\sigma
\right)\big|_\Omega,
\]
where \( p^\Delta:=(p_2,p_1) \) for \( p=(p_1,p_2)\in\Omega \) and
\[
N_p^\sigma(x)
:=
\frac{1}{4\pi\sigma}
\exp\left(
-\frac{\|x-p\|_2^2}{4\sigma}
\right).
\]

\begin{proposition}\label{prop:scale-space-synthesis} For every \( \sigma>0 \), the persistence scale-space feature map \( \Phi_\sigma \) has a Lipschitz extension \( \overline\Phi_\sigma\colon K(X,A)\to L^2(\Omega;\mathbb C) \) with uniformly discrete spectral synthesis. The formula \( \overline\Phi_\sigma(\alpha-\beta) = \Phi_\sigma(\alpha)-\Phi_\sigma(\beta) \) defines the unique group-homomorphic extension.
\end{proposition}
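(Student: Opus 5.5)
The plan is to reduce everything to Lemma~\ref{lem:additive-extension-synthesis}. That lemma needs two facts about \(\Phi_\sigma\): it is additive on \(D(X,A)\), and it is Lipschitz with respect to \(W_1\). Once both hold, the lemma yields three conclusions at once: the formula \(\overline\Phi_\sigma(\alpha-\beta)=\Phi_\sigma(\alpha)-\Phi_\sigma(\beta)\) gives a well-defined group homomorphism extending \(\Phi_\sigma\), it is the unique such homomorphism, and it is Lipschitz with uniformly discrete spectral synthesis.

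Additivity is immediate from the defining sum, since multiplicities add: \(\Phi_\sigma(\alpha+\beta)=\Phi_\sigma(\alpha)+\Phi_\sigma(\beta)\).

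For the Lipschitz estimate I will follow the pattern of Proposition~\ref{prop:pwgk-lipschitz-classification}.

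1. Define \(\psi(p):=(N_p^\sigma-N_{p^\Delta}^\sigma)|_\Omega\) for \(p\in\Omega\). For \(a\in A\) we have \(a=a^\Delta\), so \(\psi(a)=0\), which is consistent with the quotient by \(D(A)\).
2. Show that \(\psi\colon(X,d_\infty)\to L^2(\Omega;\mathbb C)\) is \(K\)-Lipschitz for some constant \(K=K(\sigma)\).
3. Take any matching between \(\alpha\) and \(\beta\), and write \(\Phi_\sigma(\alpha)-\Phi_\sigma(\beta)\) as a sum of \(\psi(x)-\psi(y)\) over matched pairs, plus \(\pm\psi(x)\) over points matched to the diagonal.
4. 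Bound the matched terms by \(K\|x-y\|_\infty\).
5. For a point \(x\) matched to the diagonal, bound \(\|\psi(x)\|=\|\psi(x)-\psi(a)\|\le K d_\infty(x,a)\), and take the infimum over \(a\in A\) to get \(K\,d_\infty(x,A)\).
6. Since \(d_1\) is the minimum of the direct and diagonal costs, each matched pair is controlled by \(K d_1(x,y)\). Taking the infimum over matchings gives \(\|\Phi_\sigma(\alpha)-\Phi_\sigma(\beta)\|\le K\,W_1(\alpha,\beta)\).

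Alternatively, in step 6 one can cite the stability theorem of \cite{reininghaus2015stable} for \(d_{W,1}\), which the setup identifies with \(W_1\).

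The main step is step 2, the Lipschitz estimate for \(p\mapsto N_p^\sigma\) in \(L^2(\mathbb R^2)\). Translation of a Gaussian is Lipschitz in \(L^2\): by the mean value theorem, \(\|N_p^\sigma-N_q^\sigma\|_{L^2}\le\|p-q\|_2\,\|\nabla N_0^\sigma\|_{L^2}\). Alternatively, one can compute the Gaussian inner products exactly, obtaining \(\|N_p^\sigma-N_q^\sigma\|_2^2=c_\sigma\bigl(1-e^{-\|p-q\|_2^2/(8\sigma)}\bigr)\), and then apply \(1-e^{-u}\le u\).

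The map \(p\mapsto p^\Delta\) is an isometry, so the reflected term obeys the same bound. Restricting to \(\Omega\) does not increase the norm. Finally, \(\|\cdot\|_2\le\sqrt2\|\cdot\|_\infty\), which gives \(K=2\sqrt2\,\|\nabla N_0^\sigma\|_{L^2}<\infty\).

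The real-valued features sit inside \(L^2(\Omega;\mathbb C)\), so no complexification issue arises.
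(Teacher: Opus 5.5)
Your proposal is correct and follows the paper's proof. The paper also argues additivity from the defining sum, establishes a $W_1$-Lipschitz bound, and then applies Lemma~\ref{lem:additive-extension-synthesis} to get the extension, its uniqueness among homomorphisms, and uniformly discrete spectral synthesis. The only difference is that the paper just cites \cite[Theorem~2 and Equation~(11)]{reininghaus2015stable} for the bound $\|\Phi_\sigma(\alpha)-\Phi_\sigma(\beta)\|_{L^2}\le W_1(\alpha,\beta)/(\sigma\sqrt{8\pi})$, which is your alternative route, whereas your self-contained Gaussian-translation estimate gives a somewhat larger constant that still suffices for the lemma.
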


\begin{proof}
By the defining sum, \( \Phi_\sigma \) is additive. By the preceding metric identification and \cite[Theorem~2 and Equation~(11)]{reininghaus2015stable},
\[
\|\Phi_\sigma(\alpha)-\Phi_\sigma(\beta)\|_{L^2(\Omega;\mathbb C)}
\leq
\frac{1}{\sigma\sqrt{8\pi}}
W_1(\alpha,\beta)
\]
for all \( \alpha,\beta\in D(X,A) \). By Lemma~\ref{lem:additive-extension-synthesis}, the displayed formula defines the unique group-homomorphic extension. By the same lemma, this extension is Lipschitz and has uniformly discrete spectral synthesis.
\end{proof}

\end{appendices}

\end{document}